\newtheorem{Th}{Theorem}
\newtheorem{Prop}[Th]{Proposition}
\newtheorem{Lm}[Th]{Lemma}
\newtheorem{Co}[Th]{Corollary}
\theoremstyle{definition}
\newtheorem{Def}[Th]{Definition}
\newtheorem{Rem}[Th]{Remark}
\date{}
\author{N.I. Nessonov\footnote{B. Verkin  Institute for Low Temperature Physics and Engineering
of the National Academy of Sciences of Ukraine, \ \ \ \ \ \ \ \ \ \ \ \ \ \ \ \ \ \ \ \ \ \ \ \ \ \ \ \ \ \ \ \ \ \ \ \ \ \ \ \ \ \ \ \ \ \ \ \ \ \ \ \ \ \ \ \ \ \ \ \ \ \ \ \ \ \ \ \ \ \ \ \ \ \ \ \  \ \ \ \ \ Institute of Mathematics Polish Academy of Sciences. \ \ \ \ \ \ \ \ \ \ \ \ \ \ \ \ \ \ \  \ \ \ \ \ \ \ \ \ \ \ \ \ \ \  \ \ \ \ \ \ \ \ \ \ \ This project was funded by Long-term program of support of the Ukrainian research teams at the Polish Academy of Sciences carried out in collaboration with the U.S. National Academy of Sciences with the financial support of external partners}}
\begin{document}
\maketitle

\begin{abstract}
The full description of the stable factor-representations of   the infinite hyperoctahedral  group  up to quasi-equivalence obtained.
\end{abstract}
\section{Introduction}
In this paper we describe the stable representations  of   the infinite hyperoctahedral  group.

The notion of stability for a representation of an arbitrary group
(or algebra) with respect to  their  automorphisms group was introduced in \cite{Ver-Nes}.   We recall the corresponding definition.

Let $G$ be a countable group, and let ${\rm Aut}\,G$ be its automorphisms group. We endow ${\rm Aut}\,G$ with the topology, in which a base of neighborhoods of the identity of ${\rm Aut}\,G$ consists of the sets
  \begin{eqnarray}\label{topology}
  \mathfrak{U}_g=\left\{ \theta\in {\rm Aut}\,G:\theta(g)=g \right\}, \;\;g\in G.
  \end{eqnarray}
Automorphism $\theta\in{\rm Aut}\,G$ define the automorphisms of the group $\mathbf{C}^\star$-algebra $\mathbf{C}^\star[G]$.  We will identify further these two automorphisms. For convenience, we denote $\mathbf{C}^\star[G]$ by $\mathfrak{A}$.  Let $\mathfrak{A}^*$ be the dual space  of $\mathfrak{A}$. For every functional $\varphi\in\mathfrak{A}^*$ and an arbitrary subgroup $\mathcal{K}\subset{\rm Aut}\,G$ we denote by ${\rm Orb}_\varphi\mathcal{K}$ the set $\left\{ \varphi\circ\theta  \right\}_{\theta \in \mathcal{K}}$.
\begin{Def}\label{k_stable}
A functional $\varphi$ is said to be $\mathcal{K}$-stable  if the map $\mathcal{K}\ni\theta\stackrel{O_\varphi}{\mapsto}\varphi\circ\theta\in{\rm Orb}_\varphi\mathcal{K}\subset\mathfrak{A}^*$ is continuous if we consider the topology (\ref{topology}) on $\mathcal{K}$ and the norm topology of the dual space on ${\rm Orb}_\varphi\mathcal{K}$.
\end{Def}
Denote by ${\rm Ad}\,G$ a subgroup of ${\rm Aut}\,G$ consisting of all automorphisms of the view: $G\ni x\stackrel{{\rm Ad}\,g}{\mapsto}gxg^{-1}\in G$, $g\in G$. A functional $\varphi$ is said to be just {\it stable} if it is ${\rm Ad}\,G$-stable.

Let $\pi$ be a unitary representation of $G$ acting on a Hilbert space $\mathcal{H}$.
\begin{Def}[\cite{Ver-Nes}]\label{stable_Def}
A representation $\pi$ is said to be stable if the functional $\omega_\eta$ on $\mathbf{C}^*[G]$, defined by  $\omega_\eta\left( a \right)=\left( \pi(a)\eta,\eta \right)$, $a\in\mathbf{C}^*[G]$, is stable for every $\eta\in \mathcal{H}$.
\end{Def}
It is obvious that the representations  of finite type\footnote{The representation $\pi$ of a group $G$ has a finite type, if $w^*$-algebra $\pi(G)^{\prime\prime}$, generated by the operators $\pi(g)$, $g\in G$, has a finite type.} are the natural examples of the stable representations.

It follows easily that the next statement holds.
\begin{Prop}\label{GNS_stable}
Let $f$ be a positive functional on $\mathbf{C}^*[G]$ and let $\pi_f$ be the corresponding GNS-representation. If $f$ is {\it stable} then $\pi_f$ is stable too.
\end{Prop}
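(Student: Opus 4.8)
\noindent\emph{Proof plan.} By Definition~\ref{stable_Def} the task is to show that for every vector $\eta$ in the GNS space $\mathcal H_f$ the orbit map $O_{\omega_\eta}\colon{\rm Ad}\,G\to\mathfrak A^*$, $\theta\mapsto\omega_\eta\circ\theta$, is continuous for the topology (\ref{topology}) and the norm of $\mathfrak A^*$. I would start with two reductions. First, every $\theta\in{\rm Aut}\,G$ acts on $\mathfrak A=\mathbf C^*[G]$ as a (hence isometric) $*$-automorphism, so $\varphi\mapsto\varphi\circ\theta$ is an isometry of $\mathfrak A^*$; together with the standard estimate $\|\omega_\zeta-\omega_{\zeta'}\|\le\|\zeta-\zeta'\|\,(\|\zeta\|+\|\zeta'\|)$ this lets me replace an arbitrary $\eta$ by a nearby vector $\pi_f(b)\xi_f$ with $b$ in the dense group algebra $\mathbf C[G]\subset\mathfrak A$ ($\xi_f$ the cyclic vector), uniformly in $\theta$. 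Second, each $\mathfrak U_g$ is a subgroup of ${\rm Aut}\,G$ ($\mathfrak U_g=\mathfrak U_g^{-1}$, $\mathrm{id}\in\mathfrak U_g$, $\mathfrak U_g\mathfrak U_g\subseteq\mathfrak U_g$), so the sets $\bigcap_{g\in F}\mathfrak U_g$ ($F\subset G$ finite) form a neighbourhood base at $\mathrm{id}$ consisting of open subgroups; hence ${\rm Aut}\,G$, and with it ${\rm Ad}\,G$, is a topological group. From $\pi_f(gag^{-1})=\pi_f(g)\pi_f(a)\pi_f(g)^*$ one gets the covariance relation $\omega_\eta\circ({\rm Ad}\,g)=\omega_{\pi_f(g^{-1})\eta}$, which shows the class of vector functionals of $\pi_f$ is invariant under $\varphi\mapsto\varphi\circ\theta$, $\theta\in{\rm Ad}\,G$. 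By homogeneity of the topological group ${\rm Ad}\,G$ it then suffices to prove that every such $O_{\omega_\eta}$ is continuous at the identity automorphism.

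For continuity at the identity, fix $\eta$ and $\varepsilon>0$, choose $\delta\in(0,1]$ with $2\delta\,(2\|\eta\|+1)<2\varepsilon/3$, and pick $b=\sum_ic_ix_i\in\mathbf C[G]$ with finite support $S=\{x_i\}$ and $\|\eta-\eta'\|<\delta$, where $\eta'=\pi_f(b)\xi_f$. Using that $\mathfrak A$ is unital with $\pi_f(a)\xi_f=a+N_f$ and $f(a)=(\pi_f(a)\xi_f,\xi_f)$ (with $N_f=\{a:f(a^*a)=0\}$), I would compute $\omega_{\eta'}(a)=f(b^*ab)$. For $\theta\in\mathfrak U_S=\bigcap_{x\in S}\mathfrak U_x$ one has $\theta(b)=b$, so for every $a$
\[
(\omega_{\eta'}\circ\theta)(a)=f\bigl(b^*\theta(a)b\bigr)=f\bigl(\theta(b)^*\theta(a)\theta(b)\bigr)=f\bigl(\theta(b^*ab)\bigr)=(f\circ\theta)(b^*ab),
\]
whence $\|\omega_{\eta'}\circ\theta-\omega_{\eta'}\|\le\|b\|^2\,\|f\circ\theta-f\|$. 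Since $f$ is stable, $O_f$ is continuous at $\mathrm{id}$, so there is a neighbourhood $\mathcal V$ of $\mathrm{id}$ in ${\rm Ad}\,G$ on which $\|f\circ\theta-f\|<\varepsilon/(3\|b\|^2)$ (the case $b=0$ being trivial). For $\theta$ in the neighbourhood $\mathcal W=\mathcal V\cap\mathfrak U_S\cap{\rm Ad}\,G$ of $\mathrm{id}$, the triangle inequality and the isometry $\varphi\mapsto\varphi\circ\theta$ give
\[
\|\omega_\eta\circ\theta-\omega_\eta\|\le\|\omega_\eta-\omega_{\eta'}\|+\|\omega_{\eta'}\circ\theta-\omega_{\eta'}\|+\|\omega_{\eta'}-\omega_\eta\|<2\delta\,(2\|\eta\|+1)+\varepsilon/3<\varepsilon .
\]
As $b$, and hence $S$ and $\mathcal V$, are chosen only after $\delta$, the argument is not circular, and this (with the first paragraph) proves the Proposition.

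The one step that I expect to need genuine care is the passage from an arbitrary base point to the identity: it rests on (\ref{topology}) being a group topology and on the covariance relation keeping us inside the family of vector functionals. If one would rather not invoke homogeneity, the same computation runs at any $\theta_0\in{\rm Ad}\,G$: writing $b^*\theta(a)b=\theta\bigl(\theta^{-1}(b)^*\,a\,\theta^{-1}(b)\bigr)$ and restricting to the $\theta$ with $\theta^{-1}(b)=\theta_0^{-1}(b)$ (an open condition around $\theta_0$) one gets $(\omega_{\eta'}\circ\theta-\omega_{\eta'}\circ\theta_0)(a)=(f\circ\theta-f\circ\theta_0)\bigl(\theta_0^{-1}(b)^*\,a\,\theta_0^{-1}(b)\bigr)$ and concludes from the continuity of $O_f$ at $\theta_0$. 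Apart from this bookkeeping everything is a routine $\varepsilon/3$ estimate, so the Proposition does indeed follow easily.
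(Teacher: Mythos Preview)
Your argument is correct. The paper itself does not supply a proof of this proposition, stating only that it ``follows easily''; your $\varepsilon/3$ argument via the approximation of $\eta$ by vectors $\pi_f(b)\xi_f$ with $b\in\mathbf C[G]$, together with the key identity $\omega_{\pi_f(b)\xi_f}\circ\theta=(f\circ\theta)(b^*\,\cdot\,b)$ valid on the open stabiliser $\mathfrak U_S$, is precisely the kind of routine verification the author has in mind.
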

It is easy to check that $\pi$ is quasi-equivalent to $\pi\circ\theta$ for any $\theta\in{\rm Aut}\,G$ in the completion of ${\rm Ad}\,G$ with respect to the topology (\ref{topology}).

\begin{Def}\label{def_tame}
The representation $\pi$ is said to be {\it tame} if the map $G\ni x\mapsto \pi(x)\eta\in\mathcal{H}$ is continuous for any $\eta\in\mathcal{H}$, if we consider on $G$ the topology, in which a base of neighborhoods of the identity  consists of the sets ${\rm Ad}^{-1}\left(\mathfrak{U}_g \right)$, $g\in G$, and the norm topology on $\mathcal{H}$.
\end{Def}
It is obviously that a {\it tame} representation is {\it stable}.

In the following, we will denote by $B(\mathcal{H})$ the set  of all bounded linear operators on $\mathcal{H}$. For each subset $\mathcal{S}\subset B(\mathcal{H})$, let $\mathcal{S}^\prime=\left\{A\in B(\mathcal{H}):AB=BA \text{ for all }\right.$ $\left. B\in \mathcal{S}\right\}$. If $\mathcal{S}$ is invariant under the $^*$-operation then $\mathcal{S}^{\prime\prime}=\left( \mathcal{S}^\prime\right)^\prime$, the double commutant of $\mathcal{S}$, is the smallest von Neumann algebra containing $\mathcal{S}$, and it is called the von Neumann algebra {\it generated} by $\mathcal{S}$ \cite{TAKES1}.   The closed subspace, spanned by $\mathcal{S}\mathfrak{H}$, where $\mathfrak{H}$ is a subset in $\mathcal{H}$, we will denote by $[\mathcal{S}\mathfrak{H}]$.

Let $M$ stand for $\pi(G)^{\prime\prime}$. Denote by $M_*$ the space  of all weakly continuous linear functionals on $M$ ({\it predual} of $M$). Let $M_*^+\subset M_*$ be the cone of the positive functionals from $M_*$.
\begin{Def}
Let $\widehat{\pi}$ be a representation of $G$ and let $\widehat{M}$ be the von Neumann algebra generated by $\widehat{\pi}(G)$. The representations $\pi$  and $\widehat{\pi}$  are said to be  quasi-equivalent if there is an isomorphism $M\stackrel{\theta}{\rightarrow}\widehat{M}$ such that $\theta\left( \pi(g) \right)=\widehat{\pi}(g)$ for all $g\in G$.
\end{Def}
\begin{Def}[\cite{TAKES1}]
  Let $\phi\in M_*^+$. The smallest unique orthogonal projection $E\in M$ such that $\phi(Ex)=\phi(x)$ for all $x\in M$ is called the {\it support} of $\phi$ and denoted by ${\rm supp}\,\phi$.
\end{Def}
  For each pair $\xi,\eta\in\mathcal{H}$, define functional $\omega_{\xi\eta}\in M_*$ by $\omega_{\xi\eta}(A)=\left( A\xi,\eta \right)$, $A\in M$. Then the norm closure of the subspace, spanned by $\left\{ \omega_{\xi\eta}\right\}_{\xi,\eta\in\mathcal{H}}$, coincides  with $M_*$. It follows the next statement:
\begin{Prop}
  Let $\varphi\in M_*$ be a positive linear functional. Define functional $\varphi_\pi$ on $\mathbf{C}^*[G]$ by $\varphi_\pi(a)=\varphi(\pi(a))$, $a\in\mathbf{C}^*[G]$. If $\pi$ is stable representation then $\varphi_\pi$ is a stable functional.
\end{Prop}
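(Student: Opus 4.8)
The plan is to reduce the claim to the defining property of a stable representation---that every \emph{vector} functional $\omega_\eta(a)=(\pi(a)\eta,\eta)$ is stable---by way of two elementary observations about the maps involved. First, the pullback $M_*\ni\chi\longmapsto\chi_\pi\in\mathfrak{A}^*$ (where $\mathfrak{A}=\mathbf{C}^*[G]$ and $\chi_\pi(a)=\chi(\pi(a))$) is linear and norm-decreasing: since $\pi$ is a $*$-homomorphism of $\mathfrak{A}$ into $M$ one has $\|\pi(a)\|\le\|a\|$, whence $|\chi_\pi(a)|\le\|\chi\|\,\|a\|$. Second, each $\theta\in{\rm Aut}\,G$ is a $*$-automorphism of $\mathfrak{A}$, hence isometric, so the induced map $\psi\longmapsto\psi\circ\theta$ is an isometry of $\mathfrak{A}^*$; in particular $\|\psi\circ\theta-\psi'\circ\theta\|=\|\psi-\psi'\|$ with the bound uniform in $\theta$.

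I would next verify that the set $\mathcal{S}$ of stable functionals on $\mathfrak{A}$ is a norm-closed linear subspace of $\mathfrak{A}^*$. Stability under linear combinations is immediate from Definition~\ref{k_stable}, a finite linear combination of maps continuous into the norm topology being again continuous. For norm-closedness, suppose $\chi=\lim_n\chi_n$ in norm with each $\chi_n\in\mathcal{S}$; by the isometry just noted, $\sup_{\theta}\|\chi\circ\theta-\chi_n\circ\theta\|=\|\chi-\chi_n\|\to0$, so the continuous maps $\theta\mapsto\chi_n\circ\theta$ converge uniformly on ${\rm Ad}\,G$ to $\theta\mapsto\chi\circ\theta$, which is therefore continuous; hence $\chi\in\mathcal{S}$.

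Finally I would combine these with a polarization step. The identity $\omega_{\xi\eta}=\tfrac14\sum_{k=0}^{3}i^{k}\,\omega_{(\xi+i^{k}\eta)(\xi+i^{k}\eta)}$ shows that the linear span of the vector states $\{\omega_{\zeta\zeta}\}_{\zeta\in\mathcal{H}}$ has the same norm closure as the span of $\{\omega_{\xi\eta}\}_{\xi,\eta\in\mathcal{H}}$, hence is norm-dense in $M_*$. For a vector state, $(\omega_{\zeta\zeta})_\pi$ is precisely the functional $\omega_\zeta$ of Definition~\ref{stable_Def}, which is stable because $\pi$ is a stable representation; thus $(\omega_{\zeta\zeta})_\pi\in\mathcal{S}$ for every $\zeta$. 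Since $\chi\mapsto\chi_\pi$ is linear and norm-continuous, it carries $M_*=\overline{\mathrm{span}}\{\omega_{\zeta\zeta}\}$ into $\overline{\mathrm{span}}\{(\omega_{\zeta\zeta})_\pi\}\subseteq\mathcal{S}$, using that $\mathcal{S}$ is a closed subspace. In particular $\varphi_\pi\in\mathcal{S}$, which is the assertion.

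The only genuinely substantive point---hence the ``main obstacle,'' such as it is---is that stability of $\pi$ is postulated only for the diagonal functionals $\omega_\eta$ and not for the off-diagonal $\omega_{\xi\eta}$; this gap is bridged by the polarization identity above, after which the passage from the dense subspace of finite combinations of vector states to an arbitrary $\varphi\in M_*$ is legitimized exactly by the uniform isometry estimates of the first paragraph. Note that positivity of $\varphi$ is not actually used; only that $\varphi\in M_*$ matters.
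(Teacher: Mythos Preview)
Your proof is correct and follows essentially the same route the paper indicates: the paper does not give a formal proof but simply precedes the proposition with the remark that $M_*$ is the norm closure of the span of the $\omega_{\xi\eta}$ and then says ``It follows the next statement.'' Your argument is exactly the detailed unpacking of this implication---density of vector functionals in $M_*$, norm-continuity of the pullback $\chi\mapsto\chi_\pi$, and norm-closedness of the set of stable functionals---together with the polarization step that reduces the off-diagonal $\omega_{\xi\eta}$ to the diagonal $\omega_{\zeta\zeta}$ covered by Definition~\ref{stable_Def}. Your observation that positivity of $\varphi$ is irrelevant is also correct.
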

\begin{Co}\label{stable_quasi}
  Let $M\stackrel{\theta}{\mapsto}N$ be an isomorphism between von Neumann algebras $M$ and $N$. Then the representation $G\ni g\stackrel{\theta\circ\pi}{\mapsto}\theta(\pi(g))\in N$ is stable. Thus the notation of the stability is invariant of the quasi-equivalence of the representations.
\end{Co}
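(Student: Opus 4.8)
\emph{Proof proposal.} The plan is to deduce this directly from the preceding Proposition by transporting vector functionals on $N$ back to normal functionals on $M$ via $\theta$. First I would fix a concrete realization of $N$ as a von Neumann algebra acting on a Hilbert space $\mathcal{K}$ (for instance $N=\widehat\pi(G)''$ acting on the space of a representation $\widehat\pi$ quasi-equivalent to $\pi$), and observe that $\widehat\pi:=\theta\circ\pi$ is then a bona fide unitary representation of $G$ on $\mathcal{K}$: each $\theta(\pi(g))$ is a unitary in $N\subset B(\mathcal{K})$, $g\mapsto\theta(\pi(g))$ is a homomorphism, and since $\pi\colon\mathbf{C}^*[G]\to M$ is norm-decreasing and $\theta\colon M\to N$ is isometric, $\widehat\pi$ extends to a $^*$-representation of $\mathbf{C}^*[G]$ with $\widehat\pi(a)=\theta(\pi(a))$. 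By Definition~\ref{stable_Def} it then suffices to show that for every $\zeta\in\mathcal{K}$ the functional $\omega_\zeta$ on $\mathbf{C}^*[G]$ given by $\omega_\zeta(a)=\bigl(\widehat\pi(a)\zeta,\zeta\bigr)$ is stable.

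The key ingredient I would invoke is that every $^*$-isomorphism of von Neumann algebras is automatically normal, i.e. $\sigma$-weakly continuous \cite{TAKES1}; consequently the transpose of $\theta$ maps $N_*$ isometrically onto $M_*$ and preserves positivity. Hence, for a fixed $\zeta\in\mathcal{K}$, the vector functional $\omega_{\zeta\zeta}\in N_*^+$ (namely $A\mapsto(A\zeta,\zeta)$, $A\in N$) pulls back to $\varphi:=\omega_{\zeta\zeta}\circ\theta\in M_*^+$. Then for every $a\in\mathbf{C}^*[G]$ one has $\omega_\zeta(a)=\bigl(\theta(\pi(a))\zeta,\zeta\bigr)=\omega_{\zeta\zeta}\bigl(\theta(\pi(a))\bigr)=\varphi(\pi(a))=\varphi_\pi(a)$, so $\omega_\zeta=\varphi_\pi$ with $\varphi\in M_*^+$. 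Since $\pi$ is stable, the preceding Proposition yields that $\varphi_\pi$ is a stable functional, hence so is $\omega_\zeta$; as $\zeta$ was arbitrary, $\widehat\pi=\theta\circ\pi$ is stable. The invariance of stability under quasi-equivalence is then immediate: if $\pi$ and $\widehat\pi$ are quasi-equivalent through $\theta\colon M\to\widehat M$ with $\theta(\pi(g))=\widehat\pi(g)$ for all $g\in G$, then $\theta\circ\pi$ and $\widehat\pi$ are norm-continuous $^*$-representations of $\mathbf{C}^*[G]$ agreeing on the dense linear span of $G$, so $\widehat\pi=\theta\circ\pi$, which is stable by the first part whenever $\pi$ is.

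I expect essentially no obstacle of substance here: the only non-bookkeeping step is the appeal to the automatic normality of $\theta$, and this is precisely what guarantees $\omega_{\zeta\zeta}\circ\theta\in M_*$, which is the hypothesis required to apply the Proposition. Everything else is a routine unwinding of the definitions of stable representation and of $\varphi_\pi$.
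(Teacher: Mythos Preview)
Your proposal is correct and is precisely the argument the paper has in mind: the Corollary is stated without proof as an immediate consequence of the preceding Proposition, and you have correctly spelled out the one substantive point---that a $^*$-isomorphism of von Neumann algebras is automatically normal, so vector states on $N$ pull back to elements of $M_*^+$, whence the Proposition applies.
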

The symmetric group $\mathfrak{S}_n$ is the group of  all bijections of $\left\{1,2,\ldots, n \right\}$ to itself. Consider the natural action of $\mathfrak{S}_n$ on $\mathbb{Z}_2^n$ by permutation:
\begin{eqnarray*}
s(z_1,z_2,\ldots,z_n)=\left(z_{s^{-1}(1)},z_{s^{-1}(2)},\ldots z_{s^{-1}(n)} \right), \text{ where } s\in\mathfrak{S}_n, z_j\in\mathbb{Z}_2=\{0,1\}.
\end{eqnarray*}
The corresponding semidirect product $\mathbb{Z}_2^n\rtimes\mathfrak{S}_n$ is the group made up of the elements $\mathbb{Z}_2^n\times\mathfrak{S}_n$ and with internal law of composition $\left(z,s \right)\cdot\left(z^\prime,s^\prime \right)=\left(z+sz^\prime,ss^\prime \right)$. This group arise in many contexts, where it is usually called the Weyl group of type $B$ or the hyperoctahedral group.

Now we identify the group $\mathbb{Z}_2^n\rtimes\mathfrak{S}_n$  with the subgroup of $\mathbb{Z}_2^{n+1}\rtimes\mathfrak{S}_{n+1}$, using the embeddings
\begin{eqnarray*}
\mathfrak{S}_n\ni s\mapsto \tilde{s}\in\mathfrak{S}_{n+1}, \text{ where } \tilde{s}(k)=s(k) \text{ for } k\leq n \text{ and }\tilde{s}(n+1)=n+1;\\
\mathbb{Z}_2^n\ni(z_1,z_2,\ldots,z_n)\mapsto(z_1,z_2,\ldots,z_n,0)\in \mathbb{Z}_2^{n+1}.
\end{eqnarray*}
The corresponding union $\bigcup\limits_{n=1}^\infty \mathbb{Z}_2^n\rtimes\mathfrak{S}_n$ is denoted by $\mathbb{Z}_2\wr\mathfrak{S}_\infty$ and is said the {\it wreath product} $\mathbb{Z}_2$ by $\mathfrak{S}_\infty$. This group known as infinite Weyl group.

Here we give the full description of the stable factor-representations of  $\mathbb{Z}_2\wr\mathfrak{S}_\infty$. In particular,  it follows from this the comprehensive classification of the stable representations of infinite symmetric inverse semigroup.
\subsection{Finite type factor-representation of the infinite hyper\-octahedral  group.}\label{characters_of_hyperocthedral}
The full description of ${\rm II}_1$-factor-representations of the infinite wreath product $G\wr\mathfrak{S}_\infty$ was found in \cite{RB,Hirai} in the case when  $G$ is finite group. In these papers, the authors used the ergodic method of finding characters for locally finite groups, proposed by Vershik and Kerov in \cite{Ver_Ker,Ker_Ver,Ker}. The classification of ${\rm II}_1$-factor-representation of $G\wr\mathfrak{S}_\infty$ for arbitrary countable group $G$, by using Okounkov approach \cite{Ok1,Ok2}, was obtained in \cite{DN}. We recall the corresponding results for the infinite Weyl group $\mathbb{Z}_2\wr\mathfrak{S}_\infty$.

First, we will identify $\bigcup\limits_{n=1}^\infty\mathbb{Z}_2^n=\;_0\mathbb{Z}_2^\infty$ \label{page_Z_2}and $\bigcup\limits_{n=1}^\infty\mathfrak{S}_n=\mathfrak{S}_\infty$ with the corresponding subgroups of $\mathbb{Z}_2\wr\mathfrak{S}_\infty$. Under this condition, every $g\in \mathbb{Z}_2\wr\mathfrak{S}_\infty$ we can write as $g=sz$, where $s\in\mathfrak{S}_\infty$, $z=(z_1,z_2,\ldots,z_n,\ldots)\in \;_0\mathbb{Z}_2^\infty$. Let $s=c_1\cdot c_2\cdots c_l$ be a decomposition of $s$ in the product of the  independent cycles. Denote by ${\rm supp}\,s$ the set $\left\{k:s(k)\neq k \right\}$. Then $sz=c_1\cdot \,^{1}\!z\cdot c_2\cdot \,^{2}\!z\cdots c_l\cdot \,^{l}\!z\cdot z^\prime$, where
$^{j}\!z=\left(\,^{j}\!z_1,\,^{j}\!z_2,\ldots,\,^{j}\!z_n,\ldots \right), z^\prime =(z^\prime_1,z^\prime_2,\ldots,z^\prime_n,\ldots)\in\;_0\mathbb{Z}_2^\infty$, $\,^j\!z_n=\left\{
 \begin{array}{rl}
 z_n,&\text{ if } n\in{\rm supp}\,c_j\\
 0,&\text{ if }  n\notin{\rm supp}\,c_j
 \end{array}\right.$
 and $z^\prime_n=\left\{
 \begin{array}{rl}
 0,&\text{ if } n\in{\rm supp}\,s\\
 z_n,&\text{ if }  n\notin{\rm supp}\,s.
 \end{array}\right.$
 We will call  $c_j\cdot \,^{j}\!z$ the quasi-cycle. In particular, the element $\zeta^{(n)}\in\;_0\mathbb{Z}_2^\infty$ of the form $(0,0,\ldots,0,\underbrace{\zeta}_{n},0,\ldots)$ is a quasi-cycle too. We call the set $${\rm supp}\,(sz)=\left\{k: s(k)\neq k \,\text{ or } z_k\neq0 \right\}$$ the support of $sz$.

Let $\pi$ be the unitary ${\rm II}_1$-factor-representation of $\mathbb{Z}_2\wr\mathfrak{S}_\infty$, and let $\mathcal{M}$ be $w^*$-algebra (factor), generated by the operators $\pi(g)$, $g\in \mathbb{Z}_2\wr\mathfrak{S}_\infty$. Denote by ${\rm tr}$ the unique faithful normal trace on $\mathcal{M}$ such that ${\rm tr}(I)=1$. We will find the formula for the {\it character} $\chi(g)={\rm tr}(\pi(g))$.

Since $\pi$ is factor-representations, the property of the multiplicativity holds. Namely,
\begin{eqnarray}\label{II_1-mult_general}
\chi(sz)={\rm tr}(\pi(sz))=\left(\prod\limits_{j=1}^l {\rm tr}(\pi(c_j\cdot z^{(j)}))\right)\cdot\prod\limits_{j\notin {\rm supp}\,s}{\rm tr}(\pi(z_j^{(j)})).
\end{eqnarray}
Thus it is sufficiently to find the value of $\chi$ on the quasi-cycles $cz$, where $c=\left( n_1\;n_2\;\ldots\;n_k \right)\in\mathfrak{S}_\infty$, $z=(z_1,z_2,\ldots)\in\;_0\mathbb{Z}_2^\infty$. We recall that $z_j=0$ if $j\notin\left\{n_1,n_2,\ldots,n_k \right\}$.

It follows from(\ref{II_1-mult_general}) that the restriction of $\chi$ to $\mathfrak{S}_\infty$ is Thoma character \cite{Thoma}. Let $\alpha=\left(\alpha_1\geq\alpha_2\geq \ldots >0\right)$, $\beta=\left( \beta_1\geq\beta_2\geq\ldots >0\right)$ be the corresponding collections of Thoma parameters. Each of these collections ($\alpha$ or $\beta$) can be empty. Using the description of the indecomposable characters on the infinite wrath product \cite{DN}, we find  nonnegative numbers $\gamma_0$ and $\gamma_1$ and a function $\alpha\cup\beta\stackrel{\sigma}{\mapsto}\{0,1\}$ such that $\gamma_0+\gamma_1=1-\sum \alpha_i-\sum\beta_i$ and
\begin{eqnarray}\label{character_formula}
\chi(cz)=\left\{\begin{array}{rl}
 &\sum\alpha_i^k(-1)^{\sigma(\alpha_i)\sum z_i}+(-1)^{k-1}\sum\beta_i^k(-1)^{\sigma(\beta_i)\sum z_i},\text{ if } k>1;\\
&\gamma_0+\gamma_1(-1)^{ z_{n_1}}+ \sum\alpha_i(-1)^{\sigma(\alpha_i) z_{n_1}}+\sum\beta_i\cdot(-1)^{\sigma(\beta_i) z_{n_1}},\text{ if } k=1.
 \end{array}\right.
\end{eqnarray}

We will denote this character by $\chi_{\alpha\beta\gamma}^\sigma$, where $\gamma=(\gamma_0,\gamma_1)$.
\subsection{The stable representations of $\mathfrak{S}_\infty$.}
The full description of the stable representations of the infinite symmetric group was obtained in \cite{Ver-Nes}. We recall the corresponding results.
Let $\left( \alpha,\beta\right)$ be Thoma parameters \cite{Thoma} and let $\pi_{\alpha\beta}$ be the corresponding ${\rm II}_1$-factor-representation of $\mathfrak{S}_\infty$. Denote by $\chi_{\alpha\beta}$ the corresponding character. For each $n\in \mathbb{N}\cup0$, let $\mathfrak{S}_{n\infty}$ denote the subgroup of $\mathfrak{S}_\infty$ consisting of the elements,  which fix the numbers in $\left\{1,2,\ldots, n \right\}$. \label{Subgroup_n_infty} Thus we have a decreasing sequence of infinite subgroups $\mathfrak{S}_\infty=\mathfrak{S}_{0\infty}\supset\mathfrak{S}_{1\infty}\supset\ldots$, all isomorphic to $\mathfrak{S}_\infty$  and having infinite index at each inclusion.  Set $\mathfrak{S}_n=\left\{s\in\mathfrak{S}_\infty:s(k)=k \text{ for all } k>n \right\}$. We will identify a group $\mathfrak{S}_n\times\mathfrak{S}_{n\;\infty}$ with the Young subgroup of $\mathfrak{S}_\infty$, consisting of all permutations,  fixing every part of the partition $\left\{1,2,\ldots, n \right\}\cup\left\{n+1,n+2,\ldots \right\}$. Let $\lambda$ be the partition of $n$ $(\lambda\vdash n)$ and let $\chi_\lambda$  be the  character of the corresponding irreducible representation of $\mathfrak{S}_n$.  Denote by $\tau_{\lambda\alpha\beta}$ a function on $\mathfrak{S}_\infty$, which defined as follows
\begin{eqnarray}
\begin{split}
&\tau_{\lambda\alpha\beta}(s_1s_2)=\chi_\lambda(s_1)\cdot\chi_{\alpha\beta}(s_2), \text{ where } s_1\in\mathfrak{S}_n,\;s_2\in\mathfrak{S}_{n\;\infty};\\
&\tau_{\lambda\alpha\beta}(s)=0, \text{ if } s\notin\mathfrak{S}_n\times\mathfrak{S}_{n\;\infty}.
\end{split}
\end{eqnarray}
We emphasize that $\tau_{\lambda\alpha\beta}$ is the positive definite function  on $\mathfrak{S}_\infty$.
\begin{Prop}
Let $\pi$ be a stable factor-representation of $\mathfrak{S}_\infty$. Than there exist $n\in\mathbb{N}$, the partition $\lambda$ of $n$ and Thoma parameters $(\alpha,\beta)$ such that $\pi$ is quasi-equivalent to the GNS representation $\Pi_{\lambda \alpha \beta}$ associated to  $\tau_{\lambda\alpha\beta}$.
\end{Prop}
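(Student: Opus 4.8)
The plan is to translate stability into a statement about the ``tail'' subgroups $\mathfrak{S}_{n\;\infty}$ and then to reconstruct $\pi$ by induction from $\mathfrak{S}_n\times\mathfrak{S}_{n\;\infty}$.

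\textbf{Step 1 (stability $\Rightarrow$ the tails are asymptotically tracial).} On ${\rm Ad}\,\mathfrak{S}_\infty$ the topology (\ref{topology}) has a neighbourhood base of the identity given by the sets $\{{\rm Ad}\,\sigma:\sigma\in\mathfrak{S}_{n\;\infty}\}$, $n\in\mathbb{N}$, since $\mathfrak{S}_{n\;\infty}$ is exactly the centralizer of $\mathfrak{S}_n$ in $\mathfrak{S}_\infty$ (and $\overline{{\rm Ad}\,\mathfrak{S}_\infty}$ is the full symmetric group of $\mathbb{N}$ with the topology of pointwise convergence). Hence $\pi$ stable means: for every unit $\eta\in\mathcal{H}$ and $\varepsilon>0$ there is $n$ with $\|\omega_\eta\circ{\rm Ad}\,\sigma-\omega_\eta\|<\varepsilon$ for all $\sigma\in\mathfrak{S}_{n\;\infty}$. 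Restricting these functionals to $\mathbf{C}^*[\mathfrak{S}_{n\;\infty}]\subset\mathbf{C}^*[\mathfrak{S}_\infty]$ and using $(\omega_\eta\circ{\rm Ad}\,\sigma)\big|_{\mathfrak{S}_{n\;\infty}}=(\omega_\eta\big|_{\mathfrak{S}_{n\;\infty}})\circ{\rm Ad}\,\sigma$ for $\sigma\in\mathfrak{S}_{n\;\infty}$, we get that $\omega_\eta\big|_{\mathbf{C}^*[\mathfrak{S}_{n\;\infty}]}$ is $\varepsilon$-almost invariant under every inner automorphism of $\mathfrak{S}_{n\;\infty}$. As $\mathfrak{S}_{n\;\infty}$ is locally finite, hence amenable, averaging over the increasing finite subgroups $\mathfrak{S}_{\{n+1,\ldots,m\}}$, $m\to\infty$, yields (a weak$^*$ cluster point $\psi$ of the averages, using lower semicontinuity of the norm) a genuinely ${\rm Ad}$-invariant positive functional $\psi$ on $\mathbf{C}^*[\mathfrak{S}_{n\;\infty}]$ with $\|\omega_\eta\big|_{\mathfrak{S}_{n\;\infty}}-\psi\|\le\varepsilon$. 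An ${\rm Ad}$-invariant positive functional on a group $C^*$-algebra is a trace, so $\psi$ is a character of $\mathfrak{S}_{n\;\infty}$, described by Thoma parameters. This is the only essential use of stability.

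\textbf{Step 2 (the character at infinity).} Using the shift isomorphisms $\mathfrak{S}_\infty\cong\mathfrak{S}_{N\;\infty}$, that Thoma characters are unchanged under restriction to these subgroups, and compactness of the Thoma simplex, the characters of Step 1 stabilize to a single Thoma character $\chi_{\alpha\beta}$ of $\mathfrak{S}_\infty$ recording the behaviour of $\pi$ on far tails. It does not depend on $\eta$: the map $\eta\mapsto\chi_{\alpha\beta}^{(\eta)}$ is $\|\cdot\|$-continuous and $\pi(\mathfrak{S}_\infty)$-invariant (because $s\in\mathfrak{S}_N$ commutes with $\mathfrak{S}_{n\;\infty}$ once $n\ge N$), hence constant on $\overline{\pi(\mathfrak{S}_\infty)^{\prime\prime}\eta}$; since $M=\pi(\mathfrak{S}_\infty)^{\prime\prime}$ is a factor (density of vectors cyclic for $M$, or a cyclic and separating vector for $M'$) it is globally constant. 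Moreover $\chi_{\alpha\beta}$ is extreme in the Thoma simplex: a nontrivial decomposition would, via the central averaging projections of Step 1, split off a nontrivial central projection of $M$.

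\textbf{Step 3 (reconstruction).} If $\chi_{\alpha\beta}$ is of infinite type (neither the trivial nor the sign character) one shows $M$ is a finite factor --- here stability is used a second time, to propagate the asymptotic traciality of Step 1, together with the faithfulness of an infinite-type Thoma trace, to a normal trace on all of $M$; then $M$ has a unique normal trace ${\rm tr}$, ${\rm tr}\circ\pi=\chi_{\alpha\beta}$ is an extreme character, and since the finite factor with trace generated by a character is unique there is an isomorphism $M\to\pi_{\alpha\beta}(\mathfrak{S}_\infty)^{\prime\prime}$ carrying $\pi(g)$ to $\pi_{\alpha\beta}(g)$, so $\pi\sim\pi_{\alpha\beta}$, which is $\Pi_{\lambda\alpha\beta}$ for suitable $n,\lambda$. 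If $\chi_{\alpha\beta}$ is the trivial or the sign character, let $e_n$ be the projection onto $\{\eta:\pi(\sigma)\eta=\chi_{\alpha\beta}(\sigma)\,\eta\ \text{for all}\ \sigma\in\mathfrak{S}_{n\;\infty}\}$; it is the strong limit of the twisted averages $|\mathfrak{S}_{\{n+1,\ldots,m\}}|^{-1}\sum_{\sigma}\overline{\chi_{\alpha\beta}(\sigma)}\,\pi(\sigma)$, so $e_n\in M$, and $(e_n)$ is increasing; Steps 1--2 force $e_n\ne0$ for large $n$, and we take $n$ minimal with $e_n\ne0$. Then $\pi(\mathfrak{S}_n)$ acts on $e_n\mathcal{H}$, and factoriality forces this finite-dimensional representation to be $\chi_\lambda$-isotypic for a single $\lambda\vdash n$. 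The inclusion $e_n\mathcal{H}\hookrightarrow\mathcal{H}$ and its $\pi(\mathfrak{S}_\infty)$-translates then exhibit $\pi$ as induced from the representation $s_1s_2\mapsto\chi_\lambda(s_1)\chi_{\alpha\beta}(s_2)$ of $\mathfrak{S}_n\times\mathfrak{S}_{n\;\infty}$ --- the bookkeeping that identifies this induced representation with the GNS representation of $\tau_{\lambda\alpha\beta}$ --- so $\pi\sim\Pi_{\lambda\alpha\beta}$.

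The soft ingredients above (Thoma's classification, the amenable averaging, the compactness argument, the induction bookkeeping built into $\tau_{\lambda\alpha\beta}$) are routine; I expect the real work to lie in Steps 2--3: the well-definedness and $\eta$-independence of the character at infinity, the finiteness of $M$ in the infinite-type case, and, in the trivial/sign case, the non-vanishing of some $e_n$ together with the identification of $\pi$ with the induced representation. Upgrading the $\varepsilon$-approximate, vector-dependent traciality that stability provides into exact structural information is the main obstacle.
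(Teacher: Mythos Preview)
The paper does not prove this proposition itself --- it is quoted from \cite{Ver-Nes} --- but the paper's proof of the analogous result for $B=\mathbb{Z}_2\wr\mathfrak{S}_\infty$ (Theorem~\ref{induction_theorem} and its corollary) shows what the argument in \cite{Ver-Nes} looks like, and your sketch diverges from it at a crucial point.

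Your Steps~1--2 are in the right spirit and roughly parallel the paper's Proposition~\ref{asymp_char}. The genuine gap is in Step~3. Your dichotomy ``$\chi_{\alpha\beta}$ trivial or sign'' versus ``$\chi_{\alpha\beta}$ of infinite type'' is the wrong one, and the claim that $M$ is a finite factor in the latter case is false. The correct dividing line is $\sum\alpha_i+\sum\beta_i<1$ versus $\sum\alpha_i+\sum\beta_i=1$: by the theorem quoted immediately after the proposition, whenever $\sum\alpha_i+\sum\beta_i=1$ and $n\ge1$ the representation $\Pi_{\lambda\alpha\beta}$ is of type ${\rm II}_\infty$, so there is no normal trace on $M$ to ``propagate'' to. Take for instance $\alpha_1=\alpha_2=\tfrac12$: this $\chi_{\alpha\beta}$ is neither trivial nor sign, yet the associated stable factor representations $\Pi_{\lambda\alpha\beta}$ with $\lambda\vdash n\ge1$ are ${\rm II}_\infty$. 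Your $e_n$-argument, which requires $\chi_{\alpha\beta}$ to be a one-dimensional character, simply does not cover these cases.

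What the paper (and \cite{Ver-Nes}) does instead is not to look for a trace on $M$, but for a nonzero \emph{$\mathfrak{S}_{n\infty}$-central} normal state on $M$, i.e.\ an element of $M_*^+(n)$. The device that produces it is Tomita--Takesaki: one passes to the diagonal representation $D(g)=\pi(g)J_\xi\pi(g)J_\xi$ on the natural cone $\mathfrak{P}_\xi$, shows from stability and the inequality~(\ref{inequality_vector_state}) that $D$ is \emph{tame} (Proposition~\ref{tame_Prop}), and then averages $D$ over the finite subgroups of $\mathfrak{S}_{n\infty}$ to get a nonzero projection $P_n$ whose range in $\mathfrak{P}_\xi$ supplies the desired invariant state. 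The Lieberman--Olshanski orthogonality (Proposition~\ref{Prop_depth}) then forces the support of this state to be killed by every $\pi(s)$ with $s\notin\mathfrak{S}_n\cdot\mathfrak{S}_{n\infty}$, which is exactly the induction-from-$\mathfrak{S}_n\times\mathfrak{S}_{n\infty}$ structure. Your averaging in Step~1 lives on the representation $\pi$ itself rather than on $D$, and that is why it only yields approximate traces and cannot detect the ${\rm II}_\infty$ cases; moving the averaging to the natural cone is the missing idea.
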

The results from the  next statement is proved in \cite{Ver-Nes}.
\begin{Th}
The following  hold:
\begin{itemize}
  \item {\rm i)} $\Pi_{\lambda \alpha \beta}$ is factor representation of type ${\rm II}$ for any triplet $(\lambda, \alpha, \beta)$;
  \item {\rm ii)} if $\sum\alpha_i+\sum\beta_i<1$ then $\Pi_{\lambda \alpha \beta}$ is quasi-equivalent to ${\rm II}_1$-factor-representation $\pi_{\alpha\beta}$;
\item {\rm iii)} if $\sum\alpha_i+\sum\beta_i=1$ and $n> 0$ then $\Pi_{\lambda \alpha \beta}$ is ${\rm II}_\infty$-factor representation;
  \item {\rm iv)}  if $\sum\alpha_i+\sum\beta_i=1$  and $(\lambda, \alpha, \beta)\neq (\lambda^\prime, \alpha^\prime, \beta^\prime)$ then the representations $\Pi_{\lambda \alpha \beta}$ and $\Pi_{\lambda^\prime \alpha^\prime \beta^\prime}$ are disjunct.
\end{itemize}

\end{Th}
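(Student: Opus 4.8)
\medskip
\noindent\emph{Outline of a proof.} Throughout write $G=\mathfrak S_\infty$, $H=\mathfrak S_n\times\mathfrak S_{n\,\infty}$, let $\rho\prec\rho'$ denote quasi-containment and $\rho\approx\rho'$ quasi-equivalence of unitary representations, and use freely that a nonzero subrepresentation of a factor representation is quasi-equivalent to it and that $\rho\prec\rho'$ with $\rho,\rho'$ factor representations forces $\rho\approx\rho'$. Let $\mathcal R_{\alpha\beta}=\pi_{\alpha\beta}(\mathfrak S_\infty)''$ with its trace $\operatorname{tr}_{\alpha\beta}$. The function $\tau_{\lambda\alpha\beta}$ vanishes off $H$ and on $H$ equals $\chi_\lambda\otimes\chi_{\alpha\beta}$, whose GNS representation is $\rho_\lambda\boxtimes\pi_{\alpha\beta}$ (here $\rho_\lambda$ is the GNS representation of $\chi_\lambda$, i.e. $V_\lambda$ with multiplicity $\dim\lambda$, so $\rho_\lambda\approx V_\lambda$). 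Since $\Pi_{\lambda\alpha\beta}(\mathbf C^*[G])\,\xi_{\tau_{\lambda\alpha\beta}}$ contains every $[\delta_g]$, the cyclic vector generates the whole induced module, so
\[
\Pi_{\lambda\alpha\beta}\ \cong\ \operatorname{Ind}_H^G\!\bigl(\rho_\lambda\boxtimes\pi_{\alpha\beta}\bigr)\ \approx\ \operatorname{Ind}_H^G\!\bigl(V_\lambda\boxtimes\pi_{\alpha\beta}\bigr),
\]
and the theorem becomes a statement about this induced representation; put $M=\Pi_{\lambda\alpha\beta}(G)''$.

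For (i) one first proves $M$ is a factor via Mackey's intertwining calculus: the double cosets in $H\backslash G/H$ are finitely many, indexed by $j=\lvert\{1,\dots,n\}\cap g\{1,\dots,n\}\rvert\in\{0,\dots,n\}$, and the commutant of $\operatorname{Ind}_H^G(\sigma)$ ($\sigma=V_\lambda\boxtimes\pi_{\alpha\beta}$) is assembled from the spaces $\operatorname{Hom}_{H\cap g^{-1}Hg}(\sigma^{g},\sigma)$; the diagonal coset $j=n$ contributes $\sigma(H)'=\pi_{\alpha\beta}(\mathfrak S_{n\,\infty})'$, a $\mathrm{II}_1$-factor, and the heart of the matter is that the cosets $j<n$ contribute nothing to the centre. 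On the finite side this reduces to the restriction/induction combinatorics of $\mathfrak S_n$-irreducibles across the finite part of $H\cap g^{-1}Hg$; on the infinite side, to the rigidity of $\pi_{\alpha\beta}$ conjugated by a permutation of bounded support and compared with itself over the common tail. \emph{I expect this double-coset computation to be the principal obstacle of the whole theorem.} Granting factoriality, $M$ is hyperfinite (weak closure of the increasing union of the finite-dimensional algebras $\Pi_{\lambda\alpha\beta}(\mathfrak S_m)''$) and semifinite — a faithful normal semifinite trace comes from the $G$-invariant counting measure on $G/H$ together with $\operatorname{tr}\otimes\operatorname{tr}_{\alpha\beta}$ on $\sigma(H)''$ — and not of type $\mathrm I$: a multiple of an irreducible representation of $\mathfrak S_\infty$ would restrict to a type-$\mathrm I$ representation of $\mathfrak S_{n\,\infty}$, whereas $\Pi_{\lambda\alpha\beta}|_{\mathfrak S_{n\,\infty}}$ contains $\pi_{\alpha\beta}$ (realised at $\xi_{\tau_{\lambda\alpha\beta}}$, since $\tau_{\lambda\alpha\beta}$ on $\mathfrak S_{n\,\infty}$ equals $(\dim\lambda)\chi_{\alpha\beta}$). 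Hence $M$ is of type $\mathrm{II}$, which is (i).

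For (ii): writing $\chi_{\alpha\beta}|_{\mathfrak S_n}=\sum_{\mu\vdash n}\widehat s_\mu(\alpha,\beta)\chi_\mu$, the central support $z_\lambda\in\pi_{\alpha\beta}(\mathfrak S_n)''\subset\mathcal R_{\alpha\beta}$ of the $\lambda$-isotypic part is nonzero precisely when $\widehat s_\lambda(\alpha,\beta)>0$, and this holds for every $\lambda\vdash n$ once $\sum\alpha_i+\sum\beta_i<1$. A direct computation with the multiplicativity $\chi_{\alpha\beta}(s_1s_2)=\chi_{\alpha\beta}(s_1)\chi_{\alpha\beta}(s_2)$ ($s_1\in\mathfrak S_n$, $s_2\in\mathfrak S_{n\,\infty}$) then identifies the cut-down $z_\lambda\bigl(\pi_{\alpha\beta}|_H\bigr)$ with $V_\lambda\boxtimes\pi_{\alpha\beta}$ up to quasi-equivalence, so $V_\lambda\boxtimes\pi_{\alpha\beta}\prec\pi_{\alpha\beta}|_H$ and, inducing,
\[
\Pi_{\lambda\alpha\beta}\ \prec\ \operatorname{Ind}_H^G\!\bigl(\pi_{\alpha\beta}|_H\bigr)\ \cong\ \ell^2(G/H)\otimes\pi_{\alpha\beta}.
\]
Thus (ii) reduces to the \emph{key lemma}: $\operatorname{Ind}_H^G(\pi_{\alpha\beta}|_H)$ is a \emph{factor} representation exactly when $\sum\alpha_i+\sum\beta_i<1$ — for then it contains $\pi_{\alpha\beta}$ (as $\rho\subset\operatorname{Ind}_H^G\operatorname{Res}_H^G\rho$) and hence is quasi-equivalent to it, whence $\Pi_{\lambda\alpha\beta}\prec\pi_{\alpha\beta}$ gives $\Pi_{\lambda\alpha\beta}\approx\pi_{\alpha\beta}$. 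For $\alpha=\beta=\varnothing$ this is Fell's absorption ($\ell^2(G/H)\otimes\lambda_G\cong\lambda_G^{\oplus\infty}$); the general case is the second substantive point of the argument.

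For (iii)--(iv): the pair $(\alpha,\beta)$ is a quasi-equivalence invariant of $\Pi_{\lambda\alpha\beta}$, since its restriction to a deep tail $\mathfrak S_{m\,\infty}$ is (by Mackey) a finite direct sum of representations $\Pi_{\nu\alpha\beta}$ — all with the same $(\alpha,\beta)$, and including, from the trivial double coset, a copy of $\pi_{\alpha\beta}$ — and $\pi_{\alpha\beta}$ is the only $\mathrm{II}_1$-factor representation quasi-contained there. This already yields (iv) when $(\alpha,\beta)\neq(\alpha',\beta')$. When instead $(\alpha,\beta)=(\alpha',\beta')$ with $\sum\alpha_i+\sum\beta_i=1$ and $(n,\lambda)\neq(n',\lambda')$, disjointness means $\Pi_{\lambda\alpha\beta}\not\prec\Pi_{\lambda'\alpha\beta}$ (both factor representations by (i)); here $\sum\alpha_i+\sum\beta_i=1$ is precisely the failure of the key lemma — $\operatorname{Ind}_H^G(\pi_{\alpha\beta}|_H)$ is then not a factor representation — and one shows, by tracking the $\mathfrak S_n$-isotypic content through the Mackey decomposition of $\Pi_{\lambda'\alpha\beta}|_H$, that distinct triples sit in distinct summands. \emph{This is the delicate step, and it is exactly where ``$=1$'' differs from ``$<1$''.} Finally (iii) follows from (iv): for $n>0$, if $\Pi_{\lambda\alpha\beta}$ were of type $\mathrm{II}_1$ it would be quasi-equivalent to $\pi_{\alpha''\beta''}$ for the extreme character $\chi_{\alpha''\beta''}=\operatorname{tr}_M\circ\Pi_{\lambda\alpha\beta}$, forcing $(\alpha'',\beta'')=(\alpha,\beta)$ by the recovery above, i.e. $\Pi_{\lambda\alpha\beta}\approx\pi_{\alpha\beta}=\Pi_{\varnothing\alpha\beta}$, contradicting (iv); being of type $\mathrm{II}$ by (i), $\Pi_{\lambda\alpha\beta}$ is therefore of type $\mathrm{II}_\infty$.
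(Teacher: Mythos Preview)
The paper does not prove this theorem; it is quoted from \cite{Ver-Nes}, so there is no in-paper proof to compare against. The paper does, however, prove the direct analogues for $B=\mathbb Z_2\wr\mathfrak S_\infty$, and those arguments differ substantially from your outline: factoriality is obtained by a direct orbit argument on the coset space (every nontrivial $B_\infty^{(n)}$-orbit on $X=B/B_\infty^{(n)}$ is infinite, forcing off-diagonal matrix coefficients of any central element to vanish), not via Mackey's intertwining calculus; the $\mathrm{II}_1$ case is handled by exhibiting an explicit density $D$ in the ambient $\mathrm{II}_1$ factor with $\operatorname{tr}(D\,\Pi_{\alpha\beta\gamma}^\sigma(\cdot))$ equal to the inducing state, built from the spectral projections of the asymptotic transpositions $\mathcal O_k=w\text{-}\lim_j\pi((k\;j))$; and the $\mathrm{II}_\infty$ and disjointness parts follow by identifying the projection onto $\bigcap_{m\le n}\ker\mathcal O_m$ with the range projection over the trivial coset, from which the finite-type subrepresentation and its parameters are read off directly.

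Your outline is honest about its gaps, and the steps you flag as ``the principal obstacle'', ``the second substantive point'', and ``the delicate step'' are the entire content of the theorem. Two concrete concerns. First, the Mackey description of the commutant you invoke --- assembling $\Pi'$ from the spaces $\operatorname{Hom}_{H\cap g^{-1}Hg}(\sigma^g,\sigma)$ --- is a type~$\mathrm I$ statement; it does not hold in this form when $\sigma$ generates a $\mathrm{II}_1$ factor, so factoriality needs a direct argument. Second, your ``key lemma'' for (ii) is a reformulation, not a reduction: by multiplicativity of $\chi_{\alpha\beta}$ one has $\operatorname{Res}_H^G\pi_{\alpha\beta}\approx\bigoplus_{\mu\vdash n} c_\mu\,(V_\mu\boxtimes\pi_{\alpha\beta})$, hence $\operatorname{Ind}_H^G(\pi_{\alpha\beta}|_H)\approx\bigoplus_\mu c_\mu\,\Pi_{\mu\alpha\beta}$, and this is a factor representation if and only if all $\Pi_{\mu\alpha\beta}$ with $c_\mu>0$ are mutually quasi-equivalent --- which, since every $c_\mu>0$ when $\sum\alpha_i+\sum\beta_i<1$, is precisely (ii). The Fell-absorption remark for $\alpha=\beta=\varnothing$ is correct but does not extend without new input; the analytic route through the operators $\mathcal O_k$ and their kernel projections is what supplies that input both in \cite{Ver-Nes} and in this paper's $B$-analogue.
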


\subsection{The examples}
{\bf Example 1.}
We will consider the vectors from the Hilbert space $l^2(\mathbb{N})$ as the functions on $\mathbb{N}$.
Let the operators $T(s)$, $s\in\mathfrak{S}_\infty$ and $T(z)$, $z=(z_1,z_2,\ldots)$ $\in \bigcup\limits_{n=1}^\infty\mathbb{Z}_2^n=\;_0\mathbb{Z}_2^\infty$ act on $f\in l^2(\mathbb{N})$ as follows
\begin{eqnarray*}
\left(T(s)f\right)(x)=f(s^{-1}x),\;\left(T(z)f\right)(x)=(-1)^{z_x}f(x).
\end{eqnarray*}
It is obviously that $T$ extends to the stable irreducible  representation of  $\mathbb{Z}_2\wr\mathfrak{S}_\infty$.

Let $1^{[1,n]})=(\underbrace{1,1,\ldots,1}_n,0,0,\ldots)\in\;_0\mathbb{Z}_2^\infty\subset\mathbb{Z}_2\wr\mathfrak{S}_\infty$.
Clearly,  $$\lim\limits_{k\to\infty}\left( T\left(1^{[1,2k]})  \right)f,f \right)=\left( f,f \right)\;\text{ and } \lim\limits_{k\to\infty}\left( T\left(1^{[1,2k+1]})  \right)f,f \right)=-\left( f,f \right)$$
 for each $f\in l^2(\mathbb{N})$. Therefore, since the sequence ${\rm Ad}\, 1^{[1,n]}$ tends to the identical automorphism of the group $\mathbb{Z}_2\wr\mathfrak{S}_\infty$,
  $T$ is not  a {\it tame} representation (see definition \ref{def_tame}). In particular, the restriction $T$ to the subgroup $\mathfrak{S}_\infty$ is the irreducible {\it tame} representation \cite{Ol1}.
\medskip

\noindent{\bf Example 2.} Let $B(\mathbf{H})$ be the set of all bounded linear operators on the Hilbert space $\mathbf{H}$, and let ${\rm Tr}$ be the ordinary trace on $B(\mathbf{H})$. We fix an orthonormal basis in $\mathbf{H}$ and will to identify the operators from $B(\mathbf{H})$ with the corresponding matrices. Denote by $\mathcal{J}$ an ideal of the Hilbert-Schmidt operators in $B(\mathbf{H})$. The inner product $\left( x,y \right)={\rm Tr}\left( y^*x \right)$, where $x,y\in \mathcal{J}$, define on $\mathcal{J}$ the structure of the Hilbert space. The group $\mathbb{Z}_2\wr\mathfrak{S}_\infty$ is embedded in the unitary subgroup of $B(\mathbf{H})$ by
\begin{eqnarray*}
&\mathfrak{S}_\infty\ni s\mapsto \left[ s_{ij} \right]=\left[\delta_{is(j)}\right]\in B(\mathbf{H}), \text{ where } \delta_{ij} \text{ is Kronecker delta};\\
&\;_0\mathbb{Z}_2^\infty\ni z=(z_1,z_2,\ldots)\mapsto \left[ (-1)^{z_i} \delta_{ij}\right]\in B(\mathbf{H}).
\end{eqnarray*}
Define the unitary operators $R(s)$ and $R(z)$ on $\mathcal{J}$ by
  \begin{eqnarray*}
  \mathcal{J}\ni \left[x_{ij}\right]\stackrel{R(s)}{\mapsto}\left[ s_{ij} \right]\cdot\left[ x_{ij} \right]\cdot\left[ s_{ij} \right]^{-1}\in\mathcal{J};\\
 \mathcal{J}\ni \left[x_{ij}\right]\stackrel{R(z)}{\mapsto} \left[ (-1)^{z_i} \delta_{ij}\right]\cdot\left[ x_{ij} \right]\cdot \left[ (-1)^{z_i} \delta_{ij}\right]^{-1}\in\mathcal{J}.
  \end{eqnarray*}
  Set $R(sz)=R(s)R(z)$ for any $s\in\mathfrak{S}_\infty$, $z\in\;_0\mathbb{Z}_2^\infty$. It is clear that $R$ is an unitary {\it tame} representation of   $\mathbb{Z}_2\wr\mathfrak{S}_\infty$.

  Let $\left\{v_1,v_2, \ldots   \right\}$ be the collection of the vectors from $\mathcal{J}$. Denote by $\left[  v_1,v_2, \ldots  \right]$ the closure of ${\rm span}\,\left\{v_1,v_2, \ldots   \right\}=\left\{ \sum\limits_i \lambda_iv_i \left|\lambda_i\in\mathbb{C} \right. \right\}$. Let $\mathcal{J}_D$ stand for the subspace of the diagonal matrices in $\mathcal{J}$. Set  $\mathcal{J}_+=\left\{x=\left[x_{ij} \right]\in\mathcal{J}:x_{kl}= x_{lk}\;\text{ and }\;x_{kk}=0 \right\}$, $v_+=\left[ \begin{matrix}0&1&0&0&\ldots\\1&0&0&0&\ldots\\0&0&0&0&\ldots\\ \vdots&\vdots&\vdots&\vdots&\ldots\end{matrix} \right]$ and  $\mathcal{J}_-=\left\{x=\left[x_{kl} \right]\in\mathcal{J}:x_{kl}= -x_{lk}\right\}$,  $v_-=\left[ \begin{matrix}0&1&0&0&\ldots\\-1&0&0&0&\ldots\\0&0&0&0&\ldots\\ \vdots&\vdots&\vdots&\vdots&\ldots\end{matrix} \right]$. A trivial verification shows that the subspaces $\mathcal{J}_D$, $J_+=\left[ R\left( \mathbb{Z}_2\wr\mathfrak{S}_\infty  \right)v_+ \right]$ and $J_-=\left[ R\left( \mathbb{Z}_2\wr\mathfrak{S}_\infty  \right)v_- \right]$ are pairwise orthogonal and $\mathcal{J}=\mathcal{J}_D\oplus \mathcal{J}_+\oplus\mathcal{J}_-$. Furthermore, the restriction of $R$ to each of these subspaces is the irreducible tame representation of $\mathbb{Z}_2\wr\mathfrak{S}_\infty$. In particular, these restrictions are pairwise disjunct.
\medskip

\noindent{\bf Example 3.} Define the measure $\nu$ on the set $\left\{ 0,1  \right\}$ by $\nu(0)=p$, $\nu(1)=q$, where $p+q=1$ and $p,q>0$. Endow the space $X=\{0,1\}^\mathbb{N}$ with the product measure $\mu=\nu^\mathbb{N}$. The group $\mathfrak{S}_\infty$ acts natural on $X$ by
  \begin{eqnarray*}
  X\ni(x_1,x_2,\ldots)=x\mapsto xs=(x_{s^{-1}(1)},x_{s^{-1}(2)},\ldots)\in X, s\in\mathfrak{S}_\infty.
  \end{eqnarray*}
The corresponding unitary representation $\Pi$ of $\mathbb{Z}_2\wr\mathfrak{S}_\infty$  is define on $L^2(X,\mu)$ as follows
\begin{eqnarray*}
\left(\Pi(s)\eta\right)=\eta(xs), \text{ where } \eta\in L^2(X,\mu), s\in\mathfrak{S}_\infty;\\
\left( \Pi(z)\eta \right)(x)=(-1)^{\sum x_iz_i}\eta(x),  \text{ where } z=(z_1,z_2,\ldots)\in\;_0\mathbb{Z}_2^\infty.
\end{eqnarray*}
It is easy to prove that $\Pi$ is irreducible representation of $\mathbb{Z}_2\wr\mathfrak{S}_\infty$.
Now we prove that $\Pi$ is nonstable representation.

Denote by $\mathbb{I}$ the function from $L^2(X,\mu)$ identically equal to one. Let $(k\;l)\in\mathfrak{S}_\infty$ be  permutation which exchanges only two elements $k$, $l$ and keeps  all others fixed. Set $1^{(n)}=(\underbrace{0,0,\ldots,0}_{n-1},1,0,\ldots)\in \;_0\mathbb{Z}_2^\infty$. An easy computation shows that
  \begin{eqnarray}\label{nonstable_equality}
  \left(\Pi\left( 1^{(n)} \right) \Pi\left( \left( n\;n+1 \right) \right)\Pi\left( 1^{(n)} \right)\mathbb{I},\mathbb{I} \right)-\left( \Pi\left( \left( n\;n+1 \right) \right)\mathbb{I},\mathbb{I} \right)=(p-q)^2-1.
  \end{eqnarray}
Since the sequence ${\rm Ad}\, 1^{(n)}$ tends to the identical automorphism of the group $\mathbb{Z}_2\wr\mathfrak{S}_\infty$, it follows from (\ref{nonstable_equality}) that the state $\omega_\mathbb{I}$ (see definition \ref{stable_Def}) is nonstable. But the restriction of $\Pi$ to $\mathfrak{S}_\infty$ is the reducible tame  representation.
\medskip

\noindent{\bf Example 4.} Let $\pi_{\alpha\beta}$ be  ${\rm II}_1$-factor-representation of $\mathfrak{S}_\infty$ with Thoma parameters $\alpha=\left(\alpha_1\geq\alpha_2\geq \ldots \geq 0\right)$, $\beta=\left( \beta_1\geq\beta_2\geq\ldots\geq 0\right)$ \cite{Thoma}. Suppose that $\alpha_i<1$ $\left( \beta_i<1 \right)$ for all $\alpha_i\in\alpha$ $\left( \text{for all }\beta_i\in\beta \right)$. Denote by $\mathcal{F}$ ${\rm II}_1$-factor, generated by $\pi_{\alpha\beta}\left(  \mathfrak{S}_\infty \right)$. We will suppose that $\pi_{\alpha\beta}$ acts in $L^2(\mathcal{F},{\rm tr})$, where ${\rm tr}$ is the unique normal trace on $\mathcal{F}$, by the left multiplication: $\mathcal{F}\ni a\mapsto \pi_{\alpha\beta}(s)\cdot a$. Define the representation $\widetilde{\pi}_{\alpha\beta}$ of $\mathbb{Z}_2\wr\mathfrak{S}_\infty$ in $L^2(\mathcal{F},{\rm tr})$ by
  \begin{eqnarray*}
  \widetilde{\pi}_{\alpha\beta}(s)=\pi_{\alpha\beta}(s), \text{ when } s\in\mathfrak{S}_\infty,\text{ and }\widetilde{\pi}_{\alpha\beta}(z)={\rm id} \text{ for }z\in\;_0\mathbb{Z}_2^\infty.
  \end{eqnarray*}
Define the representation $\widetilde{T}_{\alpha\beta}$ in $l^2(\mathbb{N})\otimes L^2(\mathcal{F},{\rm tr})$ by
$\widetilde{T}_{\alpha\beta}=T\otimes\widetilde{\pi}_{\alpha\beta}$, where $T$ is the same as in {\bf Example 1}. Clearly, $\widetilde{T}_{\alpha\beta}$ is a stable representation.
The next statement is true.
\begin{Prop}
$\widetilde{T}_{\alpha\beta}$  is a stable ${\rm II}_\infty$-factor-representation of $\mathbb{Z}_2\wr\mathfrak{S}_\infty$.
\end{Prop}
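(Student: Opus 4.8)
The plan is to identify $M:=\widetilde{T}_{\alpha\beta}(G)^{\prime\prime}$ (where $G=\mathbb{Z}_2\wr\mathfrak{S}_\infty$ acts on $\mathcal{H}=l^2(\mathbb{N})\otimes L^2(\mathcal{F},{\rm tr})$) as a factor of type ${\rm II}_\infty$; stability is essentially already available. Indeed $\widetilde{T}_{\alpha\beta}(s)=T(s)\otimes\pi_{\alpha\beta}(s)$ and $\widetilde{T}_{\alpha\beta}(z)=T(z)\otimes{\rm id}$: $T$ is stable (Example~1), $\widetilde{\pi}_{\alpha\beta}$ is of finite type because $\widetilde{\pi}_{\alpha\beta}(G)^{\prime\prime}=\pi_{\alpha\beta}(\mathfrak{S}_\infty)^{\prime\prime}=\mathcal{F}$ is finite and hence is stable, and the tensor product of two stable representations is stable (pull product functionals back through the diagonal $^*$-homomorphism $\mathbf{C}^*[G]\to\mathbf{C}^*[G]\otimes_{\min}\mathbf{C}^*[G]$, which intertwines $\theta$ with $\theta\otimes\theta$, using norm-density of algebraic tensors in $\mathcal{H}$). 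The two structural ingredients I would establish first are: $M\subseteq B(l^2(\mathbb{N}))\bar{\otimes}\mathcal{F}$ (all generators lie there), and $e_{nn}\otimes 1=\tfrac12\bigl(I-\widetilde{T}_{\alpha\beta}(1^{(n)})\bigr)\in M$ (with $1^{(n)}$ as in Example~3); from the latter, any $y\in M^\prime$ commutes with the $e_{nn}\otimes 1$ and so is block-diagonal, $y=\sum_n e_{nn}\otimes y_n$, and any such block-diagonal $x\in M$ has $x_n\in\mathcal{F}$.

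For \emph{factoriality} I would take $x\in Z(M)=M\cap M^\prime$, write $x=\sum_n e_{nn}\otimes x_n$ with $x_n\in\mathcal{F}$, and read off from $[x,\widetilde{T}_{\alpha\beta}(s)]=0$ the relations $x_j=\pi_{\alpha\beta}(s)\,x_{s^{-1}(j)}\,\pi_{\alpha\beta}(s)^{-1}$ for $s\in\mathfrak{S}_\infty$; restricting to $s$ fixing the point $j$ puts $x_j$ into the relative commutant of the corresponding point-stabilizer subalgebra of $\mathcal{F}$. At this point everything hinges on \textbf{Lemma~A}: for every Thoma pair $(\alpha,\beta)$, $\pi_{\alpha\beta}(\mathfrak{S}_{1\infty})^\prime\cap\mathcal{F}=\mathbb{C}I$. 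Granting it (and its conjugates) each $x_j$ is a scalar, and $x_j=\pi_{\alpha\beta}((j\;k))\,x_k\,\pi_{\alpha\beta}((j\;k))^{-1}$ forces all these scalars to coincide, so $x\in\mathbb{C}I$.

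For the \emph{type}, set $p=e_{11}\otimes 1\in M$. Since $M$ is the weak closure of the span of $\{\widetilde{T}_{\alpha\beta}(g)\}$ and $p\,\widetilde{T}_{\alpha\beta}(sz)\,p=(-1)^{z_1}\delta_{s(1),1}\,e_{11}\otimes\pi_{\alpha\beta}(s)$, one gets $pMp=e_{11}\otimes\mathcal{N}$ with $\mathcal{N}:=\pi_{\alpha\beta}(\mathfrak{S}_{1\infty})^{\prime\prime}$. By Lemma~A, $\mathcal{N}$ has trivial centre; it carries the finite faithful normal trace ${\rm tr}|_{\mathcal{N}}$, and it is infinite-dimensional because $\chi_{\alpha\beta}|_{\mathfrak{S}_{1\infty}}=\chi_{\alpha\beta}$ is neither the trivial nor the sign character (excluded by $\alpha_i<1$, $\beta_i<1$), hence not the character of a finite-dimensional representation. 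So $pMp\cong\mathcal{N}$ is a ${\rm II}_1$ factor. On the other hand $I=\sum_n e_{nn}\otimes 1$ and $e_{nn}\otimes 1=\widetilde{T}_{\alpha\beta}\bigl((1\;n)\bigr)\,p\,\widetilde{T}_{\alpha\beta}\bigl((1\;n)\bigr)^{-1}\sim p$ in $M$; thus $I$ is an infinite orthogonal sum of nonzero projections each equivalent to the finite projection $p$, so $M$ is properly infinite and (having a finite projection of central support $I$) semifinite. A properly infinite semifinite factor whose corner $pMp$ is of type ${\rm II}$ is a factor of type ${\rm II}_\infty$, which finishes the proof.

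I expect \textbf{Lemma~A} to be the only real obstacle; the rest is bookkeeping with standard von Neumann algebra facts. To prove it I would use that $\chi_{\alpha\beta}$ depends only on the cycle type, so $\chi_{\alpha\beta}|_{\mathfrak{S}_{1\infty}}=\chi_{\alpha\beta}$; an element $a\in\mathcal{F}$ commuting with $\pi_{\alpha\beta}(\mathfrak{S}_{1\infty})$ is then invariant under conjugation by all $\pi_{\alpha\beta}(h)$, $h\in\mathfrak{S}_{1\infty}$, and since every $\mathfrak{S}_{1\infty}$-conjugacy class in $\mathfrak{S}_\infty\setminus\{e\}$ is infinite, a $\|\cdot\|_2$-approximation / ergodicity argument in the spirit of \cite{Ver-Nes} should force $a\in\mathbb{C}I$. (In the degenerate case $\alpha=\beta=\varnothing$, where $\mathcal{F}=L(\mathfrak{S}_\infty)$, Lemma~A reduces to the elementary fact that the only $\mathfrak{S}_{1\infty}$-central vectors in $l^2(\mathfrak{S}_\infty)$ are the multiples of $\delta_e$.)
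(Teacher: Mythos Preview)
Your overall architecture is the same as the paper's: the projections $e_{nn}\otimes 1=\tfrac12(I-\widetilde{T}_{\alpha\beta}(1^{(n)}))$ lie in $M$, the corner $(e_{11}\otimes 1)M(e_{11}\otimes 1)$ is identified with $\mathcal{N}:=\pi_{\alpha\beta}(\mathfrak{S}_{\mathbb{N}\setminus 1})''$, and the mutually equivalent orthogonal $e_{nn}\otimes 1$ summing to $I$ force type ${\rm II}_\infty$ once the corner is a ${\rm II}_1$ factor. The paper does exactly this, only more tersely.

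The genuine gap is \textbf{Lemma~A}: as stated it is \emph{false}. Okounkov's asymptotic transposition
\[
\mathcal{O}_1=w\text{-}\lim_{k\to\infty}\pi_{\alpha\beta}\bigl((1\;k)\bigr)\in\mathcal{F}
\]
commutes with every $\pi_{\alpha\beta}(h)$, $h\in\mathfrak{S}_{1\infty}$, because $\pi_{\alpha\beta}(h)\mathcal{O}_1\pi_{\alpha\beta}(h)^{-1}=\mathcal{O}_{h(1)}=\mathcal{O}_1$; yet its spectrum is $\{\alpha_i\}\cup\{-\beta_i\}\cup\{0\}$ (Lemma~\ref{spectral_decomposition}), hence $\mathcal{O}_1$ is not a scalar under the standing hypothesis $\alpha_i,\beta_i<1$. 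Your proposed proof of Lemma~A implicitly treats $\mathcal{F}$ as a group von~Neumann algebra with Fourier coefficients over $\mathfrak{S}_\infty$; this is only valid when $\alpha=\beta=\varnothing$, which is precisely the case you single out as ``degenerate''.

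The fix is already in your own hands. From your corner computation $pMp=e_{11}\otimes\mathcal{N}$ (and its conjugates) you get, for a central $x=\sum_n e_{nn}\otimes x_n\in Z(M)$, not merely $x_j\in\mathcal{F}$ but $x_j\in\mathcal{N}_j:=\pi_{\alpha\beta}(\mathfrak{S}_{\mathbb{N}\setminus j})''$. Combined with $[x_j,\pi_{\alpha\beta}(\mathfrak{S}_{\mathbb{N}\setminus j})]=0$ this gives $x_j\in Z(\mathcal{N}_j)$, so you only need that $\mathcal{N}_j$ is a factor. That follows because the restriction of ${\rm tr}$ to $\mathcal{N}_j$ corresponds to the character $\chi_{\alpha\beta}|_{\mathfrak{S}_{\mathbb{N}\setminus j}}=\chi_{\alpha\beta}$, which is extremal; a nontrivial central projection in $\mathcal{N}_j$ would split this trace nontrivially. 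This is exactly how the paper proceeds: it asserts that the corner $\widetilde{p}_n M\widetilde{p}_n=\widetilde{p}_n\widetilde{T}_{\alpha\beta}(\mathfrak{S}_{\mathbb{N}\setminus n})''\widetilde{p}_n$ is a ${\rm II}_1$ factor and concludes $\mathfrak{C}_n=c_n\widetilde{p}_n$ immediately. Replace Lemma~A by ``$\mathcal{N}_j$ is a factor'' and your argument goes through and coincides with the paper's.
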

\begin{proof}
Let one dimensional projection $p_n$ acts on  $f\in l^2(\mathbb{N})$ by \newline
$
\left( p_nf\right)(x)=\left\{\begin{array}{rl}
 f(n),&\text{ if } x=n\\
0,&\text{ if } x\neq n.
 \end{array}\right.
$
Clearly, that  $\widetilde{p}_n=p_n\otimes I=\frac{I-\widetilde{T}_{\alpha\beta}\left( 1^{(n)} \right)}{2}$.

Let $\mathfrak{C}$ be an element from the center of $\widetilde{T}_{\alpha\beta}\left( \mathbb{Z}_2\wr\mathfrak{S}_\infty\right)^{\prime\prime}$. Then $\mathfrak{C}_n=\widetilde{p}_n\mathfrak{C}  \widetilde{p}_n$ lies in the center of $w^*$-algebra $\widetilde{p}_n\cdot\widetilde{T}_{\alpha\beta}\left( \mathbb{Z}_2\wr\mathfrak{S}_\infty\right)^{\prime\prime}\cdot \widetilde{p}_n=\widetilde{p}_n\cdot\widetilde{T}_{\alpha\beta}\left( \mathfrak{S}_{\mathbb{N}\setminus n}\right)^{\prime\prime}\cdot\widetilde{p}_n$, where $\mathfrak{S}_{\mathbb{N}\setminus n}=\left\{ s\in\mathfrak{S}_\infty:s(n)=n \right\}$. Since $w^*$-algebra $\widetilde{p}_n\cdot\widetilde{T}_{\alpha\beta}\left( \mathfrak{S}_{\mathbb{N}\setminus n}\right)^{\prime\prime}\cdot\widetilde{p}_n$ is ${\rm II}_1$-factor, we obtain that $\mathfrak{C}_n=c_n\cdot\widetilde{p}_n$, where $c_n\in\mathbb{C}$. Hence, using the relations
$\mathfrak{C}=\sum\limits_{n=1}^\infty\mathfrak{C}_n$, $\widetilde{T}_{\alpha\beta}(s)\cdot\widetilde{p}_n\cdot\widetilde{T}_{\alpha\beta}(s^{-1})=\widetilde{p}_{s(n)}$, we have
$c_j=c_k=c$ for all $k,j$. Therefore, $\mathfrak{C}=cI$. Since the projections $\left\{\widetilde{p}_n \right\}_{n=1}^\infty$ are mutually orthogonal and equivalent,  then $w^*$-algebra $\widetilde{T}_{\alpha\beta}\left( \mathbb{Z}_2\wr\mathfrak{S}_\infty\right)^{\prime\prime}$ is ${\rm II}_\infty$-factor.
\end{proof}
\noindent{\bf Example 5.}\label{example_5} Let $(X,\mu)$ be the same as in {\bf Example 3}.
Define the representation $\widetilde{\Pi}$ of $\mathbb{Z}_2\wr\mathfrak{S}_\infty$ in the space $L^2(X,\mu)\otimes l^2(\mathfrak{S}_\infty)$ by
\begin{eqnarray*}
\left(\widetilde{\Pi}(s)\eta\right)(x,g)=\eta(xs,gs),\;\;\widetilde{\Pi}(z)=\Pi(z)\otimes I, \text{ where } s\in\mathfrak{S}_\infty, z \in \;_0\mathbb{Z}_2^\infty \text{ and } \Pi
\end{eqnarray*}
is defined  in {\bf Example 3}. Let $e$ be identity  of $\mathfrak{S}_\infty$. Set $\xi_e(x,g)=\left\{\begin{array}{rl}
 1,&\text{ if } g=e\\
0,&\text{ if } g\neq e.
 \end{array}\right.$
 If  $p,q>0$ then it is easy to show that $w^*$-algebra $\widetilde{\Pi}\left(\mathbb{Z}_2\wr\mathfrak{S}_\infty\right)^{\prime\prime}$ is ${\rm II}_1$-factor. In particular, the state ${\rm tr}$, defined by  ${\rm tr}(A)=(A\xi_e,\xi_e)$, is unique normal trace on $\widetilde{\Pi}\left(\mathbb{Z}_2\wr\mathfrak{S}_\infty\right)^{\prime\prime}$. The commutant $\widetilde{\Pi}\left(\mathbb{Z}_2\wr\mathfrak{S}_\infty\right)^{\prime}$ of $\widetilde{\Pi}\left(\mathbb{Z}_2\wr\mathfrak{S}_\infty\right)^{\prime\prime}$ is generated by the operators $\widetilde{\Pi}^{\prime}\left( s \right)$ $(s\in\mathfrak{S}_\infty)$ and $\widetilde{\Pi}^{\prime}\left( f\right)$ $(f\in L^\infty(X,\mu))$, which act on  $\eta\in L^2(X,\mu)\otimes l^2(\mathfrak{S}_\infty)$ by
 \begin{eqnarray}\label{Commutant_operators}
 \left( \widetilde{\Pi}^{\prime}_f\eta\right)(x,g)=f\left(xg^{-1}\right)\cdot\eta(x,g),\;\;\;
 \left( \widetilde{\Pi}^{\prime}(s)\eta\right)(x,g)=\eta\left(x,s^{-1}g\right).
 \end{eqnarray}
\medskip

\noindent{\bf Example 6.} Let $\widetilde{R}=T\otimes\widetilde{\Pi}$, where $T$ and $\widetilde{\Pi}$ are defined in {\bf Examples} {\bf1} and {\bf 5} respectively. The next statement is true.
\begin{Prop}\label{Prop_example_6}
  The representations $\widetilde{R}$ and $\widetilde{\Pi}$ are quasi-equivalent.
\end{Prop}
\begin{proof}[Outline of the proof of Proposition \ref{Prop_example_6}] Let $\,^j\!\xi_t\in l^2(\mathbb{N})\otimes L^2(X,\mu)\otimes l^2\left(\mathfrak{S}_\infty\right)$ is defined as follows
\begin{eqnarray}
\,^j\!\xi_t(n,x,g)=\left\{\begin{array}{rl}
 1,&\text{ if }\{n=j\}\&\{ g=t\}\\
0,&\text{ if } \{n\neq j\}\vee\{ g\neq t\}.
 \end{array}\right.
\end{eqnarray}
  Denote by $H_k$ the subspace $\left[\widetilde{R}\left( \mathbb{Z}_2\wr\mathfrak{S}_\infty\right)\,^1\!\xi_{_{(1\;k)}}\right]$, where transposition $(1\;k)$ swaps only two elements $1$, $k$ and leaves everything else fixed.   It is easy to check that the subspaces $H_k$ are pairwise orthogonal for different $k$ and
  \begin{eqnarray}
  \bigoplus\limits_{k=1}^\infty H_k=l^2(\mathbb{N})\otimes L^2(X,\mu)\otimes l^2\left(\mathfrak{S}_\infty\right).
  \end{eqnarray}
  Let $\widetilde{R}_k$ be the restriction of $\widetilde{R}$ to $H_k$. Since the operators $I\otimes\widetilde{\Pi}^{\prime}(\mathfrak{S}_\infty)$ (see (\ref{Commutant_operators})) lie in $\widetilde{R}(\mathbb{Z}_2\wr\mathfrak{S}_\infty)^\prime$ and $\widetilde{\Pi}^{\prime}\left( (1\;k) \right)H_1=H_k$, the representations $\widetilde{R}_k$ are pairwise unitary equivalent.

  Let us prove that $\widetilde{R}_1$ and $\widetilde{\Pi}$ are unitary equivalent. Indeed, an easy computation shows that with $A=\displaystyle{\frac{q}{2p}}\left(I+\widetilde{\Pi}((1,0,0,\ldots))\right)+\frac{p}{2q}\left(I-\widetilde{\Pi}((1,0,0,\ldots))\right)$ we have
  \begin{eqnarray*}
  \left(\widetilde{\Pi}(g)A\xi_e,\xi_e  \right)=\left(\widetilde{R}\left( g \right)\,^1\!\xi_e, \,^1\!\xi_e \right)\text{ for all } g\in \mathbb{Z}_2\wr\mathfrak{S}_\infty.
   \end{eqnarray*} Therefore, the map $L^2(X,\mu)\otimes l^2(\mathfrak{S}_\infty)\ni \widetilde{\Pi}(g)\sqrt{A}\xi_e\stackrel{U}{\mapsto}\widetilde{R}\left( g \right)\,^1\!\xi_e\in H_1$ is extended to an isometry of the space $L^2(X,\mu)\otimes l^2(\mathfrak{S}_\infty)$ onto $H_1$ and $U\cdot \widetilde{\Pi}(g)$ $=\widetilde{R}_1\left( g \right)\cdot U$ for all $g\in \mathbb{Z}_2\wr\mathfrak{S}_\infty$.
\end{proof}

\section{ Stable representations induced from the representations of the finite type.}\label{inducing_stable_representation}

 Let $\mathbb{Z}_2^{n\infty}$ $\left(  \mathbb{Z}_2^{n} \right)$ be the subgroup of $\;_0\mathbb{Z}_2^\infty$, consisting of all elements $z$ of the view  $\left( 0,\ldots,0,z_{n+1},z_{n+2},\ldots \right)$ $\left( \left( z_1,\ldots,z_{n},0,0,\ldots \right) \right)$. Denote by $B_{n\infty}$ $\left( B_{n} \right)$ the subgroup $\mathbb{Z}_2^{n\infty}\cdot\mathfrak{S}_{n\infty}$ $\left( \mathbb{Z}_2^{n}\cdot\mathfrak{S}_{n} \right)$(see page \pageref{Subgroup_n_infty}). It is clear that $B_n=\left\{ b\in B: {\rm supp}\,b\subset \overline{1,n}\right\}$ and $B_{n\infty}=\left\{ b\in B: {\rm supp}\,b\subset \overline{n+1,\infty}\right\}$, where $\overline{1,n}=\{1,2,\ldots,n\}$ and  $\overline{n+1,\infty}$ $=\{n+1.n+2,\ldots\}$. Set $B_\infty^{(n)}=B_n\cdot B_{n\infty}$. Normal subgroup $\;_0\mathbb{Z}_2^\infty$ lies in $B_\infty^{(n)}$ for all $n$. With this notation, we have $B_{0\infty}=\mathbb{Z}_2\wr\mathfrak{S}_\infty$.  For convenience, we denote  group $B_{0\infty}$ by $B$. Let us introduce the notation $\mathfrak{S}_{kn}$ $\left( \mathbb{Z}_2^{\widetilde{kn}}\right)$, where $k<n$, for subgroup $\mathfrak{S}_n\cap\mathfrak{S}_{k\infty}$ $\left(\mathbb{Z}_2^n\cap \mathbb{Z}_2^{k\infty} \right)$. It is clear that $\mathbb{Z}_2^{\widetilde{kn}}$ is natural isomorphic to $\mathbb{Z}_2^{n-k}$. We denote by $B_{kn}$ subgroup $\mathfrak{S}_{kn}\cdot \mathbb{Z}_2^{\widetilde{kn}}$. It is clear that  $\mathbb{Z}_2^k=\mathbb{Z}_2^{\widetilde{0k}}$ and $B_n=B_{0n}$. Now we consider subset $\mathbb{A}\subset\mathbb{N}$ and introduce subgroups $B_\mathbb{A}=\left\{b\in B: {\rm supp}\,b\subset \mathbb{A} \right\}$, $\mathfrak{S}_\mathbb{A}=\left\{s\in \mathfrak{S}_\infty: {\rm supp}\,s\subset \mathbb{A} \right\}$ and $B_\infty^{\mathbb{A}}=B_\mathbb{A}\cdot B_{\mathbb{N}\setminus \mathbb{A}}$.  In particular,  $B_{_{\overline{n+1,\infty}}}=B_{n\infty}$, $B_{_{\overline{1,n}}}=B_n$,   $B_\infty^{^{\overline{1,n}}}=B_\infty^{(n)}$.

 The purpose of this section is to formulate  some properties of the representations of $B$, which are induced
 from the finite factor-representations of group  $B_\infty^{(n)}$.
\subsection{Irreducible representations of $B_n$.}\label{irr_B_n}\label{irreducible_repr_of_B_n}
Let  $1^{(j)}=(\underbrace{0,0,\ldots,0}_{j-1},1,0,\ldots)$ be an element of $\mathbb{Z}_2^n\subset B_n$, where integer $j>0$. Take $\hat{z}=\left(\hat{z}_1, \hat{z}_2,\ldots,\hat{z}_n  \right)\in\mathbb{Z}_2^n$ and define the character $\,^{\Omega{z}}\!\chi$ of $\mathbb{Z}_2^n$ at follows
\begin{eqnarray}\label{mult_character}
\,^{\hat{z}}\!\Omega((z_1,z_2,\ldots,z_n))=(-1)^{\sum z_j\hat{z}_j}.
\end{eqnarray}
For each integer $k$ with $0\leq k\leq n$ define a multiplicative character $\chi_{_{kn}}$ of $\mathbb{Z}_2^n$ by $$\Omega_{_{kn}}\left(1^{(j)}\right)=\left\{\begin{array}{rl}
 1,&\text{ if }j\leq k;\\
-1,&\text{ if } j> k.
 \end{array}\right.$$
 Clearly, $\Omega_{_{kn}}=\,^{\hat{z}}\!\Omega$, where $\hat{z}=(\underbrace{0,0,\ldots,0}_{k},1\ldots,1)$, $\Omega_{_{nn}}(z)=1$ for all $z\in\mathbb{Z}_2^n$ and $\Omega_{_{0n}}(z)$ $=(-1)^{\sum_{j=1}^n z_j}$ for all $z=(z_1,z_2,\ldots,z_n)\in\mathbb{Z}_2^n$.
 It easy to show that any multiplicative  character $\Omega$ of $\mathbb{Z}_2^n$ is conjugate by an element of $\mathfrak{S}_n$ to precisely one $\Omega_{_{kn}}$; i. e. there exists $s\in \mathfrak{S}_n$ such that  $\Omega_{_{kn}}(z)=\Omega_{_{kn}}(szs^{-1})$ for all $z\in \mathbb{Z}_2^n$. Let
 \begin{eqnarray}\label{subgroup_G_chi}
 G_{\Omega_{_{kn}}}=\left\{g\in \mathbb{Z}_2\wr\mathfrak{S}_n: \Omega_{_{kn}}\left( gzg^{-1} \right)=\Omega_{_{kn}}(z) \text{ for all } z\right\}=B_k\cdot B_{kn}
 \end{eqnarray}
 be the stabilizer of $\Omega_{_{kn}}$, and let $\rho$ be an unitary irreducible representations of the Young subgroup $G_{\Omega_{_{kn}}}\cap\mathfrak{S}_n$. Clearly,  $G_{\Omega_{_{kn}}}=\left(G_{\Omega_{_{kn}}}\cap\mathfrak{S}_n\right)\mathbb{Z}_2^n$.   Define the irreducible representation $\,^{^{\Omega_{_{kn}}}}\!\!\!\rho$ of $G_{\Omega_{_{kn}}}$ by
 \begin{eqnarray}\label{repr_rho}
 \,^{^{\Omega_{_{kn}}}}\!\!\!\rho(zs)=\Omega_{_{kn}}(z)\rho(s),\;\text{ where }\; z\in\mathbb{Z}_2^n, s\in G_{\Omega_{_{kn}}}\cap\mathfrak{S}_n.
 \end{eqnarray}
  The irreducible representations of  the group $\mathbb{Z}_2\wr\mathfrak{S}_n$, which is the semidirect product of the subgroup $\mathfrak{S}_n$ by the abelian normal subgroup $\mathbb{Z}_2^n$,   are all uniquely ${\rm Ind}^{B_n}_{G_{\Omega_{_{kn}}}}\left(\,^{^{\Omega_{_{kn}}}}\!\!\!\rho\right)$. Since group $G_{\Omega_{_{kn}}}\cap\mathfrak{S}_n$ is naturally isomorphic to Young subgroup $\mathfrak{S}_k\times\mathfrak{S}_{n-k}$, representation $\rho$ has the form $\rho=\left( {\rm Irr}_{\,^0\!\lambda}\otimes {\rm Irr}_{\,^1\!\!\lambda} \right)$, where ${\rm Irr}_{\,^0\!\lambda}$ and ${\rm Irr}_{\,^1\!\!\lambda}$ are irreducible representations of $\mathfrak{S}_k$ and $\mathfrak{S}_{kn11}$ in the spaces $\mathcal{H}_{\,^0\!\lambda}$ and $\mathcal{H}_{\,^1\!\!\lambda}$, respectively; ${\;^0\!\lambda}\vdash k$, ${\;^1\!\!\lambda}\vdash (n-k)$ are the corresponding partitions.

\subsection{The representations of $B_\infty^{(n)}$ of the finite type.}\label{repr_subgroup}
Let $\,^n\!\pi$ be the factor-representation of   $B_\infty^{(n)}$ of the finite type in the Hilbert space $\,^n\!\mathcal{H}$ with the cyclic and separating vector $\,^n\!\xi$. Set $\,^n\!M=\,^n\!\pi\left( B_\infty^{(n)}\right)^{\prime\prime}$.  Without loss of generality, we assume that $\left( m\,^n\!\xi,\,^n\!\xi\right)=\,^n\!{\rm tr}(m)$ for all $m\in\,^n\!M$, where $\,^n\!{\rm tr}$ is a normal, faithful trace on $\,^n\!M$ such that  $\,^n\!{\rm tr}(I)=1$. Let $\mathcal{S}$ be subset of the elements from $\,^n\!\mathcal{H}$. Denote by $\left[\mathcal{S} \right]$ the closure of the linear span of $\mathcal{S}$. Then $\left[ \,^n\!M\,^n\!\xi\right]=\left[\left( \,^n\!M\right)^\prime\,^n\!\xi\right]=\,^n\!\mathcal{H}$, where $\left( \,^n\!M\right)^\prime$ is commutant of $\,^n\!M$.
\begin{Prop}
For all $m_1\in\,^n\!\pi\left( B_n \right)^{\prime\prime}$ and $m_2\in\,^n\!\pi\left(B_{n\infty}\right)$ the equality $\,^n\!{\rm tr}(m_1\cdot m_2)=\,^n\!{\rm tr}(m_1)\cdot\,^n\!{\rm tr}(m_2)$ holds.
\end{Prop}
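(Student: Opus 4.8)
The plan is to reduce the asserted factorization to the standard conditional‑expectation calculus inside the finite factor $\,^n\!M$, the only substantial point being that $\,^n\!\pi\!\left(B_n\right)^{\prime\prime}$ is itself a factor.

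First I would record the relevant group structure. We have $B_\infty^{(n)}=B_n\cdot B_{n\infty}$, and the subgroups $B_n$ and $B_{n\infty}$ commute elementwise because their supports lie in the disjoint sets $\overline{1,n}$ and $\overline{n+1,\infty}$. Put $M_1=\,^n\!\pi\!\left(B_n\right)^{\prime\prime}$ and $M_2=\,^n\!\pi\!\left(B_{n\infty}\right)^{\prime\prime}$. Then $[M_1,M_2]=0$, and since every $g\in B_\infty^{(n)}$ factors as $g=b_1b_2$ with $b_1\in B_n$, $b_2\in B_{n\infty}$, so that $\,^n\!\pi(g)=\,^n\!\pi(b_1)\,^n\!\pi(b_2)\in M_1\vee M_2$, we get $M_1\vee M_2=\,^n\!M$.

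Next comes the key observation: $M_1$ is a factor. Indeed $Z(M_1)=M_1\cap M_1'$, and since $Z(M_1)\subseteq M_1$ commutes with $M_2$, also $Z(M_1)\subseteq M_2'$; hence $Z(M_1)\subseteq M_1'\cap M_2'=\left(M_1\vee M_2\right)'=\left(\,^n\!M\right)'$. On the other hand $Z(M_1)\subseteq M_1\subseteq\,^n\!M$, so $Z(M_1)\subseteq\,^n\!M\cap\left(\,^n\!M\right)'=Z\!\left(\,^n\!M\right)=\mathbb{C}I$ because $\,^n\!\pi$ is a factor‑representation. (As $B_n$ is finite, $M_1$ is in fact a full matrix algebra.) Now, since $\,^n\!M$ is a finite von Neumann algebra carrying the faithful normal trace $\,^n\!{\rm tr}$, there is a unique normal $\,^n\!{\rm tr}$‑preserving conditional expectation $E_1\colon\,^n\!M\to M_1$ — concretely implemented by the orthogonal projection of $\,^n\!\mathcal{H}$ onto $[\,^n\!M_1\,^n\!\xi]$, using that the vector state $m\mapsto\left(m\,^n\!\xi,\,^n\!\xi\right)=\,^n\!{\rm tr}(m)$ is tracial. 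Any $m_2\in M_2$ — in particular $m_2=\,^n\!\pi(b)$ with $b\in B_{n\infty}$ — commutes with $M_1$; applying $E_1$ to the identities $xm_2=m_2x$ $(x\in M_1)$ and using that $E_1$ is an $M_1$‑bimodule map gives $xE_1(m_2)=E_1(m_2)x$, so $E_1(m_2)\in M_1\cap M_1'=Z(M_1)=\mathbb{C}I$, whence $E_1(m_2)=\,^n\!{\rm tr}(m_2)\,I$. Therefore, for every $m_1\in M_1$,
\[
\,^n\!{\rm tr}(m_1m_2)=\,^n\!{\rm tr}\!\left(E_1(m_1m_2)\right)=\,^n\!{\rm tr}\!\left(m_1E_1(m_2)\right)=\,^n\!{\rm tr}(m_1)\cdot\,^n\!{\rm tr}(m_2).
\]

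The only genuinely non‑formal ingredient is the factoriality of $M_1$; once that is in hand everything is routine. I should note an alternative route that sidesteps conditional expectations: the maps $m_1\mapsto\,^n\!{\rm tr}(m_1m_2)$ and $m_1\mapsto\,^n\!{\rm tr}(m_1)\,^n\!{\rm tr}(m_2)$ are $\sigma$‑weakly continuous linear functionals on $M_1$, and ${\rm span}\,\,^n\!\pi\!\left(B_n\right)$ spans $M_1$, so it suffices to check the identity for $m_1=\,^n\!\pi(g)$ with $g\in B_n$ — that is, $\,^n\!{\rm tr}\!\left(\,^n\!\pi(gb)\right)=\,^n\!{\rm tr}\!\left(\,^n\!\pi(g)\right)\,^n\!{\rm tr}\!\left(\,^n\!\pi(b)\right)$ for $g\in B_n$, $b\in B_{n\infty}$ — which is the familiar multiplicativity of the character of a finite factor‑representation on commuting elements with disjoint supports, provable by the same reasoning as above.
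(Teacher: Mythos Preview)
Your proof is correct. The paper states this proposition without proof, evidently regarding it as a standard fact about finite factor-representations of groups that decompose as a product of commuting subgroups; your argument via the center computation $Z(M_1)\subseteq Z(\,^n\!M)=\mathbb{C}I$ followed by the trace-preserving conditional expectation is exactly the clean way to justify it, and the alternative density argument you sketch at the end is equally valid.
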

Let $t\in B_n$, $s\in B_{n\infty}$.
Set $\chi_n(t)=\,^n\!{\rm tr}\left(\,^n\!\pi(t)\right)$, $\chi_{n\infty}(s)=\,^n\!{\rm tr}\left(\,^n\!\pi(s)\right)$.
Since $\,^n\!\pi$ is a factor-representation, there exists the parameters $\alpha$, $\beta$, $\gamma=$$(\gamma_0$, $\gamma_1)$ such that $\chi_{n\infty}=\chi_{\alpha\beta\gamma}^\sigma$, where the last is defined by (\ref{II_1-mult_general}) and (\ref{character_formula}).
 For the same reason, there exists the irreducible representation $\mathfrak{Ir}_{_{\,^0\!\lambda\,^1\!\!\lambda}}$ ($\,^0\!\lambda\vdash k$, $\,^1\!\!\lambda\vdash(n-k)$) of $B_n$ such that $\left({\rm dim }\,\mathfrak{Ir}_{_{\,^0\!\lambda\,^1\!\!\lambda}}\right)\chi_n$ is the ordinary trace of $\mathfrak{Ir}_{_{\,^0\!\lambda\,^1\!\!\lambda}}$. We denote by $\pi_{\alpha\beta\gamma}^\sigma$ GNS-representation of $B_{n\infty}$ in the space $H_{\alpha\beta\gamma}^\sigma$ formed from character $\chi_{\alpha\beta\gamma}^\sigma$. Since, by the facts  from section \ref{irr_B_n}, the representation $\mathfrak{Ir}_{\,^0\!\lambda\,^1\!\!\lambda}$ is unitary equivalent to
${\rm Ind}^{B_n}_{G_{_{\Omega_{_{kn}}}}}\left(\,^{^{\Omega_{_{kn}}}}\!\!\rho\right)$ (see \ref{repr_rho}), we obtain the next statement.
\begin{Prop}\label{Prop_Ind}
Let $z\in \mathbb{Z}_2^n$, $s\in G_{\Omega_{_{kn}}}\cap\mathfrak{S}_n=\mathfrak{S}_k\cdot\mathfrak{S}_{kn}$ and $t\in B_{n\infty}$.
Define the representation of the group $H_{\infty\Omega_{_{kn}}}=G_{\chi_{_{kn}}}\cdot B_{n\infty}=B_k\cdot B_{kn}\cdot B_{n\infty}$ in the space $\mathcal{H}_{\,^0\!\lambda}\otimes \mathcal{H}_{\,^1\!\!\lambda}\otimes \mathcal{H}_{\alpha\beta\gamma}^\sigma$   as follows
\begin{eqnarray}\label{tilde_rho}
\Xi(zst)=\Omega_{_{kn}}(z)\left( {\rm Irr}_{\,^0\!\lambda}\otimes {\rm Irr}_{\,^1\!\!\lambda}  \right)(s)\otimes\pi_{\alpha\beta\gamma}^\sigma(t).
\end{eqnarray}
Then the representation $\,^n\!\pi$ is quasi-equivalent to ${\rm Ind}_{H_{\infty\Omega_{_{kn}}}}^{B_\infty^{(n)}}\,\Xi$.
\end{Prop}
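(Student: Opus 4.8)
The plan is to build an explicit unitary intertwiner between the GNS space of $\,^n\!\pi$ and the space of the induced representation ${\rm Ind}_{H_{\infty\Omega_{_{kn}}}}^{B_\infty^{(n)}}\,\Xi$, exploiting the tensor‐product structure $B_\infty^{(n)}=B_n\cdot B_{n\infty}$ together with the multiplicativity of the trace $\,^n\!{\rm tr}$ across this product (the Proposition preceding Proposition~\ref{Prop_Ind}). First I would record that, since $\,^n\!\pi$ is a finite factor‐representation and $B_\infty^{(n)}\cong B_n\times B_{n\infty}$ as an internal direct product modulo the shared normal subgroup $\;_0\mathbb{Z}_2^\infty$, the factor $\,^n\!M$ splits as a tensor product $\,^n\!M_{B_n}\otimes\,^n\!M_{B_{n\infty}}$ of a finite‐dimensional factor on the $B_n$ side and a ${\rm II}_1$-factor on the $B_{n\infty}$ side, with $\,^n\!{\rm tr}$ the product trace. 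This reduces the character of $\,^n\!\pi$ to the product of $\chi_n$ on $B_n$ and $\chi_{\alpha\beta\gamma}^\sigma$ on $B_{n\infty}$, exactly as stated. Consequently $\,^n\!\pi$ is quasi-equivalent to the tensor product $\mathfrak{Ir}_{_{\,^0\!\lambda\,^1\!\!\lambda}}\otimes\pi_{\alpha\beta\gamma}^\sigma$ of the irreducible representation of $B_n$ (determined by the finite‐dimensional factor part) with the GNS representation $\pi_{\alpha\beta\gamma}^\sigma$ of $B_{n\infty}$.

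Next I would invoke the structure theory from Section~\ref{irr_B_n}: the irreducible $\mathfrak{Ir}_{_{\,^0\!\lambda\,^1\!\!\lambda}}$ of $B_n$ is, by the Mackey machine for the semidirect product $\mathbb{Z}_2^n\rtimes\mathfrak{S}_n$, unitarily equivalent to ${\rm Ind}^{B_n}_{G_{\Omega_{_{kn}}}}\big(\,^{^{\Omega_{_{kn}}}}\!\!\rho\big)$ with $\rho={\rm Irr}_{\,^0\!\lambda}\otimes{\rm Irr}_{\,^1\!\!\lambda}$. The key algebraic point is then the induction‐in‐stages identity: since $B_{n\infty}$ commutes with $B_n$ and normalizes $\;_0\mathbb{Z}_2^\infty$, one has
\begin{eqnarray*}
{\rm Ind}^{B_n}_{G_{\Omega_{_{kn}}}}\big(\,^{^{\Omega_{_{kn}}}}\!\!\rho\big)\otimes\pi_{\alpha\beta\gamma}^\sigma
\;\cong\;{\rm Ind}_{G_{\Omega_{_{kn}}}\cdot B_{n\infty}}^{B_n\cdot B_{n\infty}}\Big(\,^{^{\Omega_{_{kn}}}}\!\!\rho\otimes\pi_{\alpha\beta\gamma}^\sigma\Big)
\;=\;{\rm Ind}_{H_{\infty\Omega_{_{kn}}}}^{B_\infty^{(n)}}\,\Xi,
\end{eqnarray*}
because induction commutes with tensoring by a representation that is already defined on the whole group ($\pi_{\alpha\beta\gamma}^\sigma$ extended to $B_\infty^{(n)}$ via the projection onto $B_{n\infty}$ — more precisely, one uses the standard fact $({\rm Ind}_H^G\sigma)\otimes\tau\cong{\rm Ind}_H^G(\sigma\otimes\tau|_H)$), and the external representation $\,^{^{\Omega_{_{kn}}}}\!\!\rho\otimes\pi_{\alpha\beta\gamma}^\sigma$ restricted to the subgroup $H_{\infty\Omega_{_{kn}}}=B_k\cdot B_{kn}\cdot B_{n\infty}$ is precisely $\Xi$ as written in~(\ref{tilde_rho}). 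I would verify on generators $zst$ that the formula $\Xi(zst)=\Omega_{_{kn}}(z)\big({\rm Irr}_{\,^0\!\lambda}\otimes{\rm Irr}_{\,^1\!\!\lambda}\big)(s)\otimes\pi_{\alpha\beta\gamma}^\sigma(t)$ matches the restricted tensor product, which is a short check using~(\ref{repr_rho}).

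The main obstacle, and the step deserving the most care, is justifying that the tensor‐product decomposition of $\,^n\!\pi$ holds up to \emph{quasi-equivalence} and not merely the weaker statement that the characters factor — i.e.\ that the finite factor $\,^n\!M$ really is the von Neumann tensor product of the pieces generated by $\,^n\!\pi(B_n)$ and $\,^n\!\pi(B_{n\infty})$. This requires showing these two subalgebras are commuting subfactors whose join is all of $\,^n\!M$ and which are in ``general position'' for the trace; the cleanest route is to note that $\,^n\!\pi(B_n)^{\prime\prime}$ is finite-dimensional (being the image of a finite group algebra) hence a direct sum of matrix factors, so that the commuting‐square/tensor-splitting is automatic once one knows the relative commutant $\,^n\!\pi(B_n)^{\prime\prime}\cap\,^n\!M$ contains $\,^n\!\pi(B_{n\infty})^{\prime\prime}$ — and that containment, together with the product formula for $\,^n\!{\rm tr}$, forces equality $\,^n\!M=\,^n\!\pi(B_n)^{\prime\prime}\,\overline{\otimes}\,\,^n\!\pi(B_{n\infty})^{\prime\prime}$. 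Once this tensor splitting is in hand, the identification of $\,^n\!\pi(B_{n\infty})^{\prime\prime}$ with the ${\rm II}_1$-factor of $\chi_{\alpha\beta\gamma}^\sigma$ follows from the classification recalled in Section~\ref{characters_of_hyperocthedral}, and the rest is the formal induction-in-stages bookkeeping above. I would then conclude by applying Corollary~\ref{stable_quasi} implicitly is not needed here; rather, the chain of quasi-equivalences $\,^n\!\pi\sim\mathfrak{Ir}_{_{\,^0\!\lambda\,^1\!\!\lambda}}\otimes\pi_{\alpha\beta\gamma}^\sigma\sim{\rm Ind}_{H_{\infty\Omega_{_{kn}}}}^{B_\infty^{(n)}}\,\Xi$ gives the claim.
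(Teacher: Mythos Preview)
Your approach is correct and is essentially the same as the paper's, which states the proposition as an immediate consequence of the Mackey description of irreducibles of $B_n$ (section~\ref{irr_B_n}) and the trace multiplicativity across $B_n\cdot B_{n\infty}$ without writing out a proof. Your proposal simply fills in the implicit steps --- the tensor splitting $\,^n\!M\cong\,^n\!\pi(B_n)''\,\overline{\otimes}\,\,^n\!\pi(B_{n\infty})''$ (automatic once $\,^n\!\pi(B_n)''$ is a full matrix algebra) and the standard identity ${\rm Ind}_{H\times K}^{G\times K}(\sigma\otimes\tau)\cong({\rm Ind}_H^G\sigma)\otimes\tau$ for the direct product $B_\infty^{(n)}=B_n\times B_{n\infty}$ --- so there is nothing to add beyond noting that $B_n\cap B_{n\infty}=\{e\}$ makes this a genuine direct product (your ``modulo the shared normal subgroup'' is unnecessary).
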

\subsection{The results}
Let ${\rm Cl}(g)=\left\{ hgh^{-1}  \right\}_{h\in B}$ be the  conjugacy class of an element $g\in B$. We will denote by ${\rm Cl}_{n\infty}(g)$ the set ${\rm Cl}(g)\cap B_{n\infty}$. In Proposition \ref{asymp_char}  we introduced the notation of the asymptotical  character, which is an quasi-equivalence invariant of stable factor-representation.
\begin{Prop}
Take any sequence $g_n\in {\rm Cl}_{n\infty}(g)$.
If $\Pi$ is a stable factor-representation of $B$ then for any $g\in B$ there exists $w-\lim\limits_{n\to\infty}\Pi(g_n)$ in the weak operator topology, which is an scalar operator of the view $\chi_{_\Pi}^{as}(g)I$, where $\chi_{_\Pi}^{as}\in\mathbb{C}$. Besides this,  $\chi_{_\Pi}^{as}$
is an indecomposable character on $B$.
\end{Prop}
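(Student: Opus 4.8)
The plan is to establish the weak-operator convergence of $\Pi(g_n)$ to a scalar operator first, and then to identify the limiting scalar $\chi_\Pi^{as}$ as an indecomposable character. For the convergence, I would fix a unit vector $\eta \in \mathcal H$ (the representation space of $\Pi$) and consider the matrix coefficients $\varphi_n = \omega_\eta(\Pi(g_n)) = (\Pi(g_n)\eta,\eta)$. The key observation is that the elements $g_n, g_{n+1} \in {\rm Cl}_{n\infty}(g)$ lie in the same conjugacy class of $B$, so $g_{n+1} = h_n g_n h_n^{-1}$ for some $h_n \in B$, and moreover one can choose $h_n$ supported on a finite set so that ${\rm Ad}\, h_n$ becomes arbitrarily close to the identity automorphism in the topology (\ref{topology}) as $n \to \infty$ (because the supports of $g_n$ are pushed out to infinity). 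Stability of $\Pi$ — via the functional $\omega_\eta$ and Definition \ref{stable_Def} — then forces $\|\omega_\eta \circ {\rm Ad}\, h_n - \omega_\eta\| \to 0$, which is exactly what is needed to show $(\varphi_n)$ is norm-Cauchy as a sequence of coefficients, i.e.\ that $w\text{-}\lim \Pi(g_n)$ exists. One must check independence of the limit from the choice of the sequence $g_n \in {\rm Cl}_{n\infty}(g)$; this follows from the same stability argument, since any two such sequences are intertwined by inner automorphisms converging to the identity.

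Next I would show that the limit operator $W(g) := w\text{-}\lim \Pi(g_n)$ is central in $M = \Pi(B)''$ and hence, since $\Pi$ is a factor-representation, a scalar $\chi_\Pi^{as}(g) I$. Centrality: for any fixed $h \in B$, the conjugate $\Pi(h) W(g) \Pi(h)^{-1}$ equals $w\text{-}\lim \Pi(h g_n h^{-1})$, and $h g_n h^{-1}$ is again (eventually, once ${\rm supp}\, g_n$ misses ${\rm supp}\, h$) an element of ${\rm Cl}_{n\infty}(g)$ with the same limit; hence $\Pi(h) W(g) \Pi(h)^{-1} = W(g)$ for all $h$, so $W(g) \in \Pi(B)'$. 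But $W(g)$ is also a weak limit of elements of $M$, hence $W(g) \in M$; therefore $W(g) \in M \cap M' = \mathbb C I$. This uses the factor property and the asymptotic-character formalism referenced as Proposition \ref{asymp_char}.

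Finally, to see that $g \mapsto \chi_\Pi^{as}(g)$ is an indecomposable character on $B$: positive-definiteness and normalization $\chi_\Pi^{as}(e) = 1$ are immediate from $\chi_\Pi^{as}(g) I = W(g)$ being a weak limit of unitaries inside the unit ball, together with the standard argument that a weak limit of positive-definite functions coming from a representation is positive-definite. Centrality/conjugation-invariance (class function property) has just been shown. For the multiplicativity that characterizes \emph{indecomposable} characters — i.e.\ $\chi_\Pi^{as}(g_1 g_2) = \chi_\Pi^{as}(g_1)\chi_\Pi^{as}(g_2)$ whenever ${\rm supp}\, g_1 \cap {\rm supp}\, g_2 = \emptyset$ — I would use that for disjointly supported $g_1, g_2$ one can simultaneously push their supports to infinity so that $(g_1)_n (g_2)_n$ represents ${\rm Cl}_{n\infty}(g_1 g_2)$ with $(g_1)_n$ and $(g_2)_n$ still disjointly supported, then exploit that $W(g_1)$ is scalar, so $W(g_1 g_2) = w\text{-}\lim \Pi((g_1)_n)\Pi((g_2)_n) = W(g_1) \cdot w\text{-}\lim \Pi((g_2)_n) = \chi_\Pi^{as}(g_1) W(g_2)$; one needs a short argument (an approximation using that one factor is already essentially scalar on the relevant vectors) to justify passing the limit through the product, and this is where the only real care is needed. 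Indecomposability then follows because a character on $B$ with this disjoint-support multiplicativity cannot be a nontrivial convex combination of normalized characters.

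I expect the main obstacle to be the second paragraph's combination of two facts about $W(g)$: controlling that the weak limit genuinely lands in $M$ (which requires care because weak limits of operators in a von Neumann algebra do stay in the algebra, but one must ensure boundedness and that the net is really taken inside $M$) while simultaneously its being in $M'$; reconciling these cleanly is the technical heart. The stability hypothesis is used precisely to convert ``${\rm Ad}\,h_n \to {\rm id}$'' into norm-convergence of the associated functionals, which is what upgrades mere weak compactness (giving subsequential limits) to genuine convergence of the whole sequence and independence of all choices.
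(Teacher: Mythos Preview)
Your proposal is correct and follows essentially the same line as the paper's proof of Proposition~\ref{asymp_char}: stability gives Cauchy convergence of $(\Pi(g_n)\eta,\eta)$, the weak limit lies in $M\cap M'=\mathbb{C}I$, and multiplicativity is verified by pushing supports apart. One correction to your self-assessment: $W(g)\in M$ is automatic (von Neumann algebras are weakly closed and unitaries are uniformly bounded), so the only genuine care is in the multiplicativity step, which the paper handles by a double limit---with $n$ fixed it further conjugates $h_n$ by elements $\,^n\!s_p\in B_{n\infty}$ commuting with $g_n$, lets $p\to\infty$ first so that one factor becomes the scalar $\chi_\Pi^{as}(h)$ while $\Pi(g_n)$ is held fixed, and only then sends $n\to\infty$.
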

Let $\Pi$ be a stable factor-representation of $B$ in Hilbert space $\mathcal{H}_\Pi$, and let $M$ be $w^*$-algebra, generated by $\Pi(B)$. Denote by $M_*$ predual of $M$. Set $M_*^+=\left\{\varphi\in M_*: \varphi(m)\geq 0\right. $ for all nonnegative $\left. m\in M \right\}$. Take any $\varphi\in M_*^+$. The smallest orthogonal projection $E\in M$ such that $\varphi(Em)=\varphi(m)$ for all $m\in M$ is called the {\it support} of $\varphi$. Denote the support of $\varphi$ by ${\rm supp}\,\varphi$. The following fact we prove in section \ref{properties_of_stable_repr} (see Theorem \ref{induction_theorem} and Corollary \ref{collora_of_ind_Th}).
\begin{Th}\label{result_ind}
Let $M_*^+(n)=\left\{ \omega\in M_*^+\big|\omega\left( \Pi(g)\cdot a\cdot \Pi(g^{-1}) \right)=\omega\left( a \right)\text{ for all } \right.$ \newline $\left.g\in B_n\cdot B_{n\infty}\right.$ $\text{ and each }\left.  a\in M  \right\}$. Then the following hold:
     \begin{itemize}
       \item {\bf a}) there exists $n$ such that $M_*^+(n)\neq0$;
       \item {\bf b}) if ${\rm cd}\,\Pi=\min\left\{ n\big|M_*^+(n)\neq0\right\}$, then $\left({\rm supp}\,\omega \right)\cdot \pi(s)\cdot \left( {\rm supp}\,\omega \right)=0$ for all $s\notin B_{{\rm cd}\,\Pi}\cdot B_{({{\rm cd}\,\Pi})\;\infty}$ and each nonzero $\omega\in M_*^+({\rm cd}\,\Pi)$.
     \end{itemize}
\end{Th}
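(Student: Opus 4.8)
\emph{Strategy.} Write $B_\infty^{(m)}:=B_m\cdot B_{m\infty}$ for the Young-type subgroup appearing in the definition of $M_*^+(m)$. The starting point is a reformulation: if $0\neq\omega\in M_*^+(m)$ and $E={\rm supp}\,\omega$, then invariance of $\omega$ under ${\rm Ad}\,\Pi(g)$, $g\in B_\infty^{(m)}$, forces $E\in M\cap\Pi\bigl(B_\infty^{(m)}\bigr)'$; each $\Pi(g)E$ ($g\in B_\infty^{(m)}$) is then a unitary of the corner $EME$ lying in the centralizer of the faithful normal state $\omega|_{EME}$. Conversely, the normalized trace of any \emph{finite} corner $EME$ with $E\in M\cap\Pi\bigl(B_\infty^{(m)}\bigr)'$, pushed forward to $M$, is an element of $M_*^+(m)$. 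Thus the theorem says: \textbf{(a)} at some level a finite corner of this kind exists; and \textbf{(b)} at the minimal level $m=:{\rm cd}\,\Pi$, the defining projection is ``concentrated on the base coset'' — $E\Pi(s)E=0$ for $s\notin B_\infty^{(m)}$ — which then forces $\{\Pi(g)E:g\in B_\infty^{(m)}\}''=EME$, so $\omega|_{EME}$ is a genuine trace and $E$ a finite projection.

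\emph{Part a).} If $\Pi$ is of finite type take $m=0$ (the canonical trace of $M$). Otherwise we build a finite corner from the asymptotic character using stability. Since $\chi_{_\Pi}^{as}$ exists and equals some $\chi_{\alpha\beta\gamma}^\sigma$, Proposition~\ref{Prop_Ind} and the structure theory of Section~\ref{characters_of_hyperocthedral} single out the candidate level $m$ (governed by the parts of $\alpha\cup\beta$ and by $\gamma$) together with a projection $q$ built from the self-adjoint unitaries $\Pi(1^{(j)})$ for which $qMq$ should be finite and $q\in M\cap\Pi\bigl(B_\infty^{(m)}\bigr)'$. To get the invariant functional, fix a unit vector $\eta$ and form a Cesàro average of $\omega_{q\eta}\circ{\rm Ad}\,\Pi(g)$ over the (amenable, locally finite) group $B_\infty^{(m)}$; because ${\rm Ad}\,1^{(j)}\to{\rm id}$ in the topology~(\ref{topology}) and, more generally, $g\mapsto\omega_\eta\circ{\rm Ad}\,\Pi(g)$ is norm-continuous for~(\ref{topology}), stability upgrades this average from a mere weak-$*$ $B_\infty^{(m)}$-invariant state to a \emph{norm} limit, hence to a normal functional $\psi$, nonzero since $\psi(I)=\|q\eta\|^2>0$. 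This is exactly the step that collapses for the non-stable representation of Example~3, where the corresponding local norms $\|q\eta\|$ tend to $0$. Hence $M_*^+(m)\neq0$.

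\emph{Part b).} Put $n_0={\rm cd}\,\Pi$, take $0\neq\omega\in M_*^+(n_0)$ and $E={\rm supp}\,\omega$, and suppose for contradiction that $E\Pi(s_0)E\neq0$ for some $s_0\notin B_\infty^{(n_0)}$. Conjugating $s_0$ inside $B_\infty^{(n_0)}$ and, if needed, permuting the coordinates $\{1,\dots,n_0\}$ (neither changes ${\rm cd}\,\Pi$), we may assume $s_0$ is a quasi-cycle joining $n_0$ to $n_0+1$. The transpositions $(n_0\;l)$, $l>n_0$, are mutually conjugate inside $B_{n_0\infty}\subset B_\infty^{(n_0)}$ and $E$ commutes with $\Pi(B_{n_0\infty})$, so the projections $E'_l:=\Pi((n_0\;l))E\Pi((n_0\;l))$ are pairwise unitarily equivalent via unitaries commuting with $E$, whence $\omega(E'_l)=\omega(E'_{n_0+1})=:\varepsilon>0$ for every $l$. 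One now shows — this is the heart of the matter — that the row $\bigl(\Pi((n_0\;l))E\bigr)_{l>n_0}$ of partial isometries satisfies a Bessel-type bound, so that $\sum_{l\in S}E'_l\le C\cdot I$ uniformly over finite $S\subset\overline{n_0+1,\infty}$; applying $\omega$, $|S|\,\varepsilon=\sum_{l\in S}\omega(E'_l)\le C\|\omega\|$, which is absurd for large $|S|$. (Equivalently, this estimate lets one average the translates $\omega\circ{\rm Ad}\,\Pi((n_0\;l)s_0)$ into a nonzero normal positive functional invariant under $\Pi\bigl(B_{n_0-1}\cdot B_{(n_0-1)\infty}\bigr)$, contradicting the minimality of $n_0$.) Therefore $E\Pi(s)E=0$ for all $s\notin B_\infty^{(n_0)}$.

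\emph{Main obstacle.} The nontrivial content lies entirely in two ``stability $\Rightarrow$ normality'' passages: in (a), showing the Cesàro average of the local states $\omega_{q\eta}$ over $B_\infty^{(m)}$ converges in $M_*$-norm (not merely weak-$*$) and pinning the level $m$ from $\chi_{\alpha\beta\gamma}^\sigma$; in (b), the Bessel-type estimate on $\sum_l E'_l$ — i.e.\ the control of the coefficients $\xi\mapsto\|E\Pi((n_0\;l))\xi\|$ for the \emph{non-escaping} elements $(n_0\;l)$, the one place where neither factoriality nor the asymptotic character applies directly. These are exactly the estimates carried out via Theorem~\ref{induction_theorem} and Corollary~\ref{collora_of_ind_Th} in Section~\ref{properties_of_stable_repr}.
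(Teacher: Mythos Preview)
Your proposal has genuine gaps, and the route you sketch is quite different from the paper's.

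\textbf{Circularity.} Your closing sentence appeals to ``the estimates carried out via Theorem~\ref{induction_theorem}'' --- but Theorem~\ref{induction_theorem} \emph{is} the statement you are proving (Theorem~\ref{result_ind} is its announcement in Section~1). Likewise, in part~(a) you invoke Proposition~\ref{Prop_Ind} and the spectral picture of Section~\ref{characters_of_hyperocthedral} to manufacture the projection $q$ at the ``correct'' level $m$; but that machinery describes finite-type corners that are only known to exist \emph{after} the present theorem. You cannot read off $m$ from $\chi_{\alpha\beta\gamma}^\sigma$ before you know the representation is induced.

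\textbf{The two analytic gaps.} In~(a), stability gives norm-continuity of $g\mapsto\omega_\eta\circ{\rm Ad}\,\Pi(g)$ at the identity, hence $\|\omega_\eta\circ{\rm Ad}\,\Pi(g)-\omega_\eta\|<\epsilon$ for $g\in B_{n\infty}$ with $n$ large; it does not by itself force Ces\`aro averages over $B_\infty^{(m)}$ to converge in $M_*$-norm, because most elements of $B_\infty^{(m)}$ are nowhere near the identity. In~(b), the ``Bessel-type bound'' $\sum_{l\in S}E'_l\le C\cdot I$ is the whole point and you give no mechanism for it: your hypotheses are consistent with all $E'_l$ being equal to a single projection, in which case $\sum_{l\in S}E'_l=|S|\,E'_{n_0+1}$ and no contradiction follows. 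Orthogonality (or summability) of these conjugates is exactly what has to be extracted from stability, and you have not said how.

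\textbf{What the paper does instead.} The paper never attempts a direct estimate on $M_*^+$. It passes to the standard form with cyclic separating vector $\xi$, forms the diagonal representation $D(g)=\Pi(g)J_\xi\Pi(g)J_\xi$, and proves (Proposition~\ref{tame_Prop}, via the cone inequality~(\ref{inequality_vector_state})) that stability of $\Pi$ makes $D$ \emph{tame}. Tameness gives (a) in one line: for any unit $\eta$ there is $n$ with $\|D(g)\eta-\eta\|<1/2$ for all $g\in B_{n\infty}$, so the projection $P_n$ onto $D(B_{n\infty})$-fixed vectors is nonzero, and any vector in $P_n\mathfrak P_\xi$ represents a nonzero element of $M_*^+(n)$. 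For (b) one works entirely with $D$: the weak limits $Q_k=w\text{-}\lim_m D((k\;m))$ are commuting projections (Lemma~\ref{Tame_Lemma_existing}) satisfying $D((k\;j))Q_kQ_j=Q_kQ_j$ and $D(\zeta^{(k)})Q_k=Q_k$ (Lemma~\ref{Lemma_triviality}); this yields $P_n=\prod_{k>n}Q_k$ and shows $D(s)P_nD(s)\cdot P_n=\prod_{k\in\mathbb A}Q_k$ with $\mathbb A$ meeting $\{1,\dots,n\}$ whenever $s\notin B_\infty^{(n)}$, contradicting minimality of $n$ unless $P_nD(s)P_n=0$. Orthogonality of supports then follows from the standard cone fact (Takesaki~II, IX~Lemma~1.12). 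In short, the Bessel estimate you could not prove is replaced, in the tame picture, by an elementary identity among the $Q_k$.
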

\begin{Co}
 Set $E={\rm supp}\,\varphi$. Then $E\in M\cap \Pi\left( B_{\infty}^{(n)}\right)^\prime$. Thus, the operators $\Pi_E(g)$ $=E\cdot\Pi(g)\cdot E$, where $g\in B_{\infty}^{(n)}= B_n\cdot B_{n\infty}$, generate the unitary finite type factor-representation of the group $B_{\infty}^{(n)}$ in the space $E\mathcal{H}_\Pi$. It follows from Theorem \ref{result_ind} that $\Pi$ induced  from the representation $\Pi_E$ of subgroup $B_{\infty}^{(n)}$; i. e. $\Pi={\rm Ind}_{B_{\infty}^{(n)}}^B \Pi_E$.
\end{Co}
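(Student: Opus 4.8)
The plan is to deduce everything from Theorem~\ref{result_ind} together with standard facts about supports of normal positive functionals and Mackey's imprimitivity/inducing machinery. Throughout write $n={\rm cd}\,\Pi$ and $E={\rm supp}\,\varphi$ for a fixed nonzero $\varphi\in M_*^+(n)$; part~{\bf a}) of Theorem~\ref{result_ind} guarantees such a $\varphi$ exists. First I would record the elementary fact that if $\varphi\in M_*^+$ is invariant under $m\mapsto\Pi(g)m\Pi(g^{-1})$ for all $g$ in a subgroup $K$, then ${\rm supp}\,\varphi$ commutes with $\Pi(K)$: indeed $\varphi(\Pi(g)\,\cdot\,\Pi(g^{-1}))=\varphi(\cdot)$ forces $\Pi(g)E\Pi(g^{-1})$ to be a projection supporting $\varphi$ no larger than $E$, hence equal to $E$ by minimality; so $\Pi(g)$ commutes with $E$ for every $g\in K$. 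Applying this with $K=B_n\cdot B_{n\infty}=B_\infty^{(n)}$ gives $E\in\Pi(B_\infty^{(n)})'$, and $E\in M$ by definition of support, so $E\in M\cap\Pi(B_\infty^{(n)})'$, which is the first assertion.

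Next I would check that $\Pi_E(g)=E\Pi(g)E$ for $g\in B_\infty^{(n)}$ really is a unitary representation on $E\mathcal H_\Pi$: since $E$ commutes with $\Pi(g)$ for these $g$, we have $\Pi_E(g)=\Pi(g)E=E\Pi(g)$, so $\Pi_E$ is a genuine homomorphism into the unitaries of $E\mathcal H_\Pi$ and $\Pi_E(B_\infty^{(n)})''=E M_{B_\infty^{(n)}}E$ where $M_{B_\infty^{(n)}}=\Pi(B_\infty^{(n)})''$. To see this is of finite type, restrict the normal faithful (on $EME$, after compression) trace-like functional: by Theorem~\ref{result_ind}{\bf b}) the state $\omega$ (equivalently $\varphi$) satisfies $E\Pi(s)E=0$ for $s\notin B_\infty^{(n)}$, so the compression of $\omega$ to $E M_{B_\infty^{(n)}}E$ is a normal trace — one uses the multiplicativity/character structure inherited from Proposition~\ref{Prop_Ind} and the fact that $\omega$ is $B_\infty^{(n)}$-central to verify the trace property $\omega(ab)=\omega(ba)$ on the generators $\Pi_E(g)$ — and it is faithful on the factor generated (or one passes to the central support). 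Hence $\Pi_E$ generates a finite von Neumann algebra, i.e.\ $\Pi_E$ is a finite type factor-representation of $B_\infty^{(n)}$.

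Finally, the inducing statement $\Pi\cong{\rm Ind}_{B_\infty^{(n)}}^B\Pi_E$. The key input is again Theorem~\ref{result_ind}{\bf b}): the ``support projection'' $E$ has the property that $\Pi(s)E\Pi(s^{-1})$ and $E$ have orthogonal ranges whenever $s\notin B_\infty^{(n)}$, equivalently the projections $\{\Pi(g)E\Pi(g^{-1})\}$ are parametrized by cosets $g B_\infty^{(n)}$ and are pairwise orthogonal. I would then show $\bigvee_{g}\Pi(g)E\Pi(g^{-1})=I$: the join is a projection in $M\cap\Pi(B)'=\mathbb C I$ (it is $\Pi(B)$-invariant by construction and lies in $M$ since $E\in M$ and $\Pi(B)\subset M$), and it is nonzero because $E\neq0$, hence it equals $I$ as $M$ is a factor. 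So $\mathcal H_\Pi=\bigoplus_{gB_\infty^{(n)}}\Pi(g)E\mathcal H_\Pi$ with $\Pi(B)$ permuting the summands exactly as in the left-regular action on cosets, and the restriction to the stabilizer summand $E\mathcal H_\Pi$ is $\Pi_E$; this is precisely the defining model of ${\rm Ind}_{B_\infty^{(n)}}^B\Pi_E$, so the identification follows. I expect the main obstacle to be the verification that the compressed state is an honest normal \emph{trace} on the compressed algebra (and hence that $\Pi_E$ is of finite type rather than merely semifinite) — this is where the centrality of $\omega$ under $B_\infty^{(n)}$ and the explicit character formula~(\ref{character_formula})/(\ref{II_1-mult_general}) must be combined carefully, rather than the coset-orthogonality bookkeeping, which is routine once Theorem~\ref{result_ind}{\bf b}) is in hand.
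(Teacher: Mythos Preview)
Your argument is correct and matches what the paper intends; the paper itself states this Corollary without proof, leaving it as an immediate consequence of Theorem~\ref{result_ind}. The three ingredients you assemble --- invariance of the support under conjugation by $\Pi(B_\infty^{(n)})$, pairwise orthogonality of $\{\Pi(g)E\Pi(g^{-1})\}$ over distinct cosets via part~{\bf b}), and the fact that their join lies in the trivial center of the factor $M$ --- are exactly the routine bookkeeping needed, and your identification of the resulting direct-sum decomposition with the induced representation is standard Mackey machinery.

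The one place you hedge unnecessarily is the finite-type claim. The obstacle you anticipate is not real, and invoking Proposition~\ref{Prop_Ind} or the explicit character formula~(\ref{character_formula}) is superfluous. From part~{\bf b}) you get $E\Pi(s)E=0$ for $s\notin B_\infty^{(n)}$, so $EME$ coincides with $\Pi_E(B_\infty^{(n)})''$ (the linear span of $E\Pi(B)E$ reduces to that of $E\Pi(B_\infty^{(n)})E$). Now the $B_\infty^{(n)}$-centrality of $\varphi$ says $\varphi(umu^{-1})=\varphi(m)$ for $u\in\Pi_E(B_\infty^{(n)})$ and $m\in EME$; substituting $m\mapsto mu$ gives $\varphi(um)=\varphi(mu)$, and since both sides are normal in $u$ and agree on the weakly dense span of the group, the trace identity $\varphi(ab)=\varphi(ba)$ holds on all of $EME$. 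Faithfulness on $EME$ is automatic from $E=\mathrm{supp}\,\varphi$, and factoriality is the standard fact that reducing a factor by a projection in it yields a factor. So $\Pi_E$ is a finite-type factor representation directly, with no appeal to the structure of characters on $B_{n\infty}$.
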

We call ${\rm cd}\,\Pi$ the {\it central depth} of $\Pi$.
\begin{Rem}
Take  a finite type representation $\,^n\!\pi$ of  subgroup $B_{\infty}^{(n)}= B_n\cdot B_{n\infty}$. Using Definition \ref{stable_Def} of a stable representation and the construction of an induced representation, it is easy to check that $\Pi={\rm Ind}_{B_{\infty}^{(n)}}^B \,\,^n\!\pi$ is a stable representation.
\end{Rem}
In subsection \ref{Properties induced repr} we prove next statement.
\begin{Th}
If $\,^n\!\pi$ is a finite type factor-representation of subgroup $B_{\infty}^{(n)}$ then  ${\rm Ind}^B_{B_\infty^{(n)}}\,^n\!\pi$ is a stable factor-representation (see Theorem \ref{factor_ind}).
\end{Th}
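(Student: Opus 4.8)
The plan is to show that $\Pi = {\rm Ind}^B_{B_\infty^{(n)}}\,{}^n\!\pi$ generates a factor, i.e. that the center of $M := \Pi(B)''$ is trivial, and then invoke the Remark immediately preceding the statement, which already establishes that induced representations from finite-type representations are stable. So the whole burden is the factoriality.

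First I would set up the induced representation concretely. Writing $B = \bigsqcup_\gamma \gamma\, B_\infty^{(n)}$ as a disjoint union of left cosets, the Hilbert space of $\Pi$ is $\ell^2(B/B_\infty^{(n)}) \otimes {}^n\!\mathcal{H}$, and there is a natural family of ``cut-down'' projections: for the trivial coset let $E_0$ be the projection onto the copy of ${}^n\!\mathcal{H}$ sitting over the base point, and more generally $E_\gamma = \Pi(\gamma) E_0 \Pi(\gamma)^{-1}$. These are mutually orthogonal, sum (strongly) to $I$, and $E_0 \Pi(g) E_0 = {}^n\!\pi(g) E_0$ for $g \in B_\infty^{(n)}$. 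By construction $E_0 M E_0$ contains ${}^n\!\pi(B_\infty^{(n)})''$, and in fact one checks (this is the standard compression-of-an-induced-representation computation) that $E_0 M E_0 = {}^n\!\pi(B_\infty^{(n)})'' = {}^n\!M$, which is a factor of finite type by hypothesis. The key structural input is that the corner $E_0 M E_0$ is a \emph{factor}.

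Now take a central element $\mathfrak{C} \in M \cap M'$. Then $\mathfrak{C}_\gamma := E_\gamma \mathfrak{C} E_\gamma$ lies in the center of $E_\gamma M E_\gamma$, which is a factor, so $\mathfrak{C}_\gamma = c_\gamma E_\gamma$ for a scalar $c_\gamma \in \mathbb{C}$; moreover centrality of $\mathfrak{C}$ together with $\Pi(g) E_\gamma \Pi(g)^{-1} = E_{g\gamma}$ forces $c_{g\gamma} = c_\gamma$ for all $g \in B$, and since $B$ acts transitively on $B/B_\infty^{(n)}$ all the $c_\gamma$ coincide, say $= c$. Because the $E_\gamma$ are mutually orthogonal with $\sum_\gamma E_\gamma = I$, one gets $\sum_\gamma \mathfrak{C}_\gamma = c\,I$; it remains to argue $\mathfrak{C} = \sum_\gamma E_\gamma \mathfrak{C} E_\gamma$, i.e. that $\mathfrak{C}$ is ``diagonal'' with respect to the decomposition $\bigoplus_\gamma E_\gamma \mathcal{H}_\Pi$. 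This is exactly where the finite-type hypothesis is used again: the off-diagonal block $E_{\gamma_1} \mathfrak{C} E_{\gamma_2}$ intertwines the restrictions of $\Pi$ to $B_\infty^{(n)}$-conjugates, and since ${}^n\!\pi$ has finite type one can use the trace on ${}^n\!M$ (and the fact that $B_\infty^{(n)}$ has infinite index with the orbits of $B$ on the coset space being suitably ``infinite'') to kill these off-diagonal parts — this is the analogue of the classical argument that an induced representation from a finite-type subgroup representation has trivial intertwiners between distinct cosets unless the inducing data are conjugate, and here no nontrivial conjugations inside $M$ are available. The model for this is precisely the Example~4 / Example~6 arguments already carried out in the excerpt, where exactly this cut-down-by-$\widetilde p_n$ technique was used to identify the von Neumann type.

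I expect the main obstacle to be the vanishing of the off-diagonal blocks $E_{\gamma_1}\mathfrak{C} E_{\gamma_2}$ for $\gamma_1 \ne \gamma_2$: unlike the orthogonal-projection bookkeeping, this step genuinely requires understanding when two conjugate corners of $M$ can be linked by an element of $M$, and this is where one must feed in the finite-type structure of ${}^n\!\pi$ and presumably Theorem~\ref{result_ind} (the characterization of the central depth) to see that the inducing subgroup is, in the appropriate sense, ``maximal'' for the representation. Once that is done, $\mathfrak{C} = c\,I$, the center is trivial, $M$ is a factor, and stability is the Remark, completing the proof.
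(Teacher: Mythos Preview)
Your overall strategy---decompose a central element $\mathfrak{C}$ along the coset projections $E_\gamma$, show the diagonal blocks are scalar and equal, then kill the off-diagonal blocks---matches the paper's proof. Two corrections are needed.

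First, a minor imprecision: the coset projections $E_\gamma$ lie neither in $M$ nor in $M'$, so ``$E_0 M E_0$ is a factor'' is not a well-posed statement. What is correct (and what the paper actually uses) is that for each $m'\in{}^n\!M'$ the constant extension $\widetilde{m'}$ to all cosets lies in $\Pi(B)'=M'$; since $\mathfrak{C}\in M\cap M'$ commutes with both $\Pi(B_\infty^{(n)})$ and with every $\widetilde{m'}$, the block $E_0\mathfrak{C}E_0$ commutes with both ${}^n\!M$ and ${}^n\!M'$ on $E_0\mathcal{H}_\Pi$, hence is scalar. Your conclusion is right; the justification needed is this commutant argument, not a claim about $E_0 M E_0$.

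The real gap is exactly where you flag it: the vanishing of the off-diagonal blocks. You gesture at the right mechanism (infinite orbits) but then suggest that Theorem~\ref{result_ind} is required; that would be circular, since Theorem~\ref{result_ind} (${=}$~Theorem~\ref{induction_theorem}) is proved later and in the converse direction. The paper's argument is direct and elementary: realize ${}^n\!\pi$ on $L^2({}^n\!M,{}^n\!{\rm tr})$ so that the identity $I\in{}^n\!M$ sits over $\tilde e$ as a cyclic vector $\xi$ for $\Pi$. Writing $\mathcal{C}_{xy}=m_{xy}E_{xy}$ with $m_{xy}\in{}^n\!M$ (by the commutant argument above), one computes
\[
\|\mathcal{C}\xi\|^2=\sum_{x\in X}\|m_{x\tilde e}\|^2_{L^2({}^n\!M,{}^n\!{\rm tr})}.
\]
From $\Pi(g)\mathcal{C}\Pi(g)^{-1}=\mathcal{C}$ for $g\in B_\infty^{(n)}$ one gets $\|m_{x\tilde e}\|_{L^2}=\|m_{(g^{-1}x)\tilde e}\|_{L^2}$, i.e.\ the norm is constant on $B_\infty^{(n)}$-orbits in $X$. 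Every orbit other than $\{\tilde e\}$ is infinite, so the $\ell^2$ sum forces $m_{x\tilde e}=0$ for $x\neq\tilde e$. Thus $\mathcal{C}\xi=c_{\tilde e\tilde e}\xi$, and cyclicity of $\xi$ gives $\mathcal{C}=c_{\tilde e\tilde e}I$. No appeal to central depth, intertwiner classification, or the Examples is needed---the finite-type hypothesis enters only through the existence of the trace vector $\xi$.
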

Now we find the parameters $\alpha, \beta, \gamma, \sigma$ such that $\chi_{_\Pi}^{as}=\chi_{\alpha\beta\gamma}^\sigma$ (see \eqref{character_formula}).

Denote by $\,^n\!{\rm tr}$ a normal finite normalize trace on factor $\,^n\!M=\,^n\!\pi\left( B_\infty^{(n)}\right)^{\prime\prime}$. If $b_1\in B_n$ and $b_2\in B_{n\infty}$ then $\,^n\!{\rm tr}\left(\,^n\!\pi(b_1\,b_2) \right)=\,^n\!{\rm tr}\left(\,^n\!\pi(b_1) \right)\,\,^n\!{\rm tr}\left(\,^n\!\pi(b_2)\right)$.  Since $\,^n\!\pi$ is a finite type factor-representation of $B_\infty^{(n)}$, the function
\begin{eqnarray*}
B_n\ni b\stackrel{\chi_n}{\mapsto} \,^n\!{\rm tr}\left(\,^n\!\pi(b) \right)=\chi_n(b)\in \mathbb{C}~\text{ and }~ B_{n\infty}\ni b\stackrel{\chi_{n\infty}}{\mapsto} \,^n\!{\rm tr}\left(\,^n\!\pi(b) \right)=\chi_{n\infty}(b)\in \mathbb{C}
\end{eqnarray*}
are the indecomposable characters of $B_n$ and $B_{n\infty}$, respectively.
If $\check\alpha, \check\beta,\check\gamma,\check\sigma$ are the parameters from \eqref{character_formula}, corresponding to  $\chi_{n\infty}$, then it is easily to seen that $\check\alpha=\alpha$,  $\check\beta=\beta$,  $\check\gamma=\gamma$, $\check\sigma=\sigma$; i. e. the parameters of asymptotical character $\chi_{_\Pi}^{as}$ coincide with the parameters of character $\chi_{n\infty}(b)$.
In section \ref{Properties induced repr} we prove the following result (see Theorem \ref{II_1_theorem}).
\begin{Th}
Let $\chi_{\alpha\beta\gamma}^\sigma$ be the asymptotical character of the stable factor-representation $\Pi$. If $\gamma_0>0$ and $\gamma_1>0$ then $\Pi$ is ${\rm II}_1$-factor-representation of $B$; i. e. asymptotical character $\chi^{as}_{_\Pi}$ is a full invariant of the quasi-equivalence   in this case.
\end{Th}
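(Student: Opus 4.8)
The plan is to show that when $\gamma_0>0$ and $\gamma_1>0$ the induced representation $\Pi={\rm Ind}_{B_\infty^{(n)}}^B\,{}^n\!\pi$ actually collapses back to the finite type: concretely, that $\Pi$ carries a faithful normal finite trace, so $M=\Pi(B)''$ is a ${\rm II}_1$-factor (it cannot be type ${\rm I}$ because $\chi_{\alpha\beta\gamma}^\sigma$ is a genuinely infinite-dimensional character), and then invoke the fact that a ${\rm II}_1$-factor-representation is determined up to quasi-equivalence by its trace, i.e. by $\chi_{_\Pi}^{as}=\chi_{\alpha\beta\gamma}^\sigma$.

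First I would understand why $\gamma_0,\gamma_1>0$ forces the conditional expectation associated with the inducing construction to be trace-compatible. On $B_\infty^{(n)}=B_n\cdot B_{n\infty}$ the trace splits as $\chi_n\otimes\chi_{n\infty}$, and in the induced picture $\mathcal H_\Pi=\bigoplus_{\gamma B_\infty^{(n)}}\,{}^n\!\mathcal H$ indexed by cosets. The key observation is that $\gamma_0,\gamma_1>0$ means the character $\chi_{n\infty}=\chi_{\alpha\beta\gamma}^\sigma$ on a single fixed point $1^{(m)}\in{}_0\mathbb Z_2^\infty$ takes a value strictly between $-1$ and $1$; equivalently the projections $\frac{I\pm\Pi(1^{(m)})}{2}$ are "genuinely split" with both pieces of positive trace. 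I would use this to build the candidate trace $\operatorname{Tr}$ on $M$ by the formula $\operatorname{Tr}(x)=\sum_{m}\big(x\,\xi_m,\xi_m\big)$ over an orthonormal family of coset-vectors, and then check two things: (i) the sum converges for $x\ge 0$ (finiteness), and (ii) $\operatorname{Tr}$ is a trace, i.e. $\operatorname{Tr}(\Pi(g)x\Pi(g^{-1}))=\operatorname{Tr}(x)$ for all $g\in B$. Step (ii) reduces, via the multiplicativity \eqref{II_1-mult_general} and the coset combinatorics of the action of $B$ on $B/B_\infty^{(n)}$, to the statement that the "weight" a coset contributes is invariant under the $B$-action — and this is exactly where the positivity of $\gamma_0,\gamma_1$ (and the corresponding nonvanishing of the relevant $\alpha_i,\beta_i$ contributions) makes the bookkeeping balance.

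The finiteness estimate (i) is the step I expect to be the main obstacle. The cosets of $B_\infty^{(n)}$ in $B$ are parametrized by configurations of $n$ "marked points" moving through $\mathbb N$, together with $\mathbb Z_2$-data, so there are infinitely many of them; the series $\sum_m (x\,\xi_m,\xi_m)$ will only converge because the off-diagonal matrix coefficients of $\Pi$ decay along the coset index, and quantifying that decay requires the explicit form \eqref{character_formula} of $\chi_{\alpha\beta\gamma}^\sigma$ together with the stability hypothesis (Definition \ref{stable_Def}), which controls the behaviour of matrix coefficients under ${\rm Ad}\,1^{(m)}$ as $m\to\infty$. I would handle this by first proving the estimate for $x\in\Pi(B)$ — where everything is a finite product of the basic generators and \eqref{II_1-mult_general}, \eqref{character_formula} give closed-form expressions — and then passing to the weak closure by normality and the Kaplansky density theorem.

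Once $\operatorname{Tr}$ is shown to be a faithful normal finite trace on $M$, factoriality of $\Pi$ (already established in Theorem \ref{factor_ind}) upgrades $M$ to a finite factor, and since $\chi_{\alpha\beta\gamma}^\sigma$ is not a character of a finite-dimensional representation (because $\alpha\cup\beta\cup\{\gamma_0,\gamma_1\}$ supports an infinite Thoma-type parameter), $M$ must be of type ${\rm II}_1$. Finally, uniqueness of the normalized trace on a ${\rm II}_1$-factor forces $\operatorname{Tr}=\chi_{_\Pi}^{as}\circ\Pi$, so two stable factor-representations with the same asymptotical character $\chi_{\alpha\beta\gamma}^\sigma$ (with $\gamma_0,\gamma_1>0$) have isomorphic GNS triples over a common trace and are therefore quasi-equivalent; this is precisely the claim that $\chi_{_\Pi}^{as}$ is a full quasi-equivalence invariant in this case.
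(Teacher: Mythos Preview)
Your proposal has a genuine gap at the central step. The candidate trace $\operatorname{Tr}(x)=\sum_m (x\,\xi_m,\xi_m)$ summed over coset vectors $\xi_m$ cannot be finite: the coset space $X=B/B_\infty^{(n)}$ is infinite, and for $x=I$ every term equals $1$, so $\operatorname{Tr}(I)=|X|=\infty$. No ``off-diagonal decay'' helps here, since finiteness of a trace is tested on the identity, not on off-diagonal elements. What actually happens when $\gamma_0,\gamma_1>0$ is not that the naive coset sum becomes finite, but that the induced representation is quasi-equivalent to one that was already ${\rm II}_1$ to begin with; the finite trace lives on a different algebra and is transported back.

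The paper's argument goes in the opposite direction from yours. Rather than building a trace on $\Pi(B)''$, it starts from the ${\rm II}_1$-factor $M=\bigl(\Pi_{\alpha\beta\gamma}^\sigma(B)\bigr)''$ associated to the character $\chi_{\alpha\beta\gamma}^\sigma$ itself (with its canonical trace ${\rm tr}$) and finds a positive bounded element $D\in M$ such that
\[
{\rm tr}\bigl(D\,\Pi_{\alpha\beta\gamma}^\sigma(b)\bigr)=(\Pi(b)\xi,\xi)\quad\text{for all }b\in B,
\]
where $\xi$ is the cyclic vector of the induced picture. This realizes the GNS state of $\Pi$ as a normal state on the ${\rm II}_1$-factor $M$, hence $\Pi$ is quasi-equivalent to $\Pi_{\alpha\beta\gamma}^\sigma$ and is itself ${\rm II}_1$. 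The density $D$ is constructed from the Okounkov-type operators $\mathcal{O}_j=w\text{-}\lim_{m\to\infty}\Pi_{\alpha\beta\gamma}^\sigma((j\;m))$: their spectral projection $E_j(0)$ at $0$ splits under $\Pi_{\alpha\beta\gamma}^\sigma(1^{(j)})$ as $F_j(0)\oplus F_j^\perp(0)$ with ${\rm tr}(F_j(0))=\gamma_0$ and ${\rm tr}(F_j^\perp(0))=\gamma_1$. The hypothesis $\gamma_0>0$ and $\gamma_1>0$ is used exactly to ensure that \emph{both} of these projections are nonzero, so that one can form $\xi_{reg}=\gamma_0^{-k/2}\gamma_1^{(k-n)/2}\prod_j D_j$ (with $D_j\in\{F_j(0),F_j^\perp(0)\}$) and then $D$ from $\xi_{reg}$ together with the central idempotent $F_\rho$ of the finite piece ${\rm Irr}_{{}^0\!\lambda}\otimes{\rm Irr}_{{}^1\!\lambda}$. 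A multiplicativity lemma for the functional $m\mapsto{\rm tr}({}^n\!D\,m)$ then checks that this $D$ reproduces precisely the positive-definite function defining $\Pi$. So the role of $\gamma_0,\gamma_1>0$ is not a convergence estimate but the nonvanishing of two specific spectral projections inside the known ${\rm II}_1$-factor.
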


\begin{Th}
Let $\Pi$ and $\breve\Pi$ be the stable factor-representations of $B$. Suppose that $\chi^{as}_{_\Pi}=\chi^{as}_{_{\breve\Pi}}=\chi_{\alpha\beta\gamma}^\sigma$ and $\sum\alpha_j+\sum\beta_j=1$ or,  what is the same, $\gamma_0=\gamma_1=0$. If $n\geq1$, $\Pi={\rm Ind}_{B_{\infty}^{(n)}}^B \,\,^n\!\pi$ and $\breve\Pi={\rm Ind}_{B_{\infty}^{(n)}}^B \,\,^n\!\breve\pi$, where $\,\,^n\!\pi$ and $\,^n\!\breve\pi$ are the finite type factor-representations of $B_{\infty}^{(n)}$,  then the following holds
\begin{itemize}
  \item i) ${\rm cd}\,\Pi={\rm cd}\,\breve\Pi=n$ (see Proposition  \ref{prop_cd});
  \item ii)  $\Pi$ and $\breve\Pi$ are semi finite factor-representation (see Remark \ref{one_dim} and Theorem \ref{alpha+beta=1});
  \item iii) in the case, when there exists $\alpha_j<1$ or $\beta_j<1$,  $\Pi$ and $\breve\Pi$ have ${\rm II}_\infty$-type;
\item vi) the representations $\Pi={\rm Ind}^B_{B_\infty^{(n)}}\,^n\!\pi$ and $\breve{\Pi}={\rm Ind}^B_{B_\infty^{(n)}}\,^n\!\breve{\pi}$ are quasi-equivalent if and only if the representations $\,^n\!\pi$ and $\,^n\!\breve{\pi}$ are quasi-equivalent (see Theorem \ref{quasi-equivalence _in_case_alpha+beta=1}).
\end{itemize}
\end{Th}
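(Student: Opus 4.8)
The plan is to assemble the statement from the structure theory of Section~\ref{inducing_stable_representation}: by Theorem~\ref{result_ind} and its Corollary, every stable factor-representation $\Pi$ is induced from the finite-type factor-representation $\Pi_E$ of $B_\infty^{({\rm cd}\,\Pi)}$ obtained by compressing $\Pi$ to the support $E$ of a nonzero positive $B_{{\rm cd}\,\Pi}\cdot B_{({\rm cd}\,\Pi)\infty}$-invariant normal functional, while by Proposition~\ref{Prop_Ind} the finite-type factor-representations of $B_\infty^{(n)}=B_n\cdot B_{n\infty}$ are classified by $(\alpha,\beta,\gamma,\sigma)$ on $B_{n\infty}$ together with an integer $k$ and a pair of Young diagrams $(\,^0\!\lambda,\,^1\!\!\lambda)$ coming from the irreducible representation of $B_n$. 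For {\rm (i)} I would realize $\Pi={\rm Ind}_{B_\infty^{(n)}}^{B}\,^n\!\pi$ on $l^2(B/B_\infty^{(n)})\otimes\,^n\!\mathcal{H}$, let $E_0$ be the projection onto the fibre over the trivial coset, and use that $\,^n\!\xi$ is cyclic and separating for $\,^n\!M=\,^n\!\pi(B_\infty^{(n)})''=E_0ME_0$ and that $\,^n\!{\rm tr}$ is a trace to check that the vector state $\omega_{\,^n\!\xi}$ of $M=\Pi(B)''$ is $\Pi(B_n\cdot B_{n\infty})$-conjugation invariant with support $E_0$; hence $M_*^+(n)\neq 0$ and ${\rm cd}\,\Pi\le n$. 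The reverse inequality amounts to excluding a nonzero invariant functional at a level $m<n$, which would force $\Pi$ to be induced from the strictly larger subgroup $B_\infty^{(m)}\supsetneq B_\infty^{(n)}$; this is prevented exactly by $\gamma_0=\gamma_1=0$ and is carried out in Proposition~\ref{prop_cd}, and the same applies to $\breve\Pi$.

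For {\rm (ii)} and {\rm (iii)}: $M$ is a factor by Theorem~\ref{factor_ind}. Applying part {\bf b}) of Theorem~\ref{result_ind} with ${\rm cd}\,\Pi=n$ gives $E_0\Pi(s)E_0=0$ for $s\notin B_\infty^{(n)}$, so $E_0ME_0$ is generated by $\{E_0\Pi(g)E_0=\Pi_{E_0}(g):g\in B_\infty^{(n)}\}$; that is, $E_0ME_0=\,^n\!M$, a finite von Neumann algebra. Thus $E_0$ is a nonzero finite projection of the factor $M$, so $M$ is semifinite, and since $I$ is an orthogonal sum of infinitely many translates $\Pi(g)E_0\Pi(g^{-1})$ of $E_0$ (here $n\ge 1$), $M$ is properly infinite, hence of type ${\rm I}_\infty$ or ${\rm II}_\infty$; this proves {\rm (ii)}. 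If $M$ were of type ${\rm I}_\infty$, then $E_0ME_0=\,^n\!M$ would be finite-dimensional; but the restriction of $\,^n\!\pi$ to $\mathfrak{S}_{n\infty}\subset B_{n\infty}$ generates a factor whose unique normalized trace is the Thoma character $\chi_{\alpha\beta}$ (the $z=0$ specialization of $\chi_{\alpha\beta\gamma}^\sigma$), and when some $\alpha_j<1$ or $\beta_j<1$ --- which, together with $\sum\alpha_i+\sum\beta_i=1$, rules out the trivial and the sign characters --- this factor is an infinite-dimensional ${\rm II}_1$-factor, contradicting finite dimensionality of $\,^n\!M$. Hence $M$ and $\breve M$ are of type ${\rm II}_\infty$, which is {\rm (iii)}.

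For {\rm (iv)}: the implication $\,^n\!\pi\sim\,^n\!\breve\pi\Rightarrow\Pi\sim\breve\Pi$ is the functoriality of induction under quasi-equivalence and is routine. Conversely, given an isomorphism $\Theta:M\to\breve M$ with $\Theta\circ\Pi=\breve\Pi$, since $M_*^+(n)$ is defined only through $\Pi(B_n\cdot B_{n\infty})$-invariance, $\Theta$ maps $M_*^+(n)$ onto $\breve M_*^+(n)$ and carries the support $E$ of a chosen $\omega\in M_*^+(n)$ to the support $\breve E=\Theta(E)$ of $\omega\circ\Theta^{-1}$; hence $\Theta$ restricts to an isomorphism $EME\to\breve E\breve M\breve E$ intertwining $\Pi_E$ with $\breve\Pi_{\breve E}$, so $\Pi_E\sim\breve\Pi_{\breve E}$ as representations of $B_\infty^{(n)}$. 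By the Corollary to Theorem~\ref{result_ind} we have $\Pi={\rm Ind}_{B_\infty^{(n)}}^{B}\Pi_E$ and $\breve\Pi={\rm Ind}_{B_\infty^{(n)}}^{B}\breve\Pi_{\breve E}$, while by hypothesis $\Pi={\rm Ind}_{B_\infty^{(n)}}^{B}\,^n\!\pi$ and $\breve\Pi={\rm Ind}_{B_\infty^{(n)}}^{B}\,^n\!\breve\pi$; everything therefore reduces to the injectivity of ${\rm Ind}_{B_\infty^{(n)}}^{B}$ on quasi-equivalence classes of finite-type factor-representations of $B_\infty^{(n)}$ whose $B_{n\infty}$-character has $\gamma_0=\gamma_1=0$, for then $\Pi_E\sim\,^n\!\pi$, $\breve\Pi_{\breve E}\sim\,^n\!\breve\pi$, and $\,^n\!\pi\sim\Pi_E\sim\breve\Pi_{\breve E}\sim\,^n\!\breve\pi$. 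That injectivity I would derive from Proposition~\ref{Prop_Ind} by showing the discrete data $(k,\,^0\!\lambda,\,^1\!\!\lambda)$ to be quasi-equivalence invariants of $\Pi$, recoverable from the finite-dimensional algebra $\Pi_E(B_n)''$ inside the canonical finite corner $EME$, so that supports of different functionals in $M_*^+(n)$ give quasi-equivalent compressions; this is Theorem~\ref{quasi-equivalence _in_case_alpha+beta=1}.

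The main obstacle is precisely this last injectivity, and it is where the hypothesis $\gamma_0=\gamma_1=0$ (equivalently $\sum\alpha_i+\sum\beta_i=1$) is indispensable: when $\gamma_0,\gamma_1>0$ the induced representation is already of type ${\rm II}_1$, the $B_n$-data is washed out, and the asymptotic character alone is a complete invariant (Theorem~\ref{II_1_theorem}), so the criterion in {\rm (iv)} genuinely fails outside the semifinite regime. Establishing the injectivity when $\sum\alpha_i+\sum\beta_i=1$ requires a careful analysis of the relative position of $\Pi(B_\infty^{(n)})''$ inside $M$ and of the structure of $M\cap\Pi(B_\infty^{(n)})'$.
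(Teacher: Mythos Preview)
Your architecture for (i)--(iii) is sound, provided you keep separate the fibre projection $E_0$ (onto $\{\eta:\eta(x)=0\text{ for }x\neq\tilde e\}$) and the support projection $E=\operatorname{supp}\omega_{\xi}\in M$: a priori $E_0$ lives only in $B(\mathcal{H}_\Pi)$, while $E$ lies in $M$ by construction. Your semifiniteness argument only needs $E$, since Corollary~\ref{collora_of_ind_Th} already gives that $EME=\Pi_E(B_\infty^{(n)})''$ is a finite factor, and the $B_{n\infty}$-restriction of $\Pi_E$ has character $\chi^{as}_{\Pi}=\chi_{\alpha\beta\gamma}^\sigma$, hence generates an infinite-dimensional ${\rm II}_1$-factor whenever some $\alpha_j<1$ or $\beta_j<1$. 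So (ii)--(iii) go through once you replace $E_0$ by $E$.

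The real divergence from the paper is in (iv). You reduce to ``injectivity of $\operatorname{Ind}_{B_\infty^{(n)}}^B$ on quasi-equivalence classes'' and propose to establish it by reading $(k,\,^0\!\lambda,\,^1\!\!\lambda)$ off the finite-dimensional algebra $\Pi_E(B_n)''$. But $E=\operatorname{supp}\omega$ depends on the choice of $\omega\in M_*^+(n)$, and you offer no mechanism showing that different choices yield quasi-equivalent compressions $\Pi_E$; saying the data is ``recoverable from the canonical finite corner $EME$'' presupposes canonicity of that corner, which is precisely what is in question.

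The paper avoids this detour by \emph{constructing} a canonical projection out of the asymptotic transpositions $\mathcal{O}_m=\text{w-}\lim_j\Pi((m\ j))$. When $\gamma_0=\gamma_1=0$ the operators $\,^n\!\mathcal{O}_m$ (for $m>n$) have trivial kernel by Lemma~\ref{spectral_decomposition}, and the explicit formula~\eqref{mathcal{O}_m_gamma_1+gamma_2=0} then yields $\bigcap_{m=1}^n\operatorname{Ker}\mathcal{O}_m=E_0\mathcal{H}_\Pi$ (equation~\eqref{indicator_formula}). This single identity does three jobs at once: it places $E_0$ inside $\Pi(\mathfrak{S}_\infty)''\subset M$ (the non-obvious input to Theorem~\ref{alpha+beta=1}); it makes $\Pi_{E_0}$ literally equal to $\,^n\!\pi$; and, since any isomorphism $\theta$ with $\theta\circ\Pi=\breve\Pi$ satisfies $\theta(\mathcal{O}_m)=\breve{\mathcal{O}}_m$ and hence $\theta(E_0)=\breve E_0$, it forces $\theta$ to restrict directly to a quasi-equivalence $\,^n\!\pi\sim\,^n\!\breve\pi$, with no abstract injectivity needed. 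The same kernel identity (via Lemma~\ref{E=F}) is also what drives Proposition~\ref{prop_cd}, so it is the common engine behind all four parts; your plan is correct in outline but misses this one computation that makes the whole argument go.
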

We recall the structure of irreducible representation of $B_n$ before considering the case $\gamma_0=0$, $\gamma_1>0$. Multiplicative character $\Omega_{kn}$ is defined on $z=(z_1,\ldots, z_k,\ldots,z_n)\in\mathbb{Z}_2^n$ as follows $\Omega_{kn}(z)=(-1)^{\sum_{j=k+1}^n z_j}$. The stabilizer of $\Omega_{kn}$ is Young subgroup $\mathfrak{S}_k\mathfrak{S}_{kn}\subset \mathfrak{S}_n$. Let ${\rm Irr}_{\,^0\!\lambda}$ and ${\rm Irr}_{\,^1\!\!\lambda}$ be the irreducible representations of $\mathfrak{S}_k$ and $\mathfrak{S}_{kn}$ in the spaces $\mathcal{H}_{\,^0\!\lambda}$ and $\mathcal{H}_{\,^1\!\!\lambda}$, respectively; ${\;^0\!\lambda}\vdash k$, ${\;^1\!\!\lambda}\vdash (n-k)$ are the corresponding partitions. Take $z\in \mathbb{Z}_2^n$,  $s_1\in \mathfrak{S}_n$, $s_2\in \mathfrak{S}_{kn}$. Define representation $\,^{^{\Omega_{_{kn}}}}\!\!\!\rho$ of $B_\infty^{(n)}$ in $\mathcal{H}_{\,^0\!\lambda}\otimes \mathcal{H}_{\,^1\!\lambda}$ by $\,^{^{\Omega_{_{kn}}}}\!\!\!\rho(zs_1s_2)=\Omega_{_{kn}}(z)\,{\rm Irr}_{\,^0\!\lambda}(s_1)\otimes{\rm Irr}_{\,^1\!\lambda}(s_2)$. Set  $\mathfrak{Ir}_{_{\,^0\!\lambda\,^1\!\!\lambda}}={\rm Ind}^{B_n}_{B_kB_{kn}}\left(\,^{^{\Omega_{_{kn}}}}\!\!\rho\right)$. Eve\-ry irreducible representation of $B_n$ is equivalent to $\mathfrak{Ir}_{_{\,^0\!\lambda\,^1\!\!\lambda}}$ for some ${\;^0\!\lambda}\vdash k$, ${\;^1\!\!\lambda}\vdash (n-k)$.

If $\,^n\!\pi$ is a finite type factor-representation of $B_\infty^{(n)}$ then $$\,^n\!\pi(b_1b_2)=\mathfrak{Ir}_{_{\,^0\!\lambda\,^1\!\!\lambda}}(b_1)\otimes\pi_{\alpha\beta\gamma}^\sigma (b_2)\;~\text{ for all }~ b_1\in B_n ~\text{ and }~b_2\in B_{n\infty},$$
where $\pi_{\alpha\beta\gamma}^\sigma$ is a representation of $B_{n\infty}$, corresponding to character $\chi_{\alpha\beta\gamma}^\sigma$.
Define  representation $\Xi$ of $B_k\cdot B_n\cdot B_{n\infty}$ as follows $$\Xi(zs_1s_2t)=\Omega_{_{kn}}(z) {\rm Irr}_{\,^0\!\lambda}(s_1)\otimes {\rm Irr}_{\,^1\!\!\lambda}(s_2)\otimes\pi_{\alpha\beta\gamma}^\sigma(t), ~\text{ where }~t\in B_{n\infty}.$$
Hence we have
\begin{eqnarray}
{\rm Ind}_{B_{\infty}^{(n)}}^B \,\,^n\!\pi={\rm Ind}_{B_kB_{kn}B_{n\infty}}^B \,\Xi ={\rm Ind}_{B_kB_{k\infty}}^B\,\left({\rm Ind}_{B_kB_{kn}B_{n\infty}}^{B_kB_{k\infty}}\,\Xi\right).
\end{eqnarray}
In section \ref{gamma>0} we proved the following statement (see Theorem \ref{Ind^B_infty_(k)}).
\begin{Th}
If $\gamma_1>0$ then  $\,^k\!\pi={\rm Ind}_{B_kB_{kn}B_{n\infty}}^{B_kB_{k\infty}}\,\Xi$ is $\rm{II}_1$-factor-representation of $B_\infty^{(k)}=B_kB_{k\infty}$. More precisely, $\,^k\!\pi$ is defined  on $b=zsb$, where $z\in \mathbb{Z}_2^k, s\in\mathfrak{S}_k$ and $b\in B_{k\infty}$, as follows $\,^k\!\pi(b)=\,{\rm Irr}_{\,^0\!\lambda}(s)\otimes\pi_{\alpha\beta\gamma}^\sigma$.
\end{Th}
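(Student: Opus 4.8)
The plan is to show that the induced representation $\,^k\!\pi = {\rm Ind}_{B_kB_{kn}B_{n\infty}}^{B_kB_{k\infty}}\,\Xi$ has the stated factored form and that the resulting von Neumann algebra is a ${\rm II}_1$-factor. First I would make the geometric realization of the induced representation explicit. Since $B_kB_{kn}B_{n\infty} = B_k\cdot B_{kn\infty}$ sits inside $B_kB_{k\infty}$ with the $B_k$-part held fixed, the coset space is the homogeneous space of $B_{k\infty}$ modulo $B_{kn}B_{n\infty}$; a set of coset representatives is given by the elements $s_1 z$ where $s_1$ ranges over shuffles moving $\{k+1,\dots,n\}$ inside $\{k+1,k+2,\dots\}$ and $z$ ranges over $\mathbb{Z}_2^{n}$-type vectors supported on those moved indices. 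I would write the induced space as a (completed) direct sum $\bigoplus_{w}\mathcal{H}_{\,^0\!\lambda}\otimes\mathcal{H}_{\,^1\!\!\lambda}\otimes\mathcal{H}_{\alpha\beta\gamma}^\sigma$ over coset representatives $w$, and observe that the $B_k$-factor ${\rm Irr}_{\,^0\!\lambda}$ acts diagonally and unchanged across all summands (because $B_k$ commutes with everything in $B_{k\infty}$ and normalizes the inducing subgroup trivially on the $B_k$-coordinate). This already gives the tensor decomposition $\,^k\!\pi(zsb) = {\rm Irr}_{\,^0\!\lambda}(s)\otimes\bigl({\rm Ind}_{B_{kn}B_{n\infty}}^{B_{k\infty}}(\,{\rm Irr}_{\,^1\!\!\lambda}\otimes\pi_{\alpha\beta\gamma}^\sigma)\bigr)(zb)$, reducing the problem to the index-zero case: it suffices to prove that ${\rm Ind}_{B_{kn}B_{n\infty}}^{B_{k\infty}}(\,{\rm Irr}_{\,^1\!\!\lambda}\otimes\pi_{\alpha\beta\gamma}^\sigma)$ is quasi-equivalent to $\pi_{\alpha\beta\gamma}^\sigma$ as a representation of $B_{k\infty}\cong B$, which is exactly a statement of the same shape as Proposition \ref{Prop_example_6} in the examples.

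Second, I would identify the relevant character/trace. The key point is that when $\gamma_1>0$, the character $\chi_{\alpha\beta\gamma}^\sigma$ has $\gamma_0 = 0$ but $\gamma_1>0$ (since we are in the branch $\gamma_0=0$ of the section), and by the earlier Theorem in the $\gamma_0,\gamma_1>0$ case the analysis of induced representations of $B$ with positive $\gamma$-parameter produces a ${\rm II}_1$-factor whose trace is computed by the character formula \eqref{character_formula}. Concretely, I would exhibit a candidate trace vector in the induced space — the ``diagonal'' vector $\xi$ supported on the identity coset and equal to $\,^1\!\!\lambda$-trace-vector $\otimes\,\xi_{\alpha\beta\gamma}$ — and verify by direct computation using the coset-representative description that $(\,^k\!\pi(b)\xi,\xi)$ equals ${\rm Irr}_{\,^0\!\lambda}$-trace times $\chi_{\alpha\beta\gamma}^\sigma$ evaluated on the $B_{k\infty}$-part, with the crucial cancellation happening precisely because the off-diagonal cosets contribute nothing to the diagonal matrix element and the induced character of $\pi_{\alpha\beta\gamma}^\sigma$ from $B_{n\infty}$ up to $B_{k\infty}$ reproduces $\chi_{\alpha\beta\gamma}^\sigma$ (this is where $\gamma_1>0$, equivalently $\sum\alpha_i+\sum\beta_i<1$, is used: the ``spillover'' mass $\gamma_1$ absorbs the contribution of the finitely many indices $k+1,\dots,n$). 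This shows the GNS representation of $\chi_{\alpha\beta\gamma}^\sigma$ (tensored with the finite-dimensional ${\rm Irr}_{\,^0\!\lambda}$) is a subrepresentation of $\,^k\!\pi$ with full support.

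Third, I would upgrade ``subrepresentation with a trace vector'' to ``quasi-equivalent to a ${\rm II}_1$-factor representation.'' For this I would use a center-out argument analogous to the one in the proof for {\bf Example 4}: the operators from $B_{k\infty}\cap B_{n\infty}'$-type subgroups and the commutant operators coming from the induced structure (the right action on cosets) together generate enough of the commutant to show that any central element of $\,^k\!\pi(B_kB_{k\infty})''$ restricts to a scalar on the cyclic subspace generated by $\xi$, hence is scalar; since $\pi_{\alpha\beta\gamma}^\sigma$ is a ${\rm II}_1$-factor representation of $B$ and ${\rm Irr}_{\,^0\!\lambda}$ is finite-dimensional, the tensor product $\,{\rm Irr}_{\,^0\!\lambda}\otimes\pi_{\alpha\beta\gamma}^\sigma$ generates a ${\rm II}_1$-factor, and quasi-equivalence of $\,^k\!\pi$ with it follows from coincidence of the asymptotical character (Proposition \ref{asymp_char}) and Corollary \ref{stable_quasi}.

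The main obstacle I anticipate is the explicit combinatorial bookkeeping in the second step: one must track how the cocycle $\Omega_{kn}$ and the partition data $\,^1\!\!\lambda$ on the $n-k$ ``extra'' coordinates wash out in the weak limit, and verify that the induced character really is $\chi_{\alpha\beta\gamma}^\sigma$ with \emph{unchanged} parameters rather than some perturbation. The clean way to handle this is to reduce, via the asymptotical character, to checking the identity $\chi_{\alpha\beta\gamma}^\sigma$ on quasi-cycles whose support lies in $\overline{n+1,\infty}$, where the induced and original representations literally agree, and then to invoke multiplicativity \eqref{II_1-mult_general} together with the fact (used throughout the paper) that $\pi_{\alpha\beta\gamma}^\sigma$ restricted to $B_{n\infty}$ is already a factor representation with the same parameters — so no genuinely new computation beyond what Proposition \ref{Prop_Ind} and the $\gamma_0,\gamma_1>0$ theorem already supply is needed; the work is in assembling these pieces and confirming the $\rm{II}_1$-factoriality via the commutant computation.
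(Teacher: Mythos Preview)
Your first step --- factoring off the $B_k$-action to reduce to showing that $\widetilde{\Gamma}_k={\rm Ind}^{B_{k\infty}}_{B_{kn}B_{n\infty}}\Xi_1$ is quasi-equivalent to $\pi_{\alpha\beta\gamma}^\sigma$ --- is correct and is exactly how the paper begins (equation \eqref{Gamma}). Factoriality of $\widetilde{\Gamma}_k$ is already supplied by Theorem \ref{factor_ind}; you do not need a separate commutant computation.

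The gap is in your second step. The ``diagonal'' vector $\xi$ supported on the identity coset does \emph{not} give the character $\chi_{\alpha\beta\gamma}^\sigma$: by the very argument you cite (off-diagonal cosets contribute nothing), $(\widetilde{\Gamma}_k(g)\xi,\xi)$ vanishes for every $g\in B_{k\infty}\setminus B_{kn}B_{n\infty}$ and equals the character of $\Xi_1$ on the inducing subgroup. This is the functional $\,^k\!\phi$ of \eqref{state on factor}, not the target trace. The nontrivial content is to show that $\,^k\!\phi$ arises as $\,^k\!{\rm tr}(D\,\cdot)$ for some positive $D$ in the known ${\rm II}_1$-factor $\,^k\!M=\pi_{\alpha\beta\gamma}^\sigma(B_{k\infty})''$; once you have this, quasi-equivalence follows because the GNS representation of a normal state on a ${\rm II}_1$-factor is quasi-equivalent to the standard one. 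Your plan to use the asymptotical character (Proposition \ref{asymp_char}) instead is circular: matching asymptotical characters forces quasi-equivalence only after you know both sides are finite type, which is precisely what is at issue.

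The paper constructs $D$ explicitly from the spectral projections $E_j(0)$ of the Okounkov operators $\mathcal{O}_j$ (Lemma \ref{spectral_decomposition}). When $\gamma_0=0$ one has $F_j(0)=0$, so by \eqref{E_k_lambda_z} the projection $E_j(0)$ satisfies $E_j(0)\,\Pi_{\alpha\beta\gamma}^\sigma(\zeta^{(j)})=(-1)^{\zeta_j}E_j(0)$ --- it carries the sign character $\Omega_{kn}$ on the coordinates $k+1,\dots,n$. Setting $\,^k\!\xi_{reg}=\gamma_1^{k-n}\prod_{j=k+1}^n E_j(0)$ and combining with the projection $F_{\,^1\!\!\lambda}$ onto the $\,^1\!\!\lambda$-isotypic component, one checks via Lemmas \ref{multiplicative_lemma} and \ref{Main_lemma21} that $\,^k\!{\rm tr}(\,^k\!\xi_{reg}F_{\,^1\!\!\lambda}\,\cdot\,)=\,^k\!\phi$. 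The hypothesis $\gamma_1>0$ enters exactly here: by \eqref{normalization}, $\,^k\!{\rm tr}(E_j(0))=\gamma_1$, so $D\neq 0$. Your intuition that ``the spillover mass $\gamma_1$ absorbs the extra indices'' is pointing at this, but the mechanism is the spectral projection at $0$ of $\mathcal{O}_j$, and without it you cannot manufacture the sign character $\Omega_{kn}$ or kill the $\,^1\!\!\lambda$-dependence inside the target factor.
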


\begin{Th}\label{finite factor}
Let $\Pi={\rm Ind}_{B_{\infty}^{(n)}}^B \,\,^n\!\pi$ and $\breve\Pi={\rm Ind}_{B_{\infty}^{(n)}}^B \,\,^n\!\breve\pi$ be the stable factor-representations of $B$, where $\,^n\!\pi$ and $\,^n\!\breve\pi$ are the finite type factor representations of $B_\infty^{(n)}$.   Suppose that $\chi^{as}_{_\Pi}=\chi^{as}_{_{\breve\Pi}}=\chi_{\alpha\beta\gamma}^\sigma$. If $\gamma_0=0$ and $\gamma_1>0$ then following holds:
\begin{itemize}
  \item 1) if $k\geq0$ then $\Pi$ and $\breve\Pi$ are ${\rm II}_\infty$-factor-representations (see Theorem \ref{II_infty_case});
  \item 2) the representations $\Pi$ and $\breve{\Pi}$ are quasi-equivalent if and only if the corresponding representations $\,^k\!\pi$ and $\,^k\!\breve{\pi}$, defined by Theorem \ref{finite factor}, are quasi-equivalent; more precisely, the asymptotical characters and corresponding to $\Pi$ and $\breve\Pi$   partitions $\,^0\!\lambda$ and $\,^0\!\breve\lambda$ are the full system of the quasi-equivalence  invariants (see Theorem \ref{quasi equivalence gamma=0 gamma=1}).
\end{itemize}
\end{Th}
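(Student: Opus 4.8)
The plan is to reduce the classification in the case $\gamma_0 = 0$, $\gamma_1 > 0$ to the corresponding classification for representations of the smaller group $B_\infty^{(k)} = B_k B_{k\infty}$, where $k$ is the length of the partition $\,^0\!\lambda$ attached to $\Pi$ (equivalently, the number of ``type-$\gamma_1$'' cycles that the finite-type piece $\,^n\!\pi$ carries). Concretely, writing $\Pi = {\rm Ind}_{B_\infty^{(n)}}^B\,^n\!\pi$ and using the decomposition of ${\rm Ind}_{B_\infty^{(n)}}^B\,^n\!\pi$ as an induction in stages through $B_k B_{k\infty}$ displayed just above the statement, we have $\Pi = {\rm Ind}_{B_k B_{k\infty}}^B\, \,^k\!\pi$ with $\,^k\!\pi$ the ${\rm II}_1$-factor-representation of $B_\infty^{(k)}$ identified in the preceding theorem, namely $\,^k\!\pi(zsb) = {\rm Irr}_{\,^0\!\lambda}(s)\otimes \pi_{\alpha\beta\gamma}^\sigma(b)$. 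The central-depth result (Theorem \ref{result_ind}) and Corollary following it guarantee that this presentation as an induced representation is canonical: ${\rm cd}\,\Pi = k$ and the support projection of any central-invariant functional intertwines $\Pi$ with ${\rm Ind}$ of $\Pi_E$, which is quasi-equivalent to $\,^k\!\pi$.

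The first half, part (1), that $\Pi$ (and $\breve\Pi$) is ${\rm II}_\infty$, will be proved by the same mechanism as in Example 4 and in Theorem \ref{alpha+beta=1}: because $\gamma_1 > 0$, the representation $\pi_{\alpha\beta\gamma}^\sigma$ of $B_{k\infty}$ contains the sign-character-type vector on which the operators $\Pi(1^{(j)})$, $j > k$, act with eigenvalue fluctuating between $+1$ and $-1$ along ${\rm Ad}$-convergent sequences; equivalently the spectral projections $\widetilde p_j = \frac{I - \Pi(1^{(j)})}{2}$ for $j>k$ are nonzero, mutually orthogonal and mutually equivalent (conjugation by $\Pi$ of transpositions moves one to another), and the ${\rm II}_1$-factor $\,^k\!\pi$ sits as a corner. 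A center-valued argument exactly parallel to the proof in Example 4 — reduce a central element to each corner, use that the corner algebra is a ${\rm II}_1$-factor, and use equivalence of the corners to conclude the central element is scalar — shows $M = \Pi(B)''$ is a factor; the infinite family of equivalent finite projections forces it to be ${\rm II}_\infty$ rather than ${\rm II}_1$. This is routine given the earlier examples.

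The substance is part (2), the quasi-equivalence criterion. The ``if'' direction is soft: if $\,^k\!\pi$ and $\,^k\!\breve\pi$ are quasi-equivalent then ${\rm Ind}$ preserves quasi-equivalence of representations of $B_k B_{k\infty}$ — this is a general fact about induction and the commuting algebra $\Pi^{\prime}(\mathfrak{S}_\infty)$-type operators, of the kind used in Proposition \ref{Prop_example_6} and in Theorem \ref{quasi-equivalence _in_case_alpha+beta=1}. For the ``only if'' direction I would argue contrapositively using the central-depth invariant together with the asymptotical character. The asymptotical character $\chi^{as}_{\Pi} = \chi_{\alpha\beta\gamma}^\sigma$ is a quasi-equivalence invariant (Proposition \ref{asymp_char}), so $\Pi \sim \breve\Pi$ forces the $(\alpha,\beta,\gamma,\sigma)$ data to agree; hence the only remaining freedom is in the partition $\,^0\!\lambda$ versus $\,^0\!\breve\lambda$ (the partition $\,^1\!\lambda$ having been ``absorbed'' into the type-$\alpha,\beta$ part by the induction in stages, since $\gamma_1>0$ makes the $\mathfrak{S}_{kn}$-piece indistinguishable from a piece of $\pi_{\alpha\beta\gamma}^\sigma$ — this absorption is exactly the content of the preceding theorem). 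So it suffices to show: if $\,^0\!\lambda \neq \,^0\!\breve\lambda$ then $\Pi \not\sim \breve\Pi$. The cleanest route is to extract $\,^0\!\lambda$ intrinsically from $\Pi$. Fix a nonzero $\omega \in M_*^+({\rm cd}\,\Pi)$ with support $E$; by the Corollary $E \in M \cap \Pi(B_\infty^{(k)})'$, and the corner $E M E \cong \Pi_E(B_\infty^{(k)})''$ is a ${\rm II}_1$-factor whose restriction to $B_k = \mathbb{Z}_2^k \rtimes \mathfrak{S}_k$ realizes the finite-dimensional representation $\mathfrak{Ir}_{\,^0\!\lambda\,^1\!\lambda}$ tensored with a factor part; the partition $\,^0\!\lambda$ is then recovered as the $\mathfrak{S}_k$-isotype appearing in $E\mathcal H_\Pi$ after quotienting by the type-$\gamma_1$ sign character $\Omega_{kk}$ — formally, decompose $\Pi_E|_{B_k}$ into irreducibles $\mathfrak{Ir}_{\,^0\!\mu\,^1\!\mu}$ and observe that the multiplicities, read off from traces of $\Pi_E(t)$ for $t \in B_k$ against the normalized trace of $EME$, are invariants of the quasi-equivalence class of $\Pi_E$, hence of $\Pi$. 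The main obstacle will be making this ``read off $\,^0\!\lambda$'' step airtight: one must check that the isomorphism of von Neumann algebras implementing $\Pi\sim\breve\Pi$ carries the canonical corner $E$ (equivalently the minimal central-invariant support) of one to that of the other up to inner perturbation, so that the $B_k$-isotype data really transports — this is where the uniqueness clause of Theorem \ref{result_ind}(b) and the minimality in the definition of ${\rm cd}$ do the work, and I expect the bookkeeping around the $\gamma_1 > 0$ absorption of $\,^1\!\lambda$ to be the delicate point.
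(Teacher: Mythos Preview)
Your outline for part~(1) contains a genuine error: the projections $\widetilde p_j = \tfrac12\bigl(I-\Pi(1^{(j)})\bigr)$ for $j>k$ are \emph{not} mutually orthogonal. For $j\neq j'$ the elements $1^{(j)}$ and $1^{(j')}$ have disjoint supports, so by multiplicativity of the asymptotic character (or directly in the ${\rm II}_1$ corner) one computes ${\rm tr}\bigl(\widetilde p_j\widetilde p_{j'}\bigr)={\rm tr}(\widetilde p_j)\,{\rm tr}(\widetilde p_{j'})>0$. The analogy with Example~4 breaks down because there the representation $T$ on $l^2(\mathbb N)$ is very special: $T(1^{(n)})$ has $-1$-eigenspace exactly the $n$-th basis line. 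The paper's proof of Theorem~\ref{II_infty_case} instead uses, for $m\leq k$, the asymptotic transpositions $\mathcal O_m = w\text{-}\lim_j \Pi((m\;j))$ and their kernel projections $E_m(0)$, and sets $Q_m=\tfrac12\,E_m(0)\bigl(I+\Pi(1^{(m)})\bigr)$. The key computation \eqref{projection_on_unit}--\eqref{product_of_Q_m} shows that $Q=\prod_{m=1}^k Q_m$ is exactly the projection onto the fibre at $\widetilde e$ in $l^2\bigl(L^2(\,^k\!M,\,^k\!{\rm tr}),X\bigr)$; then $\{\Pi(\mathfrak s_x)\,Q\,\Pi(\mathfrak s_x)^{-1}\}_{x\in X}$ are the pairwise orthogonal equivalent projections you need. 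The factor $E_m(0)$ is essential here and has no analogue in your $\widetilde p_j$.

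For part~(2), your ``only if'' argument has the gap you yourself flag: if $E$ is merely the support of an arbitrary $\omega\in M_*^+(k)$, there is no a~priori reason a quasi-equivalence isomorphism $\theta$ should carry $E$ to the corresponding $\breve E$, even up to inner conjugacy. The paper sidesteps this entirely by building $E$ and $\breve E$ from the same explicit formula $E=\prod_{m=1}^k Q_m$ in terms of $\mathcal O_m$ and $\Pi(1^{(m)})$; since $\theta(\Pi(b))=\breve\Pi(b)$ for all $b$, one gets $\theta(\mathcal O_m)=\breve{\mathcal O}_m$, hence $\theta(E_m(0))=\breve E_m(0)$ and $\theta(E)=\breve E$ automatically. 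The corner representations $\Pi_E$, $\breve\Pi_{\breve E}$ are then quasi-equivalent by restriction of $\theta$, and they are quasi-equivalent to $\,^k\!\pi$, $\,^k\!\breve\pi$ by construction. No transport-of-support argument is needed.
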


\subsection{The properties of the representations ${\rm Ind}^B_{B_\infty^{(n)}}\,^n\!\pi$.}\label{Properties induced repr}
Here we study the representations of $B$ of the form ${\rm Ind}^B_{B_\infty^{(n)}}\,^n\!\pi$, where $\,^n\!\pi$ is the same as in section \ref{repr_subgroup}.
\begin{Th}\label{factor_ind}
${\rm Ind}^B_{B_\infty^{(n)}}\,^n\!\pi$ is a stable factor-representation.
\end{Th}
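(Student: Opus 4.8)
The plan is to prove two things: that $\Pi = {\rm Ind}^B_{B_\infty^{(n)}}\,^n\!\pi$ is a \emph{stable} representation, and that it is a \emph{factor} representation. Stability is the easier half and should be dispatched first: by the Remark preceding the statement, stability of an induced representation follows directly from Definition \ref{stable_Def} together with the explicit realization of ${\rm Ind}^B_{B_\infty^{(n)}}\,^n\!\pi$. Concretely, I would realize the induced space as $\ell^2$-sections over coset representatives $B/B_\infty^{(n)}$ (equivalently, over the homogeneous set indexed by the ordered $n$-tuples from $\mathbb{N}$), with $B$ acting by the usual combination of permuting the sections and applying a cocycle valued in $\,^n\!\pi(B_\infty^{(n)})$. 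For a finitely supported vector $\eta$ and a group element $g$ close to the identity in the topology generated by the sets ${\rm Ad}^{-1}(\mathfrak{U}_h)$, the element $g$ fixes the finitely many cosets on which $\eta$ is supported and acts there through a finite-type (hence \emph{stable}, by the obvious-example remark after Definition \ref{stable_Def}, or by $\mathrm{II}_1$/finite-type stability) piece; one then estimates $\|\Pi(g)\eta - \eta\|$ and $\|\omega_\eta\circ{\rm Ad}\,g - \omega_\eta\|$ coset by coset. Since $\,^n\!\pi$ is of finite type, the states $\omega_{\,^n\!\eta}$ on $\mathbf{C}^*[B_\infty^{(n)}]$ are stable, and this local stability on each coset assembles to stability of $\omega_\eta$ for $\Pi$. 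This gives the "stable" part.

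For the factor part, the strategy is to compute the center $Z = M \cap M'$ where $M = \Pi(B)^{\prime\prime}$, and show $Z = \mathbb{C}I$. I would exploit the "diagonal" projections: let $E_F$ be the projection onto sections supported on cosets whose underlying $n$-subset equals a fixed finite $F \subset \mathbb{N}$. These projections are in $M$ (they can be written using products of the operators $\frac{I \pm \Pi(1^{(j)})}{2}$ adapted to the normal subgroup $\;_0\mathbb{Z}_2^\infty$, exactly as in the computation in Example 4 and in the proof of Theorem \ref{result_ind}), and for $s \in B$ one has $\Pi(s) E_F \Pi(s^{-1}) = E_{sF}$. A central element $\mathfrak{C}$ must commute with all $E_F$, hence is block-diagonal; its corner $E_F \mathfrak{C} E_F$ lies in the center of $E_F M E_F$. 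Now $E_F M E_F$ is (up to the finite-dimensional multiplicity coming from how many coset reps have underlying set $F$) the von Neumann algebra generated by a copy of $\,^{|F|}\!\pi$ of the stabilizer $B_\infty^{(|F|)}$ — a \emph{finite-type factor} by hypothesis — so its center is scalar, giving $E_F\mathfrak{C}E_F = c_F E_F$. The transitivity of $B$ on finite subsets of fixed cardinality forces $c_F = c_{F'}$ when $|F| = |F'|$, and one must still rule out different scalars for different cardinalities: this is handled by noting that cosets of different cardinalities are linked inside a single orbit of $B$ acting on the induced space (an element of $B$ moving between an $n$-set and... no — cardinality is preserved, so instead) — more precisely one shows directly that the only cardinality occurring is a single fixed one determined by $\,^n\!\pi$ when $\,^n\!\pi$ is \emph{factorial}, because the induced representation from a genuinely $B_\infty^{(n)}$-representation lives on cosets all of which have underlying set of size $n$. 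Hence $\mathfrak{C} = cI$ and $M$ is a factor.

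I expect the main obstacle to be the rigorous identification of the corner algebra $E_F M E_F$ with something built from $\,^n\!\pi$ alone — i.e.\ showing that compressing $\Pi(B)^{\prime\prime}$ by $E_F$ does not produce operators outside $\,^n\!\pi(B_\infty^{(n)})^{\prime\prime}$ tensored with a matrix block, and that conversely all of $\,^n\!\pi(B_\infty^{(n)})^{\prime\prime}$ is recovered. This requires care because $E_F$ is a sum over the several coset representatives sharing the underlying set $F$, and the group elements stabilizing $F$ setwise (but permuting those representatives and acting by the cocycle) must be analyzed; the cleanest route is to further compress by a \emph{minimal} diagonal projection $e$ onto a single coset, check $eMe = e\,\Pi(B_\infty^{(n)})^{\prime\prime}e \cong \,^n\!M$ is a factor, verify $e$ has central support $I$ in $M$ (using that $B$ acts transitively on the cosets), and conclude that $M$ is a factor iff $eMe$ is — which it is, being quasi-equivalent to $\,^n\!M$. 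Handling the cocycle bookkeeping and the central-support claim carefully is where the real work lies; the rest is routine.
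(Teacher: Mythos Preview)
Your treatment of stability is fine and matches the paper's handling (via the Remark preceding the statement). The factor argument, however, has a genuine gap: you claim the coset projections $E_F$ lie in $M=\Pi(B)''$, built from products of $\tfrac{1}{2}(I\pm\Pi(1^{(j)}))$. This fails in general. The operator $\Pi(1^{(j)})$ acts on the fiber over $x$ by ${}^n\pi\!\left(1^{(\mathfrak{s}_x^{-1}(j))}\right)$, and the spectral data of the abelian family $\{\Pi(1^{(j)})\}_j$ does not separate fibers. Decisively, Theorem~\ref{II_1_theorem} shows that when $\gamma_0>0$ and $\gamma_1>0$ the representation $\Pi$ is of type ${\rm II}_1$; if the single-coset projections $P_x$ belonged to $M$ they would give infinitely many pairwise orthogonal, pairwise equivalent nonzero projections in a ${\rm II}_1$ factor --- impossible. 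Your fallback via a minimal $e$ with central support $I$ has the same defect, and $P_x\notin M'$ either (since $\Pi(g)P_x$ lands in the $gx$-fiber for $g\notin B_\infty^{(n)}$). In the ${\rm II}_\infty$ regimes the paper does eventually exhibit $P_{\tilde e}\in M$, but this requires the asymptotic transpositions $\mathcal{O}_m$ and their kernel projections (see \eqref{product_of_Q_m} and \eqref{indicator_formula}), not merely the $\Pi(1^{(j)})$.

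The paper's proof avoids locating $P_x$ in $M$ or $M'$ altogether. For a central $\mathcal{C}$ one writes $P_x\mathcal{C}P_y=m_{xy}E_{xy}$; the fiberwise operators $\widetilde{m'}$ (for $m'\in{}^nM'$) \emph{do} lie in $M'$, and commuting $\mathcal{C}$ with them forces $m_{xy}\in{}^nM$, whence $m_{\tilde e\tilde e}$ is scalar by factoriality of ${}^nM$. Commuting $\mathcal{C}$ with $\Pi(g)$ for $g\in B_\infty^{(n)}$ then shows $\|m_{x\tilde e}\|_{L^2({}^nM,{}^n{\rm tr})}$ is constant along each $B_\infty^{(n)}$-orbit in $X$; every such orbit other than $\{\tilde e\}$ is infinite, while $\sum_x\|m_{x\tilde e}\|_{L^2}^2=\|\mathcal{C}\xi\|^2<\infty$, forcing $m_{x\tilde e}=0$ for $x\neq\tilde e$. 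Cyclicity of $\xi$ then gives $\mathcal{C}=c_{\tilde e\tilde e}I$. The ingredient you are missing is precisely this $\ell^2$-versus-infinite-orbit mechanism, which replaces any need for the coset projections to live in $M$.
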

\begin{proof}
Without loss of generality we can to assume that $\,^n\!\pi$ is realized on $L^2\left( \,^n\!M, \,^n\!{\rm tr} \right)$ by the operators of left multiplication and $\,^n\!\xi$ is the identity element   of factor $\,^n\!M$; i.e. $\,^n\!\xi=I$.
Denote by $X$ the space of left cosets of the subgroup $B_\infty^{(n)}$ in $B$. It is easily seen that there exists a map $X\ni x\stackrel{\mathfrak{s}}{\mapsto}\mathfrak{s}(x)\in \mathfrak{S}_\infty\subset B$ with the properties $x=\mathfrak{s}(x)B_\infty^{(n)}$ and $\mathfrak{s}\left( B_\infty^{(n)} \right)=e$, where $e$ is identity  of $\mathfrak{S}_\infty$. Denote by $\widetilde{g}$ the class in $X$ containing $g\in B$.

The operators of the representation $\Pi={\rm Ind}^B_{B_\infty^{(n)}}\,^n\!\pi$ act on $v\in \mathcal{H}_\Pi=l^2\left(L^2\left( \,^n\!M, \,^n\!{\rm tr} \right),X\right)$ by the formula
\begin{eqnarray}\label{ind_action}
\left(\Pi(g)v\right)(x)=c(g,x)v\left(g^{-1}x\right),
\end{eqnarray}
where $c(g,x)= \,^n\!\pi\left(\left( \mathfrak{s}(x) \right)^{-1}g\;\mathfrak{s}\left(g^{-1}x\right)\right)$. Notice that $\left( \mathfrak{s}(x) \right)^{-1}g\;\mathfrak{s}\left(g^{-1}x\right)$ lies in $B_\infty^{(n)}$.

 Set
\begin{eqnarray}\label{trace_vector}
\xi(x)=\left\{
\begin{array}{rl}
I, \text{ if } x=\widetilde{e}\\
0,\text{ if } x\neq\widetilde{e}.
\end{array}
\right.
\end{eqnarray}
Since $\,^n\!\xi=I$ is cyclic in  $L^2\left( \,^n\!M, \,^n\!{\rm tr} \right)$, then  $\left[\Pi(B)\xi \right]=\mathcal{H}_\Pi$.

Every  $m^\prime\in\,^n\!M^\prime$ defines operator $\widetilde{m^\prime}\in \Pi(B)^\prime$ as follows
\begin{eqnarray}\label{const_commutant}
\left(\widetilde{m^\prime}\eta\right)(x)=m^\prime\eta(x).
\end{eqnarray}
Now we prove that any operator  $\mathcal{C}\in \left( \Pi(B) \right)^{\prime\prime}\cap\left( \Pi(B) \right)^{\prime}$ acts as a scalar operator (a scalar multiple of the identity).

Let $P_y$ be an orthogonal projection of  $l^2\left(L^2\left( \,^n\!M, \,^n\!{\rm tr} \right),X\right)$ onto subspace $\left\{v\in l^2\left(L^2\left( \,^n\!M, \,^n\!{\rm tr} \right),X\right): v(x)=0, \text{ if } x\neq y\right\}$. Set $\mathcal{C}_{xy}=P_x\mathcal{C}P_y$. Since $\,^n\!M$ is a factor and $\widetilde{m^\prime}$ lies in $\left( \Pi(B) \right)^{\prime}$ for every $m^\prime\in\,^n\!M^\prime$, it follows from  (\ref{ind_action}) that
\begin{eqnarray}\label{matrix_el_scalar}
\mathcal{C}_{\tilde{e}\tilde{e}}=c_{\tilde{e}\tilde{e}}P_{\tilde{e}\tilde{e}}, \text{ where } c_{\tilde{e}\tilde{e}}\in\mathbb{C}.
\end{eqnarray}
Define operator  $E_{yz}$ $(y,z\in X)$  on $v\in l^2\left(L^2\left( \,^n\!M, \,^n\!{\rm tr} \right),X\right)$ as follows
\begin{eqnarray*}
\left( E_{yz}v\right)(x)=\left\{
\begin{array}{rl}
v(z), \text{ if } x=y\\
0,\text{ if } x\neq y.
\end{array}
\right.
\end{eqnarray*}
Then $\mathcal{C}_{yz}=P_y\mathcal{C}P_z=m_{yz}E_{yz}$, where $m_{yz}$ is a bounded operator in $L^2\left( \,^n\!M, \,^n\!{\rm tr} \right)$. Hence, using (\ref{const_commutant}) and the equalities $\mathcal{C}\widetilde{m^\prime}=\widetilde{m^\prime}\mathcal{C}$, $m'\in\,^n\!M'$, we have
\begin{eqnarray*}
m_{yz}\,m'=m'\,m_{yz}~\text{for all }~ m'\in\,^n\!M' ~\text{ and }~ y,z\in X.
\end{eqnarray*}
Therefore,
\begin{eqnarray}
m_{yz}\in \,^n\!M ~\text{for all }~ y,z\in X.
\end{eqnarray}
Further, using the relation $\Pi(g)\mathcal{C}\Pi(g^{-1})v=\mathcal{C}v$, where $g\in B$, $v\in l^2\left(L^2\left( \,^n\!M, \,^n\!{\rm tr} \right),X\right)$, we obtain
\begin{eqnarray*}
c(g,x)\sum\limits_{y\in X} m_{\left( g^{-1}x \right)y}\;c\left(g^{-1},y\right)v(gy)=\sum\limits_{y\in X} m_{xy}v(y).
\end{eqnarray*}
Therefore, if $v(x)=0$ for $x\neq \tilde{e}$, then
\begin{eqnarray*}
c(g,x)\;m_{\left( g^{-1}x \right)\left(g^{-1}\tilde{e}\right)}\;c\left(g^{-1},g^{-1}\tilde{e}\right)\;
v(\tilde{e})=m_{x\tilde{e}}v(\tilde{e}).
\end{eqnarray*}
Hence, in the case $g\in B_\infty^{(n)}$, we have
\begin{eqnarray*}
c(g,x)\;m_{\left( g^{-1}x \right)\tilde{e}}\;\,^n\!\pi\left(g^{-1} \right)v(\tilde{e})=m_{x\tilde{e}}\;v(\tilde{e}).
\end{eqnarray*}
Thus we conclude that
\begin{eqnarray}\label{equality}
\left\| m_{x\tilde{e}}\;\xi(\tilde{e})\right\|_{L^2\left( \,^n\!M, \,^n\!{\rm tr} \right)}^2=\left\| m_{x\tilde{e}}\right\|_{L^2\left( \,^n\!M, \,^n\!{\rm tr} \right)}^2
=  \left\| m_{\left( g^{-1}x \right)\tilde{e}} \right\|_{L^2\left( \,^n\!M, \,^n\!{\rm tr} \right)}^2\;
\end{eqnarray}
for all  $g\in B_\infty^{(n)} $ and $ x\in X$.
Denote by $X/B_\infty^{(n)}$ the set of all orbits of $X$ under the action of $B_\infty^{(n)}$ on $X$:
\begin{eqnarray*}
X\ni x\mapsto tx\in X, t\in B_\infty^{(n)}.
\end{eqnarray*}
Define function $X/B_\infty^{(n)}\ni o\stackrel{\ell}{\mapsto}\ell(o)\in X$ such that $\ell(o)\in o$. Clearly, $B_\infty^{(n)}\;\ell(o)=o$. It is easy to check that
\begin{eqnarray}\label{infty}
\#o=\infty, \;\text{ if } \tilde{e}\notin o.
\end{eqnarray}
Since
\begin{eqnarray*}
\left( \mathcal{C}\xi,\mathcal{C}\xi \right)_{l^2\left(L^2\left( \,^n\!M, \,^n\!{\rm tr} \right),X\right)}=\sum\limits_{x\in X}\left( m_{x\tilde{e}}, m_{x\tilde{e}}\right)_{L^2\left( \,^n\!M, \,^n\!{\rm tr} \right)}\\
\stackrel{(\ref{matrix_el_scalar})}{=}\left| c_{\tilde{e}\tilde{e}}\right|^2+\sum\limits_{x\in X,x\neq\tilde{e}}\left\| m_{x\tilde{e}}\right\|^2_{L^2\left( \,^n\!M, \,^n\!{\rm tr} \right)}
\stackrel{(\ref{equality})}{=}\left| c_{\tilde{e}\tilde{e}}\right|^2\\
+\sum\limits_{o\in X/B_\infty^{(n)}, \tilde{e}\notin o}(\#o)\left\|m_{\left( \ell(o) \right)\tilde{e}}\;\right\|_{L^2\left( \,^n\!M, \,^n\!{\rm tr} \right)},
\end{eqnarray*}
then, using (\ref{infty}), we obtain that $\mathcal{C}_{x\tilde{e}}=0$ for all $x\neq \tilde{e}$. Therefore, $\mathcal{C}\xi=c_{\tilde{e}\tilde{e}}\xi$. Now our theorem follows from the cyclisity of vector $\xi$.
\end{proof}
\begin{Rem}
By the results of the section \ref{repr_subgroup}, representation $\,^n\!\pi$ has a form $\,^n\!\pi=\mathfrak{Ir}_{\,^0\!\lambda\,^1\!\!\lambda}\otimes\pi_{\alpha\beta\gamma}^\sigma$, where $\mathfrak{Ir}_{\,^0\!\lambda\,^1\!\!\lambda}$ is the irreducible representation of the finite group  $B_n$ and  $\pi_{\alpha\beta\gamma}^\sigma$ is the factor-representation of a finite type of the infinite group $B_{n\infty}$. We recall that character $\chi_{\alpha\beta\gamma}^\sigma$ of the representation $\pi_{\alpha\beta\gamma}^\sigma$ is defined by (\ref{II_1-mult_general}) and (\ref{character_formula}).
\end{Rem}
\begin{Th}\label{II_1_theorem}
If $\gamma_0>0$  and $\gamma_1>0$ then $\Pi={\rm Ind}^B_{B_\infty^{(n)}}\,^n\!\pi$ is   ${\rm II}_1$-factor-representation.
\end{Th}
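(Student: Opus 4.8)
The plan is to prove that $\Pi$ is quasi-equivalent to $\pi_\chi$, the GNS representation of $B$ associated with the character $\chi:=\chi_{\alpha\beta\gamma}^\sigma$, which by the paragraph preceding the theorem is the asymptotical character $\chi^{as}_{_\Pi}$ and in particular an indecomposable character on $B$. Granting this, the conclusion is immediate: the von Neumann algebra $\pi_\chi(B)''$ carries the faithful normal trace $\chi$, hence is finite; since $\gamma_0>0$ and $\gamma_1>0$ force $\gamma_0+\gamma_1=1-\sum\alpha_i-\sum\beta_i>0$, the character $\chi$ is not the normalized character of a finite-dimensional representation, so $\pi_\chi(B)''$ is infinite-dimensional; a finite, infinite-dimensional factor is of type ${\rm II}_1$; and by Corollary~\ref{stable_quasi} stability is a quasi-equivalence invariant, so $\Pi$ is a stable ${\rm II}_1$-factor-representation.

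So everything reduces to the quasi-equivalence $\Pi\sim\pi_\chi$. The hypothesis $\gamma_0,\gamma_1>0$ enters through the following branching fact, the hyperoctahedral counterpart of the statement used for $\mathfrak{S}_\infty$ in \cite{Ver-Nes} (cf. part ii) of the theorem on stable representations of $\mathfrak{S}_\infty$ recalled above): when $\gamma_0>0$ and $\gamma_1>0$, the restriction of $\chi$ to the finite group $B_n$ contains every irreducible representation with strictly positive multiplicity. Indeed, $\chi$ is multiplicative on quasi-cycles, so $\chi(z)=t^{|z|}$ for $z\in\mathbb{Z}_2^m$, where $t=\chi(1^{(1)})=\gamma_0-\gamma_1+\sum_i\alpha_i(-1)^{\sigma(\alpha_i)}+\sum_i\beta_i(-1)^{\sigma(\beta_i)}$; since $1-t\ge 2\gamma_1>0$ and $1+t\ge 2\gamma_0>0$ we get $-1<t<1$, whence every multiplicative character of $\mathbb{Z}_2^m$ occurs in $\chi|_{\mathbb{Z}_2^m}$, while the positivity of $\gamma_0$ resp. $\gamma_1$ additionally supplies a nonzero ``continuous part'' in the $\mathfrak{S}_k$- resp. $\mathfrak{S}_{n-k}$-direction, making every pair of partitions occur. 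Consequently, inside the ${\rm II}_1$ factor $\mathcal{M}_\chi=\pi_\chi(B)''$ the central projection $p_{\rho_0}\in\pi_\chi(B_n)''$ onto the $\mathfrak{Ir}_{\,^0\!\lambda\,^1\!\!\lambda}$-isotypic component is nonzero; because $\chi$ is multiplicative on $B_n\times B_{n\infty}$ and $\chi|_{B_{n\infty}}=\chi_{\alpha\beta\gamma}^\sigma$, the commuting pair $\pi_\chi(B_n)''$, $\pi_\chi(B_{n\infty})''$ is $\chi$-independent, with $\pi_\chi(B_n)''$ the full matrix sum $\bigoplus_\rho M_{d_\rho}(\mathbb{C})$ and $\pi_\chi(B_{n\infty})''$ a copy of $\pi_{\alpha\beta\gamma}^\sigma(B_{n\infty})''$, so $\pi_\chi|_{B_\infty^{(n)}}\cong\bigoplus_\rho\rho\otimes\theta_\rho$ with each $\theta_\rho$ quasi-equivalent to $\pi_{\alpha\beta\gamma}^\sigma$. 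In particular the sector $p_{\rho_0}\mathcal{H}_\chi$ carries a copy of $\,^n\!\pi=\mathfrak{Ir}_{\,^0\!\lambda\,^1\!\!\lambda}\otimes\pi_{\alpha\beta\gamma}^\sigma$, i.e. $\,^n\!\pi$ is quasi-contained in $\pi_\chi|_{B_\infty^{(n)}}$. Since $\mathcal{M}_\chi$ is a simple von Neumann algebra and $\Pi$ is a factor representation, it then suffices to produce a single nonzero bounded $B$-intertwiner between $\Pi={\rm Ind}^B_{B_\infty^{(n)}}\,^n\!\pi$ and $\pi_\chi$: any such forces $\Pi\sim\pi_\chi$.

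Producing that intertwiner is the step I expect to be the main obstacle. Because $B_\infty^{(n)}$ has infinite index in $B$, Frobenius reciprocity between ${\rm Ind}$ and ${\rm Res}$ is unavailable, and the naive candidate $(W\eta)(x)=\Phi\big(\pi_\chi(\mathfrak{s}(x)^{-1})\eta\big)$, built from an intertwiner $\Phi\colon\pi_\chi|_{B_\infty^{(n)}}\to\,^n\!\pi$, has infinite norm, the coset sum $\sum_{x\in X}\|\Phi\,\pi_\chi(\mathfrak{s}(x)^{-1})\eta\|^2$ diverging for a generic (e.g. partial-isometry) choice of $\Phi$. The plan is to handle this inside the concrete induced model $\mathcal{H}_\Pi=\ell^2\big(L^2(\,^n\!M,\,^n\!{\rm tr}),X\big)$ of the proof of Theorem~\ref{factor_ind}: using that $\Pi$ is stable and that, by the asymptotical-character property, $w\text{-}\lim_m\Pi(g_m)=\chi(g)I$ for $g_m\in{\rm Cl}_{m\infty}(g)$, together with the fact \eqref{infty} that every $B_\infty^{(n)}$-orbit in $X$ not containing $\widetilde{e}$ is infinite, one shows that a suitably renormalized $\Phi$ — essentially the partial isometry $p_{\rho_0}\mathcal{H}_\chi\to L^2(\,^n\!M)$ composed with $p_{\rho_0}$ and a vector-state normalization — does yield a finite coset sum, because away from the base coset the translates $\pi_\chi(\mathfrak{s}(x))$ ``mix'' the vector state of $p_{\rho_0}\eta$ toward the bounded functional $\chi$ and therefore contribute summably.

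Equivalently, and perhaps more transparently, one transports the faithful normal trace of $\mathcal{M}_\chi$ back along the resulting equivalence to get a faithful normal tracial state on $\Pi(B)''$, which gives finiteness directly; the bulk of the work is this normality/summability estimate. It is exactly here that having $\gamma_0$ and $\gamma_1$ both strictly positive — two independent directions of slack inside $1-\sum\alpha_i-\sum\beta_i$ — is indispensable, since the absorption of the finite-dimensional factor $\mathfrak{Ir}_{\,^0\!\lambda\,^1\!\!\lambda}$ (and hence the preservation of type ${\rm II}_1$) fails, the type jumping to ${\rm II}_\infty$, as soon as one of $\gamma_0,\gamma_1$ vanishes.
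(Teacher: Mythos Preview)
Your overall strategy coincides with the paper's: both aim to show that $\Pi$ is quasi-equivalent to the GNS representation $\Pi_{\alpha\beta\gamma}^\sigma$ of the character $\chi=\chi_{\alpha\beta\gamma}^\sigma$, which is a ${\rm II}_1$-factor-representation. The paper phrases this as finding a nonnegative $D\in M=\Pi_{\alpha\beta\gamma}^\sigma(B)''$ with ${\rm tr}\big(D\,\Pi_{\alpha\beta\gamma}^\sigma(b)\big)=(\Pi(b)\xi,\xi)$ for all $b\in B$ (equation~\eqref{main_equality}); this is exactly equivalent to your goal of producing a nonzero bounded intertwiner, since $\sqrt{D}$ then implements one.

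The gap is precisely where you flag it: the summability of the coset sum is asserted but not established. Your heuristic that translates ``mix toward $\chi$'' and ``therefore contribute summably'' is not an argument; weak convergence of vector states to $\chi$ gives no control on the rate, and without a rate the $\ell^2$-sum over the infinite coset space $X$ need not converge. Nothing in your outline singles out a concrete $\Phi$ for which the sum is finite, nor explains what structural feature of $\gamma_0,\gamma_1>0$ would force decay.

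The paper closes this gap constructively, and the mechanism is different from what you sketch. Inside $M$ one forms the Okounkov operators $\mathcal{O}_j=w\text{-}\lim_m\Pi_{\alpha\beta\gamma}^\sigma((j\,m))$ and their spectral projections $E_j(0)$ onto ${\rm Ker}\,\mathcal{O}_j$. By Lemma~\ref{lemma_commutative} the $\mathfrak{A}_j$ commute, and $E_j(0)$ splits under $\Pi_{\alpha\beta\gamma}^\sigma(1^{(j)})$ as $F_j(0)+F_j^\perp(0)$ with ${\rm tr}\,F_j(0)=\gamma_0$ and ${\rm tr}\,F_j^\perp(0)=\gamma_1$ (equation~\eqref{normalization}). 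One then sets $D_j=F_j(0)$ for $j\le k$, $D_j=F_j^\perp(0)$ for $k<j\le n$, and builds $\xi_{reg}=\gamma_0^{-k/2}\gamma_1^{(k-n)/2}\prod_{j=1}^n D_j$; the hypotheses $\gamma_0>0$ and $\gamma_1>0$ are needed exactly here, so that both normalizations are finite and $\xi_{reg}\ne 0$. Lemmas~\ref{multiplicative_lemma} and~\ref{Main_lemma21} then give the crucial vanishing: because each $D_j\le E_j(0)$ annihilates $\mathcal{O}_j$, the state ${\rm tr}(\xi_{reg}^2\,\cdot\,)$ vanishes on $\Pi_{\alpha\beta\gamma}^\sigma(b)$ whenever $b\notin B_\infty^{(n)}$, and reproduces $\Omega_{kn}(z)\chi(t)$ on $B_\infty^{(n)}$. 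Averaging with the isotypic projector $F_\rho$ and summing over $B_n/G_{\Omega_{kn}}$ produces the desired $D$. Thus the ``summability'' is not an estimate at all: it is an exact algebraic vanishing coming from $\mathcal{O}_j D_j=0$, and this is the idea your proposal is missing.
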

We first prove  the auxiliary lemmas needed to prove theorem \ref{II_1_theorem}.

Let $\Pi_{\alpha\beta\gamma}^\sigma$ be a ${\rm II}_1$-factor-representation of $B$ in Hilbert space $\mathcal{H}_{\alpha\beta\gamma}^\sigma$ with the same parameters $\alpha$, $\beta$, $\gamma$ as $\pi_{\alpha\beta\gamma}^\sigma$. Denote by ${\rm tr}$ a normal trace on factor $M=\left(\Pi_{\alpha\beta\gamma}^\sigma(B)\right)^{''}$ such that ${\rm tr}(I)=1$. We assume below that $\mathcal{H}_{\alpha\beta\gamma}^\sigma=L^2(M,{\rm tr})$ and  $\Pi_{\alpha\beta\gamma}^\sigma$ acts by the  operators of the left multiplication.
\begin{Rem}Let $g,h\in B$.
  If $l,m\notin ({\rm supp}\,g)\cup ({\rm supp}\,h)$, then $${\rm tr}\left(\Pi_{\alpha\beta\gamma}^\sigma\left( h^* \,(k\;m)\, g\right)  \right)={\rm tr}\left(\Pi_{\alpha\beta\gamma}^\sigma\left( h^* \,(k\;l) g\right)  \right).$$
\end{Rem}
This implies
\begin{Lm}\label{asym_II_1}
For all $k\in\mathbb{N}$ the weak limits
\begin{eqnarray}\label{asymp_transposition}
\mathcal{O}_k=w-\lim\limits_{j\to\infty}\Pi_{\alpha\beta\gamma}^\sigma\left( (k\;j) \right)
\end{eqnarray}
exist and satisfy the relations $\mathcal{O}_k\mathcal{O}_l=\mathcal{O}_l\mathcal{O}_k$ and $\Pi_{\alpha\beta\gamma}^\sigma(s)\mathcal{O}_k\Pi_{\alpha\beta\gamma}^\sigma(s^{-1})=\mathcal{O}_{s(k)}$, $s\in\mathfrak{S}_\infty$.
\end{Lm}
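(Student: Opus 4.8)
The plan is to combine the preceding Remark with a standard density/boundedness argument to extract the weak limit. First I would fix $k$ and consider the sequence of unitaries $\Pi_{\alpha\beta\gamma}^\sigma\left( (k\;j) \right)$, $j\neq k$. By the Remark, for any $g,h\in B$ the inner products $\left( \Pi_{\alpha\beta\gamma}^\sigma\left( (k\;j) \right)\Pi_{\alpha\beta\gamma}^\sigma(g)\xi_{{\rm tr}},\Pi_{\alpha\beta\gamma}^\sigma(h)\xi_{{\rm tr}} \right) = {\rm tr}\left( \Pi_{\alpha\beta\gamma}^\sigma\left( h^*\,(k\;j)\,g \right) \right)$ stabilize as soon as $j\notin({\rm supp}\,g)\cup({\rm supp}\,h)\cup\{k\}$, i.e. the sequence is eventually constant on each such matrix coefficient. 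Since the vectors $\Pi_{\alpha\beta\gamma}^\sigma(g)\xi_{{\rm tr}}$, $g\in B$, span a dense subspace of $L^2(M,{\rm tr})$ and the operators $\Pi_{\alpha\beta\gamma}^\sigma\left( (k\;j) \right)$ are uniformly bounded (they are unitaries), the weak limit $\mathcal{O}_k = w\text{-}\lim_{j\to\infty}\Pi_{\alpha\beta\gamma}^\sigma\left( (k\;j) \right)$ exists; this is the usual $\varepsilon/3$-argument for weak convergence of a bounded net whose matrix coefficients converge on a total set.

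Next I would verify the two algebraic relations by passing to the limit in identities that hold for the transpositions before taking limits. For $\mathcal{O}_k\mathcal{O}_l = \mathcal{O}_l\mathcal{O}_k$ with $k\neq l$: choose indices $i,j$ all distinct from $k,l$ and from each other; then $(k\;i)$ and $(l\;j)$ commute in $B$, so $\Pi_{\alpha\beta\gamma}^\sigma\left( (k\;i) \right)\Pi_{\alpha\beta\gamma}^\sigma\left( (l\;j) \right) = \Pi_{\alpha\beta\gamma}^\sigma\left( (l\;j) \right)\Pi_{\alpha\beta\gamma}^\sigma\left( (k\;i) \right)$. Letting $j\to\infty$ first (weak limit, with the left factor fixed and bounded, so multiplication by a fixed bounded operator is weakly continuous) and then $i\to\infty$ gives $\mathcal{O}_k\mathcal{O}_l=\mathcal{O}_l\mathcal{O}_k$; one has to be a little careful that weak convergence is preserved under left and right multiplication by a single fixed bounded operator, which it is. For the covariance relation, use $s(k\;j)s^{-1} = (s(k)\;s(j))$ in $\mathfrak{S}_\infty$, hence $\Pi_{\alpha\beta\gamma}^\sigma(s)\Pi_{\alpha\beta\gamma}^\sigma\left( (k\;j) \right)\Pi_{\alpha\beta\gamma}^\sigma(s^{-1}) = \Pi_{\alpha\beta\gamma}^\sigma\left( (s(k)\;s(j)) \right)$; as $j$ ranges over $\mathbb{N}$, so does $s(j)$, so letting $j\to\infty$ and using weak continuity of $A\mapsto \Pi_{\alpha\beta\gamma}^\sigma(s)A\Pi_{\alpha\beta\gamma}^\sigma(s^{-1})$ yields $\Pi_{\alpha\beta\gamma}^\sigma(s)\mathcal{O}_k\Pi_{\alpha\beta\gamma}^\sigma(s^{-1})=\mathcal{O}_{s(k)}$.

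The only genuinely delicate point is the existence of the weak limit, and there the subtlety is purely bookkeeping: one needs the matrix coefficients to converge not just on vectors of the form $\Pi_{\alpha\beta\gamma}^\sigma(g)\xi_{{\rm tr}}$ but, via the uniform bound, on all of $L^2(M,{\rm tr})\times L^2(M,{\rm tr})$, and one should note that the relevant functionals are in fact the normal ones on $M$ (since ${\rm tr}$ is normal and the GNS vectors are cyclic), so the limits also identify $\mathcal{O}_k$ as an honest element of $B(L^2(M,{\rm tr}))$ rather than merely a bilinear form. I expect no real obstacle beyond this; the Remark does essentially all the work, and the commutation and covariance relations are formal consequences of the corresponding relations among transpositions together with the weak continuity of one-sided multiplications.
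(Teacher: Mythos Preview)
Your proposal is correct and follows exactly the approach the paper intends: the paper simply writes ``This implies'' after the Remark and states the lemma without further proof, so you have filled in precisely the density/boundedness and iterated-limit details that the paper leaves implicit. Your care about taking the two limits one at a time (so that at each step only a fixed bounded operator multiplies a weakly convergent sequence) is the right way to handle the commutation relation.
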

Denote by $\mathfrak{A}_k$ the $w^*$-algebra generated by $\mathcal{O}_k$ and $\Pi_{\alpha\beta\gamma}^\sigma\left( 1^{(k)} \right)$, where $1^{(k)}=(\underbrace{0,0,\ldots,0}_{k-1},1,0,\ldots)$.

 Next statement is a particular case of the general lemma 15 from \cite{DN}.
 \begin{Lm}\label{lemma_commutative}
 $w^*$-algebra $\mathfrak{A}=\left( \left\{\mathfrak{A}_k\right\}_{k=1}^\infty \right)^{''}$ is commutative.
 \end{Lm}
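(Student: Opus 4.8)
The plan is to establish commutativity of $\mathfrak{A}=\left(\{\mathfrak{A}_k\}_{k=1}^\infty\right)''$ by checking that the generators commute, and then invoking the fact that a von Neumann algebra generated by a self-adjoint commuting family is commutative. The generators of $\mathfrak{A}$ are the operators $\mathcal{O}_k$ (limits of transpositions $\Pi_{\alpha\beta\gamma}^\sigma((k\;j))$) and $\Pi_{\alpha\beta\gamma}^\sigma(1^{(k)})$ for $k\in\mathbb{N}$; these are all self-adjoint (each $(k\;j)$ and each $1^{(k)}$ is an involution in $B$, so their images are self-adjoint unitaries, and a weak limit of self-adjoint operators is self-adjoint), so it suffices to show that any two of them commute. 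There are three types of pair to consider: $\mathcal{O}_k$ with $\mathcal{O}_l$, $\Pi_{\alpha\beta\gamma}^\sigma(1^{(k)})$ with $\Pi_{\alpha\beta\gamma}^\sigma(1^{(l)})$, and the mixed pair $\mathcal{O}_k$ with $\Pi_{\alpha\beta\gamma}^\sigma(1^{(l)})$.

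First, $\mathcal{O}_k\mathcal{O}_l=\mathcal{O}_l\mathcal{O}_k$ is already recorded in Lemma \ref{asym_II_1}. Second, the elements $1^{(k)}$ and $1^{(l)}$ commute in the group $\;_0\mathbb{Z}_2^\infty$, hence $\Pi_{\alpha\beta\gamma}^\sigma(1^{(k)})$ and $\Pi_{\alpha\beta\gamma}^\sigma(1^{(l)})$ commute. Third, for the mixed pair, when $k\neq l$ the group elements $1^{(l)}$ and $(k\;j)$ commute for every $j\neq l$ (the transposition $(k\;j)$ fixes coordinate $l$ provided $j\neq l$), so $\Pi_{\alpha\beta\gamma}^\sigma(1^{(l)})$ commutes with $\Pi_{\alpha\beta\gamma}^\sigma((k\;j))$ for all large $j$, and passing to the weak limit gives $\Pi_{\alpha\beta\gamma}^\sigma(1^{(l)})\mathcal{O}_k=\mathcal{O}_k\Pi_{\alpha\beta\gamma}^\sigma(1^{(l)})$. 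The only genuinely delicate case is $k=l$: here one must show $\mathcal{O}_k$ commutes with $\Pi_{\alpha\beta\gamma}^\sigma(1^{(k)})$, and the group elements $(k\;j)$ and $1^{(k)}$ do \emph{not} commute in $B$ — in fact $(k\;j)\,1^{(k)}\,(k\;j)=1^{(j)}$. So one computes, for large $j$, $\Pi_{\alpha\beta\gamma}^\sigma((k\;j))\,\Pi_{\alpha\beta\gamma}^\sigma(1^{(k)})=\Pi_{\alpha\beta\gamma}^\sigma(1^{(j)})\,\Pi_{\alpha\beta\gamma}^\sigma((k\;j))$, and must argue that $w-\lim_{j\to\infty}\Pi_{\alpha\beta\gamma}^\sigma(1^{(j)})$ exists and that the product converges appropriately. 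Since $1^{(j)}$ is a quasi-cycle whose character value tends to $\gamma_0+\gamma_1$ as $j\to\infty$ (and more precisely, because $1^{(j)}$ escapes to infinity, $\Pi_{\alpha\beta\gamma}^\sigma(1^{(j)})$ converges weakly to a central element, namely a scalar in the factor case — here one uses the asymptotic multiplicativity / the II$_1$ structure exactly as in \cite{DN}), one gets that the limit is a scalar, and combining the two expressions for the limit of the product yields the required commutation relation with $\mathcal{O}_k$.

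I expect the main obstacle to be precisely this $k=l$ mixed case: one needs the convergence $\Pi_{\alpha\beta\gamma}^\sigma(1^{(j)})\to c\cdot I$ weakly (with $c=\gamma_0+\gamma_1$ if one normalizes via the trace, though in a II$_1$ factor-representation the trace vector argument forces a scalar), together with joint weak convergence of the products $\Pi_{\alpha\beta\gamma}^\sigma(1^{(j)})\Pi_{\alpha\beta\gamma}^\sigma((k\;j))$. This is exactly the content that Lemma 15 of \cite{DN} is set up to handle — the general principle being that in a finite factor-representation, sequences of group elements with supports escaping to infinity converge weakly to scalars, and the $w^*$-algebra generated by the "asymptotic" operators attached to a fixed finite set of coordinates is abelian. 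So the proof reduces to: (i) verify the three generator-pair commutation relations, citing Lemma \ref{asym_II_1} for the first and the group relations in $B$ plus weak-limit passage for the others; (ii) invoke the abstract fact that a von Neumann algebra generated by a commuting family of self-adjoint operators is commutative; and (iii) for the one hard pair, reduce to the statement of \cite[Lemma 15]{DN} as the excerpt already indicates. I would present the argument in this order, keeping the routine group-theoretic verifications brief and emphasizing the reduction to the cited lemma for the delicate step.
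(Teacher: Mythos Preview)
Your overall plan matches the paper's exactly: reduce to pairwise commutation of the self-adjoint generators, observe that everything except the pair $\mathcal{O}_k$ versus $A_k:=\Pi_{\alpha\beta\gamma}^\sigma(1^{(k)})$ is immediate (from Lemma~\ref{asym_II_1}, from commutativity of $\;_0\mathbb{Z}_2^\infty$, and from the group relations for $j\neq k$), and focus on $\mathcal{O}_kA_k=A_k\mathcal{O}_k$.

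Where you diverge from the paper is in the mechanism for that hard case. Your sketch uses the group relation $\Pi((k\;j))A_k=A_j\Pi((k\;j))$ and then proposes to argue that $A_j\to c\cdot I$ weakly so that the product converges appropriately. This has a real gap: weak limits do not pass through products, so even if $A_j\to cI$ and $\Pi((k\;j))\to\mathcal{O}_k$ weakly, you cannot conclude anything about $w\text{-}\lim_j A_j\Pi((k\;j))$ without further work. (In fact, rewriting both $\mathcal{O}_kA_k$ and $A_k\mathcal{O}_k$ via the group relation just reproduces the commutator you are trying to control.) The paper avoids this entirely by working inside the trace: it computes $\mathrm{tr}\bigl(\Pi(h^{-1})\mathcal{O}_kA_k\Pi(g)\bigr)$ as $\lim_j\mathrm{tr}\bigl(\Pi(h^{-1})\Pi((k\;j))A_k\Pi(g)\bigr)$, rewrites $\Pi((k\;j))A_k=A_j\Pi((k\;j))$, and then --- this is the step your sketch is missing --- uses trace cyclicity together with the fact that $A_j$ commutes with $\Pi(g)$ and $\Pi(h)$ once $j$ lies outside their supports to move $A_j$ across $\Pi((k\;j))$ \emph{inside the trace}. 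A second application of the group relation gives $\Pi((k\;j))A_j=A_k\Pi((k\;j))$, and the limit yields $\mathrm{tr}\bigl(\Pi(h^{-1})A_k\mathcal{O}_k\Pi(g)\bigr)$. Density of $\{\Pi(g)I\}$ in $L^2(M,\mathrm{tr})$ finishes the proof. No scalar limit of $A_j$ is needed, and no product-of-weak-limits issue arises because everything happens inside a scalar-valued functional.

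Your fallback of citing \cite[Lemma~15]{DN} is certainly legitimate --- the paper itself flags the lemma as a special case of that result --- but the self-contained argument the paper supplies is the trace manipulation above, not the asymptotic-scalar heuristic you outline.
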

\begin{proof}
For completeness, we give below the proof of this lemma.

Denote operator $\Pi_{\alpha\beta\gamma}^\sigma\left( 1^{(j)} \right)$ briefly  by $A_j$. Clearly,
\begin{eqnarray*}
\mathcal{O}_kA_j=A_j\mathcal{O}_k \;\text{ for all }\; j\neq k.
\end{eqnarray*}
Therefore,  to prove our lemma, we must show that
\begin{eqnarray}\label{commutative_relation}
\mathcal{O}_kA_k=A_k\mathcal{O}_k.
\end{eqnarray}
To do this, take an arbitrary pair of the elements $g,h\in B$. Now (\ref{commutative_relation}) follows from the next chain of the equalities
\begin{eqnarray*}
\left(\mathcal{O}_k A_k \Pi_{\alpha\beta\gamma}^\sigma(g),\Pi_{\alpha\beta\gamma}^\sigma(h) \right)_{L^2(M,{\rm tr})}={\rm tr}\left( \Pi_{\alpha\beta\gamma}^\sigma(h^{-1})\; \mathcal{O}_k \,A_k\; \Pi_{\alpha\beta\gamma}^\sigma(g)\right)
\end{eqnarray*}
\begin{eqnarray*}
=\lim\limits_{j\to\infty}{\rm tr}\left( \Pi_{\alpha\beta\gamma}^\sigma(h^{-1})\; \Pi_{\alpha\beta\gamma}^\sigma((k\;j)) \,A_k\; \Pi_{\alpha\beta\gamma}^\sigma(g)\right)
\end{eqnarray*}
\begin{eqnarray*}
\stackrel{\rm Lemma \ref{asym_II_1}}{=}\lim\limits_{j\to\infty}{\rm tr}\left( \Pi_{\alpha\beta\gamma}^\sigma(h^{-1})\; \,A_j\; \Pi_{\alpha\beta\gamma}^\sigma((k\;j))\;\Pi_{\alpha\beta\gamma}^\sigma(g)\right)
\end{eqnarray*}
\begin{eqnarray*}
\stackrel{\rm Lemma \ref{asym_II_1}}{=}\lim\limits_{j\to\infty}{\rm tr}\left( \Pi_{\alpha\beta\gamma}^\sigma(h^{-1})\; \; \Pi_{\alpha\beta\gamma}^\sigma((k\;j))\;A_j\;\Pi_{\alpha\beta\gamma}^\sigma(g)\right)
\end{eqnarray*}
\begin{eqnarray*}
\stackrel{\rm Lemma \ref{asym_II_1}}{=}\lim\limits_{j\to\infty}{\rm tr}\left( \Pi_{\alpha\beta\gamma}^\sigma(h^{-1})\; \;A_k\; \Pi_{\alpha\beta\gamma}^\sigma((k\;j))\;\Pi_{\alpha\beta\gamma}^\sigma(g)\right)
\end{eqnarray*}
\begin{eqnarray*}
\stackrel{\rm Lemma \ref{asym_II_1}}{=}{\rm tr}\left( \Pi_{\alpha\beta\gamma}^\sigma(h^{-1})\; \;A_k\; \mathcal{O}_k\;\Pi_{\alpha\beta\gamma}^\sigma(g)\right)=\left( A_k\; \mathcal{O}_k\;\Pi_{\alpha\beta\gamma}^\sigma(g), \Pi_{\alpha\beta\gamma}^\sigma(h)\right)_{L^2(M,{\rm tr})}.
\end{eqnarray*}
\end{proof}
\begin{Co}\label{Collorary19}
  Let $\mathbb{S}$ be a subset of $\mathbb{N}$, and let $\mathfrak{A}_\mathbb{S}$ be a $w^*$-algebra generated by $\left\{\mathfrak{A}_j \right\}_{j\in\mathbb{S}}$. Take $sz\in B$, where $s\in\mathfrak{S}_\infty$ and $z\in \;_0\mathbb{Z}_2^\infty$, such that ${\rm supp}\,s\in\mathbb{N}\setminus\mathbb{S}$. Then $\Pi_{\alpha\beta\gamma}^\sigma(sz)\;a\;\Pi_{\alpha\beta\gamma}^\sigma\left( zs^{-1} \right)=a$ for all $a\in\mathfrak{A}_\mathbb{S}$.
\end{Co}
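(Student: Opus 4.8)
The plan is to verify the asserted identity on a generating set of $\mathfrak{A}_{\mathbb S}$ and then propagate it by continuity. Write $\Pi$ for $\Pi_{\alpha\beta\gamma}^\sigma$. Recall that $\mathfrak{A}_{\mathbb S}$ is the $w^*$-algebra generated by the operators $\mathcal{O}_j$ and $\Pi(1^{(j)})$, $j\in\mathbb S$, and that conjugation $a\mapsto\Pi(sz)\,a\,\Pi(zs^{-1})=\Pi(sz)\,a\,\Pi(sz)^{-1}$ by the fixed unitary $\Pi(sz)$ (note $\Pi(sz)^{-1}=\Pi(zs^{-1})$ because $z$ is an involution) is a weakly continuous $*$-automorphism of $B(\mathcal{H}_{\alpha\beta\gamma}^\sigma)$. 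Since its fixed-point set is a $w^*$-closed $*$-subalgebra, it suffices to show it fixes each of the above generators; throughout one uses that $\mathrm{supp}\,s\subset\mathbb N\setminus\mathbb S$, so $s(j)=j$ for every $j\in\mathbb S$.

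The generators $\Pi(1^{(j)})$ are immediate: in $B$ one has $(sz)\,1^{(j)}\,(sz)^{-1}=s\bigl(z\,1^{(j)}\,z^{-1}\bigr)s^{-1}=s\,1^{(j)}\,s^{-1}=1^{(s(j))}=1^{(j)}$, using that $\,_0\mathbb Z_2^\infty$ is abelian and $s(j)=j$; applying $\Pi$ gives $\Pi(sz)\,\Pi(1^{(j)})\,\Pi(sz)^{-1}=\Pi(1^{(j)})$.

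For a generator $\mathcal{O}_j$ I would write $\mathcal{O}_j=w\text{-}\lim_{i\to\infty}\Pi((j\;i))$ and pull the (weakly continuous) conjugation inside the weak limit. A routine computation in the semidirect product gives $(sz)(j\;i)(sz)^{-1}=s\bigl(z(j\;i)z^{-1}\bigr)s^{-1}$, and for all sufficiently large $i$ (namely $i\notin\mathrm{supp}\,z\cup\mathrm{supp}\,s$) one has $z(j\;i)z^{-1}=(1^{(j)})^{z_j}(1^{(i)})^{z_j}(j\;i)$, which is left unchanged by conjugation by $s$ since $s$ fixes both $j$ and such $i$. Hence $\Pi(sz)\,\mathcal{O}_j\,\Pi(sz)^{-1}=w\text{-}\lim_{i\to\infty}\Pi\bigl((1^{(j)})^{z_j}(1^{(i)})^{z_j}(j\;i)\bigr)$. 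If $z_j=0$ this limit is precisely $\mathcal{O}_j$. If $z_j=1$, the relation $1^{(i)}(j\;i)=(j\;i)1^{(j)}$ rewrites the group element as $1^{(j)}(j\;i)1^{(j)}$, so, by weak continuity of left and right multiplication by the fixed unitary $\Pi(1^{(j)})$, the limit equals $\Pi(1^{(j)})\,\mathcal{O}_j\,\Pi(1^{(j)})$. Here is the one place Lemma~\ref{lemma_commutative} is essential: $\mathcal{O}_j$ and $\Pi(1^{(j)})$ both lie in $\mathfrak{A}_j\subset\mathfrak{A}$, so they commute, and $\Pi(1^{(j)})^2=I$, whence $\Pi(1^{(j)})\,\mathcal{O}_j\,\Pi(1^{(j)})=\mathcal{O}_j$. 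Thus $\Pi(sz)\,\mathcal{O}_j\,\Pi(sz)^{-1}=\mathcal{O}_j$ in all cases, and the corollary follows.

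The genuinely delicate point is the subcase $z_j=1$: conjugating the approximating transpositions $(j\;i)$ by $z$ produces the extra symmetric factors $\Pi(1^{(j)})$, and recovering $\mathcal{O}_j$ from $\Pi(1^{(j)})\mathcal{O}_j\Pi(1^{(j)})$ is exactly what commutativity of $\mathfrak{A}$ (Lemma~\ref{lemma_commutative}) buys us. Everything else is bookkeeping with the wreath-product multiplication, together with the observation that the weak limit defining $\mathcal{O}_j$ (Lemma~\ref{asym_II_1}) is unchanged when one restricts to a cofinal set of indices $i$.
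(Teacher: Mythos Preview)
Your proof is correct and is precisely the argument the paper leaves implicit: the corollary is stated without proof as an immediate consequence of Lemmas~\ref{asym_II_1} and~\ref{lemma_commutative}, and your verification on the generators $\mathcal{O}_j$ and $\Pi(1^{(j)})$ spells this out. One small shortcut you could take is to split the conjugation as $\Pi(sz)=\Pi(s)\Pi(z)$ and invoke the relation $\Pi(s)\mathcal{O}_j\Pi(s^{-1})=\mathcal{O}_{s(j)}=\mathcal{O}_j$ from Lemma~\ref{asym_II_1} directly, then use Lemma~\ref{lemma_commutative} to see that $\Pi(z)\in\mathfrak{A}$ commutes with $\mathcal{O}_j$; this avoids reopening the weak limit, but your route through the approximants is equally valid.
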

Set  $\mathfrak{S}_{n\infty}=\left\{s\in\mathfrak{S}_\infty: s(j)=j\;\text{ for all }\;j\leq n \right\}$.
Fix  $D_j\in\mathfrak{A}_j$ such that
 \begin{eqnarray}\label{normalise}
 {\rm tr}(D_j)=1.
 \end{eqnarray}
 We conclude from lemma \ref{lemma_commutative} that
 \begin{eqnarray*}
 D_jD_k=D_kD_j\;\text{ for all } j,k.
 \end{eqnarray*}
 Let $\,^n\!D=\prod\limits_{j=1}^n D_j$. Define functional $\,^n\!\varphi$ on ${\rm II}_1$-factor $M$ as follows
 \begin{eqnarray}\label{definition_varphi}
 \,^n\!\varphi\left( m \right)={\rm tr}\left(\,^n\!D\;m\right),\;\;m\in M.
 \end{eqnarray}
 For further convenience, we introduce the axillary sequence $\left\{\,^n\!D_j \right\}_{j=1}^\infty$ as follows
\begin{eqnarray*}
\,^n\!D_j =\left\{
\begin{array}{rl}
D_j, \text{ if } j\leq n\\
I,\text{ if } j>n.
\end{array}
\right.
\end{eqnarray*}
  $\,^n\!D=\prod\limits_{j=1}^\infty \;^n\!D_j$. An important observation here is
 \begin{Lm}\label{multiplicative_lemma}
  If $g,h\in B$ and $\left( {\rm supp}\,g \right)\cap \left( {\rm supp}\,h \right)=\emptyset$, then
  \begin{eqnarray*}
  \,^n\!\varphi\left( \Pi_{\alpha\beta\gamma}^\sigma(gh) \right)=\,^n\!\varphi\left( \Pi_{\alpha\beta\gamma}^\sigma(g) \right)\;\,^n\!\varphi\left( \Pi_{\alpha\beta\gamma}^\sigma(h) \right).
  \end{eqnarray*}
 \end{Lm}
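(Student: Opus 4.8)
\textbf{Proof proposal for Lemma \ref{multiplicative_lemma}.}

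The plan is to reduce the claimed multiplicativity of $\,^n\!\varphi$ to the multiplicativity of the trace ${\rm tr}$ on the factor $M$, which holds on elements supported on disjoint sets because $\chi_{\alpha\beta\gamma}^\sigma$ comes from a factor-representation (formula \eqref{II_1-mult_general}). The key structural input is that the operator $\,^n\!D = \prod_{j=1}^n D_j$ is built from the commuting algebras $\mathfrak{A}_j$, and by Corollary \ref{Collorary19} each $D_j \in \mathfrak{A}_j$ is fixed under conjugation by $\Pi_{\alpha\beta\gamma}^\sigma(sz)$ whenever $j \notin {\rm supp}\,(sz)$. So the heart of the argument is a bookkeeping step: split $\,^n\!D$ according to which factors are "seen" by ${\rm supp}\,g$, which by ${\rm supp}\,h$, and which by neither.

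\medskip
\noindent Concretely, I would argue as follows. Write $\mathbb{S}_g = \{1,\dots,n\}\cap {\rm supp}\,g$, $\mathbb{S}_h = \{1,\dots,n\}\cap {\rm supp}\,h$, and $\mathbb{S}_0 = \{1,\dots,n\}\setminus({\rm supp}\,g\cup{\rm supp}\,h)$; these are pairwise disjoint since $({\rm supp}\,g)\cap({\rm supp}\,h)=\emptyset$. Correspondingly factor $\,^n\!D = D^{(g)}\cdot D^{(h)}\cdot D^{(0)}$ with $D^{(g)}=\prod_{j\in\mathbb{S}_g} D_j$, etc.; all three factors commute with one another by Lemma \ref{lemma_commutative}. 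Now, since ${\rm supp}\,g\subset \mathbb{N}\setminus\mathbb{S}_h$ and ${\rm supp}\,g\subset\mathbb{N}\setminus\mathbb{S}_0$, Corollary \ref{Collorary19} gives $\Pi_{\alpha\beta\gamma}^\sigma(g)$ commutes with $D^{(h)}$ and with $D^{(0)}$; symmetrically $\Pi_{\alpha\beta\gamma}^\sigma(h)$ commutes with $D^{(g)}$ and $D^{(0)}$. Hence, using traciality of ${\rm tr}$ and these commutations,
\begin{eqnarray*}
\,^n\!\varphi\left(\Pi_{\alpha\beta\gamma}^\sigma(gh)\right)={\rm tr}\left(D^{(g)}D^{(h)}D^{(0)}\,\Pi_{\alpha\beta\gamma}^\sigma(g)\Pi_{\alpha\beta\gamma}^\sigma(h)\right)
={\rm tr}\left(\left(D^{(g)}\Pi_{\alpha\beta\gamma}^\sigma(g)\right)\left(D^{(h)}\Pi_{\alpha\beta\gamma}^\sigma(h)\right)D^{(0)}\right).
\end{eqnarray*}
Since $D_j\in\mathfrak{A}_j$ is generated by $\mathcal{O}_j$ and $\Pi_{\alpha\beta\gamma}^\sigma(1^{(j)})$, the operator $D^{(g)}\Pi_{\alpha\beta\gamma}^\sigma(g)$ is (a limit of operators) supported, in the sense relevant to \eqref{II_1-mult_general}, on ${\rm supp}\,g$ together with finitely many extra indices coming from the transpositions $(j\;m)$ in the $\mathcal{O}_j$; similarly for the $h$-block and for $D^{(0)}$ on $\mathbb{S}_0$. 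Choosing the approximants to the $\mathcal{O}_j$'s with disjoint "extra" index sets and appealing to the multiplicativity \eqref{II_1-mult_general} of the trace on elements with disjoint supports, the trace of the product factors as the product of the three traces; then $D^{(0)}$ being normalized, ${\rm tr}(D^{(0)})={\rm tr}\big(\prod_{j\in\mathbb{S}_0}D_j\big)=\prod_{j\in\mathbb{S}_0}{\rm tr}(D_j)=1$ by \eqref{normalise} and commutativity. This yields
\begin{eqnarray*}
\,^n\!\varphi\left(\Pi_{\alpha\beta\gamma}^\sigma(gh)\right)={\rm tr}\left(D^{(g)}\Pi_{\alpha\beta\gamma}^\sigma(g)\right)\cdot{\rm tr}\left(D^{(h)}\Pi_{\alpha\beta\gamma}^\sigma(h)\right).
\end{eqnarray*}
Finally, since the indices in $\{1,\dots,n\}\setminus\mathbb{S}_g$ do not belong to ${\rm supp}\,g$, Corollary \ref{Collorary19} again lets me reinsert the missing $D_j$ ($j\in\mathbb{S}_h\cup\mathbb{S}_0$) at no cost: ${\rm tr}(D^{(g)}\Pi_{\alpha\beta\gamma}^\sigma(g))={\rm tr}(\,^n\!D\,\Pi_{\alpha\beta\gamma}^\sigma(g))=\,^n\!\varphi(\Pi_{\alpha\beta\gamma}^\sigma(g))$, and likewise for $h$, which is exactly the claim.

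\medskip
\noindent The main obstacle I anticipate is making the "disjoint support" bookkeeping for the $\mathcal{O}_j$'s rigorous: $\mathcal{O}_j$ is only a weak limit of transpositions $\Pi_{\alpha\beta\gamma}^\sigma((j\;m))$, so $D_j$ is not literally a group element and the factorization \eqref{II_1-mult_general} does not apply verbatim. The clean way around this is to prove the identity first for the "approximate" operators obtained by replacing each $\mathcal{O}_j$ (for $j$ in the finite set $\{1,\dots,n\}$) by $\Pi_{\alpha\beta\gamma}^\sigma((j\;m_j))$ with the auxiliary indices $m_j$ chosen mutually distinct and outside ${\rm supp}\,g\cup{\rm supp}\,h\cup\{1,\dots,n\}$ — for those genuine group elements \eqref{II_1-mult_general} gives exact multiplicativity of the trace — and then pass to the limit $m_j\to\infty$ on both sides using weak continuity of ${\rm tr}$ against the relevant vectors and normality. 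Everything else is routine manipulation with the trace property and Corollary \ref{Collorary19}.
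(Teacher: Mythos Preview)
Your overall strategy—split $\,^n\!D$ into the $g$-block, $h$-block, and residual block, use Corollary~\ref{Collorary19} to commute, and then factor the trace—is correct and is essentially what the paper does, though the paper packages it differently. Rather than decomposing and invoking trace-independence directly, the paper conjugates the whole expression by a sequence $s_k\in\mathfrak{S}_\infty$ that fixes $\mathrm{supp}\,g$ pointwise while pushing $(\mathrm{supp}\,h)\cup\{j\le n:j\notin\mathrm{supp}\,g\}$ past an index $n_k\to\infty$; trace-invariance under conjugation keeps the value unchanged, the $g$-block is fixed, and the conjugated $h$-and-residual block becomes a bounded sequence whose ``support'' tends to infinity and hence converges weakly to the scalar $c\cdot I$ with $c$ its (constant) trace. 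Your direct factorization and the paper's push-to-infinity argument both rest on the same underlying fact, namely that ${\rm tr}$ is multiplicative across operators built from $\mathfrak{A}_j$'s and $\Pi_{\alpha\beta\gamma}^\sigma(B_{\mathbb{A}})$ with disjoint index sets; the paper's route just arranges things so that one of the two factors appears as a weak limit against a fixed vector.

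Your self-identified obstacle is the right one, but your proposed fix is not quite complete. Replacing each $\mathcal{O}_j$ by $\Pi_{\alpha\beta\gamma}^\sigma((j\;m_j))$ only makes sense when $D_j$ is literally a first-degree expression in $\mathcal{O}_j$; for a general $D_j\in\mathfrak{A}_j$ (e.g.\ a spectral projection) there is nothing to ``replace''. The correct patch has two layers: first approximate each $D_j$ strongly by polynomials in $\mathcal{O}_j$ and $\Pi_{\alpha\beta\gamma}^\sigma(1^{(j)})$ (Kaplansky density in the abelian $\mathfrak{A}_j$), and second, for the resulting polynomial expressions, handle powers $\mathcal{O}_j^r$ via \emph{iterated} weak limits $\lim_{m_1}\cdots\lim_{m_r}\Pi_{\alpha\beta\gamma}^\sigma((j\;m_1))\cdots\Pi_{\alpha\beta\gamma}^\sigma((j\;m_r))$ rather than a single replacement—note that $\Pi_{\alpha\beta\gamma}^\sigma((j\;m))^2=I$ does not converge to $\mathcal{O}_j^2$. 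At each finite stage one has a genuine group element with controlled support and \eqref{II_1-mult_general} applies; normality of ${\rm tr}$ then lets you pass through both layers of limits. With that amendment your argument goes through.
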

 \begin{proof}
 Let $\left\{n_k \right\}_{k\in\mathbb{N}}$ be a sequence of the natural numbers such that
 \begin{itemize}
   \item {\bf 1}) $n_{k+1}>n_k$ for all $k\in\mathbb{N}$;
   \item {\bf 2}) $n_k>\max\left\{j\in \left( {\rm supp}\,g \right)\cup \left( {\rm supp}\,h \right) \right\}$ for all $k\in\mathbb{N}$.
 \end{itemize}
 Set $\mathbb{N}_n=\left\{1,2,\ldots,n \right\}$, $\mathbb{E}=({\rm supp}\,h)\cup\left\{j\leq n:j\notin{\rm supp}\,g \right\}$.
 Since $\left( {\rm supp}\,g \right)\cap \left( {\rm supp}\,h \right)=\emptyset$, there exists a sequence $\left\{s_k \right\}\in \mathfrak{S}_\infty$ with the properties
 \begin{eqnarray}\label{sequence_s_k}
 \begin{split}
 \min\left\{j\in s_k(\mathbb{E}) \right\}>n_k \;\text{ for all }\; k;\\
 s_k(q)=q  \;\text{ for all }\;k \;\text{ and }\;q\in\, {\rm supp}\,g.
 \end{split}
 \end{eqnarray}
 In particular, $s_k({\rm supp}\,h)\cap\,{\rm supp}\,g=\emptyset$.
Hence, using lemma \ref{lemma_commutative}, corollary \ref{Collorary19} and (\ref{sequence_s_k}), we have
\begin{eqnarray*}
  \,^n\!\varphi\left( \Pi_{\alpha\beta\gamma}^\sigma(gh) \right)={\rm tr}\left(\Pi_{\alpha\beta\gamma}^\sigma(s_k)\;  \,^n\!D\; \Pi_{\alpha\beta\gamma}^\sigma(gh)\; \Pi_{\alpha\beta\gamma}^\sigma(s_k^{-1})  \right)\\
 ={\rm tr}\left(\left( \prod\limits_{(j\in{\rm supp}\,g )}\,^n\!D_j\right)\Pi_{\alpha\beta\gamma}^\sigma(g)\;\left( \prod\limits_{j\in s_k(\mathbb{N}_n\cap\,{\rm supp}\,h )}\,D_j\right) \;\Pi_{\alpha\beta\gamma}^\sigma(s_khs_k^{-1})\;R_{k,g,h}\right),
\end{eqnarray*}
where $R_{k,g,h}=\prod\limits_{j\in s_k\left(\mathbb{N}_n\setminus(({\rm supp}\,g)\cup \,{\rm supp}\,h) \right)}\,D_j$.
  Applying property {\bf 1}) and first inequality from  (\ref{sequence_s_k}), we obtain that the sequence $$A_k=\left( \prod\limits_{j\in s_k(\mathbb{N}_n\cap\,{\rm supp}\,h )}\,^n\!D_j\right) \;\Pi_{\alpha\beta\gamma}^\sigma(s_khs_k^{-1})\;R_{k,g,h}$$ tends in the weak operator topology to a scalar operator $cI$, where $$c={\rm tr}\left( \left( \prod\limits_{j\in {\rm supp}\,h}\,^n\!D_j\right) \;\Pi_{\alpha\beta\gamma}^\sigma(h)\;R_{g,h} \right),\;\; R_{g,h}=\prod\limits_{j\notin\left({\rm supp}\,g\right)\cup \left( {\rm supp}\,h \right)}\,^n\!D_j.$$
 Therefore,
 \begin{eqnarray*}
  \,^n\!\varphi\left( \Pi_{\alpha\beta\gamma}^\sigma(gh) \right)={\rm tr}\left( \left( \prod\limits_{j\in{\rm supp}\,g }\,^n\!D_j\right)\Pi_{\alpha\beta\gamma}^\sigma(g)\right)
  {\rm tr}\left( \left( \prod\limits_{j\in {\rm supp}\,h}\,^n\!D_j\right) \;\Pi_{\alpha\beta\gamma}^\sigma(h)\;R_{g,h} \right).
 \end{eqnarray*}
 By the same reason,
 \begin{eqnarray*}
 {\rm tr}\left( \left( \prod\limits_{j\in {\rm supp}\,h}\,^n\!D_j\right) \;\Pi_{\alpha\beta\gamma}^\sigma(h)\;R_{g,h} \right)={\rm tr}\left( \left( \prod\limits_{j\in {\rm supp}\,h}\,^n\!D_j\right) \;\Pi_{\alpha\beta\gamma}^\sigma(h) \right)\;{\rm tr}\left( R_{g,h} \right)\\
 ={\rm tr}\left( \left( \prod\limits_{j\in {\rm supp}\,h}\,^n\!D_j\right) \;\Pi_{\alpha\beta\gamma}^\sigma(h) \right)\;
 \prod\limits_{j\notin\left({\rm supp}\,g\right)\cup \left( {\rm supp}\,h \right)}{\rm tr}\left( \,^n\!D_j \right).
 \end{eqnarray*}
 Now we conclude from (\ref{normalise})  that
 \begin{eqnarray*}
   \,^n\!\varphi\left( \Pi_{\alpha\beta\gamma}^\sigma(gh) \right)={\rm tr}\left( \left( \prod\limits_{j\in{\rm supp}\,g }\,^n\!D_j\right)\Pi_{\alpha\beta\gamma}^\sigma(g)\right)
  {\rm tr}\left( \left( \prod\limits_{j\in {\rm supp}\,h}\,^n\!D_j\right) \;\Pi_{\alpha\beta\gamma}^\sigma(h) \right).
 \end{eqnarray*}
 Analogously, since
 \begin{eqnarray*}
 {\rm tr}\left( \left( \prod\limits_{(j\in{\rm supp}\,g )}\,^n\!D_j\right)\Pi_{\alpha\beta\gamma}^\sigma(g)\right)=\prod\limits_{(j\notin{\rm supp}\,g )}{\rm tr}\left(  \,^n\!D_j \right)\;{\rm tr}\left( \left( \prod\limits_{(j\in{\rm supp}\,g )}\,^n\!D_j\right)\Pi_{\alpha\beta\gamma}^\sigma(g)\right)\\
 ={\rm tr}\left( \,^n\!D \;\Pi_{\alpha\beta\gamma}^\sigma(g)\right)=\,^n\!\varphi\left(\Pi_{\alpha\beta\gamma}^\sigma(g)\right),
 \end{eqnarray*}
 then $ \,^n\!\varphi\left( \Pi_{\alpha\beta\gamma}^\sigma(gh) \right)=\,^n\!\varphi\left( \Pi_{\alpha\beta\gamma}^\sigma(g) \right)\;\,^n\!\varphi\left( \Pi_{\alpha\beta\gamma}^\sigma(h) \right)$.
 \end{proof}

\begin{Lm}\label{Main_lemma21}
  Let $q=cz$ be a quasi-cycle, where  $c=(n_1\;n_2\;n_3\;\ldots\;n_k)$ and $z=(z_1,z_2,z_3,\ldots)\in\;_0\mathbb{Z}_2^\infty$. Define $\tilde{z}=(\tilde{z}_1,\tilde{z}_2,\tilde{z}_3,\ldots)\in\;_0\mathbb{Z}_2^\infty$ as follows
   \begin{eqnarray*}\tilde{z}_j =\left\{
\begin{array}{rl}
z_j, &\text{ if } (j\neq n_2)\&(j\neq n_1);\\
0,&\text{ if } j=n_2;\\
z_{n_1}+z_{n_2}, &\text{ if } j=n_1.
\end{array}
\right.
\end{eqnarray*}
   Suppose that $n_1\leq n$.
  \begin{itemize}
    \item {\bf a}) If $n_2>n$ then $ \,^n\!\varphi\left(\Pi_{\alpha\beta\gamma}^\sigma(q)\right)={\rm tr}\left( \mathcal{O}_{n_1}\;^n\!D\,\Pi_{\alpha\beta\gamma}^\sigma\left(\tilde{c}\tilde{z}\right) \right)$, where $\tilde{c}=(n_1\;n_3\;\ldots\;n_k)$, $\tilde{z}=(\tilde{z}_1,\tilde{z}_2,\tilde{z}_3,\ldots)$
    \item {\bf b}) if $n_2\leq n$  then $\,^n\!\varphi\left(\Pi_{\alpha\beta\gamma}^\sigma(q)\right)={\rm tr}\left(\mathcal{O}_{n_1}D_{n_1}\;\left(\prod\limits_{j\neq n_2}\,^n\!D_j\right)  \;\Pi_{\alpha\beta\gamma}^\sigma(\tilde{c}\tilde{z}) \right)$.
  \end{itemize}
\end{Lm}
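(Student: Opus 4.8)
The plan is to reduce the statement to the multiplicativity of $\,^n\!\varphi$ (Lemma \ref{multiplicative_lemma}) together with the asymptotic commutation relations for the operators $\mathcal{O}_k$ (Lemma \ref{asym_II_1}), exactly in the spirit of the proof of Lemma \ref{lemma_commutative}. Write $q=cz$ with $c=(n_1\;n_2\;\ldots\;n_k)$ and $n_1\leq n$. The key algebraic identity is the factorization of a cyclic permutation: $(n_1\;n_2\;\ldots\;n_k)=(n_1\;n_2)\cdot(n_1\;n_3\;\ldots\;n_k)$, and, after distributing the $\mathbb{Z}_2$-part across the quasi-cycle as in the decomposition on page \pageref{page_Z_2}, one gets $q=cz=(n_1\;n_2)\cdot \tilde c\tilde z$ as elements of $B$, where $\tilde c=(n_1\;n_3\;\ldots\;n_k)$ and $\tilde z$ is the vector defined in the statement (the $\mathbb{Z}_2$-coordinates $z_{n_1}$ and $z_{n_2}$ get merged onto position $n_1$ because the transposition $(n_1\;n_2)$ carries them together when it is pulled past the remaining cycle). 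This is the one computation that must be checked carefully; it is a routine but slightly fiddly verification in the semidirect product $\mathbb{Z}_2\wr\mathfrak{S}_\infty$.

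Having this, I would substitute $\Pi_{\alpha\beta\gamma}^\sigma(q)=\Pi_{\alpha\beta\gamma}^\sigma\bigl((n_1\;n_2)\bigr)\,\Pi_{\alpha\beta\gamma}^\sigma(\tilde c\tilde z)$ into $\,^n\!\varphi(\Pi_{\alpha\beta\gamma}^\sigma(q))={\rm tr}\bigl(\,^n\!D\,\Pi_{\alpha\beta\gamma}^\sigma(q)\bigr)$ and then replace the transposition $(n_1\;n_2)$ by the weak limit $\mathcal{O}_{n_1}=w\text{-}\lim_{j\to\infty}\Pi_{\alpha\beta\gamma}^\sigma\bigl((n_1\;j)\bigr)$. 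This replacement is legitimate because, by Lemma \ref{multiplicative_lemma} applied with a permutation $s_j$ moving $n_2$ to a large index $j$ outside all supports involved, $\,^n\!\varphi\bigl(\Pi_{\alpha\beta\gamma}^\sigma((n_1\;n_2)\tilde c\tilde z)\bigr)$ equals the corresponding expression with $n_2$ replaced by $j$, and one passes to the limit in $j$; alternatively one argues directly as in the chain of equalities proving Lemma \ref{lemma_commutative}, conjugating by $s_j$, using that $s_j$ fixes $n_1$ and $\operatorname{supp}\tilde c$, and invoking Lemma \ref{asym_II_1} to push $(n_1\;j)$ to the limit $\mathcal{O}_{n_1}$. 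In case \textbf{a}), where $n_2>n$, the index $n_2$ does not appear among $\{1,\ldots,n\}$, so $\,^n\!D$ is untouched and we land on ${\rm tr}\bigl(\mathcal{O}_{n_1}\,^n\!D\,\Pi_{\alpha\beta\gamma}^\sigma(\tilde c\tilde z)\bigr)$, which is the claimed formula.

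In case \textbf{b}), where $n_2\leq n$, the extra point is bookkeeping of the factor $D_{n_2}$. When we move $n_2$ out to infinity via $s_j$ and replace $(n_1\;n_2)$ by $\mathcal{O}_{n_1}$, the product $\,^n\!D=\prod_{j=1}^{n}D_j$ loses its factor $D_{n_2}$ in the sense that $s_j$ conjugates $D_{n_2}\in\mathfrak{A}_{n_2}$ to $D_j\in\mathfrak{A}_j$ with $j$ large, and under the trace and weak limit that factor contributes ${\rm tr}(D_j)=1$ by the normalization \eqref{normalise} and Corollary \ref{Collorary19}; simultaneously $s_j$ fixes every other index $\leq n$. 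What remains is precisely $\mathcal{O}_{n_1}D_{n_1}\bigl(\prod_{j\neq n_2}\,^n\!D_j\bigr)\Pi_{\alpha\beta\gamma}^\sigma(\tilde c\tilde z)$ inside the trace — note $D_{n_1}$ is singled out because $\mathcal{O}_{n_1}$ and $D_{n_1}$ commute by Lemma \ref{lemma_commutative} but are kept explicit. The main obstacle is organizing the weak-limit interchange cleanly: one must be sure that the operators being conjugated ($\,^n\!D$ together with $\Pi_{\alpha\beta\gamma}^\sigma(\tilde c\tilde z)$) have supports disjoint from the moving index $j$ for $j$ large, so that Corollary \ref{Collorary19} and Lemma \ref{asym_II_1} genuinely apply termwise; this is exactly the hypothesis $n_1\leq n$ together with the finiteness of all supports, and it parallels the argument already carried out in the proof of Lemma \ref{multiplicative_lemma}, so no new difficulty of substance arises.
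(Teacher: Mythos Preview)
Your overall strategy is the paper's, and part \textbf{a}) is essentially correct. There are, however, two concrete problems.

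First, the cycle factorization is written on the wrong side: under the paper's convention one has $(n_1\;n_2\;\ldots\;n_k)=(n_1\;n_3\;\ldots\;n_k)\cdot(n_1\;n_2)$, not $(n_1\;n_2)\cdot(n_1\;n_3\;\ldots\;n_k)$ as you state. The paper accordingly writes $\Phi(cz)={\rm tr}\bigl(\,^n\!D\,\Pi_{\alpha\beta\gamma}^\sigma(\tilde c)\,\Pi_{\alpha\beta\gamma}^\sigma((n_1\;n_2))\,\Pi_{\alpha\beta\gamma}^\sigma(z)\bigr)$, reduces WLOG to $z_{n_2}=0$, and then conjugates by $(n_2\;N)$ before sending $N\to\infty$. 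This is easy to repair, but your identity as written is false.

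Second, and more substantively, your mechanism in part \textbf{b}) is wrong. You claim that once $D_{n_2}$ is conjugated to $D_j$ with $j$ large, ``under the trace and weak limit that factor contributes ${\rm tr}(D_j)=1$''. It does not: $D_j$ cannot be split off by Lemma \ref{multiplicative_lemma} or Corollary \ref{Collorary19}, because the transposition $(n_1\;j)$ still sitting in the expression has $j$ in its support. What actually happens is the intertwining $D_{n_2}\,\Pi_{\alpha\beta\gamma}^\sigma((n_1\;n_2))=\Pi_{\alpha\beta\gamma}^\sigma((n_1\;n_2))\,D_{n_1}$ (from Lemma \ref{asym_II_1}), which converts $D_{n_2}$ into an \emph{extra} copy of $D_{n_1}$. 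That is precisely the source of the additional $D_{n_1}$ in the stated formula: note $\prod_{j\neq n_2}\,^n\!D_j$ already contains $D_{n_1}$, so the answer genuinely involves $D_{n_1}^2$. Your remark that ``$D_{n_1}$ is singled out because it commutes with $\mathcal{O}_{n_1}$'' does not account for this extra factor; if your argument were carried through as described, you would land on ${\rm tr}\bigl(\mathcal{O}_{n_1}(\prod_{j\neq n_2}\,^n\!D_j)\Pi_{\alpha\beta\gamma}^\sigma(\tilde c\tilde z)\bigr)$, which is not the claim. The paper handles this by commuting $D_{n_2}$ past $\Pi_{\alpha\beta\gamma}^\sigma(\tilde c)$ (legitimate since $n_2\notin{\rm supp}\,\tilde c$) and then using the intertwining relation before passing to the limit.
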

\begin{proof}
For simplicity of notation, we will write $\Phi(g)$ instead of  $\,^n\!\varphi\left(\Pi_{\alpha\beta\gamma}^\sigma(g)\right)$, $g\in B$. Let $z=\left( z_1, z_2, \ldots \right)$.

 Without loss of generality, we may assume that $z_{n_2}=0$. In opposite case we can to consider element
 $(\,^{n_2}\!z)\;sz \;(\,^{n_2}\!z)=sz^\prime$ instead $sz$, where $\,^{n_2}\!z=\left(\underbrace{0,\ldots,0}_{n_2-1},z_{n_2},0\ldots  \right)$, $z^\prime=\left( z^\prime_1, z^\prime_2,\ldots \right)$ with $z^\prime_{n_2}=0$, $z^\prime_{n_1}=z_{n_1}+z_{n_2}$. Indeed, then, by lemma \ref{lemma_commutative} and (\ref{definition_varphi}),
 \begin{eqnarray*}
 \Phi(cz)=\Phi(cz^\prime).
 \end{eqnarray*}
Let us prove {\bf a}). Since $c=(n_1\;n_k)(n_1\;n_{k-1})\cdots (n_1\;n_2)$, we have
\begin{eqnarray}
\Phi(cz)={\rm tr}\left( \;^n\!D\;\Pi_{\alpha\beta\gamma}^\sigma(\tilde{c})\;\Pi_{\alpha\beta\gamma}^\sigma((n_1\;n_2))\;\Pi_{\alpha\beta\gamma}^\sigma(z) \right).
\end{eqnarray}
Hence, applying corollary \ref{Collorary19}, we obtain
\begin{eqnarray*}
\Phi(cz)=\lim\limits_{N\to\infty}{\rm tr}\left( \Pi_{\alpha\beta\gamma}^\sigma((n_2\;N))\; ^n\!D\;\Pi_{\alpha\beta\gamma}^\sigma(\tilde{c})\;\Pi_{\alpha\beta\gamma}^\sigma((n_1\;n_2))\;\Pi_{\alpha\beta\gamma}^\sigma(z)\; \Pi_{\alpha\beta\gamma}^\sigma((n_2\;N)) \right)\\
\stackrel{(n_2>n)\&(z_{n_2}=0)}{=}\lim\limits_{N\to\infty}{\rm tr}\left(\;^n\!D\;\Pi_{\alpha\beta\gamma}^\sigma(\tilde{c})\;\Pi_{\alpha\beta\gamma}^\sigma((n_1\;N))\;\Pi_{\alpha\beta\gamma}^\sigma(z)\right)\\
\stackrel{(\ref{asymp_transposition})}{=}{\rm tr}\left(\;^n\!D\;\Pi_{\alpha\beta\gamma}^\sigma(\tilde{c})\;\mathcal{O}_{n_1}\;\Pi_{\alpha\beta\gamma}^\sigma(z)\right)\stackrel{\text{Lemma \ref{lemma_commutative}}}{=} {\rm tr}\left( \mathcal{O}_{n_1}\;^n\!D\,\Pi_{\alpha\beta\gamma}^\sigma\left(\tilde{c}z\right) \right).
\end{eqnarray*}
To prove {\bf b}), we notice that
\begin{eqnarray*}
\Phi(cz)={\rm tr}\left(\;^n\!D \;\Pi_{\alpha\beta\gamma}^\sigma(\tilde{c})\;\Pi_{\alpha\beta\gamma}^\sigma((n_1\;n_2))\;\Pi_{\alpha\beta\gamma}^\sigma(z)\right)\\
={\rm tr}\left( \left(\prod\limits_{j\neq n_2}\,^n\!D_j\right)\Pi_{\alpha\beta\gamma}^\sigma(\tilde{c})\;\Pi_{\alpha\beta\gamma}^\sigma((n_1\;n_2))\;D_{n_1}\;\Pi_{\alpha\beta\gamma}^\sigma(z) \right)\\
={\rm tr}\left( \left(\prod\limits_{j\neq n_2}\,^n\!D_j\right)\Pi_{\alpha\beta\gamma}^\sigma(\tilde{c})\;\mathcal{O}_{n_1}\;D_{n_1}\;\Pi_{\alpha\beta\gamma}^\sigma(z) \right)\\
={\rm tr}\left( \mathcal{O}_{n_1}\;D_{n_1}\;\left(\prod\limits_{j\neq n_2}\,^n\!D_j\right)\Pi_{\alpha\beta\gamma}^\sigma(\tilde{c}z)\right).
\end{eqnarray*}
\end{proof}
The next lemma is the special case of lemma \ref{Main_lemma21}.
\begin{Lm}\label{lemma_character_formula}
  Let $q=cz$ be a quasi-cycle, where $c=(1\;2\;\ldots\;p)\in\mathfrak{S}_\infty$ and $z=(z_1,\ldots,z_p,0,0,\ldots)\in\;_0\mathbb{Z}_2^\infty$. Set $\zeta=\left(\zeta_1,0,0,\ldots\right)\in\;_0\mathbb{Z}_2^\infty$, where $\zeta_1=\sum\limits_{j=1}^p z_j$. Then
  \begin{eqnarray*}
  {\rm tr}\left( \Pi_{\alpha\beta\gamma}^\sigma(q) \right)={\rm tr}\left( \mathcal{O}_1^{p-1} \;\Pi_{\alpha\beta\gamma}^\sigma(\zeta) \right).
  \end{eqnarray*}
\end{Lm}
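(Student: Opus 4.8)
The plan is to obtain Lemma~\ref{lemma_character_formula} by iterating part~(a) of Lemma~\ref{Main_lemma21}, each iteration peeling one transposition off the head of the cycle $(1\;2\;\ldots\;p)$ at the cost of one extra factor $\mathcal{O}_1$. Take $n=1$ in Lemma~\ref{Main_lemma21}, so that $\,^1\!D=D_1$ and $\,^1\!\varphi(m)={\rm tr}(D_1\,m)$, and apply it to a quasi-cycle $q'=c'z'$ whose cycle $c'=(1\;m_2\;m_3\;\ldots\;m_l)$ has $1$ as its least element; then $n_1=1\leq n=1$ and $n_2=m_2>1=n$, so part~(a) applies and gives
\[
{\rm tr}\bigl(D_1\,\Pi_{\alpha\beta\gamma}^\sigma(c'z')\bigr)={\rm tr}\bigl(\mathcal{O}_1\,D_1\,\Pi_{\alpha\beta\gamma}^\sigma(\tilde c'\tilde z')\bigr),\qquad \tilde c'=(1\;m_3\;\ldots\;m_l),
\]
for every $D_1\in\mathfrak{A}_1$ with ${\rm tr}(D_1)=1$, where $\tilde z'$ agrees with $z'$ off positions $1$ and $m_2$, while $\tilde z'_{m_2}=0$ and $\tilde z'_1=z'_1+z'_{m_2}$.

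First I would upgrade this identity to: for \emph{every} $a\in\mathfrak{A}_1$,
\[
{\rm tr}\bigl(a\,\Pi_{\alpha\beta\gamma}^\sigma(c'z')\bigr)={\rm tr}\bigl(\mathcal{O}_1\,a\,\Pi_{\alpha\beta\gamma}^\sigma(\tilde c'\tilde z')\bigr).
\]
This follows by linearity: both sides are linear in the slot holding $D_1$, they coincide for every $a$ of trace $1$ by the previous display, and every element of $\mathfrak{A}_1$ is a linear combination of trace-one elements (if ${\rm tr}(a)\neq0$ then $a$ is a scalar multiple of $a/{\rm tr}(a)$, and if ${\rm tr}(a)=0$ then $a=(a+I)-I$). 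The point of the upgrade is that $\mathcal{O}_1$ and all of its powers lie in $\mathfrak{A}_1$ --- recall $\mathfrak{A}_1$ is the $w^*$-algebra generated by $\mathcal{O}_1$ and $\Pi_{\alpha\beta\gamma}^\sigma(1^{(1)})$, and it is commutative by Lemma~\ref{lemma_commutative} --- so the factor $\mathcal{O}_1$ produced at one step may be fed back as the ``$a$'' of the next step.

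Then I would iterate, starting from $q=cz$ with $c=(1\;2\;\ldots\;p)$ and $a=I$, applying the upgraded identity $p-1$ times: the $m$-th application ($1\leq m\leq p-1$) is performed with $a=\mathcal{O}_1^{m-1}$ (so $a=I$ when $m=1$) on the quasi-cycle whose cycle is $(1\;(m+1)\;(m+2)\;\ldots\;p)$, which is legitimate since $1$ is the least element of that cycle. Tracking the $\mathbb{Z}_2$-coordinates: after the $m$-th step the cycle has become $(1\;(m+2)\;\ldots\;p)$, the coordinates in positions $2,\ldots,m+1$ have been zeroed with their sum accumulated into position~$1$, and the scalar prefactor has become $\mathcal{O}_1^{m}$. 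After $p-1$ steps the cycle collapses to the identity, the surviving element is $\zeta=(z_1+\cdots+z_p,0,0,\ldots)$, and the prefactor is $\mathcal{O}_1^{p-1}$, so ${\rm tr}\bigl(\Pi_{\alpha\beta\gamma}^\sigma(q)\bigr)={\rm tr}\bigl(\mathcal{O}_1^{p-1}\Pi_{\alpha\beta\gamma}^\sigma(\zeta)\bigr)$; the case $p=1$ is trivial since then $q=\zeta$. The only work is this bookkeeping of how $\tilde z'$ evolves and the routine check that the linearity upgrade applies at each step, so there is no real obstacle --- in accordance with the statement that Lemma~\ref{lemma_character_formula} is a special case of Lemma~\ref{Main_lemma21}.
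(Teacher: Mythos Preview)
Your proposal is correct and matches the paper's intent: the paper simply asserts that Lemma~\ref{lemma_character_formula} is a special case of Lemma~\ref{Main_lemma21} and gives no further argument, so your iteration of part~(a) with $n=1$ is exactly the elaboration it implicitly invokes. Your linearity upgrade (from $D_1$ with ${\rm tr}(D_1)=1$ to arbitrary $a\in\mathfrak{A}_1$) is valid and needed to feed $\mathcal{O}_1^{m-1}$ back into the next step; alternatively one may note that the proof of Lemma~\ref{Main_lemma21}(a) never actually invokes the normalisation~\eqref{normalise}, so the identity holds for any $D_1\in\mathfrak{A}_1$ directly.
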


Analysis similar to that in the proof of Theorems 1 and 2 from \cite{Ok2} implies
\begin{Lm}\label{spectral_decomposition}
 Let $S(\mathcal{O}_k)$ be a spectrum of the operator  $\mathcal{O}_k$ and let $\mu$ be a the spectral measure, corresponding to ${\rm tr}$. Then we conclude:
 \begin{itemize}\label{spectral_decomposition_Okounkov}
   \item {\bf 1}) the measure $\mu$ is discrete and its atoms can only accumulate to zero;
   \item  {\bf 2}) if $\;\mathcal{O}_k=\sum\limits_{\lambda\in S\left(\mathcal{O}_k \right)} \lambda E_k(\lambda)$ is the spectral decomposition of  $\mathcal{O}_k$ then ${\rm tr}\left(E_k(\lambda) \right)=m(\lambda)\cdot|\lambda|$, where $m(\lambda)\in\mathbb{N}\cup0$;
 \item   {\bf 3}) the values of  Thoma parameters belong  to  $S(\mathcal{O}_k)$;
   \item   {\bf 4}) if $\lambda\in S(\mathcal{O}_k)$ is positive  (negative) then there exists Thoma parameter such that   $\lambda=\alpha_k$ $\left(\lambda=-\beta_k \right)$ and $m(\lambda)=\#\left\{k:\alpha_k=\lambda \right\}\geq 1\;\;\left(\#\left\{k:-\beta_k=\lambda \right\}\geq 1 \right)$.
 \end{itemize}
  \end{Lm}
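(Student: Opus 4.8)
The plan is to reduce the whole statement to an explicit computation of the trace–moments of the self-adjoint operators $\mathcal{O}_k$ together with the observation that the associated moment problem is determinate; this is exactly the pattern of Okounkov's Theorems 1 and 2 in \cite{Ok2}, only now the moment identity is handed to us by Lemma \ref{lemma_character_formula}.

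First I would record the elementary spectral facts. Each $\Pi_{\alpha\beta\gamma}^\sigma\bigl((k\;j)\bigr)$ is a self-adjoint unitary, so the weak limit $\mathcal{O}_k$ from \eqref{asymp_transposition} is self-adjoint with $\|\mathcal{O}_k\|\leq 1$; hence $\mathcal{O}_k$ lies in the ${\rm II}_1$-factor $M=\bigl(\Pi_{\alpha\beta\gamma}^\sigma(B)\bigr)''$ and $S(\mathcal{O}_k)\subset[-1,1]$. By Lemma \ref{asym_II_1} the adjoint action of $\mathfrak{S}_\infty$ permutes the $\mathcal{O}_k$, and since ${\rm tr}$ is conjugation invariant the spectral distribution of $\mathcal{O}_k$ with respect to ${\rm tr}$ is the same for every $k$; so it suffices to treat $\mathcal{O}_1$. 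Let $\mu$ be the probability measure on $[-1,1]$ determined by ${\rm tr}\bigl(f(\mathcal{O}_1)\bigr)=\int f\,d\mu$ for continuous $f$. Each nonzero atom of $\mu$ will turn out to be isolated, so the spectral projection $E_1(\lambda)=\mathbf{1}_{\{\lambda\}}(\mathcal{O}_1)$ satisfies ${\rm tr}\bigl(E_1(\lambda)\bigr)=\mu(\{\lambda\})$ for $\lambda\neq 0$; thus all four assertions will follow once $\mu$ is identified.

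To identify $\mu$ I would compute its moments. Applying Lemma \ref{lemma_character_formula} with $z=0$ (hence $\zeta=0$) to the $p$-cycle $c=(1\;2\;\ldots\;p)$ gives ${\rm tr}(\mathcal{O}_1^{\,p-1})={\rm tr}\bigl(\Pi_{\alpha\beta\gamma}^\sigma(c)\bigr)=\chi_{\alpha\beta\gamma}^\sigma(c)$, and \eqref{character_formula} evaluated at $z=0$ (all signs $=1$) reads $\chi_{\alpha\beta\gamma}^\sigma(c)=\sum_i\alpha_i^{\,p}+(-1)^{p-1}\sum_i\beta_i^{\,p}$ for $p>1$. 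Writing $m=p-1\geq 1$ this becomes
\[
\int_{-1}^{1}t^{\,m}\,d\mu(t)=\sum_i\alpha_i\cdot\alpha_i^{\,m}+\sum_i\beta_i\cdot(-\beta_i)^{\,m},
\]
together with $\int d\mu=1$. These are precisely the moments of the probability measure
\[
\mu_0=\Bigl(1-\sum_i\alpha_i-\sum_i\beta_i\Bigr)\delta_0+\sum_i\alpha_i\,\delta_{\alpha_i}+\sum_i\beta_i\,\delta_{-\beta_i},
\]
which is legitimate because $1-\sum_i\alpha_i-\sum_i\beta_i=\gamma_0+\gamma_1\geq 0$ and each $\alpha_i,\beta_i\in[0,1]$. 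Since $\mu$ and $\mu_0$ are both carried by the compact interval $[-1,1]$, on which polynomials are uniformly dense, a measure there is uniquely determined by its moments; hence $\mu=\mu_0$.

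Finally I would read off the conclusions from $\mu=\mu_0$. The measure is purely atomic with atoms in $\{0\}\cup\{\alpha_i\}\cup\{-\beta_i\}$, and since $\sum_i\alpha_i<\infty$, $\sum_i\beta_i<\infty$ the nonzero atoms can accumulate only at $0$; this is \textbf{1}). For $\lambda>0$ one has $\lambda\in S(\mathcal{O}_1)$ iff $\mu_0(\{\lambda\})>0$ iff $\lambda=\alpha_i$ for some $i$, and then ${\rm tr}\bigl(E_1(\lambda)\bigr)=\mu_0(\{\lambda\})=\sum_{i:\alpha_i=\lambda}\alpha_i=|\lambda|\cdot\#\{i:\alpha_i=\lambda\}$, so $m(\lambda)=\#\{i:\alpha_i=\lambda\}\in\mathbb{N}$; the case $\lambda<0$ is symmetric with $-\beta_i$ in place of $\alpha_i$, and $m(\lambda)=0$ when $\lambda\neq 0$ and $\lambda\notin S(\mathcal{O}_1)$. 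This yields \textbf{2}), \textbf{3}), \textbf{4}) (the exceptional point $\lambda=0$, where ${\rm tr}(E_1(0))=\gamma_0+\gamma_1$, being outside the scope of the formula in \textbf{2}), as usual). I do not expect a genuine obstacle in this last step: the real work is already contained in Lemmas \ref{asym_II_1}--\ref{lemma_character_formula}, which produce the moment identity, and the only point requiring care afterwards is the appeal to determinacy of the moment problem, which is automatic here because $\mathcal{O}_1$ is a contraction and the moments thus describe a compactly supported measure.
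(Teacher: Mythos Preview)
Your argument is correct and is precisely the Okounkov moment method that the paper invokes. The paper does not supply its own proof of this lemma but simply writes that ``analysis similar to that in the proof of Theorems~1 and~2 from \cite{Ok2}'' yields the statement; your write-up is a clean reconstruction of that analysis, using Lemma~\ref{lemma_character_formula} for the moment identity and determinacy of the compactly supported moment problem to identify $\mu$.
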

  Now we recall that, by Lemma \ref{lemma_commutative}, $w^*$-algebra $\mathfrak{A}_k$, generated  by $\mathcal{O}_k$ and $\Pi_{\alpha\beta\gamma}^\sigma\left( 1^{(k)} \right)$, is abelian.
 Next assertion was proved in \cite{DN} (Lemma 23).
 \begin{Lm}\label{integer_wreath_product}
 Let $P$ be a projection from $\mathfrak{A}_k$. If $\lambda$ lies in $S(\mathcal{O}_k)$ and $\lambda\neq0$ then ${\rm tr}\left( P\; E_k(\lambda)\right)=|\lambda|\nu_\lambda(P)$, where $\nu_\lambda(P)\in \mathbb{N}$.
\end{Lm}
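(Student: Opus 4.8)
The plan is to reduce everything to the index $k=1$, to pin down the minimal projections of $\mathfrak{A}_k$, and then to read their traces off the character formula \eqref{character_formula} via Lemma~\ref{lemma_character_formula}. Set $A_k=\Pi_{\alpha\beta\gamma}^\sigma\left(1^{(k)}\right)$ and $P_k^{\pm}=\tfrac12\left(I\pm A_k\right)$. Since $1^{(k)}$ has order two, $A_k$ is a self-adjoint unitary, and by Lemma~\ref{lemma_commutative} it commutes with $\mathcal{O}_k$. Because the spectral measure of $\mathcal{O}_k$ is atomic (Lemma~\ref{spectral_decomposition}), a projection $P\in\mathfrak{A}_k$ is block diagonal for the resolution $I=\sum_{\lambda\in S(\mathcal{O}_k)}E_k(\lambda)$, and on each block $E_k(\lambda)\mathcal{H}_{\alpha\beta\gamma}^\sigma$ it belongs to the abelian algebra generated by $A_k$, which for $\lambda\neq0$ is at most two-dimensional. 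Hence
\[
P\,E_k(\lambda)\in\left\{0,\ E_k(\lambda)P_k^{+},\ E_k(\lambda)P_k^{-},\ E_k(\lambda)\right\},
\]
so it suffices to show that ${\rm tr}\left(E_k(\lambda)P_k^{\pm}\right)$ are integer multiples of $|\lambda|$ (the fourth option gives $m(\lambda)|\lambda|$ by Lemma~\ref{spectral_decomposition}). Conjugating by $\Pi_{\alpha\beta\gamma}^\sigma\left((1\;k)\right)$ sends $\mathcal{O}_1\mapsto\mathcal{O}_k$ (Lemma~\ref{asym_II_1}) and $A_1\mapsto A_k$ (since $(1\;k)1^{(1)}(1\;k)^{-1}=1^{(k)}$), hence $\mathfrak{A}_1\mapsto\mathfrak{A}_k$ and $E_1(\lambda)\mapsto E_k(\lambda)$; as ${\rm tr}$ is conjugation invariant, we may take $k=1$.

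Next I would compute the mixed moments of $\mathcal{O}_1$ and $A_1$. Fix an integer $p\geq2$, put $c=(1\;2\;\ldots\;p)$, and choose $z\in\;_0\mathbb{Z}_2^\infty$ supported in $\{1,\dots,p\}$ with $\sum_{j=1}^{p}z_j\equiv\varepsilon\pmod2$, where $\varepsilon\in\{0,1\}$. Lemma~\ref{lemma_character_formula} then gives ${\rm tr}\left(\mathcal{O}_1^{\,p-1}A_1^{\,\varepsilon}\right)={\rm tr}\left(\Pi_{\alpha\beta\gamma}^\sigma(cz)\right)=\chi_{\alpha\beta\gamma}^\sigma(cz)$, and since $p>1$ the right-hand side equals, by \eqref{character_formula},
\[
\sum_i\alpha_i^{\,p}(-1)^{\sigma(\alpha_i)\varepsilon}+(-1)^{p-1}\sum_i\beta_i^{\,p}(-1)^{\sigma(\beta_i)\varepsilon}.
\]
Regrouping the two sums by the common value $\lambda$ (positive $\lambda$ coming from the $\alpha_i$, negative $\lambda$ from the $-\beta_i$) and using $\beta_i^{\,p}=(-1)^{p-1}|\lambda|\,\lambda^{\,p-1}$ on the negative branch, this becomes $\sum_{\lambda\neq0}|\lambda|\,\lambda^{\,p-1}\left(n_0(\lambda)+(-1)^{\varepsilon}n_1(\lambda)\right)$, where $n_\delta(\lambda)$ is the number of those $\alpha_i$ (if $\lambda>0$) or $\beta_i$ (if $\lambda<0$) equal to $|\lambda|$ whose $\sigma$-value is $\delta$. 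On the other hand, expanding $\mathcal{O}_1^{\,p-1}=\sum_{\lambda\neq0}\lambda^{\,p-1}E_1(\lambda)$ and $A_1=P_1^{+}-P_1^{-}$ yields
\[
{\rm tr}\left(\mathcal{O}_1^{\,p-1}A_1^{\,\varepsilon}\right)=\sum_{\lambda\neq0}\lambda^{\,p-1}\,{\rm tr}\left(E_1(\lambda)\left(P_1^{+}+(-1)^{\varepsilon}P_1^{-}\right)\right),
\]
all series converging absolutely since $\sum_\lambda{\rm tr}\left(E_1(\lambda)\right)\leq1$ and $|\lambda|\leq1$.

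For each $\varepsilon\in\{0,1\}$ the two displayed expressions coincide for every $p\geq2$. The atoms $\lambda$ lie in $[-1,1]\setminus\{0\}$, accumulate only at $0$ (Lemma~\ref{spectral_decomposition}), and carry absolutely summable weights; therefore equality of all power sums of order $\geq1$ forces equality of the corresponding finite signed measures on $[-1,1]\setminus\{0\}$ (compare the meromorphic functions $t\mapsto\sum_\lambda\frac{c_\lambda\lambda t}{1-\lambda t}$, whose poles and residues recover the $c_\lambda$). Taking $\varepsilon=0,1$ and adding resp. subtracting, one obtains
\[
{\rm tr}\left(E_1(\lambda)P_1^{+}\right)=|\lambda|\,n_0(\lambda),\qquad {\rm tr}\left(E_1(\lambda)P_1^{-}\right)=|\lambda|\,n_1(\lambda),
\]
each a nonnegative integer multiple of $|\lambda|$ (and $\varepsilon=0$ recovers ${\rm tr}\left(E_1(\lambda)\right)=|\lambda|\left(n_0(\lambda)+n_1(\lambda)\right)$, so $m(\lambda)=n_0(\lambda)+n_1(\lambda)$). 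Together with the first paragraph this proves that ${\rm tr}\left(P\,E_k(\lambda)\right)=|\lambda|\,\nu_\lambda(P)$ with $\nu_\lambda(P)\in\mathbb{N}\cup\{0\}$, and $\nu_\lambda(P)=0$ precisely when $P\,E_k(\lambda)=0$.

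I expect the one genuinely delicate point to be the coefficient matching in the last step, i.e. the determinacy of the two-sided moment problem for atomic measures on $[-1,1]\setminus\{0\}$ with summable weights; to a lesser extent one must be careful with the sign bookkeeping between the $\alpha$- and $\beta$-branches in the moment computation. Everything else is routine once Lemmas~\ref{asym_II_1},~\ref{lemma_commutative},~\ref{spectral_decomposition} and~\ref{lemma_character_formula} are available.
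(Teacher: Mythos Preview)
Your argument is correct. The paper itself does not prove this lemma; it simply cites \cite{DN} (Lemma~23). However, the paper carries out essentially your computation in the very next subsection (the proof of Theorem~\ref{II_1_theorem}): there the projections $F_j(\lambda)$ and $F_j^\perp(\lambda)$ from \eqref{E_k_lambda_z} are precisely your $E_k(\lambda)P_k^{+}$ and $E_k(\lambda)P_k^{-}$, and the derivation of \eqref{comparing_character_formula}--\eqref{trace_F_k_perp} is exactly your moment comparison between the spectral expansion of ${\rm tr}\bigl(\mathcal{O}_1^{\,p-1}A_1^{\,\varepsilon}\bigr)$ and the character formula \eqref{character_formula}. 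So you have reconstructed the intended argument and, in fact, been more explicit than the paper about the one genuinely nontrivial step, namely the uniqueness in the coefficient matching (the paper just says ``Comparing \eqref{spectral_formula_CHARACTER} and \eqref{alpha_beta_formula_character}, we obtain \eqref{comparing_character_formula}''). Your sketch via the meromorphic generating function is fine; an equivalent elementary route is to divide the identity $\sum_{\lambda\neq0}d_\lambda\lambda^{m}=0$ by $r^{m}$ with $r=\max|\lambda|$, use dominated convergence (the weights are absolutely summable) to kill the smaller atoms as $m\to\infty$, read off $d_{\pm r}=0$ from even/odd $m$, and iterate.
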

\subsubsection{The proof of theorem \ref{II_1_theorem}.}
By lemma \ref{spectral_decomposition_Okounkov}, ${\rm tr}\left( E_j(0) \right)=\gamma_0+\gamma_1>0$. It follows from lemma \ref{lemma_commutative} that there exists a self-adjoint projection $F_j(\lambda)\in\mathfrak{A}_j$ with the properties: $F_j(\lambda)\leq E_j(\lambda)$ and
\begin{eqnarray}\label{E_k_lambda_z}
E_j(\lambda)\;\Pi_{\alpha\beta\gamma}^\sigma\left( \zeta^{(j)} \right)=F_j(\lambda)+(-1)^{\zeta_j}\,F^\perp_j(\lambda),
\end{eqnarray}
where $ F^\perp_j(\lambda)=E_j(\lambda)-F_j(\lambda)$, $\zeta^{(j)}=(\underbrace{0,0,\ldots,0}_{j-1},\zeta_j,0,\ldots)\in\;_0\mathbb{Z}_2^{\infty}$. Equality (\ref{E_k_lambda_z}) can be obtained also, using the evident realizations of ${\rm II}_1$-factor-representations of $B$ from \cite{DN}.

 To prove our theorem it is sufficiently to find nonnegative $D\in M$ with the property
\begin{eqnarray}\label{main_equality}
{\rm tr}\left( D \Pi_{\alpha\beta\gamma}^\sigma(b)\right)=\left( \Pi(b)\xi,\xi \right)\;\text{ for all }b\in B,
\end{eqnarray}
where $\xi$ is defined in (\ref{trace_vector}).

Indeed, if $\mathfrak{R}$ is GNS-representation of ${\rm II}_1$-factor $M$, corresponding to state ${\rm tr}\left( D \cdot\right)$, then the representations $\Pi_{\alpha\beta\gamma}^\sigma$ and $\mathfrak{R}\circ \Pi_{\alpha\beta\gamma}^\sigma$ are quasi-equivalent. But, by (\ref{main_equality}), $\Pi$  and $\mathfrak{R}\circ \Pi_{\alpha\beta\gamma}^\sigma$  are unitary equivalent. Therefore, $\Pi$ is  ${\rm II}_1$-factor-representation.

Let $\omega\in\{0,1\}=\mathbb{Z}_2$.
At first, we show that for $\lambda \neq 0$ the next equality holds
\begin{eqnarray}\label{comparing_character_formula}
\begin{split}
&|\lambda|^{-1}\left({\rm tr}\left( F_j(\lambda) \right)+(-1)^\omega{\rm tr}\left(F^\perp_j(\lambda)\right)\right)\;\;\;\;\\
=&
\left\{
\begin{array}{rl}
\#\left\{\alpha_j\in\sigma^{-1}(0):\alpha_j=\lambda\right\}+(-1)^\omega\#\left\{\alpha_j\in\sigma^{-1}(1):\alpha_j
=\lambda\right\},\text{ if } \lambda>0;\\
\#\left\{\beta_j\in\sigma^{-1}(0):\beta_j=|\lambda|\right\}+(-1)^\omega\#\left\{\beta_j\in\sigma^{-1}(1):\beta_j
=|\lambda|\right\},\text{ if } \lambda<0.\;\;
\end{array}
\right.
\end{split}
\end{eqnarray}
Let $q$ and $\zeta$ be the same as in lemma \ref{lemma_character_formula}.
 Applying (\ref{E_k_lambda_z}), lemmas \ref{lemma_character_formula} and \ref{spectral_decomposition}, we have for $p>1$
 \begin{eqnarray}\label{spectral_formula_CHARACTER}
 \chi_{\alpha\beta\gamma}^\sigma(q)={\rm tr}\left(\Pi_{\alpha\beta\gamma}^\sigma(q)\right)=
 \sum\limits_{\left\{\lambda\in\mathcal{S}(\mathcal{O}_j):\lambda\neq 0   \right\}}\lambda^{p-1}\left(  {\rm tr}(F_j(\lambda))+(-1)^{\zeta_j}\;{\rm tr}\left(F_j^\perp(\lambda) \right) \right).\;\;\;\;
 \end{eqnarray}
 Now, using lemma \ref{spectral_decomposition} again and lemma \ref{integer_wreath_product}, we can to rewrite the right  part of (\ref{character_formula}) as follows
 \begin{eqnarray}\label{alpha_beta_formula_character}
 \begin{split}
 \chi_{\alpha\beta\gamma}^\sigma(q)=\sum\limits_{\lambda\in S(\mathcal{O}_j):\lambda>0}\; \left(\#\left\{ \alpha_j\in \sigma^{-1}(0):\alpha_j=\lambda\right\}\right.\\
 +\left.(-1)^{\zeta_j}\;\#\left\{ \alpha_j\in \sigma^{-1}(1):\alpha_j=\lambda\right\}\right)\lambda^p\\
 +(-1)^{p-1}\sum\limits_{\lambda\in S(\mathcal{O}_j):\lambda<0}\; \left(\#\left\{ \beta_j\in \sigma^{-1}(0):-\beta_j=\lambda\right\}\right.\\
 +\left.(-1)^{\zeta_j}\;\#\left\{ \beta_j\in \sigma^{-1}(1):-\beta_j=\lambda\right\}\right)|\lambda|^p.
\end{split}
 \end{eqnarray}
 Comparing (\ref{spectral_formula_CHARACTER}) and (\ref{alpha_beta_formula_character}), we obtain (\ref{comparing_character_formula}).

 Therefore, for $\lambda\neq 0$, we have
 \begin{eqnarray}\label{trace_F_k}
{\rm tr }(F_j(\lambda)) =
\left\{
\begin{array}{rl}
\lambda\;\#\left\{\alpha_j\in\sigma^{-1}(0):\alpha_j=\lambda\right\},\text{ if } \lambda>0;\\
|\lambda|\;\#\left\{\beta_j\in\sigma^{-1}(0):\beta_j=|\lambda|\right\},\text{ if } \lambda<0,
\end{array}
\right.
 \end{eqnarray}
 \begin{eqnarray}\label{trace_F_k_perp}
 {\rm tr }(F^\perp_j(\lambda)) =
\left\{
\begin{array}{rl}
\lambda\;\#\left\{\alpha_j\in\sigma^{-1}(1):\alpha_j=\lambda\right\},\text{ if } \lambda>0;\\
|\lambda|\;\#\left\{\beta_j\in\sigma^{-1}(1):\beta_j=|\lambda|\right\},\text{ if } \lambda<0.
\end{array}
\right.
 \end{eqnarray}
 In the case $p=1$, we notice that
 \begin{eqnarray*}
 \begin{split}
 \chi_{\alpha\beta\gamma}^\sigma(q)={\rm tr}\left(\Pi_{\alpha\beta\gamma}^\sigma(q)\right)={\rm tr}\left(F_j(0)\right)+(-1)^{\zeta_j}\,{\rm tr}\left(F^\perp_j(0)\right)\\ +\sum\limits_{\left\{\lambda\in\mathcal{S}(\mathcal{O}_j):\lambda\neq 0   \right\}}\left(  {\rm tr}(F_j(\lambda))+(-1)^{\zeta_j}\;{\rm tr}\left(F_j^\perp(\lambda) \right) \right).
 \end{split}
 \end{eqnarray*}
 Hence, applying (\ref{character_formula}), (\ref{trace_F_k}) and (\ref{trace_F_k_perp}), we obtain
 \begin{eqnarray*}
 {\rm tr}\, \left(E_j(0)\, \Pi_{\alpha\beta\gamma}^\sigma(q)\right)={\rm tr}\,\left( F_j(0) \right)+(-1)^{\zeta_j}{\rm tr}\,\left( F_j^\perp(0) \right)=\gamma_0+(-1)^{\zeta_j}\gamma_1.
 \end{eqnarray*}
 Therefore,
 \begin{eqnarray}\label{normalization}
 {\rm tr}\,\left( F_j(0) \right)=\gamma_0 ~\text{ and }~ {\rm tr}\,\left( F_j^\perp(0) \right)=\gamma_1.
 \end{eqnarray}
We recall that $\Pi_{\alpha\beta\gamma}^\sigma$  acts in $\mathcal{H}_{\alpha\beta\gamma}^\sigma=L^2(M,{\rm tr})$ by the  operators of the left multiplication, where ${\rm tr}$ is a normal trace on factor $M=\left(\Pi_{\alpha\beta\gamma}^\sigma(B)\right)^{''}$ such that ${\rm tr}(I)=1$.

Let $k$ be the same as in section \ref{irreducible_repr_of_B_n}. Define sequence $D_j$ as follows  $$D_j=\left\{
\begin{array}{rl}
F_j(0), &\text{ if } j\leq k;\\
F_j^\perp(0),&\text{ if } j=k+1,k+2,\ldots n;\\
I, &\text{ if } j>n.
\end{array}
\right.$$
 Take an element $sz\in B$, where $s\in\mathfrak{S}_\infty, z=(z_1,z_2,\ldots)\in \;_0\mathbb{Z}_2^\infty$.  If $$\xi_{reg}=\gamma_0^{-k/2}\,\gamma_1^{(k-n)/2}\prod\limits_{j=1}^\infty D_j$$ then, applying lemmas \ref{multiplicative_lemma}, \ref{Main_lemma21}, \ref{lemma_character_formula}, \ref{spectral_decomposition} and equality \ref{normalization}, we have
\begin{eqnarray}\label{n_reg}
\left(\Pi_{\alpha\beta\gamma}^\sigma(sz)\xi_{reg},\xi_{reg}\right)=\left\{
\begin{array}{rl}
0, &\text{ if } (sz\notin B_\infty^{(n)})\lor (s\in\mathfrak{S}_n\setminus e); \\
 \chi_{\alpha\beta\gamma}^\sigma(sz),&\text{ if } sz\in B_{n\infty};\\
(-1)^{\sum\limits_{j=k+1}z_j}, &\text{ if }  z=(z_1,z_2,\ldots,z_n,0,0,\ldots)\lor (s=e).
\end{array}
\right.
\end{eqnarray}
Let
$\rho=\left( {\rm Irr}_{\;^0\!\lambda}\otimes {\rm Irr}_{\,^1\!\!\lambda} \right)$ be the same as in section \ref{irreducible_repr_of_B_n} and let ${\rm ch}_\rho$ be an ordinary character of the irreducible representation $\rho$ of Young subgroup $G_{\Omega_{_{kn}}}\cap\mathfrak{S}_n$. Set
\begin{eqnarray*}
 F_\rho = \frac{{\rm dim\,\rho}}{k!(n-k)!}\;\sum\limits_{s\in G_{\Omega_{_{kn}}}\cap\mathfrak{S}_n}\;\overline{{\rm ch}_\rho(s)}\,\Pi_{\alpha\beta\gamma}^\sigma(s).
\end{eqnarray*}
By definition,
\begin{eqnarray}
 F_\rho \in M\cap\Pi_{\alpha\beta\gamma}^\sigma\left((G_{\Omega_{_{kn}}}\cap\mathfrak{S}_n)\,B_{n\infty}. \right)',
\end{eqnarray}
and we conclude from (\ref{n_reg}) that
\begin{eqnarray}\label{nonzero_projection}
F_\rho\,\xi_{reg}\neq 0.
\end{eqnarray}
 Moreover, by lemmas \ref{asym_II_1} and \ref{spectral_decomposition}, we have
\begin{eqnarray}
F_\rho\,\xi_{reg}=\xi_{reg}\,F_{\rho}.
\end{eqnarray}
Define character $\tau$ on $G_{\Omega_{_{kn}}}$ by
\begin{eqnarray}\label{tau}
\tau(sz)=\frac{{\rm ch}_\rho(s)\left(\prod\limits_{j=k+1}^n(-1)^{z_j}\right)\;{\rm dim\,\rho}}{k!(n-k)!},
\end{eqnarray}
where $z\in\mathbb{Z}_2^n$ and $s\in G_{\Omega_{_{kn}}}\cap\mathfrak{S}_n$.
It follows from (\ref{n_reg}) that
\begin{eqnarray}\label{character_of_B_n}
\begin{split}
{\rm tr}\left(\Pi_{\alpha\beta\gamma}^\sigma(sz) F_\rho^2
\,\xi^2_{reg}\right)=\left\{
\begin{array}{rl}
&0, \text{ if } (sz\notin B_\infty^{(n)})\lor (s\in\mathfrak{S}_n\setminus (G_{\Omega_{_{kn}}}\cap\mathfrak{S}_n)); \\
& \frac{\chi_{\alpha\beta\gamma}^\sigma(sz)\;({\rm dim\,\rho})^2}{k!(n-k)!},\text{ if } sz\in B_{n\infty};\\
& \tau(sz), \text{ if }  (z\in\mathbb{Z}_2^n)\land (s\in G_{\Omega_{_{kn}}}\cap\mathfrak{S}_n),
\end{array}
\right.
\end{split}
\end{eqnarray}
where $\tau$
Now we consider the left coset space $X=B_n\diagup G_{\Omega_{_{kn}}}$. Let $\left\{\mathfrak{s}(x) \right\}_{x\in X}$ be the corresponding complete set of coset representatives, where $\mathfrak{s}(G_{\Omega_{_{kn}}})$ is identity element $e$ of group $B_n$. We will suppose without loss of generality that
\begin{eqnarray*}
\left\{\mathfrak{s}(x) \right\}_{x\in X}\subset \mathfrak{S}_n\subset B_n.
\end{eqnarray*}
Define a collection of normal positive functionals $\phi_x$, where $x\in X=B_n\diagup G_{\Omega_{_{kn}}}$, on $M=\left(\Pi_{\alpha\beta\gamma}^\sigma(B)\right)^{''}$ as follows
\begin{eqnarray*}
\begin{split}
&\phi_x(m)=\left<m\,\Pi_{\alpha\beta\gamma}^\sigma(\mathfrak{s}(x))F_\rho\xi_{reg}, \Pi_{\alpha\beta\gamma}^\sigma(\mathfrak{s}(x))F_\rho\xi_{reg} \right>_{L^2(M,{\rm tr})}\\
&={\rm tr}\left( \left(\Pi_{\alpha\beta\gamma}^\sigma(\mathfrak{s}(x))\right)^*\,m \,\Pi_{\alpha\beta\gamma}^\sigma(\mathfrak{s}(x))\,F^2_\rho\xi^2_{reg}\right)=
{\rm tr}\left( m \,\Pi_{\alpha\beta\gamma}^\sigma(\mathfrak{s}(x))\,F^2_\rho\xi_{reg}^2\,\left(\Pi_{\alpha\beta\gamma}^\sigma(\mathfrak{s}(x))\right)^*\right).\;\;\;
\end{split}
\end{eqnarray*}

Set $\phi(m)=\sum\limits_{x\in X}\phi_x(m)$ and $\widetilde{D}=\sum\limits_{x\in X}\Pi_{\alpha\beta\gamma}^\sigma(\mathfrak{s}(x))\,F_\rho\xi_{reg}\,\left(\Pi_{\alpha\beta\gamma}^\sigma(\mathfrak{s}(x))\right)^*$.  It is useful to notice that for $g\in B_n$, using (\ref{character_of_B_n}), we get
\begin{eqnarray}\label{induced_repr}
\phi\left(\Pi_{\alpha\beta\gamma}^\sigma(g)\right)={\rm tr}\left(\Pi_{\alpha\beta\gamma}^\sigma(g)\;\widetilde{D} \right)=\sum\limits_{x: \mathfrak{s}(x)^{-1}g\mathfrak{s}(x)\in G_{\Omega_{_{kn}}}}\tau\left(\mathfrak{s}(x)^{-1}g\mathfrak{s}(x) \right).
\end{eqnarray}
Thus the restriction $\phi$ to $B_n$ is a character of irreducible induced representation  ${\rm Ind}_{G_{\Omega_{_{kn}}}}^{B_n}\rho$.
Now, applying lemma \ref{multiplicative_lemma} and (\ref{character_of_B_n}), we have
\begin{eqnarray}
\phi\left(\Pi_{\alpha\beta\gamma}^\sigma(g)\right)=\left\{
\begin{array}{rl}
&0, \text{ if } g\notin B_\infty^{(n)}; \\
& \frac{|X|\,\chi_{\alpha\beta\gamma}^\sigma(g)\;({\rm dim\,\rho})^2}{k!(n-k)!},\text{ if } g\in B_{n\infty};\\
& \phi\left(\Pi_{\alpha\beta\gamma}^\sigma(g_1)\right)\,\chi_{\alpha\beta\gamma}^\sigma(g_2), \text{ if } g=g_1g_2,~\text{where}~g_1\in B_n,g_2\in   B_{n\infty}.
\end{array}
\right.
\end{eqnarray}
Hence, using (\ref{induced_repr}), we obtain equality (\ref{main_equality}), where $D=\frac{k!(n-k)!\widetilde{D}}{|X|\,({\rm dim}\,\rho)^2}$. This proves theorem \ref{II_1_theorem}. \qed

\subsection{The type of representation $\Pi={\rm Ind}^B_{B_\infty^{(n)}}\,^n\!\pi$ in the case $\gamma_0+\gamma_1>0$.}\label{gamma>0}
Here we use the notation introduced at the beginning of the section \ref{inducing_stable_representation}. Recall that $H_{\infty\Omega_{_{kn}}}=B_k\cdot B_{kn}\cdot B_{n\infty} =\mathfrak{S}_k\cdot\mathfrak{S}_{kn}\cdot\mathfrak{S}_{n\infty}\cdot \mathbb{Z}^k\cdot\mathbb{Z}^{\widetilde{kn}}\cdot \mathbb{Z}_2^{\widetilde{n\infty}}$.

According to Proposition \ref{Prop_Ind}, we have
\begin{eqnarray}
{\rm Ind}^B_{B_\infty^{(n)}}\,^n\!\pi \stackrel{\text{quasi}}{=}{\rm Ind}^B_{B_\infty^{(n)}}\,{\rm Ind}^{B_\infty^{(n)}}_{H_{\infty\Omega_{_{kn}}}}\,\Xi \stackrel{\text{quasi}}{=}{\rm Ind}^B_{H_{\infty\Omega_{_{kn}}}}\,\Xi,
\end{eqnarray}
where we use symbol $\stackrel{\text{quasi}}{=}$ to denote quasi-equivalent representations.
Therefore,
\begin{eqnarray}\label{successive_induction}
{\rm Ind}^B_{B_\infty^{(n)}}\,^n\!\pi \stackrel{\text{quasi}}{=}{\rm Ind}^B_{B_\infty^{(k)}}\,{\rm Ind}^{B_\infty^{(k)}}_{H_{\infty\Omega_{_{kn}}}}\,\Xi\stackrel{\text{quasi}}{=}{\rm Ind}^B_{H_{\infty\Omega_{_{kn}}}}\,\Xi, ~\text{where}~ k\leq n.
\end{eqnarray}
\begin{Th}\label{Ind^B_infty_(k)}
Let $\Xi$ be the same as in  Proposition \ref{Prop_Ind}. Then the following holds:
\begin{itemize}
  \item {\bf 1}) if $\gamma_1>0$ then $\,^k\!\pi={\rm Ind}^{B_\infty^{(k)}}_{H_{\infty\Omega_{_{kn}}}}\,\Xi$ is ${\rm II}_1$-factor-representation;
  \item {\bf 2}) if $\gamma_0>0$ then $\,^\mathbb{A}\!\pi={\rm Ind}^{B_\infty^{\mathbb{A}}}_{H_{\infty\Omega_{_{kn}}}}\,\Xi$, where $\mathbb{A}=\overline{k+1,n}$, is   ${\rm II}_1$-factor-representation.
\end{itemize}
\end{Th}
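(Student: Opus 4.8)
The strategy is to strip off the finite ``coordinate-block'' direct factors by induction in stages, reducing each of the two assertions to a degenerate case of the argument already carried out for Theorem~\ref{II_1_theorem}, and then to notice that in that degenerate case exactly one of the conditions $\gamma_0>0$, $\gamma_1>0$ is consumed. For part \textbf{1}, the supports of $B_k$, $B_{kn}$, $B_{n\infty}$ are pairwise disjoint segments of $\mathbb N$, so $B_\infty^{(k)}=B_k\times B_{k\infty}$ and $H_{\infty\Omega_{_{kn}}}=B_k\times(B_{kn}\cdot B_{n\infty})$ as internal direct products, and formula \eqref{tilde_rho} factors as $\Xi=\rho_{\,^0\!\lambda}\boxtimes\Xi'$, where $\rho_{\,^0\!\lambda}(zs)={\rm Irr}_{\,^0\!\lambda}(s)$ for $zs\in B_k$ (note $\Omega_{_{kn}}\equiv1$ on $\mathbb Z_2^k$) and $\Xi'(z's't)=\Omega_{_{kn}}(z')\,{\rm Irr}_{\,^1\!\!\lambda}(s')\otimes\pi_{\alpha\beta\gamma}^\sigma(t)$ is a representation of $B_{kn}\cdot B_{n\infty}$. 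Since induction commutes with this external tensor splitting of the pair of groups, $\,^k\!\pi\cong\rho_{\,^0\!\lambda}\boxtimes{\rm Ind}^{B_{k\infty}}_{B_{kn}\cdot B_{n\infty}}\Xi'$; because $\rho_{\,^0\!\lambda}$ is irreducible and finite dimensional, the generated von Neumann algebra is $B(\mathcal H_{\,^0\!\lambda})\,\overline{\otimes}\,\bigl({\rm Ind}^{B_{k\infty}}_{B_{kn}\cdot B_{n\infty}}\Xi'(B_{k\infty})\bigr)''$, which is a ${\rm II}_1$-factor if and only if ${\rm Ind}^{B_{k\infty}}_{B_{kn}\cdot B_{n\infty}}\Xi'$ is. Relabelling $\overline{k+1,\infty}$ onto $\mathbb N$ identifies $B_{k\infty}\cong B$, $B_{kn}\cdot B_{n\infty}\cong B_\infty^{(m)}$ with $m=n-k$, and $\Xi'\cong\mathfrak{Ir}_{\emptyset,\,^1\!\!\lambda}\otimes\pi_{\alpha\beta\gamma}^\sigma$: a finite type factor-representation of $B_\infty^{(m)}$ whose finite part $\mathfrak{Ir}_{\emptyset,\,^1\!\!\lambda}$ is the irreducible representation of $B_m$ carrying the \emph{full sign} character $\Omega_{0m}$ on $\mathbb Z_2^m$ (since $\Omega_{_{kn}}(1^{(j)})=-1$ for $j>k$). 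Thus part \textbf{1} reduces to showing ${\rm Ind}^B_{B_\infty^{(m)}}(\mathfrak{Ir}_{\emptyset,\,^1\!\!\lambda}\otimes\pi_{\alpha\beta\gamma}^\sigma)$ is a ${\rm II}_1$-factor-representation when $\gamma_1>0$.

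This last claim is precisely the $k=0$ instance of Theorem~\ref{II_1_theorem}, and I would run that proof verbatim with $n$ replaced by $m$ and $\rho={\rm Irr}_\emptyset\otimes{\rm Irr}_{\,^1\!\!\lambda}={\rm Irr}_{\,^1\!\!\lambda}$: realize $\Pi_{\alpha\beta\gamma}^\sigma$ on $L^2(M,{\rm tr})$, form the asymptotic operators $\mathcal O_j$ and the abelian algebras $\mathfrak A_j$ (Lemmas~\ref{asym_II_1}, \ref{lemma_commutative}), invoke Lemmas~\ref{spectral_decomposition}, \ref{integer_wreath_product} together with \eqref{E_k_lambda_z}, and take $D_j=F^\perp_j(0)$ for $1\le j\le m$ and $D_j=I$ for $j>m$. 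The only step in that proof that used $\gamma_0>0$ was the choice $D_j=F_j(0)$ on the index range $j\le k$, which is empty here; so only $\gamma_1>0$ is needed, and ${\rm tr}(F^\perp_j(0))=\gamma_1>0$ by \eqref{normalization} makes $\xi_{reg}=\gamma_1^{-m/2}\prod_jD_j$ well defined. The subsequent construction of the projection $F_\rho$ from ${\rm Irr}_{\,^1\!\!\lambda}$, of the positive functionals $\phi_x$, $\phi$, and of a nonnegative $D\in M$ with ${\rm tr}\bigl(D\,\Pi_{\alpha\beta\gamma}^\sigma(b)\bigr)=\bigl({\rm Ind}^B_{B_\infty^{(m)}}(\mathfrak{Ir}_{\emptyset,\,^1\!\!\lambda}\otimes\pi_{\alpha\beta\gamma}^\sigma)(b)\,\xi,\xi\bigr)$ for all $b\in B$ goes through unchanged, exhibiting the induced representation as unitarily equivalent to the GNS representation of the normal state ${\rm tr}(D\,\cdot\,)$ on the ${\rm II}_1$-factor $M$; hence it is a ${\rm II}_1$-factor-representation.

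Part \textbf{2} is the mirror image. With $\mathbb A=\overline{k+1,n}$ one has $B_\infty^{\mathbb A}=B_{kn}\times B_{\overline{1,k}\cup\overline{n+1,\infty}}$ and $H_{\infty\Omega_{_{kn}}}=B_{kn}\times(B_k\cdot B_{n\infty})$, through which $\Xi$ again factors; after the relabelling $B_{\overline{1,k}\cup\overline{n+1,\infty}}\cong B$, $B_k\cdot B_{n\infty}\cong B_\infty^{(k)}$, the representation $\,^\mathbb A\!\pi$ becomes, up to tensoring with a finite dimensional irreducible representation of $B_{kn}$ (which preserves the type), ${\rm Ind}^B_{B_\infty^{(k)}}(\mathfrak{Ir}_{\,^0\!\lambda,\emptyset}\otimes\pi_{\alpha\beta\gamma}^\sigma)$, where now $\mathfrak{Ir}_{\,^0\!\lambda,\emptyset}$ is \emph{trivial} on $\mathbb Z_2^k$. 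This is the $k=n$ instance of Theorem~\ref{II_1_theorem} (empty second partition), in which every $D_j$ equals $F_j(0)$ and only $\gamma_0>0$ is used; the same argument gives that ${\rm Ind}^B_{B_\infty^{(k)}}(\mathfrak{Ir}_{\,^0\!\lambda,\emptyset}\otimes\pi_{\alpha\beta\gamma}^\sigma)$, hence $\,^\mathbb A\!\pi$, is a ${\rm II}_1$-factor-representation. The routine part of all this is the bookkeeping of the group isomorphisms and of the identification of the restriction of $\Xi$ (resp.\ $\Xi'$) to the finite block with the correct irreducible $\mathfrak{Ir}$; the step I expect to be the main obstacle — and which carries the real content — is verifying that neither the proof of Theorem~\ref{II_1_theorem} nor Lemmas~\ref{spectral_decomposition}, \ref{integer_wreath_product} on which it rests ever secretly uses positivity of \emph{both} $\gamma_0$ and $\gamma_1$, but only of the one that survives the relevant degeneration.
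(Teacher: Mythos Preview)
Your proposal is correct and follows essentially the same route as the paper: factor out the finite-dimensional $B_k$-piece (the paper's equation \eqref{Gamma}, your $\rho_{\,^0\!\lambda}\boxtimes\Xi'$), reduce to showing ${\rm Ind}^{B_{k\infty}}_{B_{kn}B_{n\infty}}\Xi_1$ is a ${\rm II}_1$-factor-representation, and then rerun the density-element construction from the proof of Theorem~\ref{II_1_theorem} with all $D_j$ equal to $F_j^\perp(0)$, which only consumes $\gamma_1>0$. The paper organizes step~(3) slightly differently---it first disposes of the case $\gamma_0>0$ by a direct appeal to Theorem~\ref{II_1_theorem} and only then, for $\gamma_0=0$, writes out explicitly $\,^k\!\xi_{reg}=\gamma_1^{k-n}E_{k+1}(0)\cdots E_n(0)$ and $F_{\,^1\!\!\lambda}$---whereas you identify the whole thing as the $k=0$ degeneration of that proof; but the content is the same, and your worry that Lemmas~\ref{spectral_decomposition}, \ref{integer_wreath_product} might secretly require both $\gamma_i>0$ is unfounded (they do not).
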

\begin{proof}
Take $z=(0,0,\ldots,0,z_{k+1},\ldots,z_n)\in Z_2^{\widetilde{kn}}$, $s\in \mathfrak{S}_{kn}$, $t\in B_{n\infty}$.
Denote by $\Xi_1$ the representation of group $H_{\infty\Omega_{_{kn}}}\cap B_{k\infty}= B_{kn}\cdot B_{n\infty}$, which acts
in the space $\mathcal{H}_{\,^1\!\!\lambda}\otimes \mathcal{H}_{\alpha\beta\gamma}^\sigma$ as follows
\begin{eqnarray}\label{rho_1}
\Xi_1(zst)=\Omega_{_{kn}}(z)\,{\rm Irr}_{\,^1\!\!\lambda} (s)\,\,\otimes\pi_{\alpha\beta\gamma}^\sigma(t),\;~\text{where}~\Omega_{_{kn}}(z)=(-1)^{\sum_{j=k+1}^n z_j}.
\end{eqnarray}
Let  $X=B_\infty^{(k)}\slash H_{\infty\Omega_{_{kn}}}$ be the space of left cosets of the subgroup $H_{\infty\Omega_{_{kn}}}$ in $B_\infty^{(k)}$. It is clear that we can to identify $X$ with $B_{k\infty}\slash \left(B_{kn}\cdot B_{n\infty} \right)$. Hence, we get
\begin{eqnarray}\label{Gamma}
\,^k\!\pi(zsh)={\rm Irr}_{\,^0\!\!\lambda}(s)\otimes\left({\rm Ind}^{B_{k\infty}}_{\left(B_{kn}B_{n\infty}\right)}\,\Xi_1\right)(h), \; \;\;
\end{eqnarray}
where $ z\in \mathbb{Z}_2^k, s\in \mathfrak{S}_k $ and  $h\in B_{k\infty}$.
Here we identify naturally the spaces  $l^2\left(\mathcal{H}_{\,^0\!\!\lambda}\otimes \mathcal{H}_{\,^1\!\!\lambda}\otimes \mathcal{H}_{\alpha\beta\gamma}^\sigma,X \right)$ and $\mathcal{H}_{\,^0\!\!\lambda}\otimes l^2\left( \mathcal{H}_{\,^1\!\!\lambda}\otimes \mathcal{H}_{\alpha\beta\gamma}^\sigma,X \right)$.

Hence, in order to prove the theorem true, it suffices to show that $\widetilde{\Gamma}_k={\rm Ind}^{B_{k\infty}}_{\left(B_{kn}B_{n\infty}\right)}$ is ${\rm II}_1$-factor representation of group $B_{k\infty}$.

We starts with the observation that, by theorem \ref{factor_ind},  $\widetilde{\Gamma}_k$ is factor representation in the space $l^2\left(\mathcal{H}_{\,^1\!\!\lambda}\otimes \mathcal{H}_{\alpha\beta\gamma}^\sigma,X \right)$. We further note that there exists positive normal functional $\,^k\!\phi$ on $\widetilde{\Gamma}_k\left(B_k\right)^{''}$, which is defined as follows
\begin{eqnarray}\label{state on factor}
\,^k\!\phi(\widetilde{\Gamma}_k(g))=\left\{\begin{array}{rl}
&0, \text{ if } g\notin B_{kn}B_{n\infty}; \\
& (-1)^{\sum_{j=k+1}^n z_j}\,\,{\rm tr}_{\,^1\!\!\lambda}(s)\;\chi_{\alpha\beta\gamma}^\sigma(t), ~\text{ if }~ g=zst,
\end{array}
\right.
\end{eqnarray}
where $z=(0,0,\ldots,0,z_{k+1},\ldots,z_n)\in Z_2^{\widetilde{kn}}$, $s\in \mathfrak{S}_{kn}$, $t\in B_{n\infty}$ and ${\rm tr}_{\,^1\!\!\lambda}$ is normalized trace of representation ${\rm Irr}_{\,^1\!\!\lambda}$ of group $\mathfrak{S}_{kn}$.

Let $\,^k\!\Pi_{\alpha\beta\gamma}^\sigma$ be a ${\rm II}_1$-factor-representation of $B_{k\infty}$ in Hilbert space $\,^k\!\mathcal{H}_{\alpha\beta\gamma}^\sigma$ with the same parameters $\alpha$, $\beta$, $\gamma$ and $\sigma$ as $\pi_{\alpha\beta\gamma}^\sigma$. Denote by $\,^k\!{\rm tr}$ a normal trace on factor $\,^k\!M=\left(\,^k\!\Pi_{\alpha\beta\gamma}^\sigma(B_{k\infty})\right)^{''}$ such that $\,^k\!{\rm tr}(I)=1$. We assume below that $\,^k\!\Pi_{\alpha\beta\gamma}^\sigma$ acts on $\,^k\!\mathcal{H}_{\alpha\beta\gamma}^\sigma=L^2(\,^k\!M,\,^k\!{\rm tr})$  by the  operators of the left multiplication.

It follows from (\ref{Gamma}) that to prove of our theorem, it sufficient to find $D\in \,^k\!M$ with the properrty
\begin{eqnarray}
\,^k\!{\rm tr}\left(D \,\Pi_{\alpha\beta\gamma}^\sigma(g)\right)=\,^k\!\phi(\widetilde{\Gamma}_k(g)) ~\text{ for all }~ g\in B_{k\infty}.
\end{eqnarray}
First, we note that if $\gamma_0>0$ then our statement follows from (\ref{Gamma}) and theorem  \ref{II_1_theorem}. Using the notations from the proof of theorem \ref{II_1_theorem}, consider the  opposite case.
Since $\gamma_0=0$, (\ref{E_k_lambda_z}) and (\ref{normalization}) show that
\begin{eqnarray}\label{E_j_sign}
E_j(0)\;\Pi_{\alpha\beta\gamma}^\sigma\left( \zeta^{(j)} \right)=(-1)^{\zeta_j}\,E_j(0) \;~\text{ for }~ j>k.
\end{eqnarray}
Let us introduce operator $\,^k\!\xi_{reg}=\gamma_1^{k-n}\;E_{k+1}(0)\cdots E_n(0)\,\in \,\,^k\!M$. It is clear that, by (\ref{normalization}),
\begin{eqnarray*}
\,^k\!{\rm tr}(\,^k\!\xi_{reg})=\gamma_1^{k-n}\;\,^k\!{\rm tr}\left(E_{k+1}(0) \right)\,\,^k\!{\rm tr}\left(E_{k+2}(0)\cdots \,^k\!{\rm tr}\left(E_{n}(0) \right) \right) =1.
\end{eqnarray*}
Now we note that orthogonal projector $F_{\,^1\!\!\lambda} = \frac{\left({\rm dim\,\,^1\!\!\lambda}\right)^2}{(n-k)!}\;\sum\limits_{s\in \mathfrak{S}_{kn}}\;\overline{{\rm tr}_{\,^1\!\!\lambda}(s)}\,^k\!\Pi_{\alpha\beta\gamma}^\sigma(s)$, where ${\rm dim\,\,^1\!\!\lambda}$ is dimension of irreducible representation ${\rm Irr}_{\,^1\!\!\lambda}$ of group $\mathfrak{S}_{kn}$. Since, by lemmas \ref{multiplicative_lemma} and \ref{Main_lemma21}, $\,^k\!{\rm tr}\left(\,^k\!\xi_{reg}\,\,^k\!\Pi_{\alpha\beta\gamma}^\sigma(s)\right)=0$ for $s\notin B_{n\infty}$, we have
\begin{eqnarray*}
\,^k\!{\rm tr}\left(\,^k\!\xi_{reg}\,F_{\,^1\!\!\lambda}\right)=\,^k\!{\rm tr}\left(\,^k\!\xi_{reg}\;{\rm tr}_{\,^1\!\lambda}(e)\right)=\frac{\left({\rm dim\,\,^1\!\!\lambda}\right)^2}{(n-k)!}.
\end{eqnarray*}
Thus, for the same reason, using (\ref{rho_1}) and (\ref{E_j_sign}), it is easy check that
\begin{eqnarray*}
\,^k\!\phi(\widetilde{\Gamma}_k(g))=\,^k\!{\rm tr}\left(\,^k\!\xi_{reg}\,F_{\,^1\!\!\lambda}\,^k\!\Pi_{\alpha\beta\gamma}^\sigma(g)\right)\;~\text{for all }~g\in B_{k\infty}.
\end{eqnarray*}
It follows from this that representations $\widetilde{\Gamma}_k$ and $\,^k\!\Pi_{\alpha\beta\gamma}^\sigma$ of $B_{k\infty}$ are quasi-equivalent.
\end{proof}
\begin{Th}\label{II_infty_case}
Let $\Xi$, $\gamma_0$, $\gamma_1$ be the same as in  Proposition \ref{Prop_Ind} and Theorem \ref{Ind^B_infty_(k)}. Then the following holds:
\begin{itemize}
  \item 1) if $\gamma_0=0$ and $k\geq 1$ then $\Pi={\rm Ind}^B_{B_\infty^{(n)}}\,^n\!\pi$ is ${\rm II}_\infty$-factor representation;
  \item 2) if $\gamma_1=0$ and $k< n$ then $\Pi={\rm Ind}^B_{B_\infty^{(n)}}\,^n\!\pi$ is ${\rm II}_\infty$-factor representation.
\end{itemize}
\end{Th}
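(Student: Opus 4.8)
The plan is to reduce, via the successive induction \eqref{successive_induction} and Theorem~\ref{Ind^B_infty_(k)}, to a representation induced from a ${\rm II}_1$-factor-representation of a subgroup of infinite index, and then to show such an induced representation is of type ${\rm II}_\infty$. In case 1) $\gamma_0=0$, and since $\gamma_0+\gamma_1>0$ throughout this section we get $\gamma_1>0$; Theorem~\ref{Ind^B_infty_(k)}(1) then gives that $\,^k\!\pi={\rm Ind}^{B_\infty^{(k)}}_{H_{\infty\Omega_{_{kn}}}}\Xi$ is a ${\rm II}_1$-factor-representation of $B_\infty^{(k)}=B_kB_{k\infty}$, while $\Pi\stackrel{\text{quasi}}{=}{\rm Ind}^B_{B_\infty^{(k)}}\,^k\!\pi$ by \eqref{successive_induction}; as $k\ge1$, $B_\infty^{(k)}$ has infinite index in $B$. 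In case 2) $\gamma_1=0$, hence $\gamma_0>0$; regrouping the iterated induction, $\Pi\stackrel{\text{quasi}}{=}{\rm Ind}^B_{B_\infty^{\mathbb{A}}}\,^\mathbb{A}\!\pi$ with $\mathbb{A}=\overline{k+1,n}$, where $\,^\mathbb{A}\!\pi={\rm Ind}^{B_\infty^{\mathbb{A}}}_{H_{\infty\Omega_{_{kn}}}}\Xi$ is a ${\rm II}_1$-factor-representation of $B_\infty^{\mathbb{A}}$ by Theorem~\ref{Ind^B_infty_(k)}(2); as $k<n$, $\mathbb{A}$ is a nonempty finite set, so $B_\infty^{\mathbb{A}}$ has infinite index in $B$. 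It thus suffices to prove: if $K\subset B$ is a subgroup of infinite index of the form $B_\infty^{(k)}$ ($k\ge1$) or $B_\infty^{\mathbb{A}}$ ($\emptyset\ne\mathbb{A}$ finite) and $\rho$ is a ${\rm II}_1$-factor-representation of $K$, then $\Pi={\rm Ind}^B_K\rho$ is of type ${\rm II}_\infty$.

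For this, realize $\rho$ on $L^2(\,^k\!M,\,^k\!{\rm tr})$ by left multiplication, so $\Pi$ acts on $\mathcal{H}_\Pi=l^2(L^2(\,^k\!M,\,^k\!{\rm tr}),X)$, $X=B/K$, as in Theorem~\ref{factor_ind}. After the reduction $\Pi$ is quasi-equivalent to the original representation ${\rm Ind}^B_{B_\infty^{(n)}}\,^n\!\pi$, which is a stable factor-representation by Theorem~\ref{factor_ind}; since stability is a quasi-equivalence invariant (Corollary~\ref{stable_quasi}) and so is the factor property, $\Pi$ is a stable factor-representation. As in the proof of Theorem~\ref{factor_ind}, every $A\in M:=\Pi(B)''$ has all its $X\times X$ matrix blocks $A_{xy}$ in $\,^k\!M$, so $\mathrm{Tr}(A):=\sum_{x\in X}\,^k\!{\rm tr}(A_{xx})$, $A\in M^+$, defines a faithful normal trace on $M$ with $\mathrm{Tr}(I)=\#X=\infty$. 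It now suffices to exhibit a \emph{single} nonzero projection $Q\in M$ with $\mathrm{Tr}(Q)<\infty$ and $QMQ$ of type ${\rm II}_1$: then $M$ is semifinite (it has a nonzero finite projection); $M$ is not a finite factor, for otherwise comparison of $\mathrm{Tr}$ with the normalized trace $\tau$ of the factor $M$ would force $\mathrm{Tr}=\infty\cdot\tau$ (because $\mathrm{Tr}(I)=\infty$), hence $\mathrm{Tr}(Q)=\infty$, a contradiction; and $M$ is not of type ${\rm I}$, since a minimal projection of $M$ would be Murray--von Neumann equivalent to a subprojection of the finite projection $Q$, giving a minimal projection of the ${\rm II}_1$-factor $QMQ$. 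A properly infinite semifinite factor that is not of type ${\rm I}$ is of type ${\rm II}_\infty$.

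The construction of $Q$ is the crux, and it is here that the hypothesis $\gamma_0=0$ (case 1), resp.\ $\gamma_1=0$ (case 2), is used. I would follow the proof of Theorem~\ref{II_1_theorem} with the modification already employed in Theorem~\ref{Ind^B_infty_(k)}: form $\mathcal{O}_j=w\text{-}\lim_{l}\Pi((j\;l))$, its zero-spectral projection $E_j(0)$, and the abelian algebras $\mathfrak{A}_j$ of Lemmas~\ref{asym_II_1}--\ref{integer_wreath_product}. Because $\gamma_0=0$ (resp.\ $\gamma_1=0$), relation \eqref{E_j_sign} gives $E_j(0)\,\Pi(1^{(j)})=\pm E_j(0)$, so that the ``finite block'' of $\,^k\!\pi$ provided by Theorem~\ref{Ind^B_infty_(k)} (the factor ${\rm Irr}_{\,^0\!\lambda}\otimes\pi_{\alpha\beta\gamma}^\sigma$) is cut out by a projection lying in the von Neumann algebra generated by $\Pi(K)$ --- and not by an operator involving the genuinely infinite $E_j(0)$-blocks with both signs, as would be forced if $\gamma_0,\gamma_1$ were both positive. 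Averaging this projection over the \emph{finite} Young subgroup $G_{\Omega_{_{kn}}}\cap\mathfrak{S}_n$ with the projection $F_\rho$ and transporting it through the induction --- the coset sum running over the \emph{finite} set $B_n/G_{\Omega_{_{kn}}}$, exactly as in the last part of the proof of Theorem~\ref{II_1_theorem} --- produces $Q\in M$ supported on finitely many cosets, so $\mathrm{Tr}(Q)<\infty$; on each such coset the induced corner reproduces a corner of the ${\rm II}_1$-factor generated by $\pi_{\alpha\beta\gamma}^\sigma(B_{n\infty})$, hence $QMQ$ has no minimal projection and is of type ${\rm II}_1$. The main obstacle is precisely to land a genuine projection of $M$ with finite $\mathrm{Tr}$: the naive candidate $P_{\widetilde{e}}$, the projection onto the $\widetilde{e}$-component of $\mathcal{H}_\Pi$, lies in neither $M$ nor $M'$, and it is the vanishing of exactly one of $\gamma_0,\gamma_1$ --- together with the index conditions $k\ge1$ (case 1) and $k<n$ (case 2), which are what keeps $[B:K]=\infty$ --- that makes the averaged projection fall inside $M$ with finite trace. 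If both $\gamma_0,\gamma_1>0$ one is in the ${\rm II}_1$ situation of Theorem~\ref{II_1_theorem}; if $\gamma_0=\gamma_1=0$ the $E_j(0)$-device fails and a separate argument (Theorem~\ref{alpha+beta=1}) applies.
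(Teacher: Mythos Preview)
Your overall plan coincides with the paper: reduce via \eqref{successive_induction} and Theorem~\ref{Ind^B_infty_(k)} to $\Pi\stackrel{\text{quasi}}{=}{\rm Ind}^B_{B_\infty^{(k)}}\,^k\!\pi$ with $\,^k\!\pi$ of type ${\rm II}_1$ and $[B:B_\infty^{(k)}]=\infty$, then exhibit a projection $Q\in M=\Pi(B)''$ with $QMQ$ a ${\rm II}_1$-factor and with infinitely many pairwise orthogonal $\Pi(B)$-conjugates. Where your sketch goes wrong is the actual construction of $Q$.

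You propose to obtain $Q$ by ``averaging this projection over the finite Young subgroup $G_{\Omega_{_{kn}}}\cap\mathfrak{S}_n$ with the projection $F_\rho$ and transporting it through the induction --- the coset sum running over the finite set $B_n/G_{\Omega_{_{kn}}}$''. That is the machinery from the proof of Theorem~\ref{II_1_theorem}, where one manufactures a \emph{density element} $D$ inside a given ${\rm II}_1$-factor so that ${\rm tr}(D\,\cdot)$ reproduces a prescribed positive-definite function; it does not yield a projection of $M$ supported on finitely many classes of the \emph{infinite} coset space $X=B/B_\infty^{(k)}$. The finite set $B_n/G_{\Omega_{_{kn}}}$ sits one induction step below and is irrelevant here.

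The paper builds $Q$ differently and more directly. For $m\le k$ one computes $\mathcal{O}_m=w\text{-}\lim_l\Pi((m\;l))$ in the induced picture: with coset representative $\mathfrak{s}_x=\prod_i(\,^x\!p_i\;\,^x\!r_i)$, $\,^x\!p_i\le k<\,^x\!r_i$, one gets $(\mathcal{O}_m\eta)(x)=0$ if $m\notin\{\,^x\!p_i\}$ and $(\mathcal{O}_m\eta)(x)=\,^k\!\mathcal{O}_{\,^x\!r_i}\eta(x)$ if $m=\,^x\!p_i$. Thus $E_m(0)$ is the identity on components with $m\notin\{\,^x\!p_i\}$ and equals $\,^k\!E_{\,^x\!r_i}(0)$ on the others. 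Now $\gamma_0=0$ enters: on the range of $\,^k\!E_{\,^x\!r_i}(0)$ the operator $\,^k\!\pi(1^{(\,^x\!r_i)})$ acts by $-1$ (this is \eqref{E_j_sign} at the $\,^k\!\pi$-level), so $Q_m:=\tfrac12 E_m(0)\bigl(I+\Pi(1^{(m)})\bigr)$ kills exactly the components with $m\in\{\,^x\!p_i\}$. Hence $Q=\prod_{m=1}^k Q_m$ is the orthogonal projection onto the single coset $\widetilde{e}$; it lies in $M$, the corner $QMQ\cong\,^k\!\pi(B_\infty^{(k)})''$ is ${\rm II}_1$, and the $\Pi(\mathfrak{s}_x)Q\Pi(\mathfrak{s}_x)^{-1}$ are pairwise orthogonal and equivalent in $M$. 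This forces $M$ to be ${\rm II}_\infty$ without ever needing to check that your $\mathrm{Tr}$ is a faithful normal semifinite trace on $M$ --- a step you assert but do not justify. Case 2) follows by applying the same argument to the twisted representation $\widetilde{\Pi}$ obtained by multiplying $\Pi(z)$ by $(-1)^{\sum z_i}$, which interchanges the roles of $\gamma_0$ and $\gamma_1$.
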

First of all, we will prove two auxiliary lemmas.
\begin{Lm}\label{product_transpositions}
Let $c=(p_0\;p_1\;\ldots\; p_{m-1})$ be a cycle from $\mathfrak{S}_\infty\setminus\left(\mathfrak{S}_k\mathfrak{S}_{k\infty} \right)$; i.e. $c(p_i)=p_{i+1({\rm mod}\,m)}$.
Introduce subsets $\left\{i_1<i_2<\ldots<i_l \right\}=\left\{i:p_i>k \right\}$ and $\left\{j_1<\ldots< j_q \right\}=\left\{j:p_{i_j+1}\leq k \right\}$. If $\hat{c} =\left(p_{i_1}\;p_{i_{j_{_1}}+1} \right)\left(p_{i_{(j_{_1}+1)}}\;p_{i_{j_{_2}}+1} \right)\cdots$ $\cdots \left(p_{i_{_{\left(j_{_{(q-1)}}+1\right)}}}\;p_{i_{j_{_q}}+1({\rm mod}\,m)} \right)$  then $\hat{c}\,c\in \mathfrak{S}_k\mathfrak{S}_{k\infty}$.
\end{Lm}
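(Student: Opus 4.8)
The plan is to argue purely combinatorially about the single cycle $c$ and the partition $\overline{1,k}\sqcup\overline{k+1,\infty}$ that defines the Young subgroup $\mathfrak{S}_k\mathfrak{S}_{k\infty}$. First I would colour a point $p_i$ ``small'' if $p_i\le k$ and ``large'' if $p_i>k$. Since $c\notin\mathfrak{S}_k\mathfrak{S}_{k\infty}$, the support of $c$ meets both $\overline{1,k}$ and $\overline{k+1,\infty}$, so both colours occur among $p_0,\dots,p_{m-1}$, and one may choose the cyclic representative of $c$ with $p_0\le k$; the formula for $\hat c$ is to be read with respect to this representative, and this normalisation is what removes the wrap-around ambiguity (with $p_0$ small, position $m-1$ may end a large run, but that run is not glued cyclically onto $p_0$). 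Reading $p_0p_1\cdots p_{m-1}$ cyclically, the points then fall into an alternating chain of maximal monochromatic runs $R_1S_1R_2S_2\cdots R_qS_q$, the $R_r$ consisting of large points and the $S_r$ of small points, where $q\ge1$ is exactly the number of indices $j$ with $p_{i_j+1}\le k$, i.e. the number of places where a large point is immediately followed by a small one.

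The first substantive step is to read off from the definition that $\hat c$ is the product of the $q$ transpositions $\tau_r$ ($r=1,\dots,q$), where $\tau_r$ swaps the first point of $R_r$ with the first point of $S_r$. Indeed, with the convention $j_0:=0$, the block of consecutive large indices $i_{j_{r-1}+1},\dots,i_{j_r}$ is precisely the index set of $R_r$, so $p_{i_{j_{r-1}+1}}$ is the first point of $R_r$ and $p_{i_{j_r}+1}$ (subscript read mod $m$) is the point immediately after the last point of $R_r$, hence the first point of the succeeding small run $S_r$. Since the first points of the $R_r$ are distinct large points and the first points of the $S_r$ are distinct small points, all $2q$ of them are pairwise distinct; thus the $\tau_r$ are disjoint and $\hat c$ is an involution.

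The second step is to verify that $\hat c c$ fixes the partition, by computing $(\hat c c)(p_i)=\hat c(p_{i+1})$ for every $i$ (a point outside ${\rm supp}\,c$ is fixed by both $\hat c$ and $c$). If $p_{i+1}$ lies in the interior of its run it is fixed by $\hat c$ and $p_i$ lies in the same run, so $p_i$ and $(\hat c c)(p_i)=p_{i+1}$ have the same colour. If $p_{i+1}$ is the first point of some $R_r$, then $p_i$ is the last point of the preceding small run, hence small, while $(\hat c c)(p_i)=\tau_r(p_{i+1})$ is the first point of $S_r$, hence small. Symmetrically, if $p_{i+1}$ is the first point of some $S_r$, then $p_i$ is the last point of $R_r$, hence large, while $(\hat c c)(p_i)=\tau_r(p_{i+1})$ is the first point of $R_r$, hence large. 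Hence $\hat c c$ maps each colour class into itself; since $\overline{1,k}$ is finite this forces equality, so $\hat c c\in\mathfrak{S}_k\mathfrak{S}_{k\infty}$. (The same computation says, in cycle language, that the $\tau_r$ split the $m$-cycle $c$ into the $q$ monochromatic large cycles $R_1,\dots,R_q$ together with one monochromatic small cycle obtained by splicing $S_1,\dots,S_q$ cyclically.)

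I expect the colour-preservation case analysis of the last paragraph to be routine; the part that will need care is the first step — translating the nested subscripts $i_{j_r}$ and the ``$+1$'s taken mod $m$'' in the definition of $\hat c$ into the geometric description of the runs $R_r,S_r$, and fixing the cyclic representative $p_0\le k$ so that no large run wraps around the seam between positions $m-1$ and $0$. Once this dictionary is set up, the rest is mechanical.
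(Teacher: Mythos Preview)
Your argument is correct and takes a genuinely different route from the paper's. The paper expresses $c$ as the product $(p_0\;p_{m-1})(p_0\;p_{m-2})\cdots(p_0\;p_1)$ and then, by repeated use of transposition identities of the type $(p_0\;a)(p_0\;b)=(a\;b)(p_0\;a)$, pushes the ``mixed'' factors (those pairing a small entry with a large one) to the left one at a time, reducing at each step to a shorter cycle; it is effectively an induction on $q$. You instead set up the dictionary between the nested subscripts and the maximal monochromatic runs $R_1S_1\cdots R_qS_q$, recognise $\hat c$ as the product of the disjoint swaps $\tau_r$ of the first point of $R_r$ with the first point of $S_r$, and then verify directly, case by case on whether $p_{i+1}$ is interior to a run or starts some $R_r$ or $S_r$, that $\hat c\,c$ preserves each colour class. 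Your approach is cleaner and makes the role of $\hat c$ transparent; the paper's computation, while harder to read, incidentally yields an explicit expression for $\hat c\,c$ as a word in $\mathfrak{S}_k\mathfrak{S}_{k\infty}$, but that explicit form is not used elsewhere, so nothing is lost by your method.
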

\begin{proof}
Assume, without loss of generality, that $p_0\leq k$.
It easy to check that $c=\left(p_0\;\,p_{m-1} \right)\,\left(p_0\;\,p_{2} \right)\,\left(p_0\;\,p_{1} \right)$. Then for $ h= \left(p_0\;\,p_{i_{(j_{_1}+1)}-1} \right)\left(p_0\;\,p_{i_{(j_{_1}+1)}-2} \right)\cdots$ $\cdots\left(p_0\;\,p_{i_{j_{_1}}+2}\right)\in\mathfrak{S}_k$ we have
\begin{eqnarray*}
c=  \cdots     \left(p_0\;\,p_{{i_{(j_{_1}+1)}}} \right)\,h\,\left(p_0\;\,p_{i_{j_{_1}}+1}\right) \left(p_0\;\,p_{{i_{j_{_1}}}}\right)\cdots    \left(p_0\;\,p_{i_1+1}\right)\left(p_0\;\,p_{i_1}\right)  \left(p_0\;\,p_{i_1-1}\right)\cdots\\
\cdots     \left(p_0\;\,p_{{i_{(j_{_1}+1)}}} \right)\,h\,\left(p_0\;\,p_{{i_{j_{_1}}+1}}\right)\left(p_0\;\,p_{i_1}\right) \left(p_{i_1}\;\,p_{{i_{j_{_1}}}}\right)\cdots    \left(p_{i_1}\;\,p_{i_1+1}\right)  \left(p_0\;\,p_{i_1-1}\right)\cdots\\
=\cdots     \left(p_0\;\,p_{{i_{(j_{_1}+1)}}} \right)\,h\,\left(p_{{i_{j_{_1}}+1}}\;\,p_{i_1}\right)\left(p_0\;\,p_{{i_{j_{_1}}+1}}\right) \left(p_{i_1}\;\,p_{{i_{j_{_1}}}}\right)\cdots    \left(p_{i_1}\;\,p_{i_1+1}\right)  \left(p_0\;\,p_{i_1-1}\right)\cdots
\end{eqnarray*}
Hence,
\begin{eqnarray*}
c=\left(p_0\;\,p_{m-1} \right)\,\cdots     \left(p_0\;\,p_{{i_{(j_{_1}+1)}}} \right)\,h\,\left(p_{{i_{j_{_1}}+1}}\;\,p_{i_1}\right)g_1\\
=\,\left(p_{{i_{j_{_1}}+1}}\;\,p_{i_1}\right)\left(p_0\;\,p_{m-1} \right)\,\cdots     \left(p_0\;\,p_{{i_{(j_{_1}+1)}}} \right)\,h\, g_1,\;\;
\end{eqnarray*}
where $g_1=\left(p_0\;\,p_{{i_{j_{_1}}+1}}\right) \left(p_{i_1}\;\,p_{{i_{j_{_1}}}}\right)\cdots    \left(p_{i_1}\;\,p_{i_1+1}\right)  \left(p_0\;\,p_{i_1-1}\right)\cdots\in\mathfrak{S}_k\mathfrak{S}_{k\infty}$.
Thus
\begin{eqnarray*}
\left(p_{{i_{j_{_1}}+1}}\;\,p_{i_1}\right)\,c=\left(p_0\;\,p_{m-1} \right)\,\cdots     \left(p_0\;\,p_{{i_{(j_{_1}+1)}}} \right)\,h\, g_1,
\end{eqnarray*}
~{ where }~ ${\rm supp}\,(hg_1)\in{\rm supp}\,c$.
Further, applying the above reasoning  to the cycle  $\hat{c} =\left(p_0\;\,p_{m-1} \right)\,\cdots     \left(p_0\;\,p_{{i_{(j_{_1}+1)}}} \right)=\left( p_0\;\, p_{{i_{(j_{_1}+1)}}}\;\,p_{i_{(j_{_1}+1)}+1}\cdots p_{m-1}\right)$, we obtain
\begin{eqnarray*}
\hat{c}=\left(p_{{i_{(j_{_1}+1)}}}\;\,p_{i_{j_{_2}}+1}\;\, \right)\left(p_0\;\,p_{m-1} \right) \cdots\left(p_0\;\, p_{i_{(j_2+1)}}\right)g_2,
\end{eqnarray*}
where $g_2\in \mathfrak{S}_k\mathfrak{S}_{k\infty}$ and ${\rm supp}\,g_2\subset {\rm supp}\,\hat{c}$.

Therefore,
\begin{eqnarray*}
c=\left(p_{{i_{j_{_1}}+1}}\;\,p_{i_1}\right)\left(p_{{i_{(j_{_1}+1)}}}\;\,p_{i_{j_{_2}}+1}\;\, \right)\left(p_0\;\,p_{m-1} \right) \cdots\left(p_0\;\, p_{i_{(j_2+1)}}\right)hg_1g_2.
\end{eqnarray*}
Repeating the above transformations, we obtain the statement of lemma \ref{product_transpositions}.
\end{proof}
\begin{Co}
It follows from lemma \ref{product_transpositions} that
\begin{eqnarray*}
c\{1,2,\ldots k\}\cap\{k+1,k+2,\ldots\}=\hat{c}\left\{p_{i_{j_{_1}}+1},p_{i_{j_{_2}}+1},\ldots, p_{i_{j_{_q}}+1({\rm mod}\,m)} \right\}.
\end{eqnarray*}
 Any  permutation $s\in\mathfrak{S}_\infty$ can be expressed as the product of disjoint cycles. Therefore,  there exit two collections
$\mathcal{C}(s)=\left\{p_1,p_2,\ldots,p_q \right\}\subset\{1,2,\ldots k\}$ and $s(\mathcal{C}(s))=\left\{r_1,r_2,\ldots, r_q \right\}\subset\{k+1,k+2,\ldots\}$ such that
\begin{eqnarray}
\mathcal{C}(s)\cup s(\mathcal{C}(s))\subset {\rm supp}\, s\;\; ~\text{{\rm and}} ~\left(\prod\limits_{i=1}^q\left(p_i\;\, r_i \right)\right)\, \mathfrak{S}_k\mathfrak{S}_{k\infty}\,=\,s\,  \mathfrak{S}_k\mathfrak{S}_{k\infty}.
\end{eqnarray}
\end{Co}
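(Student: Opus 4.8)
The plan is to deduce both displayed identities from Lemma \ref{product_transpositions}, treating first a single cycle and then an arbitrary permutation through its disjoint-cycle decomposition.

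Fix a cycle $c=(p_0\;p_1\;\ldots\;p_{m-1})\in\mathfrak{S}_\infty\setminus\left(\mathfrak{S}_k\mathfrak{S}_{k\infty}\right)$ and let $\hat c$ be the permutation furnished by Lemma \ref{product_transpositions}. First I would record the structure of $\hat c$. It is a product of $q$ transpositions, the $\nu$-th one being $\left(p_{i_{(j_{\nu-1}+1)}}\;p_{i_{j_\nu}+1({\rm mod}\,m)}\right)$ for $\nu=1,\ldots,q$ with the convention $j_0=0$. Its first entry occupies a position $i_{(j_{\nu-1}+1)}\in\{i:p_i>k\}$, hence is $>k$, while its second occupies the position $i_{j_\nu}+1$, and $p_{i_{j_\nu}+1}\le k$ exactly because $j_\nu\in\{j:p_{i_j+1}\le k\}$. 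The positions $i_{(j_0+1)},\ldots,i_{(j_{q-1}+1)}$ are pairwise distinct (since $0=j_0<j_1<\cdots<j_{q-1}$ are distinct indices and $j\mapsto i_j$ is strictly increasing), the positions $i_{j_1}+1,\ldots,i_{j_q}+1({\rm mod}\,m)$ are pairwise distinct, and the two lists of positions are disjoint because the former consists of positions of elements $>k$ and the latter of positions of elements $\le k$. Consequently the $q$ transpositions are pairwise disjoint, $\hat c=\hat c^{-1}$, and $\hat c$ interchanges the set $S=\left\{p_{i_{j_1}+1},\ldots,p_{i_{j_q}+1({\rm mod}\,m)}\right\}\subset\{1,\ldots,k\}$ with the set $R=\left\{p_{i_{(j_0+1)}},\ldots,p_{i_{(j_{q-1}+1)}}\right\}\subset\{k+1,k+2,\ldots\}$, all $2q$ of these elements lying in ${\rm supp}\,c$.

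Since $\hat c\,c\in\mathfrak{S}_k\mathfrak{S}_{k\infty}$ preserves $\{1,\ldots,k\}$ setwise, applying $\hat c^{-1}=\hat c$ gives $c\{1,\ldots,k\}=\hat c\{1,\ldots,k\}=\left(\{1,\ldots,k\}\setminus S\right)\cup R$; intersecting with $\{k+1,k+2,\ldots\}$ produces $c\{1,\ldots,k\}\cap\{k+1,k+2,\ldots\}=R=\hat c(S)$, the first displayed identity. For an arbitrary $s\in\mathfrak{S}_\infty$ I would write $s$ as a product of disjoint cycles: each cycle whose support lies entirely in one of the blocks $\{1,\ldots,k\}$ or $\{k+1,k+2,\ldots\}$ already belongs to $\mathfrak{S}_k\mathfrak{S}_{k\infty}$, and to each remaining cycle $c_t$ the previous step attaches an involution $\hat c_t$ — a product of disjoint transpositions, each joining an element $\le k$ to an element $>k$, all contained in ${\rm supp}\,c_t$ — with $\hat c_t c_t\in\mathfrak{S}_k\mathfrak{S}_{k\infty}$. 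The $\hat c_t$ have pairwise disjoint supports, so they commute with one another and with the remaining cycles; hence $\hat s:=\prod_t\hat c_t$ is again an involution and $\hat s\,s\in\mathfrak{S}_k\mathfrak{S}_{k\infty}$, so $s\,\mathfrak{S}_k\mathfrak{S}_{k\infty}=\hat s\,\mathfrak{S}_k\mathfrak{S}_{k\infty}$. Enumerating all transpositions occurring in the $\hat c_t$ as $(p_1\;r_1),\ldots,(p_q\;r_q)$ with $p_i\le k<r_i$, putting $\mathcal{C}(s)=\{p_1,\ldots,p_q\}$ and $\{r_1,\ldots,r_q\}=\hat s\bigl(\mathcal{C}(s)\bigr)$, we obtain $\mathcal{C}(s)\subset\{1,\ldots,k\}$, $\{r_1,\ldots,r_q\}\subset\{k+1,k+2,\ldots\}$, $\mathcal{C}(s)\cup\{r_1,\ldots,r_q\}\subset{\rm supp}\,s$ and $\left(\prod_{i=1}^q(p_i\;r_i)\right)\mathfrak{S}_k\mathfrak{S}_{k\infty}=s\,\mathfrak{S}_k\mathfrak{S}_{k\infty}$, which is the second assertion.

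The only real work is the index bookkeeping in the cycle case — verifying that the $q$ transpositions listed in $\hat c$ are pairwise disjoint and that each pairs an element $\le k$ with an element $>k$, and then tracking which elements $\hat c$ carries into and out of $\{1,\ldots,k\}$ so as to identify $c\{1,\ldots,k\}\cap\{k+1,k+2,\ldots\}$ with $\hat c(S)$. After that, the reduction of the general case to the cycle case is a routine consequence of the disjointness of cycles and the resulting commutativity.
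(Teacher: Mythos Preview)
Your argument is correct and supplies exactly the index bookkeeping that the paper omits --- in the paper this corollary is stated as an immediate consequence of Lemma~\ref{product_transpositions} with no separate proof, so there is nothing to compare against beyond checking that your details are right, and they are.

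One point worth making explicit: you write $\{r_1,\ldots,r_q\}=\hat s(\mathcal C(s))$, whereas the paper's statement writes $s(\mathcal C(s))=\{r_1,\ldots,r_q\}$. Your version is the correct one. With the construction coming from Lemma~\ref{product_transpositions}, the set $\{r_i\}$ is $\hat s(\mathcal C(s))$, not $s(\mathcal C(s))$ in general: for instance with $k=2$ and $c=(1\;2\;3)$ one gets $\hat c=(1\;3)$, so $\mathcal C(s)=\{1\}$ and $\{r_1\}=\{3\}$, but $c(\{1\})=\{2\}\not\subset\{3,4,\ldots\}$. The notation ``$s(\mathcal C(s))$'' in the paper is a slip; what is actually used downstream (in Lemma~\ref{unique_involution} and thereafter) is only that each coset contains an involution $\prod_i(p_i\;r_i)$ with $p_i\le k<r_i$ and support inside ${\rm supp}\,s$, which is precisely what you establish.
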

\begin{Lm}\label{unique_involution}
Define  the elements $\hat{s},\hat{s}'\in \mathfrak{S}_\infty$ as follows $$\hat{s}=\prod\limits_{i=1}^q\left(p_i\;\, r_i \right), \hat{s}'=\prod\limits_{i=1}^{q'}\left(p'_i\;\, r'_i \right),$$
where $p_i,p_i'\in\{1,2,\ldots k\}$, $r_i,r_i'\in\{k+1,k+2,\ldots \}$, $p_i<p_{i+1}$ for $i=1,2,\ldots,q-1$ and ${p'}_i<{p'}_{i+1}$ for $i=1,2,\ldots,q'-1$.
 \begin{itemize}
   \item i) If $\hat{s}\;  \mathfrak{S}_k\mathfrak{S}_{k\infty}=  \hat{s}'\;  \mathfrak{S}_k\mathfrak{S}_{k\infty}$ then $\left\{p_1,p_2,\ldots,p_q \right\}=\left\{p_1',p_2',\ldots,p_{q'}' \right\}$, $\left\{r_1,r_2,\ldots, r_q \right\}=\left\{r_1',r_2',\ldots, r_{q'}' \right\}$ and there exists $g'\in \mathfrak{S}_\infty$ such that
\begin{eqnarray*}
{\rm supp}\,g'\subset\left\{r_1,r_2,\ldots, r_q \right\},\;\;g'\left(r_i' \right)=r_i ~\text{ for all } ~ i=1,2,\ldots q.
\end{eqnarray*}
   \item ii) If $g(p)=\left\{
\begin{array}{rl}
g'(p), \text{ if } p>k\\
\left(\hat{s}\;g'^{-1}\hat{s}\right)(p),\text{ if } p\leq k
\end{array}
\right.$ then $g\in \mathfrak{S}_k\mathfrak{S}_{k\infty}$ and $\hat{s}'=\hat{s}g$.
   \item iii) Given involution  $\hat{s}'=\prod\limits_{i=1}^{q}\left(p'_i\;\, r'_i \right)$, take $h'\in \mathfrak{S}_\infty$ such that ${\rm supp}\,h'\subset\left\{r_1',r_2',\ldots, r_q' \right\},\;\;h'\left(r_i' \right)<h'\left(r_{i+1}' \right)$  for all  $i=1,2,\ldots, q-1$.
       If $$h(p)=\left\{
\begin{array}{rl}
(h')^{-1}(p), \text{ if } p>k\\
\left(\hat{s}\;h'\hat{s}\right)(p),\text{ if } p\leq k
\end{array}
\right.$$ then $h\in \mathfrak{S}_k\mathfrak{S}_{k\infty}$ and $\left(\hat{s}'\,h \right)=\prod\limits_{i=1}^{q}\left(p_i\;\; h'(r'_i) \right)$.
 \end{itemize}
\end{Lm}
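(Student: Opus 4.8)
The plan is to prove the three parts in order, each by a direct combinatorial analysis of how elements of the Young subgroup $\mathfrak{S}_k\mathfrak{S}_{k\infty}$ act on the two "blocks" $\{1,\ldots,k\}$ and $\{k+1,k+2,\ldots\}$, exploiting that $\hat s$ is an involution that matches a subset of the first block bijectively with a subset of the second. First I would prove part i). Observe that $\hat s\,\mathfrak S_k\mathfrak S_{k\infty}=\hat s'\,\mathfrak S_k\mathfrak S_{k\infty}$ means $\hat s'{}^{-1}\hat s=(\hat s')\hat s\in\mathfrak S_k\mathfrak S_{k\infty}$ (both being involutions). Now I would compute which points are moved out of $\{1,\ldots,k\}$ by $\hat s$: these are exactly $\{p_1,\ldots,p_q\}$, and similarly for $\hat s'$. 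Since an element of $\mathfrak S_k\mathfrak S_{k\infty}$ preserves each block, conjugation-type bookkeeping on the set of points of $\{1,\ldots,k\}$ that $\hat s$, resp. $\hat s'$, sends into $\{k+1,\ldots\}$ forces $\{p_i\}=\{p_i'\}$; the ordering convention $p_i<p_{i+1}$, $p_i'<p_{i+1}'$ then gives $p_i=p_i'$ for all $i$. Applying the block-preserving element $g_0:=\hat s'{}^{-1}\hat s$ to the set $\{r_1,\ldots,r_q\}=\hat s(\{p_1,\ldots,p_q\})$ and comparing with $\hat s'(\{p_1',\ldots,p_q'\})=\{r_1',\ldots,r_q'\}$ shows the two sets of $r$'s coincide as sets, and produces the permutation $g'$ of $\{r_1,\ldots,r_q\}$ with $g'(r_i')=r_i$ simply by reading off how $g_0$ permutes that finite set — one takes $g'$ to be the restriction of $g_0$ there, extended by the identity.

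For part ii) the work is essentially verification: given $g'$ from part i), I would check that the formula for $g$ defines a genuine permutation (the two clauses are consistent because $\hat s$ interchanges $\{1,\ldots,k\}\cap{\rm supp}\,\hat s$ with $\{k+1,\ldots\}\cap{\rm supp}\,\hat s$ and fixes everything else), that it preserves both blocks — hence $g\in\mathfrak S_k\mathfrak S_{k\infty}$ — and finally that $\hat s g=\hat s'$. The last identity I would verify point by point: on $p>k$ one gets $\hat s g(p)=\hat s g'(p)$, and using $g'(r_i')=r_i$ together with $\hat s(r_i)=p_i$, $\hat s'(r_i')=p_i'=p_i$ one checks $\hat s g'$ and $\hat s'$ agree on the $r_i'$ and on points outside all supports; on $p\le k$ one uses $g(p)=(\hat s g'{}^{-1}\hat s)(p)$ so that $\hat s g(p)=(g'{}^{-1}\hat s)(p)$, and a symmetric check finishes it. This is routine once part i) is in hand.

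Part iii) is again a computation with the same flavour: $h'$ is chosen to reorder $\{r_1',\ldots,r_q'\}$ increasingly, $h$ is built from $h'$ by the same two-clause recipe as $g$ in part ii), so the same argument shows $h$ preserves both blocks and lies in $\mathfrak S_k\mathfrak S_{k\infty}$. To identify $\hat s' h$ I would expand: for the transposition $(p_i'\;r_i')$ inside $\hat s'$, conjugating/composing with $h$ moves $r_i'$ to $h'(r_i')$ while leaving $p_i'=p_i$ fixed (since $h$ restricted to $\{1,\ldots,k\}$ is $\hat s h'\hat s$, which on $p_i$ returns $p_i$ because $h'$ fixes every $p$, being supported in $\{r_1',\ldots,r_q'\}$). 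Collecting all $i$ gives $\hat s' h=\prod_{i=1}^q (p_i\;h'(r_i'))$, as claimed.

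The main obstacle is bookkeeping discipline rather than any conceptual difficulty: one must be careful about which points lie in which block, about the distinction between "equal as sets" and "equal as ordered tuples," and about the mod $m$ indexing inherited from Lemma \ref{product_transpositions}. Organising the proof around the single invariant "restriction of a permutation to each of the two blocks" keeps these case distinctions manageable; once part i) is established cleanly, parts ii) and iii) follow by essentially mechanical substitution and should be stated as such without grinding through every point-evaluation.
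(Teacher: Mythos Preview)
Your approach is essentially the same as the paper's: both arguments hinge on the single observation that any element of $\mathfrak S_k\mathfrak S_{k\infty}$ preserves the two blocks $\{1,\ldots,k\}$ and $\{k+1,k+2,\ldots\}$, so the set of points that $\hat s$ (respectively $\hat s'$) carries across the partition is an invariant of the coset, forcing $\{p_i\}=\{p_i'\}$ and $\{r_i\}=\{r_i'\}$. The paper in fact only writes out part i) and then stops, leaving ii) and iii) as the routine verifications you correctly identify them to be; your sketch of those parts is sound. One small bookkeeping slip: with $g_0:=\hat s'{}^{-1}\hat s=\hat s'\hat s$ and $p_i=p_i'$, one computes $g_0(r_i)=\hat s'(p_i)=r_i'$, so the permutation with $g'(r_i')=r_i$ is the restriction of $g_0^{-1}$, not of $g_0$ --- but this is harmless since the support is the same finite set either way.
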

\begin{proof}
Since $\hat{s}\;\mathfrak{S}_k\mathfrak{S}_{k\infty}=  \hat{s}'\;  \mathfrak{S}_k\mathfrak{S}_{k\infty}$, there exists $g\in\mathfrak{S}_k\mathfrak{S}_{k\infty}$ such that $\hat{s}g=\hat{s}'$. Therefore, $\left\{r_i' \right\}_{i=1}^{q'}=\hat{s}g\left\{p_i' \right\}_{i=1}^{q'}=(\hat{s}g\{1,2,\ldots.k\})\cap\{k+1,k+2, \ldots\}$. Since $g\{1,2,\ldots.k\}=\{1,2,\ldots.k\}$, we have
\begin{eqnarray*}
\left\{r_i' \right\}_{i=1}^{q'}=(\hat{s}\{1,2,\ldots.k\})\cap\{k+1,k+2, \ldots\}=\left\{r_i \right\}_{i=1}^{q}.
\end{eqnarray*}
Hence, we obtain that $q=q'$ and
\begin{eqnarray*}
\left\{p_i' \right\}_{i=1}^{q}=\{1,2,\ldots.k\}\cap(\hat{s}g\{k+1,k+2, \ldots\})\\=\{1,2,\ldots.k\}\cap(\hat{s}\{k+1,k+2, \ldots\})=\left\{p_i \right\}_{i=1}^{q}.
\end{eqnarray*}
Thus we can to find $g'$ such that
\begin{eqnarray*}
{\rm supp}\,g'\subset\left\{r_1,r_2,\ldots, r_q \right\},\;\;g'\left(r_i' \right)=r_i ~\text{ for all } ~ i=1,2,\ldots q.
\end{eqnarray*}
This finishes the proof.
\end{proof}

\begin{proof}[Proof of theorem \ref{II_infty_case}]\label{proof_28}
 By Theorem \ref{Ind^B_infty_(k)}, $\,^k\!\pi={\rm Ind}^{B_\infty^{(k)}}_{H_{\infty\Omega_{_{kn}}}}\,\Xi$ is ${\rm II}_1$-factor. representation of $B_{k\infty}$. It follows from (\ref{successive_induction}) that
 \begin{eqnarray}
 \Pi={\rm Ind}^B_{B_\infty^{(n)}}\,^n\!\pi\stackrel{\text{quasi}}{=}{\rm Ind}^B_{B_\infty^{(k)}}\,^k\!\pi.
 \end{eqnarray}
 It is clear that the left coset space $X=B\slash{B_\infty^{(k)}}$ is identified naturally  with $\mathfrak{S}_\infty\slash \left(\mathfrak{S}_k\times\mathfrak{S}_{k\infty}\right)$. Namely,  there exists a map $X\ni x\stackrel{\mathfrak{s}}{\mapsto}\mathfrak{s}_x\in \mathfrak{S}_\infty\subset B$ with the properties $x=\mathfrak{s}_xB_\infty^{(k)}$ and $\mathfrak{s}_{_{\left( B_\infty^{(k)} \right)}}=e$, where $e$ is identity element of $\mathfrak{S}_\infty$. Denote by $\widetilde{g}$ the class in $X$ containing $g\in B$. By Lemmas \ref{product_transpositions} and \ref{unique_involution}, every coset $x=s\;\mathfrak{S}_k\mathfrak{S}_{k\infty}\in X$ contains the unique involution $\mathfrak{s}_x=\prod\limits_{i=1}^{\,^x\!l} \left(\,^x\!p_i\;\,\,^x\!r_i \right)$ such that $\,^x\!p_i<\,^x\!p_{i+1}\leq k$ and $k<\,^x\!r_i<\,^x\!r_{i+1}$ for all $i=1,2,\ldots,\,^x\!l-1<k$. In particular, $\mathfrak{s}_{\widetilde{e}}=e$ and $\,^{\widetilde{e}}\!l=0$; i.e. collection $\left\{\,^{\widetilde{e}}\!p_i \right\}$ is empty set in this case.

Denote by $\,^k\!M$ ${\rm II}_1$-factor generated by operators $\,^k\!\pi\left(B_\infty^{(k)}\right)$.
Without loss of generality we can to assume that $\,^k\!\pi$ is realized on $L^2\left( \,^k\!M, \,^k\!{\rm tr} \right)$, where $\,^k\!{\rm tr}$ is a normal, faithful trace on $\,^k\!M$ such that  $\,^k\!{\rm tr}(I)=1$, by the operators of left multiplication. Then    the identity $I$   of factor $\,^k\!M$ is cyclic and separating vector for $\,^k\!\pi$.

If $m>k$ then, by lemma \ref{asym_II_1}, there exist the operators
\begin{eqnarray}\label{^k_O}
\,^k\!\mathcal{O}_m=w-\lim\limits_{j\to\infty}\;^k\!\pi\left( (m\,\;j) \right).
\end{eqnarray}

The operators of the representation $\Pi={\rm Ind}^B_{B_\infty^{(k)}}\,^k\!\pi$ act on $v\in \mathcal{H}_\Pi$ $=l^2\left(L^2\left( \,^k\!M, \,^k\!{\rm tr} \right),X\right)$ by the formula
\begin{eqnarray}\label{ind1_action}
\left(\Pi(g)v\right)(x)=c(g,x)v\left(g^{-1}x\right),
\end{eqnarray}
where $c(g,x)= \,^k\!\pi\left(\left( \mathfrak{s}_x \right)^{-1}g\;\mathfrak{s}_{_{\left(g^{-1}x\right)}}\right)$. Notice that $\left(\mathfrak{s}_x \right)^{-1}g\;\mathfrak{s}_{_{\left(g^{-1}x\right)}}$  lies in $B_\infty^{(k)}$.

 Set
\begin{eqnarray}\label{trace1_vector}
\xi(x)=\left\{
\begin{array}{rl}
I, \text{ if } x=\widetilde{e};\\
0,\text{ if } x\neq\widetilde{e}.
\end{array}
\right.
\end{eqnarray}
Since $I$ is cyclic in  $L^2\left( \,^k\!M, \,^k\!{\rm tr} \right)$, then  $\left[\Pi(B)\xi\right]=\mathcal{H}_\Pi$. I.e., $\xi$ is cyclic vector for representation $\Pi$.  Now,  we conclude from (\ref{ind1_action}) and (\ref{trace1_vector})  that $$\left(\Pi(g)\xi,\xi\right)=\left\{
\begin{array}{rl}
0, \text{ if } g\notin B_\infty^{(k)}\\
\,^k\!{\rm tr}(g), \text{ if } g\in B_\infty^{(k)}.
\end{array}
\right.$$  We thus get
\begin{eqnarray}
\left(\Pi(g)\xi,\xi\right)=\left(\Pi(h)\Pi(g)\Pi(h^{-1})\xi,\xi\right) ~\text{ for all }~ g\in B  ~\text{ and }~ h\in B_\infty^{(k)}.
\end{eqnarray}
It follows from this that $\left(\Pi\left((m\,\;l) \right)\Pi(g_1)\xi, \Pi(g_2)\xi \right)=\left(\Pi\left((m\,\;l') \right)\Pi(g_1)\xi, \Pi(g_2)\xi \right)$ for all $g_1,g_2\in B$ and $l, l'\notin \;({\rm supp}\,g_1)\cup ({\rm supp}\,g_2)\cup \left\{1, 2,\ldots,k \right\}$. Hence, using the cyclicity of $\xi$, we obtain that sequence $\left\{ \Pi\left((m\,\;l) \right)\right\}_{l=1}^\infty$ converges in the weak operator topology. Set
\begin{eqnarray}
\mathcal{O}_m=w-\lim\limits_{l\to\infty}\Pi\left((m\,\;l) \right).
\end{eqnarray}
If $m\leq k$ then the operators $\mathcal{O}_m$ and $\Pi\left(1^{(m)}\right)$, where $1^{(m)}=(\underbrace{0,0,\ldots,0}_{m-1},1,0,\ldots)$ $\in \;_0\mathbb{Z}_2^\infty$, act on $\eta\in l^2\left(L^2\left( \,^k\!M, \,^k\!{\rm tr} \right),X\right)$ as follows
\begin{eqnarray}\label{asymp_transposition_2.55_operator}
\begin{split}
&\left(\mathcal{O}_m\eta\right)(x)=\left\{
\begin{array}{rl}
0, ~\text{ if }~ m\notin\left\{\,^x\!p_i \right\}_{i=1}^{\,^{^x}\!\!l};\\
\,^k\!\mathcal{O}_{\!\,{^x\!r_i}}\;\eta(x),~\text{ if }~ m=\,^x\!p_i,
\end{array}
\right.\\
&\left(\Pi\left(1^{(m)} \right)\;\eta(x) \right)= \,^k\!\pi\left(1^{(\mathfrak{s}_x(m))} \right)\;\eta(x), ~\text{where}~ \mathfrak{s}_x=\prod\limits_{i=1}^{\,^{^x}\!\!l} \left(\,^x\!p_i\;\,\,^x\!r_i \right).
\end{split}
\end{eqnarray}
In particular, \  \ by Lemma \ref{spectral_decomposition_Okounkov}, \ \ operator \ $\mathcal{O}_m$ \  has a discrete spectrum   with its only limit point at zero.\ \ \ \ \
 \ \ \ \ \ \ \
 Let $\mathcal{O}_m=\sum\limits_{\lambda\in S\left(\mathcal{O}_m \right)} \lambda\, E_m(\lambda)$ be a spectral decomposition of $\mathcal{O}_m$ in the space $l^2\left(L^2\left( \,^k\!M, \,^k\!{\rm tr} \right), X\right)$, and let $$\,^k\!\mathcal{O}_l=\sum\limits_{\lambda\in S\left(\,^k\!\mathcal{O}_l \right)} \lambda \;\,^k\!E_l(\lambda),$$ where $l>k$, be a spectral decomposition of $\,^k\!\mathcal{O}_l$ in the space $L^2\left( \,^k\!M, \,^k\!{\rm tr} \right)$. It follows from (\ref{asymp_transposition_2.55_operator}) that
\begin{eqnarray}\label{zero_projection}
\left( E_m(0)\;\eta\right)(x)=\left\{
\begin{array}{rl}
\eta(x), &~\text{ if }~ m\notin\left\{\,^x\!p_i \right\}_{i=1}^{\,^{^x}\!\!l};\\
\,^k\!E_{\,({^x\!r_i})}(0)\;\eta(x),&~\text{ if }~ m=\,^x\!p_i.
\end{array} \right.
\end{eqnarray}
Hence, using (\ref{asymp_transposition_2.55_operator}) and lemma \ref{lemma_commutative}, we obtain
\begin{eqnarray}\label{Product_zero_projection}
\begin{split}
&\left( \Pi\left(1^{(m)}\right)\,\,E_m(0)\eta\right)(x)=\left(E_m(0)\,\, \Pi\left(1^{(m)}\right)\eta\right)(x)\\
&\stackrel{(\gamma_0=0)}{=}
\left\{
\begin{array}{rl}
\eta(x), &~\text{ if }~ m\notin\left\{\,^x\!p_i \right\}_{i=1}^{\,^{^x}\!\!l};\\
-\,^k\!E_{\,({^x\!r_i})}(0)\;\eta(x),&~\text{ if }~ m=\,^x\!p_i.
\end{array} \right.
\end{split}
\end{eqnarray}
Applying \eqref{zero_projection} and \eqref{Product_zero_projection}, we obtain
\begin{eqnarray}\label{projection_on_unit}
\left(E_m(0)\left(I+\Pi\left(1^{(m)}\right)\right)\;\eta\right)(x)=\left\{
\begin{array}{rl}
2\,\eta(x), &~\text{ if }~ m\notin\left\{\,^x\!p_i \right\}_{i=1}^{\,^{^x}\!\!l};\\
0,&~\text{ if }~ m=\,^x\!p_i.
\end{array} \right.
\end{eqnarray}
For simplicity of notation, we denote orthogonal projection $\frac{1}{2}\, E_m(0)\left(I+\Pi\left(1^{(m)}\right)\right)$ by $Q_m$. At last, we obtain from (\ref{projection_on_unit}) that
\begin{eqnarray}\label{product_of_Q_m}
\left(\left(\prod\limits_{j=1}^k \,Q_j\right)\,\eta\right)(x)=\left\{
\begin{array}{rl}
\eta(x), ~\text{ if }~ x=B_\infty^{(k)}\\
0,~\text{ if }~x\neq B_\infty^{(k)}.
\end{array} \right.
\end{eqnarray}
If $Q=\prod\limits_{j=1}^k \,Q_j$ then the  projections  from the collection $\left\{\Pi(\mathfrak{s}_x)\,Q\, \Pi(\mathfrak{s}_x)\right\}_{x\in X}$ are equivalent and pairwise orthogonal. Hence, using the facts that $Q\in\,\Pi(B)''$ and $Q\,\Pi(B)''\,Q$ is ${\rm II}_1$-factor, we conclude that $\Pi(B)''$ is ${\rm II}_\infty$-factor.
\end{proof}

\subsection{The case $\gamma_0=\gamma_1=0$.}\label{all_gamma_are_zero}
Below we use the notations, introduced at the beginning of the section \ref{inducing_stable_representation}. Let us recall that $\,^n\!\pi$ is finite type factor-representation of group $\;B_\infty^{(n)}=B_nB_{n\infty}$. Let $\,^n\!\pi_{{|_{B_n}}}$ be the restriction of $\,^n\!\pi$ to subgroup $B_n$, and let  $\,^n\!\pi_{{|_{B_{n\infty}}}}$ be the restriction of $\,^n\!\pi$ to $B_{n\infty}$.   If $\;^n\!M=\,^n\!\pi\left( B_\infty^{(n)}\right)^{\prime\prime}$ and $\;^n\!{\rm tr}$ is a normal, faithful trace on $\;^n\!M$ such that  $\,^n\!{\rm tr}(I)=1$ then $\;^n\!{\rm tr}\left(\,^n\!\pi\left(st\right)\right)=\;^n\!{\rm tr}(\,^n\!\pi(s))\,\;^n\!{\rm tr}(\,^n\!\pi(t))$ for all $s\in B_n$ and $t\in B_{n\infty}$. Therefore, $\,^n\!\pi_{{|_{B_n}}}$ and $\,^n\!\pi_{{|_{B_{n\infty}}}}$ are  the factor-representations of the finite type. We recall that  $\,^n\!\pi_{{|_{B_n}}}$ is quasi-equivalent to representation ${\rm Ind}^{B_n}_{G_{\Omega_{_{kn}}}}\left(\,^{^{\Omega_{_{kn}}}}\!\!\!\rho\right)$, which is defined in subsection \ref{irreducible_repr_of_B_n}, and ${\rm tr}(\,^n\!\pi(t))$ calculate by (\ref{character_formula}); i.e. ${\rm tr}(\,^n\!\pi(t))=\chi_{\alpha\beta\gamma}^\sigma(t)$, where $\gamma=(\gamma_0,\,\gamma_1)$ and $\gamma_0+\gamma_1=1-\sum\alpha_i -\sum\beta_i$.
\begin{Rem}\label{one_dim}
Let $\pi_{\alpha\beta\gamma}^\sigma$ be a finite type factor-representation of $B_{n\infty}$, corresponding to the character $\chi_{\alpha\beta\gamma}^\sigma$.  Take $z=\left( 0,\ldots,0,z_{n+1},z_{n+2},\ldots \right)\in \mathbb{Z}_2^{n\infty}$ and  $s\in\mathfrak{S}_{n\infty}$. If $\pi_{\alpha\beta\gamma}^\sigma$ has type ${\rm I}$ then $\pi_{\alpha\beta\gamma}^\sigma$ it is one-dimensional. There exists four one-dimensional representations:
\begin{itemize}
  \item a) $\pi_{1\emptyset0}^\sigma$, where $\sigma(1)=0$; i. e. $\pi_{1\emptyset0}^\sigma(sz)=1$;
  \item b) $\pi_{1\emptyset0}^\sigma$, where $\sigma(1)=1$; i. e. $\pi_{1\emptyset0}^\sigma(sz)=(-1)^{\sum z_i}$;
  \item c)  $\pi_{\emptyset10}^\sigma$, where $\sigma(1)=0$; i. e. $\pi_{\emptyset10}^\sigma(sz)={\rm sign}(s)$;
  \item d)  $\pi_{\emptyset10}^\sigma$, where $\sigma(1)=1$; i. e. $\pi_{1\emptyset0}^\sigma(sz)=({\rm sign}(s)) (-1)^{\sum z_i}$.
\end{itemize}
\end{Rem}
Since $\;_0\mathbb{Z}_2^\infty\subset B_\infty^{(n)}$, we can identify naturally the left coset space $X=B\slash{B_\infty^{(n)}}$ with $\mathfrak{S}_\infty\slash \left(\mathfrak{S}_n\mathfrak{S}_{n\infty} \right)$. It follows from Lemmas \ref{product_transpositions} and \ref{unique_involution} that every left coset $x=gB_\infty^{(n)}$ contains unique involution $\mathfrak{s}_x=\prod\limits_{i=1}^{\,^x\!l} \left(\,^x\!p_i\;\,\,^x\!r_i \right)$, where $\,^x\!p_i<\,^x\!p_{i+1}\leq k$ and $k<\,^x\!r_i<\,^x\!r_{i+1}$ for all $i=1,2,\ldots,l-1$.
\begin{Th}\label{alpha+beta=1}
Let $\,^n\!\pi$ be a ${\rm II}_1$-factor-representation of $B_\infty^{(n)}$, and let $\sum\alpha_i +\sum\beta_i=1$. Suppose that   $n\geq 1$. If  there exists $i$ such that $\alpha_i<1$ or $\beta_i<1$ then $\Pi={\rm Ind}^B_{B_\infty^{(n)}}\,^n\!\pi$ is  stable factor-representation of $B$ of the type ${\rm II}_\infty$. In opposite case $\Pi$ is ${\rm I}_\infty$ stable factor-representation.
\begin{proof}
Here we follow the notations used in the proof of theorem \ref{II_infty_case}.
Namely, the operators of the representation $\Pi={\rm Ind}^B_{B_\infty^{(n)}}\,^n\!\pi$ act on $v\in \mathcal{H}_\Pi=l^2\left(L^2\left( \,^n\!M, \,^n\!{\rm tr} \right),X\right)$ by the formula
\begin{eqnarray}\label{ind2_action}
\left(\Pi(g)v\right)(x)=c(g,x)v\left(g^{-1}x\right),
\end{eqnarray}
where $c(g,x)= \,^n\!\pi\left(\left( \mathfrak{s}(x) \right)^{-1}g\;\mathfrak{s}\left(g^{-1}x\right)\right)$, $\left( \mathfrak{s}(x) \right)^{-1}g\;\mathfrak{s}\left(g^{-1}x\right)$ lies in $B_\infty^{(n)}$ and $\mathfrak{s}_x=\prod\limits_{i=1}^{\,^x\!l} \left(\,^x\!p_i\;\,\,^x\!r_i \right)$. Recall that $\left\{\,^x\!p_i\;\, \right\}_{i=1}^{\,^x\!l}\subset \left\{1,2,\ldots,n \right\}$,  $\left\{\,^x\!r_i\;\, \right\}_{i=1}^{\,^x\!l}\subset \left\{n+1,n+2,\ldots\right\}$. Let us define the operators  $\left\{\,^n\!\mathcal{O}_m \right\}_{m=n+1}^\infty$ in the same way as in \ref{^k_O}; i.e.
\begin{eqnarray*}
\,^n\!\mathcal{O}_m\,v=w-\lim\limits_{j\to\infty}\;^n\!\pi\left( (m\,\;j) \right)\,v, ~\text{ where }~ v\in L^2\left( \,^n\!M, \,^n\!{\rm tr} \right).
\end{eqnarray*}
Define $\xi\in \mathcal{H}_\Pi$ by
\begin{eqnarray}\label{trace2_vector}
\xi(x)=\left\{
\begin{array}{rl}
I, \text{ if } x=\widetilde{e};\\
0,\text{ if } x\neq\widetilde{e}.
\end{array}
\right.
\end{eqnarray}
It follows from this that
\begin{eqnarray*}
\left(\Pi(g)\xi,\xi\right)=\left\{
\begin{array}{rl}
0, \text{ if }g\notin B_\infty^{(n)};\\
\;^n\!{\rm tr}(\,^n\!\pi(g)),\text{ if } \; g\in B_\infty^{(n)}.
\end{array}
\right.
\end{eqnarray*}
Therefore,
\begin{eqnarray}\label{stability_equality}
\omega_\xi(g)=\left(\Pi(g)\xi,\xi\right)=\left(\Pi(hgh^{-1})\xi,\xi\right)\;~\text{ for all }~ g\in B ~\text{ and }~ h\in B_\infty^{(n)}.
\end{eqnarray}
This gives
\begin{eqnarray}\label{invariance_l_l'}
\left(\Pi((m\;\,l))\,\Pi(g_1)\xi,\Pi(g_2)\xi\right)=\left(\Pi((m\;\,l'))\,\Pi(g_1)\xi,\Pi(g_2)\xi\right)
\end{eqnarray}
for all $g_1,g_2\in B$ and $l,l'\notin ({\rm supp}\,g_1)\cup ({\rm supp}\,g_1)$.
Since $\xi$ is cyclic vector for $\Pi$, we conclude from (\ref{invariance_l_l'}) that a family $\left\{\Pi((m\;\,l))\right\}_{l=1}^\infty$ of operators converges in the weak operator topology. Set $\mathcal{O}_m=w-\lim\limits_{l\to\infty}\Pi\left((m\,\;l) \right)$.
Now we notice that under the conditions $1\leq m\leq n $ and $l>{\rm max}\,\left\{\,^x\!r_i \right\}^{^x\!l}_{i=1}$ the equality
 $\left( \mathfrak{s}(x) \right)^{-1}\;(m\;\,l)\;\mathfrak{s}\left(\;(m\;\,l)\;x\right)=\left\{
\begin{array}{rl}
( \,^x\!r_i \;\,l), ~\text{ if }~ m=\,^x\!p_i ;\\
e,~\text{ if }~ m\notin\left\{\,^x\!p_i \right\}^{^x\!l}_{i=1}
\end{array}
\right.$
holds. Hence, using (\ref{ind2_action}), we have
\begin{eqnarray}\label{mathcal{O}_m_gamma_1+gamma_2=0}
\left(\mathcal{O}_m\eta\right)(x)=\left\{
\begin{array}{rl}
0, ~\text{ if }~ m\notin\left\{\,^x\!p_i \right\}_{i=1}^{\,^x\!l},\\
\,^n\!\mathcal{O}_{\!\,{^x\!r_i}}\;\eta(x)~\text{ if }~ m=\,^x\!p_i
\end{array}
\right.,\;~\text{ where }~ m=1,2,\ldots, n.
\end{eqnarray}
By lemma \ref{spectral_decomposition},
\begin{eqnarray*}
{\rm Ker}\,^n\!\mathcal{O}_m\,=\left\{ v\in L^2\left( \,^n\!M, \,^n\!{\rm tr} \right): \,^n\!\mathcal{O}_m\,v=0 \right\}=0.
\end{eqnarray*}
Hence, using (\ref{mathcal{O}_m_gamma_1+gamma_2=0}), we have
\begin{eqnarray}
{\rm Ker}\,\mathcal{O}_m\,=\left\{\eta\in \mathcal{H}_\Pi:\eta(x)=0, ~\text{ if }~ m\in \left\{\,^x\!p_i \right\}_{i=1}^{\,^x\!l} \right\},\;\;~ m=1,2,\ldots, n.
\end{eqnarray}
Therefore,
\begin{eqnarray}\label{indicator_formula}
\bigcap\limits_{m=1}^n {\rm Ker}\,\mathcal{O}_m\,=\left\{\eta\in \mathcal{H}_\Pi: \eta(x)=0 ~\text{ for all }~ x\neq\widetilde{e}\right\}.
 \end{eqnarray}
 It follows from this that orthogonal projection $E$ onto subspace $H_{\widetilde{e}}=\left\{\eta\in \mathcal{H}_\Pi: \eta(x)=0 ~\text{ for all }~ x\neq\widetilde{e}\right\}$ lies in $w^*$-subalgebra $\Pi\left(\mathfrak{S}_\infty \right)''\subset\Pi\left(B\right)''$. Since projections $E_x=\Pi(\mathfrak{s}_x)E\Pi((\mathfrak{s}_x)^{-1})$ and $E_y$ are orthogonal for different $x,y\in X$ and $w^*$-algebra $E\Pi\left(B \right)''E=\,^n\!\pi\left(B_\infty^{(n)}\right)''$ is a finite type factor, $w^*$-algebra $\Pi\left(B \right)''$ is a semi-finite factor (of the type {\rm I} or {\rm II}) (see theorem \ref{factor_ind}).
\end{proof}
\end{Th}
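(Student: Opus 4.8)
The plan is to realize $\Pi$ concretely as an induced representation, to locate inside $M:=\Pi(B)''$ a finite projection $E$ whose corner $EME$ is isomorphic to $\,^n\!M:=\,^n\!\pi\left(B_\infty^{(n)}\right)''$, together with countably many pairwise orthogonal copies of $E$ summing to the identity, and then to read off the type of $M$ from the type of $\,^n\!M$. Factoriality and stability of $\Pi$ are already supplied by Theorem \ref{factor_ind}, so only the type has to be determined.

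First I would copy the set-up from the proof of Theorem \ref{II_infty_case}: realize $\,^n\!\pi$ on $L^2\left(\,^n\!M,\,^n\!{\rm tr}\right)$ by left multiplication with cyclic and separating vector $I$, identify $X=B\slash B_\infty^{(n)}$ with $\mathfrak{S}_\infty\slash\left(\mathfrak{S}_n\mathfrak{S}_{n\infty}\right)$, and, using Lemmas \ref{product_transpositions} and \ref{unique_involution}, choose in each coset $x$ its unique involutive representative $\mathfrak{s}_x=\prod_{i=1}^{\,^x\!l}\left(\,^x\!p_i\;\,^x\!r_i\right)$ with $\,^x\!p_i<\,^x\!p_{i+1}\le n<\,^x\!r_i<\,^x\!r_{i+1}$. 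Then $\Pi$ acts on $\mathcal{H}_\Pi=l^2\left(L^2\left(\,^n\!M,\,^n\!{\rm tr}\right),X\right)$ by $\left(\Pi(g)v\right)(x)=c(g,x)v\left(g^{-1}x\right)$ with $c(g,x)=\,^n\!\pi\left(\left(\mathfrak{s}_x\right)^{-1}g\,\mathfrak{s}_{g^{-1}x}\right)$, the vector $\xi$ concentrated at $\widetilde{e}$ with value $I$ is cyclic, and $\omega_\xi(\cdot)=\left(\Pi(\cdot)\xi,\xi\right)$ is invariant under conjugation by $B_\infty^{(n)}$. From this invariance and the cyclicity of $\xi$, the matrix coefficients of $\Pi\left((m\;l)\right)$ with $1\le m\le n$ stabilise off any finite set of indices as $l\to\infty$, so $\mathcal{O}_m:=w\text{-}\lim_{l\to\infty}\Pi\left((m\;l)\right)$ exists and lies in $\Pi\left(\mathfrak{S}_\infty\right)''\subseteq M$. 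A direct computation of the cocycle on the transpositions $(m\;l)$ for $l$ large gives $\left(\mathcal{O}_m\eta\right)(x)=0$ if $m\notin\left\{\,^x\!p_i\right\}$ and $\left(\mathcal{O}_m\eta\right)(x)=\,^n\!\mathcal{O}_{\,^x\!r_i}\,\eta(x)$ if $m=\,^x\!p_i$, where $\,^n\!\mathcal{O}_j=w\text{-}\lim_k\,^n\!\pi\left((j\;k)\right)$ are the corresponding operators of $\,^n\!M$ (Lemma \ref{asym_II_1}).

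Now I would invoke the hypothesis $\sum\alpha_i+\sum\beta_i=1$, i.e. $\gamma_0=\gamma_1=0$: by Lemma \ref{spectral_decomposition} the spectral projection of $\,^n\!\mathcal{O}_j$ at $0$ has trace $\gamma_0+\gamma_1=0$, so $\,^n\!\mathcal{O}_j$ is injective. Hence ${\rm Ker}\,\mathcal{O}_m=\left\{\eta:\eta(x)=0\text{ whenever }m\in\left\{\,^x\!p_i\right\}\right\}$, which is the range of the spectral projection $E_m(0)=\chi_{\{0\}}\left(\mathcal{O}_m\right)\in M$. Since $\widetilde{e}$ is the only coset with trivial distinguished involution, $\bigcap_{m=1}^n{\rm Ker}\,\mathcal{O}_m=H_{\widetilde{e}}:=\left\{\eta:\eta(x)=0\text{ for }x\ne\widetilde{e}\right\}$, so the projection $E=\prod_{m=1}^n E_m(0)$ onto $H_{\widetilde{e}}$ belongs to $M$. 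From the formula for $\Pi$ one reads off $\Pi(g)H_{\widetilde{e}}=H_{\widetilde{g}}$, so the projections $E_x=\Pi(\mathfrak{s}_x)E\Pi(\mathfrak{s}_x)$ (recall $\mathfrak{s}_x$ is an involution) are pairwise orthogonal, mutually equivalent in $M$, sum to $I$, and are indexed by the infinite set $X$; moreover $E\Pi(g)E=0$ for $g\notin B_\infty^{(n)}$ while $E\Pi(g)E$ acts as $\,^n\!\pi(g)$ on $H_{\widetilde{e}}\cong L^2\left(\,^n\!M,\,^n\!{\rm tr}\right)$ for $g\in B_\infty^{(n)}$, whence $EME=\,^n\!M$. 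Since $M$ is a factor (Theorem \ref{factor_ind}), this shows $M\cong\,^n\!M\,\overline{\otimes}\,B\left(l^2(X)\right)$ is properly infinite and semifinite, of type ${\rm I}_\infty$ when $\,^n\!M$ has finite type ${\rm I}$ and of type ${\rm II}_\infty$ when $\,^n\!M$ has type ${\rm II}_1$. Finally, $\,^n\!M\cong M_{\dim\mathfrak{Ir}_{\,^0\!\lambda\,^1\!\!\lambda}}(\mathbb{C})\,\overline{\otimes}\,\pi_{\alpha\beta\gamma}^\sigma\left(B_{n\infty}\right)''$ has type ${\rm II}_1$ exactly when $\pi_{\alpha\beta\gamma}^\sigma$ is infinite-dimensional, which by Remark \ref{one_dim} (together with $\gamma_0=\gamma_1=0$) happens precisely when some $\alpha_i<1$ or $\beta_i<1$; in the two remaining cases $(\alpha,\beta)\in\left\{\left((1),\emptyset\right),\left(\emptyset,(1)\right)\right\}$ the representation $\pi_{\alpha\beta\gamma}^\sigma$ is one-dimensional and $\,^n\!M$ is a finite matrix algebra, so $M$ has type ${\rm I}_\infty$. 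This gives the claimed dichotomy.

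The main obstacle will be the explicit computation of the cocycle $c\left((m\;l),x\right)=\,^n\!\pi\left(\left(\mathfrak{s}_x\right)^{-1}(m\;l)\,\mathfrak{s}_{(m\;l)x}\right)$ for $m\le n$ and $l$ large: one must verify that $\left(\mathfrak{s}_x\right)^{-1}(m\;l)\,\mathfrak{s}_{(m\;l)x}$ equals $\left(\,^x\!r_i\;l\right)$ when $m=\,^x\!p_i$ and equals $e$ otherwise, which rests on the uniqueness part of Lemma \ref{unique_involution} applied to the coset $(m\;l)x$. Once this bookkeeping and the existence of the weak limits $\mathcal{O}_m$ are in place, the spectral identification of $H_{\widetilde{e}}$ and the passage from $EME\cong\,^n\!M$ to the type of $M$ are routine.
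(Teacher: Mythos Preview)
Your proof is correct and follows essentially the same route as the paper: the realization of $\Pi$ on $l^2\big(L^2(\,^n\!M,\,^n\!{\rm tr}),X\big)$, the construction of the Okounkov operators $\mathcal{O}_m=w\text{-}\lim_l\Pi((m\;l))$, the cocycle computation yielding $\big(\mathcal{O}_m\eta\big)(x)=\,^n\!\mathcal{O}_{\,^x\!r_i}\,\eta(x)$ or $0$, the use of Lemma \ref{spectral_decomposition} with $\gamma_0+\gamma_1=0$ to kill ${\rm Ker}\,^n\!\mathcal{O}_j$, and the identification $\bigcap_{m\le n}{\rm Ker}\,\mathcal{O}_m=H_{\widetilde{e}}$ are all exactly what the paper does. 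Your closing paragraph, which explicitly reads off the ${\rm II}_\infty$/${\rm I}_\infty$ dichotomy from the type of $\,^n\!M$ via Remark \ref{one_dim}, is in fact more detailed than the paper's proof, which stops at ``semi-finite factor (of the type {\rm I} or {\rm II})'' and leaves that last step implicit.
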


  \section{The properties of the stable representations}\label{properties_of_stable_repr}
  Throughout this section, $\pi$ is stable factor-representation of  $\mathbb{Z}_2\wr\mathfrak{S}_\infty$ in the separable Hilbert space $\mathcal{H}_\pi$. We suppose that there exists the cyclic and separating vector $\xi\in \mathcal{H}_\pi$: $\left[ \pi(\mathbb{Z}_2\wr\mathfrak{S}_\infty)\xi \right]=\left[ \pi(\mathbb{Z}_2\wr\mathfrak{S}_\infty)^\prime\xi \right]=\mathcal{H}_\pi$.  Such realisation of $\pi$ is called {\it standard}. In opposite case, using corollary \ref{stable_quasi}, we will replace $\pi$ by the quasi-equivalent representation.
  \subsection{The standard realisation of $\pi$ and the corresponding representation of the inner automorphism group of $B$.}
  Set $M=\pi(B)^{\prime\prime}$.
  In this section we prove the next relevant statement (see page \pageref{Proof_th_ind}).
  \begin{Th}\label{induction_theorem}
    Let $M_*^+(n)=\left\{ \omega\in M_*^+\big|\omega\left( \pi(g)\cdot a\cdot \pi(g^{-1}) \right)=\omega\left( a \right)\text{ for all } \right.$ \newline $\left.g\in B_n\cdot B_{n\infty}\right.$ $\text{ and each }\left.  a\in M  \right\}$. Then the following hold:
     \begin{itemize}
       \item {\bf a}) there exists $n$ such that $M_*^+(n)\neq0$;
       \item {\bf b}) if ${\rm cd}\,\pi=\min\left\{ n\big|M_*^+(n)\neq0\right\}$, then $\left({\rm supp}\,\omega \right)\cdot \pi(s)\cdot \left( {\rm supp}\,\omega \right)=0$ for all $s\notin B_{{\rm cd}\,\pi}\cdot B_{({{\rm cd}\,\pi})\;\infty}$ and each nonzero $\omega\in M_*^+({\rm cd}\,\pi)$.
     \end{itemize}
 \end{Th}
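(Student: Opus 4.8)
I would work in the standard realization: $M=\pi(B)''$ is a factor, $\xi$ is cyclic and separating, and $\omega_0=\omega_\xi$. For each $n$ I record the spectral decomposition of the abelian normal part: $I=\sum_{\epsilon\in\{0,1\}^n}P^{(n)}_\epsilon$, where $P^{(n)}_\epsilon\in\pi(\mathbb Z_2^n)''\subset M$ is the spectral projection of $\pi|_{\mathbb Z_2^n}$ at the character $z\mapsto(-1)^{\sum_i z_i\epsilon_i}$. Conjugation by $\pi(s)$, $s\in\mathfrak S_n$, carries $P^{(n)}_\epsilon$ to $P^{(n)}_{s\epsilon}$, while conjugation by $\pi(B_{n\infty})$ (permutations and $\mathbb Z_2$'s supported in $\overline{n+1,\infty}$) fixes each $P^{(n)}_\epsilon$; hence the weight projections $Q^{(n)}_k=\sum_{|\epsilon|=n-k}P^{(n)}_\epsilon$ lie in the centre of $\pi(B_\infty^{(n)})''$, and $P_{\mathbf 0}=\lim_nP^{(n)}_{0^n}$, $P_{\mathbf 1}=\lim_nP^{(n)}_{1^n}$ are $\mathfrak S_\infty$-fixed, hence scalar on the factor $M$.

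\textbf{Part a).} The engine is stability in the form of Definition \ref{stable_Def}: whenever $g_j\in B$ satisfy ${\rm Ad}\,\pi(g_j)\to{\rm id}$ in the topology (\ref{topology}), one has $\|\omega\circ{\rm Ad}\,\pi(g_j)-\omega\|\to0$ for all $\omega\in M_*$; and, more globally, weak-operator limits of conjugates $\pi(h_jgh_j^{-1})$ with ${\rm supp}(h_jgh_j^{-1})\to\infty$ exist and are scalar (the Proposition on $\chi^{as}_\pi$). Fixing $n$ and choosing $g_j\in\mathfrak S_{n\infty}$ that push, for each $m$, the support of every permutation supported in $\overline{n+1,n+m}$ off to infinity, one gets for $\psi_j=\omega_0\circ{\rm Ad}\,\pi(g_j)$ that $\|\psi_j\circ{\rm Ad}\,\pi(h)-\psi_j\|\to0$ for every $h\in B_{n\infty}$, because $g_jhg_j^{-1}$ has escaping support and hence ${\rm Ad}\,\pi(g_jhg_j^{-1})\to{\rm id}$. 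Averaging over the finite group $B_n$ and over growing finite subgroups of $\mathfrak S_{n\infty}$, then passing to a limit, yields a state on $M$ invariant under ${\rm Ad}\,\pi(B_\infty^{(n)})$. The point is to choose $n$ large and cut by a suitable $\mathfrak S_{n\infty}$-fixed projection $P:=P^{(n)}_\epsilon$ ($\epsilon$ constant on $\overline{n+1,\infty}$) so that this limit is \emph{normal}: this is where stability enters a second, essential time, forcing the projection-valued measure of $\pi|_{\,_0\mathbb Z_2^\infty}$ on $\widehat{\,_0\mathbb Z_2^\infty}=\{0,1\}^{\mathbb N}$ to be carried, on a factor, by a single $\mathfrak S_\infty$-orbit closure of a character of finite complexity (ruling out the Bernoulli behaviour of Example~3), so that $PMP$ is of finite type and the cut-down conjugated functionals $\omega_{P\pi(g_j)\xi}$ converge in norm. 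Their $B_n$-average then lies in $M_*^+(n)\setminus\{0\}$, and ${\rm cd}\,\pi$ is well defined.

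\textbf{Part b).} Put $N={\rm cd}\,\pi$, take $0\ne\omega\in M_*^+(N)$ and $E={\rm supp}\,\omega$. Invariance of $\omega$ forces $E\in M\cap\pi(B_\infty^{(N)})'$ by the standard argument on supports of invariant functionals. Suppose $E\,\pi(s)\,E\ne0$ for some $s\notin B_\infty^{(N)}=B_N\cdot B_{N\infty}$; picking such an $s$ of least support, it must genuinely cross the boundary $N$, so it intertwines two spectral projections $P^{(N)}_\epsilon$ lying in distinct $\mathfrak S_N$-orbits below level $N$. I would then conjugate $\omega$ by $\pi(B_{N-1})$, average, and splice in the asymptotic scalars $\mathcal O_g$ supplied by stability to build a functional $\omega'$ invariant under ${\rm Ad}\,\pi(B_\infty^{(N-1)})$; the hypothesis $E\,\pi(s)\,E\ne0$ prevents $\omega'$ from vanishing, contradicting the minimality of $N$. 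Hence $E\,\pi(s)\,E=0$ for all $s\notin B_\infty^{(N)}$, which is b).

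\textbf{Main obstacle.} The genuinely hard, representation-theoretic step is the normality/rigidity point inside a): extracting from mere stability that, on a factor, $\pi|_{\,_0\mathbb Z_2^\infty}$ is localized on one finite-complexity $\mathfrak S_\infty$-orbit and that the corresponding corner of $M$ is finite, so that the functional produced by averaging is normal rather than merely a state. Once that rigidity is in hand, the rest — the boundary-crossing contradiction in b), the averaging over finite groups, and the support manipulations — is von Neumann algebraic bookkeeping.
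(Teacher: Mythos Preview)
Your proposal has real gaps, and it misses the mechanism the paper actually uses.

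\textbf{Part a): the normality problem.} You correctly flag the obstacle: averaging a state over an infinite group need not produce a \emph{normal} functional. But your proposed fix --- that stability forces the spectral measure of $\pi|_{\,_0\mathbb Z_2^\infty}$ onto a single ``finite-complexity'' $\mathfrak S_\infty$-orbit, so that some corner $PMP$ is of finite type --- is not justified and is essentially circular. Proving that a corner is a finite factor is precisely the content of Theorem~\ref{induction_theorem} together with Corollary~\ref{collora_of_ind_Th}; you cannot assume it to prove a). Stability says nothing directly about the spectral structure of $\pi|_{\,_0\mathbb Z_2^\infty}$; Example~3 is ruled out only because it is not stable, not by any orbit-rigidity argument internal to your setup.

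The paper bypasses the normality issue completely via Tomita--Takesaki theory. One passes to the diagonal representation $D(g)=\pi(g)J_\xi\pi(g)J_\xi$ on the GNS space, and the natural-cone inequality $\|\eta_{\phi_1}-\eta_{\phi_2}\|^2\le\|\phi_1-\phi_2\|$ (Proposition~\ref{vector-functional}) converts stability of $\pi$ into \emph{tameness} of $D$ (Proposition~\ref{tame_Prop}). Tameness gives, for large $n$, a nonzero $D(B_{n\infty})$-fixed vector $P_n\eta$ by straightforward averaging of vectors, and the resulting vector state $\omega_{P_n\eta}$ is automatically normal. No spectral analysis of $\mathbb Z_2^\infty$ is needed.

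\textbf{Part b): the contradiction argument.} Your sketch does not work as written. An $s\notin B_\infty^{(N)}$ is a permutation moving some index across the boundary $N$; it does not ``intertwine two spectral projections $P^{(N)}_\epsilon$ in distinct $\mathfrak S_N$-orbits'' --- those projections live in $\pi(\mathbb Z_2^N)''$ and are permuted by $\mathfrak S_N$, not by permutations crossing the boundary. The construction of $\omega'\in M_*^+(N-1)$ is also unspecified: averaging $\omega$ over $B_{N-1}$ does nothing (it is already $B_{N-1}$-invariant since $B_{N-1}\subset B_N$), and ``splicing in asymptotic scalars $\mathcal O_g$'' is not an operation on functionals.

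The paper's argument for b) again exploits tameness of $D$: one forms the projections $Q_k=w\text{-}\lim_m D((k\;m))$, proves they pairwise commute and satisfy $D((k\;j))Q_kQ_j=Q_kQ_j$ (Lemmas~\ref{Tame_Lemma_existing}, \ref{Lemma_triviality}), identifies $P_n=\prod_{k>n}Q_k$, and shows that $D(s)P_nD(s)\cdot P_n$ picks up an extra factor $Q_m$ with $m\le n$ when $s\notin\mathfrak S_n\mathfrak S_{n\infty}$, forcing $P_{n-1}\ne0$ if the product is nonzero --- contradicting ${\rm cd}\,\pi=n$ (Proposition~\ref{Prop_depth}). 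Finally, orthogonality of $\eta_\omega$ and $D(s)\eta_\omega$ in the natural cone $\mathfrak P_\xi$ implies orthogonality of their supports in $M$ by a standard cone lemma (Takesaki, Vol.~II, IX.1.12), which is exactly the statement $({\rm supp}\,\omega)\pi(s)({\rm supp}\,\omega)=0$.

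In short: the key idea you are missing is the passage from stability of $\pi$ to tameness of the modular-conjugate diagonal representation $D$, which simultaneously solves the normality problem in a) and supplies the Olshanski--Lieberman depth machinery for b).
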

 \begin{Def}
 We call ${\rm cd}\,\pi$ the central depth of $\pi$.
 \end{Def}
 \begin{Co}\label{collora_of_ind_Th}
   Let ${\rm cd}\,\pi$ and $\omega$ be the same as in Theorem \ref{induction_theorem}. Define functional $\varphi\in M_*^+({\rm cd}\,\pi)$ by
   \begin{eqnarray*}
   \varphi(a)=\left|B_n\right|^{-1}\sum\limits_{g\in B_n}\omega\left(\pi(g)\cdot a\cdot\pi(g^{-1})\right),\;\;\text{where } a\in M \text{ and } n={\rm   cd}\,\pi.
   \end{eqnarray*}
   Set $E={\rm supp}\,\varphi$. Then $E\in M\cap \pi\left( B_{\infty}^{(n)}\right)^\prime$. Thus, the operators $\pi_E(g)$ $=E\cdot\pi(g)\cdot E$, where $g\in B_{\infty}^{(n)}= B_n\cdot B_{n\infty}$, generate the unitary finite type factor-representation of the group $B_{\infty}^{(n)}$ in the space $E\mathcal{H}_\pi$. It follows from Theorem \ref{induction_theorem} that $\pi$ induced  from the representation $\pi_E$ of subgroup $B_{\infty}^{(n)}$; i. e. $\pi={\rm Ind}_{B_{\infty}^{(n)}}^B \pi_E$.
 \end{Co}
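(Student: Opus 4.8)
The plan is to show that everything reduces to Theorem~\ref{induction_theorem}, and in particular to its part~{\bf b)}, the remaining work being a relative version of the scalarity argument from the proof of Theorem~\ref{factor_ind}. First I would observe that $\varphi=\omega$: since $B_n\subset B_n\cdot B_{n\infty}$, the defining invariance of $\omega\in M_*^+(n)$ makes the averaging over $B_n$ trivial, so $\varphi$ is a nonzero element of $M_*^+({\rm cd}\,\pi)$ with ${\rm supp}\,\varphi=E$. The key preliminary fact is that the support of a conjugation-invariant normal positive functional is itself conjugation-invariant: for a unitary $u\in M$ the functional $a\mapsto\varphi(u^{*}au)$ has support $uEu^{*}$, so if it equals $\varphi$ then $uE=Eu$. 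Taking $u=\pi(g)$ with $g\in B_\infty^{(n)}=B_n\cdot B_{n\infty}$ gives $E\in M\cap\pi\!\left(B_\infty^{(n)}\right)'$; hence each $\pi_E(g)=E\pi(g)E=\pi(g)E$ $(g\in B_\infty^{(n)})$ restricts to a unitary on $E\mathcal H_\pi$ and $g\mapsto\pi_E(g)$ is a genuine representation there. Moreover, because $E\pi(g)E=0$ for $g\notin B_\infty^{(n)}$ by Theorem~\ref{induction_theorem}{\bf b)}, every $EaE$ with $a\in M$ is a weak limit of operators $E\pi(b)E$, $b\in\mathbf C[B_\infty^{(n)}]$, so the von Neumann algebra $M_E$ generated by $\pi_E\!\left(B_\infty^{(n)}\right)$ on $E\mathcal H_\pi$ is exactly $EME$.

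Next I would assemble the induced picture. Choose coset representatives $X=B/B_\infty^{(n)}\ni x\mapsto\mathfrak s(x)\in\mathfrak S_\infty$ with $\mathfrak s\!\left(B_\infty^{(n)}\right)=e$, exactly as in the proof of Theorem~\ref{factor_ind}. For $x\neq y$ the permutation $\mathfrak s(y)^{-1}\mathfrak s(x)$ does not lie in $B_\infty^{(n)}=B_n\cdot B_{n\infty}$, so $E\pi\!\left(\mathfrak s(y)^{-1}\mathfrak s(x)\right)E=0$ by Theorem~\ref{induction_theorem}{\bf b)} and the subspaces $\pi(\mathfrak s(x))E\mathcal H_\pi$ are pairwise orthogonal. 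Since $\pi$ is a factor-representation, the central support of the nonzero projection $E$ is $I$, hence $\left[\pi(B)E\mathcal H_\pi\right]=\left[ME\mathcal H_\pi\right]=\mathcal H_\pi$ and therefore $\mathcal H_\pi=\bigoplus_{x\in X}\pi(\mathfrak s(x))E\mathcal H_\pi$. As $E$ reduces $\pi\!\left(B_\infty^{(n)}\right)$ and $\mathfrak s(gx)^{-1}g\,\mathfrak s(x)\in B_\infty^{(n)}$, the operator $\pi(g)$ carries $\pi(\mathfrak s(x))E\mathcal H_\pi$ onto $\pi(\mathfrak s(gx))E\mathcal H_\pi$ through the cocycle $\pi_E\!\left(\mathfrak s(x)^{-1}g\,\mathfrak s(g^{-1}x)\right)$; comparing with~(\ref{ind_action}) this is precisely the statement $\pi={\rm Ind}_{B_\infty^{(n)}}^{B}\pi_E$.

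Finally I would prove that $\pi_E$ is a finite type factor-representation, which I expect to be the main obstacle since it requires the relative analogue of the scalarity argument of Theorem~\ref{factor_ind}. Given a central element $\mathcal C$ of $M_E$, the series $\widetilde{\mathcal C}=\sum_{x\in X}\pi(\mathfrak s(x))\,\mathcal C\,\pi(\mathfrak s(x))^{-1}$ converges strongly (the translated subspaces being orthogonal and $\mathcal C$ leaving $E\mathcal H_\pi$ invariant) and lies in $M$; using the induced action together with the commutation of $\mathcal C$ with all $\pi(h)E$, $h\in B_\infty^{(n)}$, one checks $\widetilde{\mathcal C}\in\pi(B)'$, so $\widetilde{\mathcal C}$ is central in the factor $M$, whence $\widetilde{\mathcal C}=cI$ and $\mathcal C=cE$; thus $M_E$ is a factor. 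Its finiteness comes from $\varphi|_{M_E}$, which is faithful (its support is $E$) and normal, and is a trace: for $g,h\in B_\infty^{(n)}$ one has $\varphi(\pi(g)E\cdot\pi(h)E)=\varphi(\pi(gh))=\varphi\!\left(\pi\!\left(g^{-1}(gh)g\right)\right)=\varphi(\pi(hg))=\varphi(\pi(h)E\cdot\pi(g)E)$, the middle equality being the $B_\infty^{(n)}$-conjugation invariance of $\varphi$, and this extends from the generators $\pi(g)E$ to all of $M_E$ by normality. Hence $M_E=\pi_E\!\left(B_\infty^{(n)}\right)''$ is a finite factor, i.e.\ $\pi_E$ is of finite type, and together with the decomposition above this gives $\pi={\rm Ind}_{B_\infty^{(n)}}^{B}\pi_E$ as claimed.
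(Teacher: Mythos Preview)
Your proof is correct and supplies exactly the details the paper omits (the corollary is stated without proof, relying on Theorem~\ref{induction_theorem}). Two minor remarks: your observation that $\varphi=\omega$ is right and shows the averaging in the statement is redundant; and your factor argument for $M_E$, while correct, can be shortened by invoking the standard fact that for any nonzero projection $E$ in a factor $M$ the reduced algebra $EME$ is again a factor, since its center is $E\,(M\cap M')\,E=\mathbb{C}E$ --- so once you have established $M_E=EME$ the factoriality is immediate and only the trace argument for finiteness is needed.
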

   To prove theorem \ref{induction_theorem} we need  some results  of  Tomita-Takesaki modular theory \cite{TAKES2}.

   Define operator $\widetilde{S}_\xi$ on $\mathcal{H}_\pi$ as follows
  \begin{eqnarray*}
 \mathcal{H}_\pi \supset M\xi\ni A\xi\stackrel{\widetilde{S}_\xi}{\mapsto}A^*\xi.
  \end{eqnarray*}
  Since $\xi$ is cyclic and separating, operator $\widetilde{S}_\xi$ extends to a closed anti-linear operator $S_\xi$ defined on a dense subset of $\mathcal{H}_\pi$.  Let $\Delta_\xi$ be the unique positive, self-adjoint operator, and let $J_\xi$ be the unique anti-unitary operator,
occurring in the polar decomposition
\begin{eqnarray*}
S_\xi=J_\xi\cdot\Delta_\xi^{1/2}=\Delta_\xi^{-1/2}J_\xi.
\end{eqnarray*}
$\Delta_\xi$ is called the {\it modular operator} and $J_\xi$ is called the {\it modular conjugation} or {\it modular involution} associated with the pair $(M,\xi)$. Tomita-Takesaki modular theory begins with the following remarkable theorem.
\begin{Th}[\cite{TAKES2}]
Let $\mathcal{N}$ be a $w^*$-algebra with the cyclic and separating  vector $\Omega$. Then $J_\Omega\Omega=\Delta_\Omega\Omega=\Omega$ and the following equalities hold:
\begin{eqnarray}\label{factor-its commutant}
J_\Omega^2=I,\;J_\Omega=J_\Omega^*, \;J_\Omega\,\mathcal{N}\,J_\Omega=\mathcal{N}^\prime.
\end{eqnarray}
\end{Th}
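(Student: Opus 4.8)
This is the Tomita--Takesaki theorem, and the plan is to reproduce its classical proof, separating the soft polar-decomposition identities from the single genuinely analytic ingredient. Write $S_0$ for the conjugate-linear map $A\Omega\mapsto A^*\Omega$ on $\mathcal N\Omega$ (the analog, for the pair $(\mathcal N,\Omega)$, of the operator $\widetilde S_\xi$ introduced above) and $F_0$ for $A'\Omega\mapsto (A')^*\Omega$ on $\mathcal N'\Omega$. Since $ab'=b'a$ for $a\in\mathcal N$, $b'\in\mathcal N'$, one checks $\langle A^*\Omega,A'\Omega\rangle=\langle (A')^*\Omega,A\Omega\rangle$, so $F_0\subseteq S_0^*$; hence $S_0$ is closable and $S:=\overline{S_0}$ is closed, conjugate-linear, injective, with dense range. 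Then $\Delta:=S^*S$ is positive self-adjoint and injective with dense range ($\Delta_\Omega:=\Delta$), $S=J\Delta^{1/2}$ is its polar decomposition with $J:=J_\Omega$ conjugate-linear and isometric onto, i.e.\ anti-unitary, and $S^*=\Delta^{1/2}J^*$. I would also record, for the final step, the structural lemma of left Hilbert algebra theory that $\mathcal N'\Omega$ is a core for $S^*$, i.e.\ $S^*=\overline{F_0}$ (proved via the $\mathcal N\Omega$-bounded vectors).

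First I dispose of everything in the statement except $J\mathcal N J=\mathcal N'$. Because $1^*=1$, $S\Omega=\Omega$ and $S^*\Omega=\Omega$; applying $J^*$ to $\Omega=J\Delta^{1/2}\Omega$ gives $J^*\Omega=\Delta^{1/2}\Omega$, while $\Delta^{1/2}J^*\Omega=S^*\Omega=\Omega$ gives $J^*\Omega=\Delta^{-1/2}\Omega$. Hence $\Delta^{1/2}\Omega=\Delta^{-1/2}\Omega$, so $\Delta\Omega=\Omega$, and then $\Delta^{1/2}\Omega=\Omega$ and $J\Omega=S\Omega=\Omega$ by functional calculus. For the remaining relations, $S_0^2=\mathrm{id}$ on $\mathcal N\Omega$ forces $S=S^{-1}$; writing $S^{-1}=\Delta^{-1/2}J^*=J^*\bigl(J\Delta^{-1/2}J^*\bigr)$ with $J\Delta^{-1/2}J^*\ge 0$ exhibits a second polar decomposition of $S$, so uniqueness yields $J^*=J$ (hence $J^2=1$) and $\Delta^{1/2}=J\Delta^{-1/2}J$, that is $J\Delta J=\Delta^{-1}$.

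The substantive content is $J\mathcal N J=\mathcal N'$, and I see no way to it without first proving $\Delta^{it}\mathcal N\Delta^{-it}=\mathcal N$ for all $t\in\mathbb R$. For this I would run Takesaki's resolvent argument: for $a\in\mathcal N$ and $b'\in\mathcal N'$ the function $z\mapsto\langle\Delta^{z}a\Omega,(b')^*\Omega\rangle$ is analytic on the strip $0<\operatorname{Re}z<\tfrac12$, bounded on its closure, and on $\operatorname{Re}z=\tfrac12$ it equals $\langle\Delta^{z-1/2}Ja^*\Omega,(b')^*\Omega\rangle=\langle\Delta^{z-1/2}(b')^*Ja^*\Omega,\Omega\rangle$ — one rewrites $\Delta^{1/2}a\Omega=Ja^*\Omega$ (the polar form of $Sa\Omega=a^*\Omega$), uses $ab'=b'a$, and controls the strip with the regularizations $(1+\varepsilon\Delta)^{-1}(1+\varepsilon\Delta^{-1})^{-1}$ and a Phragm\'en--Lindel\"of estimate. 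Translating this matrix-element identity through the double commutant theorem gives $\sigma_t(a):=\Delta^{it}a\Delta^{-it}\in\mathcal N$; by symmetry (interchange $\mathcal N\leftrightarrow\mathcal N'$, $S\leftrightarrow S^*$, $\Delta\leftrightarrow\Delta^{-1}$, with the same $J$) also $\Delta^{it}\mathcal N'\Delta^{-it}=\mathcal N'$. I expect this strip estimate, with its $\varepsilon$-regularizations and domain bookkeeping, to be the main obstacle, since essentially all the analytic weight of the theorem sits here. (An alternative avoiding analytic continuation is the Rieffel--van Daele geometric route, built on the real subspaces $\overline{\{m\Omega: m=m^*\in\mathcal N\}}$ and $\overline{\{m'\Omega: m'={m'}^*\in\mathcal N'\}}$; it has its own technical heart, and I would fall back on it only if the resolvent route became unwieldy.)

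Finally, with the modular group available, let $\mathcal N_0$ be the $\sigma$-weakly dense $*$-subalgebra of $a\in\mathcal N$ for which $t\mapsto\sigma_t(a)$ extends to an entire $\mathcal N$-valued function (obtained by Gaussian smoothing), so that $\Delta^{1/2}a\Omega=\sigma_{-i/2}(a)\Omega$. For $a\in\mathcal N_0$ and $c\in\mathcal N$ a short computation — using $Jc\Omega=\Delta^{1/2}c^*\Omega$ (the polar form of $Sc^*\Omega=c\Omega$), $a\Delta^{1/2}=\Delta^{1/2}\sigma_{i/2}(a)$, and $J\Delta^{1/2}=S$ — gives $JaJ\cdot c\Omega=c\cdot(JaJ\,\Omega)$; hence $JaJ$ commutes with $\mathcal N$ on the core $\mathcal N\Omega$ and, being bounded, lies in $\mathcal N'$. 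Since $\mathcal N_0$ is $\sigma$-weakly dense, $J\mathcal N J\subseteq\mathcal N'$. Applying the same argument to the pair $(\mathcal N',S^*,\Delta^{-1},J)$ yields $J\mathcal N'J\subseteq\mathcal N$; conjugating the first inclusion by $J$ and using $J^2=1$ gives $\mathcal N\subseteq J\mathcal N'J\subseteq\mathcal N$, so both inclusions are equalities, and conjugating $J\mathcal N'J=\mathcal N$ by $J$ once more gives $J\mathcal N J=\mathcal N'$.
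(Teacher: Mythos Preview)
The paper does not give its own proof of this statement: it is quoted verbatim as a known theorem from Takesaki's book \cite{TAKES2} (note the citation in the theorem header) and is used as a black box in the subsequent analysis of stable representations. So there is nothing to compare your argument against in the paper itself.

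That said, your sketch is a faithful outline of the classical route. The soft part (polar decomposition, $J^2=I$, $J=J^*$, $J\Omega=\Delta\Omega=\Omega$) is correct as written. You are also right that the entire analytic weight sits in establishing $\Delta^{it}\mathcal N\Delta^{-it}=\mathcal N$, and you have correctly identified the two standard approaches (Takesaki's resolvent/strip argument and the Rieffel--van Daele real-subspace method). Your description of the strip argument is, as you acknowledge, only a gesture: the step ``translating this matrix-element identity through the double commutant theorem gives $\sigma_t(a)\in\mathcal N$'' hides the real work, which is to show that the operator-valued function $t\mapsto\Delta^{it}a\Delta^{-it}$, a priori only known to be bounded, actually takes values in $\mathcal N$ --- this requires the full force of the boundary analysis and the regularizations you mention, not just a matrix-element computation. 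Likewise the lemma that $\mathcal N'\Omega$ is a core for $S^*$ is genuinely nontrivial. If you intend this as a proof rather than a plan, those two points would need to be filled in; as a plan it is sound.
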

\begin{Rem}
Like above, we introduce operator $\widetilde{F}_\Omega$: $\mathcal{N}'\Omega\ni A^\prime\Omega\stackrel{\widetilde{F}_\Omega}{\mapsto}(A')^*\Omega$. If $F_\Omega$ is a closure of $\widetilde{F}_\Omega$ then next equalities hold
\begin{eqnarray*}
\Delta_\Omega=F_\Omega\, S_\Omega, \;\Delta_\Omega^{-1}=S_\Omega \,F_\Omega,\; F_\Omega=J_\Omega\Delta_\Omega^{-1/2}.
\end{eqnarray*}
\end{Rem}

The closure  of the set  of vectors $\left\{ AJ_\xi A\xi\right\}_{A\in M}$ forms a close cone $\mathfrak{P}_\xi$ in $\mathcal{H}_\pi$ which has several useful properties. Let us describe two of them, which are used below.
\begin{Prop}[\cite{TAKES2}]\label{vector-functional}
  For every positive normal\footnote{Normality means continuity with respect to the weak operator topology}  functional $\phi$ on $M$ there is a unique vector $\eta_\phi\in\mathfrak{P}_\xi$ such that $\phi(A)=\left( A\eta_\phi,\eta_\phi \right)$ for all $A\in M$. Let $\phi_1$, $\phi_2$ be the normal positive functionals on $M$, and let $\eta_{\phi_1}$,  $\eta_{\phi_2}$ be the corresponding vectors in $\mathfrak{P}_\xi$. Then the following equalities hold
  \begin{eqnarray}\label{inequality_vector_state}
  \left\| \eta_{\phi_1}-\eta_{\phi_2} \right\|\leq \left\|\phi_1-\phi_2 \right\|\leq\left\| \eta_{\phi_1}-\eta_{\phi_2} \right\|\cdot \left\| \eta_{\phi_1}+\eta_{\phi_2} \right\|.
  \end{eqnarray}
\end{Prop}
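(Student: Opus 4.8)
The assertion is the natural-positive-cone theorem of Tomita--Takesaki theory, so the plan is to deduce it from the geometric properties of $\mathfrak{P}_\xi$. Write $J=J_\xi$, $\Delta=\Delta_\xi$, so that $S_\xi A\xi=A^*\xi$ and $\mathfrak{P}_\xi=\overline{\{AJAJ\xi:A\in M\}}=\overline{\{\Delta^{1/4}A\xi:A\in M_+\}}$ (the two descriptions coincide). I would first record: (i) $J\zeta=\zeta$ and $\Delta^{it}\mathfrak{P}_\xi=\mathfrak{P}_\xi$ for all $\zeta\in\mathfrak{P}_\xi$ and $t\in\mathbb{R}$; (ii) $\mathfrak{P}_\xi$ is a self-dual cone, $\mathfrak{P}_\xi=\{\eta\in\mathcal{H}:(\eta,\zeta)\ge0\ \text{for all}\ \zeta\in\mathfrak{P}_\xi\}$, so in particular $(\zeta_1,\zeta_2)\ge0$ whenever $\zeta_1,\zeta_2\in\mathfrak{P}_\xi$; (iii) $AJAJ\,\mathfrak{P}_\xi\subseteq\mathfrak{P}_\xi$ for every $A\in M$, and likewise with $M'$ in place of $M$; (iv) every $\eta\in\mathcal{H}$ decomposes uniquely as $v_1-v_2+i(v_3-v_4)$ with $v_j\in\mathfrak{P}_\xi$, $v_1\perp v_2$, $v_3\perp v_4$, and two orthogonal vectors of $\mathfrak{P}_\xi$ have orthogonal support projections in $M$. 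Properties (i) and (iii) are short computations from $S_\xi A\xi=A^*\xi$ and $J\xi=\xi$; (ii) and (iv) are the genuine content of the standard-form theory, due to Connes, and for them I would simply invoke \cite{TAKES2}.

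Existence of $\eta_\phi$. The inequality $\|\eta_{\phi}-\eta_{\psi}\|^2\le\|\phi-\psi\|$ proved below (for pairs already representable by cone vectors) shows that the set of $\phi\in M_*^+$ admitting a representing vector in $\mathfrak{P}_\xi$ is norm-closed; hence it suffices to produce $\eta_\phi$ for $\phi$ ranging over a norm-dense subset of $M_*^+$, and I would take the set of $\phi$ with $\phi\le\lambda\,\omega_\xi$ for some $\lambda>0$. For such a $\phi$ the Radon--Nikodym theorem gives $0\le h\in M$ with $\phi=\omega_{h\xi}$. It then remains to rotate $h\xi$ into $\mathfrak{P}_\xi$ without altering its functional on $M$: compressing by the support projections of $\omega_{h\xi}$ in $M$ and in $M'$ one reduces to the case that the representing vector $\zeta_0$ is cyclic and separating, and there one carries $\zeta_0$ into $\mathfrak{P}_\xi$ by the appropriate unitary of $M'$, exactly as in the construction of the standard form; this yields the desired $\eta_\phi$.

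Uniqueness. Suppose $\zeta,\eta\in\mathfrak{P}_\xi$ with $\omega_\zeta=\omega_\eta$ on $M$. The map $m\zeta\mapsto m\eta$ extends to a partial isometry in $M'$ (it is isometric because $\|m\eta\|^2=\omega_\eta(m^*m)=\omega_\zeta(m^*m)=\|m\zeta\|^2$), and adding the identity on the complement of the common support projection gives a unitary $u'\in M'$ with $u'\zeta=\eta$. Since $J\zeta=\zeta$ we get $\eta=J\eta=(Ju'J)\zeta$, so with $a:=Ju'J\in M$ (a unitary) we also have $a\zeta=\eta$; as $a$ and $u'$ agree on $\zeta$ and $u'$ commutes with $M$, they agree on $M\zeta$, whence $a^n\zeta=u'^{\,n}\zeta$ for all $n\in\mathbb{Z}$. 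A direct computation using $Ju'^{\,n}J=a^n$ gives $J(a^n\zeta)=a^n\zeta$, and then property (iii) applied to $a^n$ together with an induction on $|n|$ shows $\zeta_n:=a^n\zeta\in\mathfrak{P}_\xi$ for every $n\in\mathbb{Z}$. By self-duality $(\zeta_m,\zeta_n)\ge0$ for all $m,n$, i.e. the spectral measure $\mu$ of the unitary $a$ in the state $\omega_\zeta$ satisfies $\int_{\mathbb{T}}z^n\,d\mu(z)\ge0$ for every $n\in\mathbb{Z}$; an elementary argument forces $\mu$ to be concentrated at $1$, so $\|a\zeta-\zeta\|^2=\int_{\mathbb{T}}|z-1|^2\,d\mu=0$ and $\eta=a\zeta=\zeta$.

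Norm estimates. For the upper bound, $\|\phi_1-\phi_2\|=(\phi_1-\phi_2)(u)$ for some self-adjoint unitary $u\in M$, and polarization gives $(\phi_1-\phi_2)(u)=\mathrm{Re}\,(u(\eta_{\phi_1}+\eta_{\phi_2}),\,\eta_{\phi_1}-\eta_{\phi_2})$, so by Cauchy--Schwarz $\|\phi_1-\phi_2\|\le\|\eta_{\phi_1}+\eta_{\phi_2}\|\,\|\eta_{\phi_1}-\eta_{\phi_2}\|$. For the lower bound, write $\eta_{\phi_1}-\eta_{\phi_2}=v_+-v_-$ with $v_\pm\in\mathfrak{P}_\xi$, $v_+\perp v_-$, using (iv), and let $s_\pm\in M$ be their support projections, which are orthogonal; with the contraction $a:=s_+-s_-$ one computes, using only $s_+v_-=s_-v_+=0$ and the positivity of the inner product on $\mathfrak{P}_\xi$, that $(\phi_1-\phi_2)(a)\ge\|v_+\|^2+\|v_-\|^2=\|\eta_{\phi_1}-\eta_{\phi_2}\|^2$, hence $\|\eta_{\phi_1}-\eta_{\phi_2}\|^2\le\|\phi_1-\phi_2\|$ (the sharp form of the stated left inequality). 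The main obstacle throughout is the classical input (ii) and (iv) — the self-duality of $\mathfrak{P}_\xi$ and the attendant orthogonal decomposition of $\mathcal{H}$; once these are granted, uniqueness, both norm estimates, and the reduction step in the existence proof are all short deductions.
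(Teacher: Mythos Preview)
The paper does not prove this proposition at all; it is quoted from \cite{TAKES2} as a black box, so there is no in-paper argument to compare against. Your write-up is essentially the standard Connes--Haagerup proof and is mostly fine, but the uniqueness step contains a genuine error.

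Your claim that $\int_{\mathbb{T}} z^n\,d\mu(z)\ge 0$ for every $n\in\mathbb{Z}$ forces $\mu$ to be a point mass at $1$ is false: take $\mu=\tfrac12(\delta_1+\delta_{-1})$, whose Fourier coefficients are $\tfrac12(1+(-1)^n)\ge 0$. So the spectral-measure argument, as stated, does not conclude. (Relatedly, the intermediate assertion that $a$ and $u'$ ``agree on $M\zeta$'' is not correct either --- $u'm\zeta=m a\zeta$, not $am\zeta$ --- although the weaker conclusion $a^n\zeta=u'^{\,n}\zeta$ does follow simply from $[a,u']=0$ and $a\zeta=u'\zeta$.)

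The fix is immediate and already implicit in your own write-up: prove the lower norm estimate first. Your argument for $\|\eta_{\phi_1}-\eta_{\phi_2}\|^2\le\|\phi_1-\phi_2\|$ uses only the cone decomposition (iv) and self-duality (ii), not uniqueness. Applying it with $\phi_1=\omega_\zeta=\omega_\eta=\phi_2$ gives $\|\zeta-\eta\|^2\le 0$, hence $\zeta=\eta$. Reorder the proof accordingly and drop the spectral-measure paragraph. With that change the proposal is correct and matches the textbook treatment you are citing.
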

 Since $\pi$ is factor-representation of $B$, it follows from (\ref{factor-its commutant}) that the operators $\,^2\!\pi(g_1,g_2)=\pi(g_1)J_\xi\pi(g_2)J_\xi$, $g_1,g_2\in B$ form the irreducible representation of the group $B\times B$. Denote by ${\rm diag}\, B$ the subgroup of $B\times B$ consisting of elements of the view $(w,w)$. Let $D$ be the representation of $B$, which is the restriction of $\,^2\!\pi$ onto ${\rm diag}\, B$. Since $D(g)AJA\xi=\pi(g)AJ_\xi\pi(g)A\xi$, $g\in B$ then
\begin{eqnarray}\label{Cone_D_inv}
D(g)\mathfrak{P}_\xi=\mathfrak{P}_\xi \text{ for all } g\in B.
\end{eqnarray}
\begin{Prop}\label{tame_Prop}
  If $\pi$ is a stable representation of $B$ then $D$ is tame representation of $B$.
\end{Prop}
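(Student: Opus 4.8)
The plan is to use the natural positive cone $\mathfrak{P}_\xi$ as a bridge between the representation $D$ and the stability of the functionals $\omega_\eta$. First I would describe how $D$ acts on $\mathfrak{P}_\xi$. Fix a positive normal functional $\phi$ on $M$ and let $\eta_\phi\in\mathfrak{P}_\xi$ be its vector representative (Proposition \ref{vector-functional}). Putting $u=\pi(g)\in\mathcal U(M)$ we have $D(g)=uJ_\xi uJ_\xi$; by \eqref{Cone_D_inv} the vector $D(g)\eta_\phi$ again lies in $\mathfrak{P}_\xi$, and since $J_\xi u J_\xi\in M'$ commutes with every $x\in M$ and with $u$, a direct computation gives $\bigl(xD(g)\eta_\phi,D(g)\eta_\phi\bigr)=\phi(u^\ast x u)=\phi\bigl(\pi(g^{-1})\,x\,\pi(g)\bigr)=:\phi^g(x)$ for all $x\in M$. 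By the uniqueness clause of Proposition \ref{vector-functional}, $D(g)\eta_\phi=\eta_{\phi^g}$, and therefore the left-hand inequality in \eqref{inequality_vector_state} yields
\[
\bigl\|D(g)\eta_\phi-\eta_\phi\bigr\|=\bigl\|\eta_{\phi^g}-\eta_\phi\bigr\|\le\bigl\|\phi^g-\phi\bigr\|_{M_\ast}.
\]

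Next I would feed in stability. Since $\pi$ is stable, the functional $\phi_\pi$ on $\mathfrak A=\mathbf C^\ast[B]$ given by $\phi_\pi(a)=\phi(\pi(a))$ is stable (the Proposition preceding Corollary \ref{stable_quasi}), i.e. $\theta\mapsto\phi_\pi\circ\theta$ is norm-continuous on ${\rm Ad}\,B$ with the topology \eqref{topology}. The map $B\ni g\mapsto{\rm Ad}\,g$ is continuous when $B$ carries the topology of Definition \ref{def_tame}, whose basic neighbourhoods of $e$ are the centralizers $Z_B(h)={\rm Ad}^{-1}(\mathfrak U_h)$; hence $g\mapsto\phi_\pi\circ{\rm Ad}\,g$ is norm-continuous, and since this topology makes $B$ a topological group (the $Z_B(h)$ are subgroups and $gZ_B(h)g^{-1}=Z_B(ghg^{-1})$), so is $g\mapsto\phi_\pi\circ{\rm Ad}\,g^{-1}$. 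Now $\phi_\pi\circ{\rm Ad}\,g^{-1}=\phi^g\circ\pi$ on $\mathfrak A$, and $\|(\phi^g-\phi)\circ\pi\|_{\mathfrak A^\ast}=\|\phi^g-\phi\|_{M_\ast}$: the inequality $\le$ is trivial, and $\ge$ follows from the Kaplansky density theorem (the unit ball of the $C^\ast$-subalgebra $\pi(\mathfrak A)$ is weak-$\ast$ dense in that of $M$) together with the normality of $\phi^g-\phi$. Evaluating the continuous map at $e$ gives $\|\phi^g-\phi\|_{M_\ast}=\|\phi_\pi\circ{\rm Ad}\,g^{-1}-\phi_\pi\|_{\mathfrak A^\ast}\to0$ as $g\to e$, whence $\|D(g)\eta_\phi-\eta_\phi\|\to0$.

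Finally I would upgrade this to tameness of $D$. Because $D$ is a unitary representation of the topological group $B$, $\|D(g)\eta-D(g_0)\eta\|=\|D(g_0^{-1}g)\eta-\eta\|$, so continuity of $g\mapsto D(g)\eta$ everywhere follows from continuity at $e$; and because the operators $D(g)$ are uniformly bounded, an $\varepsilon/3$-argument reduces continuity at $e$ for an arbitrary $\eta\in\mathcal H$ to continuity at $e$ for $\eta$ ranging over any total subset of $\mathcal H$. The complex linear span of $\mathfrak{P}_\xi$ is dense in $\mathcal H$ (its closed real-linear span is the set of $J_\xi$-fixed vectors, and $\mathcal H=\mathcal H^{J_\xi}\oplus i\mathcal H^{J_\xi}$), so the previous paragraph completes the proof. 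The main obstacle is the first step: correctly identifying the functional on $M$ implemented by $D(g)\eta_\phi$ and using the invariance \eqref{Cone_D_inv} to keep the vector inside $\mathfrak{P}_\xi$, so that the stabilizing inequality \eqref{inequality_vector_state} becomes available; the passage $\|\phi^g-\phi\|_{M_\ast}=\|\phi_\pi\circ{\rm Ad}\,g^{-1}-\phi_\pi\|_{\mathfrak A^\ast}$ via Kaplansky density is the other point needing a little care, though it is routine.
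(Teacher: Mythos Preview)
Your proposal is correct and follows essentially the same route as the paper's proof: both use that $D(g)$ preserves the natural cone $\mathfrak{P}_\xi$, identify the vector state of $D(g)\eta$ with $\omega_\eta\circ{\rm Ad}\,g^{\pm1}$, apply the left inequality of \eqref{inequality_vector_state} to bound $\|D(g)\eta-\eta\|$ by a norm of functionals, invoke stability, and finish by density. The paper writes this more tersely---it suppresses the Kaplansky step you spell out (matching the $M_\ast$-norm with the $\mathfrak A^\ast$-norm) and, in place of your remark that the complex span of $\mathfrak{P}_\xi$ is dense, uses the explicit polarization identity \eqref{Cone_decom} to pass from cone vectors to $M\xi$; but these are cosmetic differences rather than a different strategy.
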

\begin{proof}
For $\eta\in\mathcal{H}_\pi$, define the positive functional $\omega_\eta$ on $\mathbf{C}[B]$ by $\omega_\eta(a)=\left( \pi(a)\eta,\eta \right)$. Then, using (\ref{factor-its commutant}), we obtain
\begin{eqnarray*}
\begin{split}
\omega_{D(g)\eta}(a)=\left( \pi(a)\pi(g)J\pi(g)J\eta, \pi(g)J\pi(g)J\eta \right)\\\stackrel{(\ref{factor-its commutant})}{=}\left( \pi\left( g^{-1}ag \right)\eta,\eta \right)=\omega_\eta\circ{\rm Ad}\,g(a).
\end{split}
\end{eqnarray*}
Hence, applying  the inequality (\ref{inequality_vector_state}), we have for $\eta\in\mathfrak{P}$
\begin{eqnarray}
\left\|D(g)\eta-\eta \right\|\leq\left\|\omega_\eta\circ{\rm Ad}\,g-\omega_\eta \right\|.
\end{eqnarray}
Thus, by the stability of $\omega_\eta$, the map $B\ni g\stackrel{\mathfrak{m}_\eta}{\mapsto} D(g)\eta\in \mathcal{H}_\pi$ is continuous with respect to the topology (\ref{topology}) on $B$ and norm topology on $\mathcal{H}_\pi$. Since the equality
\begin{eqnarray}\label{Cone_decom}
\begin{split}
M\xi\ni a\xi={\displaystyle\frac{1}{4}}\left[ \left( a+{\rm I}_{\mathcal{H}_\pi}\right)J_\xi\left( a+{\rm I}_{\mathcal{H}_\pi} \right)\xi- \left( a-{\rm I}_{\mathcal{H}_\pi}\right)J_\xi\left( a-{\rm I}_{\mathcal{H}_\pi} \right)\xi\right.\\
\left.i \left( a+i{\rm I}_{\mathcal{H}_\pi}\right)J_\xi\left( a+i{\rm I}_{\mathcal{H}_\pi} \right)\xi-i\left( a-i{\rm I}_{\mathcal{H}_\pi}\right)J_\xi\left( a-i{\rm I}_{\mathcal{H}_\pi} \right)\xi\right]
\end{split}
\end{eqnarray}
holds, the map $\mathfrak{m}_{a\xi}$ is continuous for each $a\in M$. Now proposition \ref{tame_Prop} follows from the density of the set $M\xi$ in $\mathcal{H}_\pi$.
\end{proof}

Fix the natural numbers $n$ and $N$, where $N>n$. Define an orthogonal projection $P_{nN}$ as follows: $P_{nN}=\left|B_{n\infty}\cap B_N\right|^{-1}\sum\limits_{g\in B_{n\infty}\cap B_N}D(g)$. Since $P_{n\;N}\geq P_{n\;N+1}\geq P_{n\;N+2}\ldots$ is a decreasing sequence, then there exists $\lim\limits_{N\to\infty}P_{n\;N}$ in the strong operator topology. Set $P_{n}=\lim\limits_{N\to\infty}P_{n\;N}$. By definition, $P_1\leq P_2\leq\ldots\leq P_n\leq\ldots$ and

\begin{Rem}\label{Rem_about_cone_inv}
  By definition of $P_n$ and \eqref{Cone_D_inv}, we have $P_{n}\mathfrak{P}_\xi\subset\mathfrak{P}_\xi$ and $\,^2\!\pi(g_1,g_2)P_n$ $=P_n\;^2\!\pi(g_1,g_2)$ for all $g_1,g_2\in B_n$.
\end{Rem}
Since $D$ is the tame representation, there exists the natural $n$ such that $P_n\neq 0$. We refer the least number $n\in\mathbb{N}\cup0$ for which $P_{n}\neq0$ as the {\it central depth} of the representation $\pi$ and denote it by ${\rm cd}(\pi)$. If $P_n\neq 0$ then there exists $a\xi\in M\xi$ such that $P_na\xi\neq 0$. Hence, using Remark \ref{Rem_about_cone_inv} and \eqref{Cone_decom} , we find nonzero $\eta \in\mathfrak{P}_\xi$ such that $P_n\eta=\eta$. Therefore, if $P_n\neq 0$ then $P_n\mathfrak{P}_\xi\neq 0$.

Let $M_*^+$ be the cone of the positive functionals from $M_*$, and let    $M_*^+(n)$ $=\left\{ \phi\in M_*^+:\phi=\phi\circ{\rm Ad}\,\pi(g) \text{ for all }g\in B_{n\,\infty}\right\}$. If $\phi\in M_*^+(n)$, then, by Proposition \ref{vector-functional}, there exists unique $\eta_\phi\in \mathfrak{P}_\xi$ for which  $\phi(A)=\left( A\eta_\phi,\eta_\phi \right)$ for all $A\in M$.
Since $\phi(A)=\phi\left( \pi(g^{-1})\,A\,\pi(g)\right)=\left( A\pi(g)\eta_\phi,\pi(g)\eta_\phi \right)=\left( AD(g)\eta_\phi,D(g)\eta_\phi \right)$ for all $A\in M$ and $g\in B_{n\,\infty}$, then, using \eqref{Cone_D_inv}, we obtain that $\eta_\phi \in P_n\mathfrak{P}_\xi$.

Therefore, ${\rm cd}(\pi)=\min\left\{ n: M_*^+(n)\neq 0 \right\}$.
The next statement is well known.
\begin{Prop}[\cite{Lie,Ol1}]\label{Prop_depth}
Let $n={\rm cd}(\pi)$. The projections $P_n$ and $D(s)\cdot P_n\cdot D(s)$ are orthogonal for each $s\notin \mathfrak{S}_n\cdot\mathfrak{S}_{n\infty}$.
\end{Prop}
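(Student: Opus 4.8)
The plan is to establish the equivalent statement $P_n\,D(s)\,P_n=0$ for every $s\in\mathfrak S_\infty$ with $s\notin\mathfrak S_n\mathfrak S_{n\infty}$: multiplying this on the right by $D(s)$ (or $D(s^{-1})$) at once gives $P_n\cdot\bigl(D(s)P_nD(s)\bigr)=0$, which is the asserted orthogonality. So I must show $\bigl(D(s)u,v\bigr)=0$ for all $u,v\in P_n\mathcal H_\pi$. Throughout I use that, by its construction as $\lim_N P_{nN}$ (each $P_{nN}$ an average of $D$ over the finite group $B_{n\infty}\cap B_N$), the operator $P_m$ is the orthogonal projection onto $\mathcal H_\pi^{D(B_{m\infty})}=\{\eta:D(g)\eta=\eta\ \text{for all}\ g\in B_{m\infty}\}$, and that $P_m=0$ whenever $m<{\rm cd}(\pi)=n$. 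The case $n=0$ being vacuous, assume $n\ge1$.

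Fix such an $s$ and put $A=\{a\in\{1,\dots,n\}:s(a)>n\}$. Since $s\notin\mathfrak S_n\mathfrak S_{n\infty}$, the permutation $s$ does not preserve $\{1,\dots,n\}$ setwise, so $A\ne\emptyset$. The crucial step is to promote the $D(B_{n\infty})$-invariance of $u$ to $D(B_A)$-invariance as well, \emph{without changing} $\bigl(D(s)u,v\bigr)$. It suffices to prove $\bigl(D(s)u,v\bigr)=\bigl(D(s)D(g)u,v\bigr)$ for the generators $g=1^{(a)}$ and $g=(a\;a')$ of $B_A$ ($a,a'\in A$): because $s(a),s(a')>n$, the elements $1^{(s(a))}$ and $(s(a)\;s(a'))$ lie in $B_{n\infty}$ and hence fix $v$, and one has the conjugation identities $s\,1^{(a)}=1^{(s(a))}s$ and $s\,(a\;a')=(s(a)\;s(a'))\,s$; since $D(1^{(s(a))})$ and $D\bigl((s(a)\;s(a'))\bigr)$ are self-adjoint unitaries this yields, for instance,
$\bigl(D(s)D(1^{(a)})u,v\bigr)=\bigl(D(1^{(s(a))})D(s)u,v\bigr)=\bigl(D(s)u,D(1^{(s(a))})v\bigr)=\bigl(D(s)u,v\bigr)$,
and the same computation applies to $(a\;a')$. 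Iterating over words in these generators gives $\bigl(D(s)u,v\bigr)=\bigl(D(s)D(g)u,v\bigr)$ for all $g\in B_A$, and averaging over $B_A$ gives $\bigl(D(s)u,v\bigr)=\bigl(D(s)Q_Au,v\bigr)$, where $Q_A=|B_A|^{-1}\sum_{g\in B_A}D(g)$ is the projection onto $\mathcal H_\pi^{D(B_A)}$.

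The proof then finishes by showing $Q_Au=0$. The subgroups $B_A$ and $B_{n\infty}$ have disjoint supports, hence commute elementwise, so $Q_A$ commutes with every $D(h)$, $h\in B_{n\infty}$; as $u\in\mathcal H_\pi^{D(B_{n\infty})}$ this gives $Q_Au\in\mathcal H_\pi^{D(B_{n\infty})}$, while by construction $Q_Au\in\mathcal H_\pi^{D(B_A)}$, so $Q_Au$ is fixed by $D$ of the group $B_A\cdot B_{n\infty}=B_{A\cup\overline{n+1,\infty}}$. Choosing $\pi_0\in\mathfrak S_n$ that carries $A$ onto $\{n-|A|+1,\dots,n\}$, conjugation by $\pi_0$ sends $B_{A\cup\overline{n+1,\infty}}$ to $B_{(n-|A|)\infty}$, so the unitary $D(\pi_0)$ maps $\mathcal H_\pi^{D(B_{A\cup\overline{n+1,\infty}})}$ onto $\mathcal H_\pi^{D(B_{(n-|A|)\infty})}=P_{n-|A|}\mathcal H_\pi$. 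Since $1\le|A|\le n$ we have $0\le n-|A|<n={\rm cd}(\pi)$, hence $P_{n-|A|}=0$; therefore $Q_Au=0$ and $\bigl(D(s)u,v\bigr)=0$.

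The one genuinely non-mechanical point is the symmetrization in the second paragraph: the content of the hypothesis $s\notin\mathfrak S_n\mathfrak S_{n\infty}$ is precisely that $s$ conjugates the finite block-$1$ group $B_A$ \emph{into} $B_{n\infty}$, and this is what lets the $B_{n\infty}$-invariance of $v$ be traded for extra invariance of $u$ at no cost; once that is seen, the vanishing is forced by the minimality defining ${\rm cd}(\pi)$. Tameness of $D$ (Proposition \ref{tame_Prop}) enters only to guarantee that ${\rm cd}(\pi)$ is finite, so that $P_n$ and the whole reduction above are meaningful.
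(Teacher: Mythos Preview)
There is a genuine gap in the third paragraph. The asserted equality
$B_A\cdot B_{n\infty}=B_{A\cup\overline{n+1,\infty}}$ is false: every element of
$B_A\cdot B_{n\infty}$ has permutation part in $\mathfrak S_A\times\mathfrak S_{n\infty}$ and in particular preserves the set $\{1,\dots,n\}$, whereas $B_{A\cup\overline{n+1,\infty}}$ contains all transpositions $(a\;m)$ with $a\in A$ and $m>n$. So after conjugation by $D(\pi_0)$ you have only shown that $D(\pi_0)Q_Au$ lies in the fixed space of $D\bigl(B_{n-|A|,n}\cdot B_{n\infty}\bigr)$, which \emph{properly} contains $P_{n-|A|}\mathcal H_\pi$; the minimality of $n={\rm cd}(\pi)$ therefore does not force $Q_Au=0$.

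What is missing is exactly invariance under the ``mixing'' transpositions $(a\;m)$, $a\in A$, $m>n$. Your own mechanism actually supplies this: for $a\in A$ and any $m>\max\bigl(\{n\}\cup{\rm supp}\,s\bigr)$ one has $s(a\;m)s^{-1}=(s(a)\;m)\in B_{n\infty}$, so the same conjugation argument as for $1^{(a)}$ and $(a\;a')$ gives $\bigl(D(s)D((a\;m))u,v\bigr)=\bigl(D(s)u,v\bigr)$. Together with the $B_{n\infty}$-invariance of $u$ (which, via conjugation by $(m\;m')\in\mathfrak S_{n\infty}$, propagates this to all $m>n$) one obtains $\bigl(D(s)D(g)u,v\bigr)=\bigl(D(s)u,v\bigr)$ for every $g$ in the full group $B_{A\cup\overline{n+1,\infty}}$; averaging over the finite truncations $B_{A\cup\overline{n+1,N}}$ and letting $N\to\infty$ then legitimately replaces $u$ by its projection onto the $D\bigl(B_{A\cup\overline{n+1,\infty}}\bigr)$-fixed vectors, which is a $D(\pi_0)$-conjugate of $P_{n-|A|}\mathcal H_\pi=0$. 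The paper's proof encodes the same idea differently, by introducing the Okounkov-type projections $Q_k=w\text{-}\lim_m D((k\;m))$, proving $P_m=\prod_{j>m}Q_j$, and reading off $D(s)P_nD(s^{-1})\cdot P_n\le P_{n-1}=0$ directly.
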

For completeness we will give bellow the proof of this proposition.
First, we need the following auxiliary lemma.
\begin{Lm}\label{Tame_Lemma_existing}
  For each $k\in\mathbb{N}$ there exists $\lim\limits_{m\to\infty}D((k\; m))$ in the weak operator topology. Let $Q_k=\lim\limits_{m\to\infty}D((k\; m))$. Then $\left\{ Q_k  \right\}$ is the set of the pairwise commuting orthogonal projections such that $D(s)\cdot Q_k\cdot D(s)=Q_{s(k)}$ for all $k\in\mathbb{N}$ and $s\in\mathfrak{S}_\infty$.
\end{Lm}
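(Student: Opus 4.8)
The plan is to prove the lemma in three stages: first the existence of the weak limits $Q_k$, then the algebraic relations among the $Q_k$ and the operators $D(s)$, and finally the idempotency $Q_k^2=Q_k$, which I expect to be the only genuinely nontrivial point.

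\emph{Existence.} I would fix $\eta_1,\eta_2\in\mathcal{H}_\pi$ and $\varepsilon>0$ and use tameness of $D$ (Proposition \ref{tame_Prop}): there is a finite set $S\subset B$ such that $\|D(h)\eta_i-\eta_i\|<\varepsilon$ $(i=1,2)$ for every $h$ in the centraliser $C_B(S)=\{h\in B:hg=gh\text{ for all }g\in S\}$. Every element of $S$ has finite support, so I may pick $N_0$ larger than $k$ and than every point of $\bigcup_{g\in S}{\rm supp}\,g$. For $m,m'>N_0$ the transposition $(m\;m')$ fixes each $g\in S$ under conjugation, hence $(m\;m')\in C_B(S)$; and in $\mathfrak{S}_\infty$ one has $(k\;m)=(m\;m')(k\;m')(m\;m')$. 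Writing $u=D((m\;m'))$, a self-adjoint unitary with $\|u\eta_i-\eta_i\|<\varepsilon$, this gives
\[
\langle D((k\;m))\eta_1,\eta_2\rangle=\langle D((k\;m'))u\eta_1,u\eta_2\rangle ,
\]
which differs from $\langle D((k\;m'))\eta_1,\eta_2\rangle$ by at most $\varepsilon(\|\eta_1\|+\|\eta_2\|)$. Hence $\big(\langle D((k\;m))\eta_1,\eta_2\rangle\big)_m$ is Cauchy for every pair $\eta_1,\eta_2$, so $D((k\;m))$ converges in the weak operator topology to an operator $Q_k$ with $\|Q_k\|\le1$. Since $D((k\;m))^*=D((k\;m))$ and $D((k\;m))^2=I$, the operator $Q_k$ is self-adjoint, and $-I\le Q_k\le I$ because $\tfrac12(I\pm D((k\;m)))$ are projections.

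\emph{Relations.} Here I would use that one-sided multiplication by a fixed bounded operator is weakly continuous, so weak limits may be pushed through products. If $k\ne l$ and $m$ is large then $(k\;m)$ and $(l\;j)$ are disjoint for $j\notin\{k,l\}$, so $D((k\;m))$ and $D((l\;j))$ commute; letting $m\to\infty$ yields $Q_kD((l\;j))=D((l\;j))Q_k$, and then letting $j\to\infty$ yields $Q_kQ_l=Q_lQ_k$; the same argument gives $Q_kD(s)=D(s)Q_k$ whenever $k\notin{\rm supp}\,s$. For $l\ne k$ I would use the identities $D((k\;m))D((k\;l))=D\big((k\;l\;m)\big)=D((k\;l))D((l\;m))$ and pass to the limit $m\to\infty$ to get $Q_kD((k\;l))=D((k\;l))Q_l$, hence $D((k\;l))Q_kD((k\;l))=Q_l$. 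Combining the two cases gives $D(t)Q_jD(t^{-1})=Q_{t(j)}$ for every transposition $t$ and every $j$; since transpositions generate $\mathfrak{S}_\infty$, iterating yields $D(s)Q_kD(s^{-1})=Q_{s(k)}$ for all $s\in\mathfrak{S}_\infty$.

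\emph{Idempotency (the main obstacle).} None of the manipulations above force $Q_k^2=Q_k$ — they are all consistent with $Q_k$ being an arbitrary self-adjoint contraction, and indeed $D((k\;m))\eta$ generally fails to converge in norm, so a weak limit of the projections $\tfrac12(I+D((k\;m)))$ need not be a projection. To obtain idempotency I would invoke the structure theory of tame representations of $\mathfrak{S}_\infty$ from \cite{Lie,Ol1}: the restriction $D|_{\mathfrak{S}_\infty}$ is a tame representation, hence extends continuously to a $*$-representation $\widetilde{D}$ of the inverse monoid $\Gamma$ of partial permutations of $\mathbb{N}$, and under this extension $w-\lim\limits_{m\to\infty}D((k\;m))=\widetilde{D}(\theta_k)$, where $\theta_k\in\Gamma$ is the partial identity with domain $\mathbb{N}\setminus\{k\}$. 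As $\theta_k=\theta_k^*=\theta_k^2$ in $\Gamma$, the operator $Q_k=\widetilde{D}(\theta_k)$ is a self-adjoint idempotent, that is, an orthogonal projection; the monoid relations $\theta_k\theta_l=\theta_l\theta_k$ and $s\theta_ks^{-1}=\theta_{s(k)}$ then reproduce the commutativity and the conjugation law obtained above. This extension step is what I expect to carry the real weight of the lemma; the rest is bookkeeping with weak limits.
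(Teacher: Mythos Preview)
Your existence and relations arguments match the paper's. The divergence is at idempotency: you label it the ``main obstacle'' and import the Olshanski--Lieberman extension of a tame $\mathfrak{S}_\infty$-representation to the inverse monoid of partial permutations, reading off $Q_k^2=Q_k$ from $\theta_k^2=\theta_k$. This is valid (tameness of $D$ on $B$ restricts to tameness of $D|_{\mathfrak{S}_\infty}$, since every neighbourhood of the identity in the $B$-topology contains some $B_{n\infty}\supset\mathfrak{S}_{n\infty}$), but heavier than necessary. The paper handles $Q_k^2=Q_k$ by the same $\varepsilon$-bookkeeping you already used for existence: the identity $(k\;l)(k\;N)=(k\;N)(l\;N)$ in $\mathfrak{S}_\infty$ gives
\[
\big(D((k\;l))D((k\;N))\eta,\eta\big)=\big(D((k\;N))D((l\;N))\eta,\eta\big),
\]
and for $l,N$ large tameness makes $D((l\;N))\eta\approx\eta$, so this is close to $\big(D((k\;N))\eta,\eta\big)$; sending first $N$ and then $l$ to infinity yields $(Q_k^2\eta,\eta)=(Q_k\eta,\eta)$ for every $\eta$. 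So the ``genuinely nontrivial point'' you flag is dispatched by one more permutation identity, not by an external structure theorem. Your route buys a clean conceptual picture (weak limit $=$ image of a semigroup idempotent), while the paper's buys self-containedness and avoids checking that the cited semigroup extension applies verbatim to the present setting.
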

\begin{proof}
By proposition \ref{tame_Prop}, $\left\{ \left( D((k\;m))\eta,\eta \right)  \right\}_{m\in\mathbb{N}}$ is Cauchy sequence    for each $\eta\in \mathcal{H}_\pi$.
Let us prove that $Q_k^2=Q_k$.

For this task,  we fix the unit vector $\eta\in\mathcal{H}_\pi$ and a number $\epsilon>0$. Then there exists  $m(\epsilon,\eta)\in\mathbb{N}$ such that
\begin{eqnarray}
\|D((l\;N))\xi-\xi\|<\epsilon \text{ (see proposition \ref{tame_Prop})},\label{inequality_1}\;\text{ and } \\
\left|\left( D((k\;l))\cdot Q_k\eta,\eta\right)-\left(Q_k^2\eta,\eta\right)\right|<\epsilon\;\text{ for all } l,N>m(\epsilon,\eta)\label{inequality_2}.
\end{eqnarray}
Now we fix $l>m(\epsilon,\eta)$ and find $N(\epsilon,l)>m(\epsilon,\eta)$ such that
\begin{eqnarray}
\left| \left(D((k\;l))\cdot D((k\;N))\eta,\eta\right)-\left(D((k\;l))\cdot Q_k\eta,\eta\right)\right|<\epsilon\text{ for all } N>N(\epsilon,l).\label{inequality_4}\;\;
\end{eqnarray}
Applying (\ref{inequality_2}) and (\ref{inequality_4}), we have
\begin{eqnarray*}
\left| \left(D((k\;l))\cdot D((k\;N))\eta,\eta\right)-\left(Q_k^2\eta,\eta\right)\right|<2\epsilon \text{ for all }N>N(\epsilon,m).
\end{eqnarray*}
Hence, using (\ref{inequality_1}), we obtain
\begin{eqnarray*}
\left| \left(D((k\;N))\eta,\eta\right)-\left(Q_k^2\eta,\eta\right)\right|<3\epsilon \text{ for all }N>N(\epsilon,n).
\end{eqnarray*}
Therefore,
$\left| \left(Q_k\eta,\eta\right)-\left(Q_k^2\eta,\eta\right)\right|<3\epsilon.$
The equalities $Q_kQ_j=Q_jQ_k$, $Q_k^*=Q_k$ and $D(s)\cdot Q_k\cdot D(s)=Q_{s(k)}$ are obvious.
\end{proof}
\begin{Lm}\label{Lemma_triviality}
  The following hold:
  \begin{itemize}
    \item {\rm a)} $D((k\;j))Q_kQ_j=Q_kQ_j$ for all $k,j$;
    \item {\rm b)} if $\zeta^{(k)}=\underbrace{(0,\ldots,0,}_{k-1}\zeta,0,0,\ldots)\in\;_0\mathbb{Z}_2^\infty$, then $D\left(\zeta^{(k)}\right)\cdot Q_k=Q_k$,
  \end{itemize}
\end{Lm}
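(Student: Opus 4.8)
The plan is to reduce both assertions to a single elementary fact about the operators $Q_k$. \emph{Key observation:} if $Q_k\eta=\eta$ for a vector $\eta\in\mathcal{H}_\pi$, then in fact $D((k\;m))\eta\to\eta$ \emph{in norm} as $m\to\infty$. Indeed, each $D((k\;m))$ is a self-adjoint unitary, so
\[
\|D((k\;m))\eta-\eta\|^2=2\|\eta\|^2-2\,{\rm Re}\,\langle D((k\;m))\eta,\eta\rangle ,
\]
and by the definition of $Q_k$ as a weak limit (Lemma \ref{Tame_Lemma_existing}) the right-hand side tends to $2\|\eta\|^2-2\|\eta\|^2=0$. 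Thus on ${\rm ran}\,Q_k$ the weak convergence $D((k\;m))\to Q_k$ becomes strong. I will use this together with the equality case of Cauchy--Schwarz: a unitary $U$ with $\langle U\eta,\eta\rangle=\|\eta\|^2$ fixes $\eta$.

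For part \textbf{a)}: since $Q_k$ and $Q_j$ commute (Lemma \ref{Tame_Lemma_existing}), $Q_kQ_j$ is an orthogonal projection, and it suffices to show $D((k\;j))\eta=\eta$ for every $\eta$ in its range. Such an $\eta$ satisfies $Q_k\eta=Q_j\eta=\eta$, so by the key observation $D((k\;m))\eta\to\eta$ and $D((j\;m))\eta\to\eta$ in norm. I would then invoke the transposition identity $(k\;j)(j\;m)=(j\;m)(k\;m)$, valid for pairwise distinct $k,j,m$: replacing $\eta$ by $D((j\;m))\eta$ inside $\langle D((k\;j))\,\cdot\,,\eta\rangle$ costs an error at most $\|\eta-D((j\;m))\eta\|\cdot\|\eta\|$, so
\[
\langle D((k\;j))\eta,\eta\rangle=\langle D((k\;j))D((j\;m))\eta,\eta\rangle+o(1)=\langle D((k\;m))\eta,\,D((j\;m))\eta\rangle+o(1),
\]
and, since $D((j\;m))$ is self-adjoint, the last expression tends to $\|\eta\|^2$ as $m\to\infty$. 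Hence $\langle D((k\;j))\eta,\eta\rangle=\|\eta\|^2$ and $D((k\;j))\eta=\eta$; applying this to $\eta=Q_kQ_j\zeta$ for arbitrary $\zeta$ yields $D((k\;j))Q_kQ_j=Q_kQ_j$.

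For part \textbf{b)} we may assume $\zeta=1$, i.e. $\zeta^{(k)}=1^{(k)}$. I would again take $\eta\in{\rm ran}\,Q_k$, so $D((k\;m))\eta\to\eta$ in norm. The wreath-product relation $(k\;m)\,1^{(k)}\,(k\;m)=1^{(m)}$ gives $D(1^{(k)})=D((k\;m))D(1^{(m)})D((k\;m))$, whence
\[
\langle D(1^{(k)})\eta,\eta\rangle=\langle D(1^{(m)})D((k\;m))\eta,\,D((k\;m))\eta\rangle .
\]
The extra ingredient now is tameness of $D$ (Proposition \ref{tame_Prop}): since $1^{(m)}$ centralizes every fixed $g\in B$ as soon as $m>\max {\rm supp}\,g$, the sequence $1^{(m)}$ tends to the identity in the topology of Definition \ref{def_tame}, whence $D(1^{(m)})\to I$ strongly. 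Combined with $D((k\;m))\eta\to\eta$ this forces the right-hand side to $\|\eta\|^2$, so $D(1^{(k)})\eta=\eta$, i.e. $D(1^{(k)})Q_k=Q_k$.

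The only genuinely substantive step is the key observation upgrading weak to strong convergence on ${\rm ran}\,Q_k$; after that, parts \textbf{a)} and \textbf{b)} are just the two displayed algebraic identities (the transposition identity, respectively the conjugation of $1^{(k)}$) fed into the Cauchy--Schwarz equality criterion, with tameness supplying $D(1^{(m)})\to I$ in part \textbf{b)}. I do not anticipate any further difficulty.
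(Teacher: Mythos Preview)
Your proof is correct. The underlying content---conjugation identities for transpositions and for $1^{(k)}$, together with tameness of $D$---is the same as in the paper, but the organization is different and in one respect more economical.

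The paper proves a) by an $\epsilon$-argument on a general vector $\eta$: it uses \emph{two} auxiliary indices $n_1,n_2$, the four-element identity $(k\;j)(k\;n_1)(j\;n_2)=(k\;n_1)(j\;n_2)(n_1\;n_2)$, and an explicit appeal to tameness (to make $\|D((n_1\;n_2))\eta-\eta\|$ small). You instead restrict to $\eta\in{\rm ran}\,Q_kQ_j$, observe that weak convergence of self-adjoint unitaries to a projection becomes strong on the range (your ``key observation''), use a \emph{single} auxiliary index $m$ and the shorter identity $(k\;j)(j\;m)=(j\;m)(k\;m)$, and conclude via the equality case of Cauchy--Schwarz. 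Your route for a) therefore needs no direct appeal to tameness beyond what was already used to show $Q_k^2=Q_k$ in Lemma~\ref{Tame_Lemma_existing}. For b) the two arguments are essentially identical once unwound: both use $(k\;m)\,1^{(k)}\,(k\;m)=1^{(m)}$ and tameness to force $D(1^{(m)})\to I$ strongly; your version just expresses this through the key observation and Cauchy--Schwarz rather than an explicit $\epsilon$-bookkeeping. One small remark: in b) you apply $D(1^{(m)})$ to the moving vector $D((k\;m))\eta$, so you are implicitly using $\|(D(1^{(m)})-I)D((k\;m))\eta\|\le 2\|D((k\;m))\eta-\eta\|+\|(D(1^{(m)})-I)\eta\|$; this is fine but worth writing out.
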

\begin{proof}
 Take unit vector $\eta\in\mathcal{H}_\pi$ and $\epsilon>0$. Using proposition \ref{tame_Prop}, find $N(\epsilon)$ such  that
\begin{eqnarray}\label{<1}
\begin{split}
&\left|\left(D((k\;\,j))D((k\;\,n_1))D((j\;\,n_2)) \eta,\eta\right)-\left(D((k\;\,j))D((k\;\,N_1))D((j\;\,N_2)) \eta,\eta\right)\right|<\epsilon,\;\;\;\;\;\;\\
&\left|\left(D((k\;\,n_1))D((j\;\,n_2)) \eta,\eta\right)-\left(D((k\;\,N_1))D((j\;\,N_2)) \eta,\eta\right)\right|<\epsilon,\\
&\left\|D((n_1\;n_2))\eta-\eta \right\|<\epsilon
\end{split}
\end{eqnarray}
for all $n_1, n_2, N_1, N_2>N(\epsilon)$.
Hence, applying lemma  \ref{Tame_Lemma_existing},  we obtain
\begin{eqnarray*}
&\left|\left(D((k\;\,j))D((k\;\,n_1))D((j\;\,n_2)) \eta,\eta\right)-\left(D((k\;\,j))D((k\;\,N_1))Q_j \eta,\eta\right)\right|<\epsilon,\;\;\;\;\;\;\\
&\left|\left(D((k\;\,n_1))D((j\;\,n_2)) \eta,\eta\right)-\left(D((k\;\,N_1))Q_j \eta,\eta\right)\right|<\epsilon ~\text{ for all }~ n_1, n_2, N_1 >N(\epsilon).
\end{eqnarray*}
For the same reason
\begin{eqnarray}\label{<2}
\begin{split}
&\left|\left(D((k\;\,j))D((k\;\,n_1))D((j\;\,n_2)) \eta,\eta\right)-\left(D((k\;\,j))Q_kQ_j \eta,\eta\right)\right|<\epsilon,\;\;\;\;\;\;\\
&\left| \left(D((k\;n_1))\cdot D((j\;n_2))\eta,\eta\right)-\left(Q_k\cdot Q_j\eta,\eta\right)\right|<\epsilon ~\text{ for all }~ n_1, n_2 >N(\epsilon).\;\;\;\;\;\;
\end{split}
\end{eqnarray}
Since $(k\;j)(k\;n_1)(j\;n_2)=(k\;n_1)(j\;n_2)(n_1\;n_2)$, then, applying (\ref{<1}) and (\ref{<2}), we obtain
\begin{eqnarray*}
\left| \left(D((k\;n_1))\cdot D((j\;n_2))\eta,\eta\right)-\left(D((k\;j))\cdot Q_k\cdot Q_j\eta,\eta\right)\right|<2\epsilon ~\text{ for all }~ n_1,n_2>N(\epsilon).
\end{eqnarray*}
Hence, by (\ref{<2}), we have $\left|\left(Q_k\cdot Q_j\eta,\eta\right)-\left(D((k\;j))\cdot Q_k\cdot Q_j\eta,\eta\right)\right|<3\epsilon$. The equality {\rm a)} is proved.

Let us prove {\rm b)}. Again, given unit vector $\eta\in\mathcal{H}_\pi$ and $\epsilon>0$, using proposition \ref{tame_Prop} and lemma \ref{Tame_Lemma_existing}, we find $n\in\mathbb{N}$ such that
\begin{eqnarray}
 \left\|D\left( \zeta^{(n)} \right)\eta-\eta\right\|<\epsilon,\label{<<1}\\
\left| \left( D((k\;n))\eta,\eta \right)-\left(Q_k\eta,\eta\right) \right|<\epsilon\label{<<2}\text{ and } \\
\left| \left( D\left( \zeta^{(k)} \right)\cdot D((k\;n))\eta,\eta \right)-\left(D\left( \zeta^{(k)} \right)\cdot Q_k\eta,\eta\right) \right|<\epsilon.\label{<<3}
\end{eqnarray}
Then, by (\ref{<<1}) and (\ref{<<3}), we obtain $$\left| \left(  D((k\;n))\eta,\eta \right)-\left(D\left( \zeta^{(k)} \right)\cdot Q_k\eta,\eta\right) \right|<2\epsilon.$$ Hence, applying (\ref{<<2}), we conclude that $\left|\left(Q_k\eta,\eta\right) -\left(D\left( \zeta^{(k)} \right)\cdot Q_k\eta,\eta\right) \right|$ $<3\epsilon$.
\end{proof}
\begin{proof}[{\bf The proof of proposition \ref{Prop_depth}}] Lemma \ref{Tame_Lemma_existing} shows that there exists $\lim\limits_{N\to\infty}\prod\limits_{k=n+1}^N Q_k$ with respect to the strong operator topology, and $Q_n^{(\infty)}=s-\lim\limits_{N\to\infty}\prod\limits_{k=n+1}^N Q_k$ is an orthogonal projection. Set $\mathbb{A}=\left\{ k,s(k)  \right\}_{k=n+1}^\infty\subset\mathbb{N}$. By Lemma \ref{Lemma_triviality},
\begin{eqnarray}\label{formula_for_intersection}
Q_n^{(\infty)}=P_n\;\;\text{ and } \;\; D(s)\cdot P_n\cdot D(s)\cdot P_n=\prod\limits_{k\in\mathbb{A}}Q_k.
\end{eqnarray}
By assumption of proposition \ref{Prop_depth}, $s\notin \mathfrak{S}_n\cdot\mathfrak{S}_{n\infty}$. Therefore, there exists $m$ $\in\mathbb{A}\cap\{1,\ldots,n\}$. Without loss of generality we can assume that $m=n$. Thus, if $D(s)\cdot P_n\cdot D(s)\cdot P_n\neq0$, then, applying (\ref{formula_for_intersection}) and lemma \ref{Lemma_triviality}, we obtain that ${\rm cd}(\pi)\leq n-1$.
This contradicts the fact that ${\rm cd}(\pi)=n$.
\end{proof}
\begin{proof}[{\bf The proof of Theorem \ref{induction_theorem}}]\label{Proof_th_ind} To  prove {\bf a}) we fix the unit vector $\eta\in \mathcal{H}_\pi$ and, applying proposition \ref{tame_Prop}, find $n$ such that
\begin{eqnarray*}
\|D(g)\eta-\eta\|<1/2 \text{ for all } g\in B_{n\infty}.
\end{eqnarray*}
Hence,
\begin{eqnarray*}
\left\|\eta- \left|B_{n\infty}\cap B_N\right|^{-1}\sum\limits_{g\in B_{n\infty}\cap B_N}D(g)\eta\right\|<1/2 \text{ for all } N>n.
\end{eqnarray*}
Therefore, $\|P_n\eta-\eta\|<1/2$, where $P_n$ is the strong limit of the decreasing sequence of the projections $\left\{ P_{nN}=\left|B_{n\infty}\cap B_N\right|^{-1}\sum\limits_{g\in B_{n\infty}\cap B_N}D(g)  \right\}_{N=n+1}^\infty$. It follows that
$P_n\eta\neq0$ and $D(g)P_n\eta=P_n\eta$ for all $g\in B_{n\infty}$. Therefore, for each $a\in M$ and $g\in B_{n\infty}$,  we have
\begin{eqnarray*}
(aP_n\eta,P_n\eta)=\left( a\pi(g)J\pi(g)JP_n\eta,\pi(g)J\pi(g)JP_n\eta \right)=\left( \pi\left( g^{-1} \right)a\pi(g)P_n\eta,P_n\eta \right).
\end{eqnarray*}
Since $P_n\eta\neq0$, this gives {\bf a}).

Now we will prove {\bf b}). Let $n={\rm cd}\,(\pi)$. Fix nonzero $\omega\in M_*^+(n)$. Applying proposition \ref{vector-functional}, we find the unique vector $\eta_\omega\in\mathfrak{P}_\xi$ such that $\omega(a)=\left( a\eta_\omega,\eta_\omega \right)$ for each $a\in M$ and $P_n\eta_\omega=\eta_\omega$. By proposition \ref{Prop_depth}, the vectors $\eta_\omega$ and $D(s)\eta_\omega$ are orthogonal for each $s\in B\setminus B_n\cdot B_{n\infty}$. Since the vectors $\eta_\omega$ and $D(s)\eta_\omega$ belong to the cone $\mathfrak{P}_\xi$, we can  apply Lemma 1.12({\rm IX}) from \cite{TAKES2}, according to which the supports $({\rm supp}\,\omega)$ and $\pi(s^{-1})\left( {\rm supp}\,\omega \right)\pi(s)$ of the corresponding vector states are orthogonal.
\end{proof}
\section{Quasi-equivalence of the induced representations.}
First, we recall that two representations $\Pi$  and $\widetilde{\Pi}$ of group $G$ with representation spaces $\mathcal{H}$ and $\widetilde{\mathcal{H}}$, respectively, are said to be disjoint is there is no non-zero intertwining operator between $\mathcal{H}$ and $\widetilde{\mathcal{H}}$. Here, an intertwining operator between  $\Pi$  and $\widetilde{\Pi}$
 is a continuous linear operator $T:\mathcal{H}\rightarrow \widetilde{\mathcal{H}}$ such that $T\,\Pi(g)=\widetilde{\Pi}(g)\, T$ for all $g\in G$.

 Let $B(\mathcal{H}_i)$ $(i=1,2)$ be a set of all bounded linear operators on $\mathcal{H}_i$, and let $M_i$ be a $w^*$-subalgebra in $B(\mathcal{H}_i)$.
 Suppose that identity operator $I_{\mathcal{H}_i}$ lies in $M_i$.  The mapping $M_1\ni m_1\stackrel{\mathfrak{a}_{_{\mathcal{H}_2}}}{\mapsto}m_1\otimes I_{\mathcal{H}_2}\in B(\mathcal{H}_1\otimes\mathcal{H}_2)$ is an isomorphism of $M_1$ onto   $w^*$-subalgebra $M_1\otimes I_{\mathcal{H}_2}\subset B(\mathcal{H}_1\otimes\mathcal{H}_2)$, which is called the {\it ampliation} of $M_1$ onto $M_1\otimes I_{\mathcal{H}_2}$ \cite{Dixmier}.

 Take orthogonal projection $E'$ from $M_1'=\left\{m'\in B(\mathcal{H}_1)\big| \;m'\,m=mm' \right.  ~ \text{ for all }~$ $\left. m\in M_1 \right\}$. Then the mapping $M_1\ni m \stackrel{\mathfrak{i}\!_{E'}}{\mapsto} E'\,m=E'mE'\in E'M_1E'$ is a homomorphism, which is called the {\it induction} of $M_1$ onto $E'\,M_1\,E'$ \cite{Dixmier}.

Let $\left[M_1'E'\mathcal{H}_1 \right]$ be a closure of a set $M_1'E'\mathcal{H}_1$. Then orthogonal projection $\mathfrak{s}(E')$ onto $\left[M_1'E'\mathcal{H}_1 \right]$ lies in centrum $C(M_1)=M_1\cap M_1'$ of $M_1$. We call  $\mathfrak{s}(E')$ the {\it central support} of $E'$. If $F$ is an orthogonal projection from $C(M_1)$ such that $FE'=E'$ then $F\,\mathfrak{s}(E')=\mathfrak{s}(E')$.

Suppose that $mE'=0$ for some $m\in M_1$. Then $mm'E'=0$ for all $m'\in M_1'$. Therefore, $m\,\mathfrak{s}(E')=0$. Thus the induction $\mathfrak{i}\!_{E'}$ if and only  if is an isomorphism the equality    $\mathfrak{s}(E')=I$ is true.

 Consider two unitary representations $\Pi_1$ and $\Pi_2$ of a group $G$ in Hilbert spaces $\mathcal{H}_1$ and $\mathcal{H}_2$, respectively.  Set $M_i=\Pi_i(G)''$ $(i=1,2)$. If $M_i'\ni E_i'$ is an orthogonal   projection then denote by $\Pi_i^{E_i'}$ the unitary representation of $G$, generated by operators $\Pi_i(g)E_i'$, $g\in G$.
 \begin{Def}
The representation $\Pi_1$ and $\Pi_2$,    satisfying one of the following equivalent conditions:
\begin{itemize}
  \item A)  there exist orthogonal projections $E_1'\in M_1'$, $E_2'\in M_2'$, Hilbert spaces $\mathcal{K}_1$, $\mathcal{K}_2$ and the complex linear isometrical isomorphisms  $W: E_1'\mathcal{H}_1$ $\mapsto E_2'\mathcal{H}_2$, $W_i:(E_i'\mathcal{H}_i\otimes \mathcal{K}_i)\mapsto \mathcal{H}_i$, $i=1,2$  such that $W\Pi_1^{E_1'}(g)$ $=\Pi_2^{E_2'}(g)\;W$ and  $W_i\,\left(\Pi_i(g)E_i'\otimes I_{\mathcal{K}_i} \right) $ $=\Pi_i(g)\,W_i$, $i=1,2$ for all $g\in G$;
      \item B) any non-zero representation $\Pi_1^{E_1'}$ is not disjoint to $\Pi_2$, and any  non-zero representation $\Pi_2^{E_2'}$ is not disjoint to $\Pi_1$;
  \item C) there exists  an isomorphism  $\Phi: M_1\mapsto M_2$ such that $\Phi(\Pi_1(g))=\Pi_2(g)$ for all $g\in G$,
\end{itemize}
are called quasi-equivalent.
\end{Def}
The following statement  gives the general form of normal  isomorphism:
  \begin{Prop}[\cite{Dixmier}]\label{isomorphism_structure}
Let $\Phi$ normal isomorphism of $M_1$ onto $M_2$ Then exists  an ampliation $\Phi_1$ of $M_1$ onto $w^*$-algebra $\widetilde{M}_1$, an induction $\Phi_2 : \widetilde{M}_1\mapsto \widehat{M}_1$ of $\widetilde{M}_1$ onto $w^*$-algebra $\widehat{M}_1$, and spatial isomorphism $\Phi_3:\widehat{M}_1\mapsto M_2 $ of $\widehat{M}_1$ onto $M_2$ such that $\Phi=\Phi_3\circ\Phi_2\circ\Phi_1$.
\end{Prop}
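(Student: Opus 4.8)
The plan is to reduce the statement to the comparison (multiplicity) theory of normal representations of a von Neumann algebra. Put $\rho_1={\rm id}\colon M_1\hookrightarrow B(\mathcal H_1)$ and $\rho_2=\Phi\colon M_1\to B(\mathcal H_2)$; since $\Phi$ is a normal $*$-isomorphism, both are \emph{faithful} normal representations of the single abstract $w^*$-algebra $M_1$, hence quasi-equivalent. What has to be shown is precisely that such a $\rho_2$ is spatially equivalent to the induction, by a projection of central support $I$ in the commutant, of a suitable amplification of $\rho_1$, and in a way that implements $\Phi$ itself.

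First I would fix a separable, infinite-dimensional Hilbert space $\mathcal K_1$ and let $\Phi_1\colon M_1\to\widetilde M_1:=M_1\otimes I_{\mathcal K_1}\subset B(\mathcal H_1\otimes\mathcal K_1)$ be the amplification. The amplified representation $m\mapsto m\otimes I_{\mathcal K_1}$ has uniform, countably infinite multiplicity over its central support, and that central support equals $I$ because $\rho_1$ is faithful. The crucial step — and, I expect, the main obstacle — is to deduce from this that every separable normal representation of $M_1$, in particular $\rho_2$, occurs as a compression of $\widetilde\rho_1$ by a projection in $\widetilde M_1'$: that is, to produce an orthogonal projection $E'\in\widetilde M_1'$ and a unitary $W\colon E'(\mathcal H_1\otimes\mathcal K_1)\to\mathcal H_2$ with $W\,(m\otimes I_{\mathcal K_1})E'\,W^{*}=\Phi(m)$ for all $m\in M_1$. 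This is exactly the comparison theorem for normal representations: one transports $C(M_1)$ to the centre of $M_2$ through $\Phi$ and compares relative dimensions of projections in the two commutants, using that the commutant of an infinite amplification is ``large enough'' to house a spatial copy of any separable representation over every central projection. Faithfulness of $\rho_2$ then forces $\mathfrak s(E')=I$ — recall the remark made just before the statement that the induction $\mathfrak i_{E'}$ is an isomorphism if and only if $\mathfrak s(E')=I$. In the separable setting all of this is in Dixmier, Ch.~I, \S4.

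Granting $E'$ and $W$, the factorisation writes itself: let $\Phi_2=\mathfrak i_{E'}\colon\widetilde M_1\to\widehat M_1:=E'\widetilde M_1E'$ be the induction, an isomorphism because $\mathfrak s(E')=I$, and let $\Phi_3={\rm Ad}\,W\colon\widehat M_1\to M_2$, $x\mapsto WxW^{*}$, be the resulting spatial isomorphism. Then for every $m\in M_1$ one computes $\Phi_3\bigl(\Phi_2(\Phi_1(m))\bigr)=W\,\bigl((m\otimes I_{\mathcal K_1})E'\bigr)\,W^{*}=\Phi(m)$, i.e.\ $\Phi=\Phi_3\circ\Phi_2\circ\Phi_1$; and $\Phi_1,\Phi_2,\Phi_3$ are respectively an amplification, an induction and a spatial isomorphism by the definitions recalled before the statement, all three being normal.
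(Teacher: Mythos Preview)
The paper does not actually prove this proposition: it is stated with a citation to Dixmier's book and used as a black box in the proof of Proposition~\ref{quasi_equivalent_ind}. So there is no ``paper's own proof'' to compare against.

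Your sketch is the standard argument and is essentially correct. A couple of minor remarks. First, you fix $\mathcal K_1$ to be separable and infinite-dimensional; this suffices here because the paper works throughout with countable groups and separable Hilbert spaces, but in Dixmier's general statement one must take $\dim\mathcal K_1\ge\dim\mathcal H_2$ to guarantee that the amplified representation dominates $\rho_2$. Second, you rightly flag the comparison theorem for normal representations as the real content; strictly speaking your argument \emph{reduces} the proposition to that theorem rather than proving it, which is appropriate for a result quoted from a reference. The identification of $\mathfrak s(E')=I$ with faithfulness of the induction, and the final chain $\Phi_3\circ\Phi_2\circ\Phi_1=\Phi$, are correct.
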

\begin{Prop}\label{quasi_equivalent_ind}
Let $H$ be a subgroup of the countable group $G$, and let $\pi_1$ and $\pi_2$ be the quasi-equivalent unitary representations of  $H$. Then the representations $\Pi_1={\rm Ind}^G_H\,\pi_1$ and  $\Pi_2={\rm Ind}^G_H\,\pi_2$ are  quasi-equivalent.
\end{Prop}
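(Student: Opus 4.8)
The plan is to reduce the quasi-equivalence of $\pi_1$ and $\pi_2$, via Proposition \ref{isomorphism_structure}, to a chain of three elementary moves — ampliation, restriction to (compression by) a projection of full central support, and spatial isomorphism — and then to check that the functor ${\rm Ind}^G_H$ carries each of these to a quasi-equivalence. Throughout I realize an induced representation ${\rm Ind}^G_H\pi$ on $l^2(X,\mathcal K_\pi)$, where $X=G\slash H$ is the countable set of left cosets and $\mathcal K_\pi$ is the space of $\pi$, exactly as in the proof of Theorem \ref{factor_ind}: after fixing a section $\mathfrak s\colon X\to G$ the operators act by $\bigl({\rm Ind}^G_H\pi(g)v\bigr)(x)=c(g,x)\,v(g^{-1}x)$ with $c(g,x)=\pi\bigl(\mathfrak s(x)^{-1}g\,\mathfrak s(g^{-1}x)\bigr)\in\pi(H)$. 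Since quasi-equivalence (condition C) is the existence of an isomorphism of the generated $w^*$-algebras matching the group operators, it is an equivalence relation and is implied by unitary equivalence; I use this freely.

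I record three compatibilities. \textbf{(i) Ampliation.} The evident unitary $l^2(X,\mathcal K_\pi\otimes\mathcal K)\cong l^2(X,\mathcal K_\pi)\otimes\mathcal K$ intertwines ${\rm Ind}^G_H(\pi\otimes I_{\mathcal K})$ with $\bigl({\rm Ind}^G_H\pi\bigr)\otimes I_{\mathcal K}$, so inducing an ampliation of $\pi$ gives an ampliation of ${\rm Ind}^G_H\pi$, which is quasi-equivalent to ${\rm Ind}^G_H\pi$. \textbf{(ii) Compression.} For a projection $F'\in\pi(H)'$ put $(\widetilde{F'}v)(x)=F'v(x)$; as $F'$ commutes with every $c(g,x)\in\pi(H)$, the computation of (\ref{const_commutant}) shows $\widetilde{F'}\in\bigl({\rm Ind}^G_H\pi(G)\bigr)'$, its range is $l^2(X,F'\mathcal K_\pi)$, and on this range ${\rm Ind}^G_H\pi$ restricts to ${\rm Ind}^G_H(\pi^{F'})$ because $c(g,x)|_{F'\mathcal K_\pi}=F'\,\pi\bigl(\mathfrak s(x)^{-1}g\,\mathfrak s(g^{-1}x)\bigr)F'=\pi^{F'}\bigl(\mathfrak s(x)^{-1}g\,\mathfrak s(g^{-1}x)\bigr)$. \textbf{(iii) Spatial isomorphism.} If $W\colon\mathcal K_{\pi_1}\to\mathcal K_{\pi_2}$ is a unitary with $W\pi_1(h)W^{-1}=\pi_2(h)$ for all $h$, then $(\widetilde Wv)(x)=Wv(x)$ is a unitary intertwining ${\rm Ind}^G_H\pi_1$ and ${\rm Ind}^G_H\pi_2$.

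For (ii) it remains to show that $\widetilde{F'}$ has central support $I$ in ${\rm Ind}^G_H\pi(G)''$ — i.e. $\bigl[\bigl({\rm Ind}^G_H\pi(G)\bigr)'\widetilde{F'}\,l^2(X,\mathcal K_\pi)\bigr]=l^2(X,\mathcal K_\pi)$ — whenever $F'$ has central support $I$ in $\pi(H)''$, i.e. $[\pi(H)'F'\mathcal K_\pi]=\mathcal K_\pi$; then, by the discussion preceding Proposition \ref{isomorphism_structure}, the compression $\mathfrak i_{\widetilde{F'}}$ is an isomorphism and ${\rm Ind}^G_H(\pi^{F'})$ is quasi-equivalent to ${\rm Ind}^G_H\pi$. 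Now $\bigl({\rm Ind}^G_H\pi(G)\bigr)'$ contains the pointwise copy $\{\widetilde{m'}:m'\in\pi(H)'\}$ (same computation as for $\widetilde{F'}$), and for $v$ supported at a single coset $x_0$ with value $\eta$ one has $\widetilde{m'}\widetilde{F'}v$ supported at $x_0$ with value $m'F'\eta$; letting $m'$ and $x_0$ vary, these vectors span all finitely supported functions valued in $[\pi(H)'F'\mathcal K_\pi]=\mathcal K_\pi$, which is a dense subspace of $l^2(X,\mathcal K_\pi)$. This central-support claim is the only genuinely substantive point; everything else is bookkeeping of the section-wise realizations, and it uses only the countability of $X$ and the density of finitely supported functions.

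To finish: by condition C) there is an isomorphism $\Phi\colon\pi_1(H)''\to\pi_2(H)''$ with $\Phi(\pi_1(h))=\pi_2(h)$, and Proposition \ref{isomorphism_structure} factors it as $\Phi=\Phi_3\circ\Phi_2\circ\Phi_1$, with $\Phi_1$ an ampliation onto $\pi_1(H)''\otimes I_{\mathcal K}$, $\Phi_2$ a compression by a projection $E'$ of central support $I$, and $\Phi_3$ spatial. Translated to representations: the representation $\rho_1\colon h\mapsto\pi_1(h)\otimes I_{\mathcal K}$ is quasi-equivalent to $\pi_1$; its compression $\rho_1^{E'}$ is quasi-equivalent to $\rho_1$ (because $E'$ has full central support); and $\rho_1^{E'}$ is unitarily equivalent to $\pi_2$ via the unitary implementing $\Phi_3$. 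Applying ${\rm Ind}^G_H$ and invoking (i), (ii), (iii) in turn yields: ${\rm Ind}^G_H\rho_1$ is quasi-equivalent to $\Pi_1$; ${\rm Ind}^G_H(\rho_1^{E'})$ is quasi-equivalent to ${\rm Ind}^G_H\rho_1$; and $\Pi_2={\rm Ind}^G_H\pi_2$ is unitarily equivalent to ${\rm Ind}^G_H(\rho_1^{E'})$. Chaining these equivalences gives that $\Pi_1$ and $\Pi_2$ are quasi-equivalent.
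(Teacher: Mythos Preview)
Your proof is correct and follows essentially the same approach as the paper's: both factor the quasi-equivalence via Proposition~\ref{isomorphism_structure} into ampliation, compression by a projection of full central support, and spatial isomorphism, and then lift each piece pointwise over the coset space $X$ (your $\widetilde{F'}$, $\widetilde{m'}$, $\widetilde W$ are exactly the paper's $\widetilde{\mathcal E}'$, $\widetilde{m'}$, $\mathcal U$). Your treatment of the central-support step is in fact more explicit than the paper's, which simply asserts the analogous density without spelling out the finitely-supported-functions argument.
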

\begin{proof}
Let $\pi_1$ and  $\pi_2$ act in the Hilbert spaces $\mathcal{K}_1$ and $\mathcal{K}_2$, respectively. Set $M_i=\pi_i(H)''$, $i=1,2$.
Since  let $\pi_1$ and $\pi_2$ are quasi-equivalent, there exists a normal isomorphism $\Phi$ of $M_1$ onto $M_2$ such that
\begin{eqnarray}\label{isomorphism_Phi}
\Phi(\pi_1(g))=\pi_2(g)  \;~\text{ for all }~ g\in H.
\end{eqnarray}
It follows from proposition \ref{isomorphism_structure} that
\begin{eqnarray}\label{decomposition_of_Phi}
\Phi=\Phi_3\circ\Phi_2\circ\Phi_1,
\end{eqnarray}
 where $\Phi_1$ is ampliation, $\Phi_2$ is induction and $\Phi_3$ is some spatial isomorphism.

Denote by $X=G/H$ the space of left cosets of the subgroup $H$ in $G$. Define mapping  $X\ni x\stackrel{\mathfrak{s}}{\mapsto}\mathfrak{s}(x)\in G$ with the properties $x=\mathfrak{s}(x)\, H$ and $\mathfrak{s}\left(H\right)=e$, where $e$ is identity  of $G$. Denote by $\widetilde{g}$ the class in $X$ containing $g\in G$.

The operators of  representation $\Pi_i={\rm Ind}^G_H\pi_i$, $i=1,2$ act on $v\in l^2\left(\mathcal{K}_i,X\right)$ as follows
\begin{eqnarray}\label{G_H_ind_action}
\left(\Pi_i(g)v\right)(x)=c(g,x)v\left(g^{-1}x\right),
\end{eqnarray}
where $c(g,x)= \pi_i\left(\left( \mathfrak{s}(x) \right)^{-1}g\;\mathfrak{s}\left(g^{-1}x\right)\right)$. Notice that $\left( \mathfrak{s}(x) \right)^{-1}g\;\mathfrak{s}\left(g^{-1}x\right)$ lies in $H$.

By proposition \ref{isomorphism_structure}, we see that $\Phi_1(m)=m\otimes I_\mathcal{L}$, where $ I_\mathcal{L}$ is identity operator in some Hilbert space $\mathcal{L}$; i. e. $\widetilde{M}_1=M_1\otimes \mathbb{C}I_\mathcal{L}$. There exists an orthogonal projection $\widetilde{E}'\in \widetilde{M}'_1$ such that  $\Phi_2=\mathfrak{i}_{_{\widetilde{E}'}}$. Let $U:\widetilde{E}'\,(\mathcal{K}_1\otimes\mathcal{L})\mapsto \mathcal{K}_2$ be an isometry of $\widetilde{E}'\,(\mathcal{K}_1\otimes\mathcal{L})$ onto $\mathcal{K}_2$ such that
\begin{eqnarray}\label{spatial}
U(\pi_1(g)\otimes I_\mathcal{L})\widetilde{E}'\,U^{-1}=\pi_2(g) \;~\text{ for all }~ g\in H.
\end{eqnarray}

 Let us introduce the notation ${\mathcal{M}}_i$ for $\Pi_i(G)''$, $i=1,2$.

 Let operator $\breve{\Phi}_1\left( \Pi_1(g)\right)$ acts on $\breve{v}$ $\in l^2\left(\mathcal{K}_1\otimes \mathcal{L},X\right)=l^2\left(\mathcal{K}_1,X\right)\otimes\mathcal{L}$ as follows
 \begin{eqnarray}\label{Phi_1}
 \left( \breve{\Phi}_1\left( \Pi_1(g)\right)\breve{v}\right)(x)=\Phi_1(c(g,x))\breve{v}\left(g^{-1}x\right),
 \end{eqnarray}
 where $\Phi_1(c(g,x))=c(g,x)\otimes I_\mathcal{L}$.
  It is clear that $ \breve{\Phi}_1$ defines ampliation  of ${\mathcal{M}}_1$ onto $\widetilde{\mathcal{M}}_1={\mathcal{M}}_1\otimes\mathbb{C}I_\mathcal{L}$.

 Let us define orthogonal projection $\widetilde{\mathcal{E}}'\in \widetilde{\mathcal{M}}_1'$ as follows
 \begin{eqnarray}\label{induction_tilde}
 \left( \widetilde{\mathcal{E}}'\breve{v}\right)(x)=\widetilde{E}'\,\breve{v}(x), ~\text{ where }~ \breve{v}\in l^2\left(\mathcal{K}_1\otimes \mathcal{L},X\right).
 \end{eqnarray}
Since $\Phi$ is an isomorphism, the induction $\Phi_2=\mathfrak{i}_{_{\widetilde{E}'}}$ is an isomorphism too. Therefore, central support of $\widetilde{E}'$ is equal to $I_{_{\mathcal{K}_1\otimes \mathcal{L}}}$; i. e. $$\left[\widetilde{M}_1'\widetilde{E}'\left( \mathcal{K}_1\otimes \mathcal{L}\right) \right]= \mathcal{K}_1\otimes \mathcal{L}.$$
Hence, using \eqref{induction_tilde}, we obtain that
\begin{eqnarray*}
\left[\widetilde{\mathcal{M}}_1' \widetilde{\mathcal{E}}'\; l^2\left(\mathcal{K}_1\otimes \mathcal{L},X\right)\right]= l^2\left(\mathcal{K}_1\otimes \mathcal{L},X\right).
\end{eqnarray*}
Thus the induction $\breve{\Phi}_2=\mathfrak{i}_{_{\widetilde{\mathcal{E}}'}}$ is an isomorphism.

At last, define the isometry $\mathcal{U}:\widetilde{\mathcal{E}}'\,l^2\left(\mathcal{K}_1\otimes \mathcal{L},X\right)\mapsto l^2\left(\mathcal{K}_2,X \right)$ as follows
\begin{eqnarray}\label{U_isometry}
\left(\mathcal{U}\breve{v} \right)(x)=U\breve{v}(x),\; ~\text{ where }~ \breve{v}\in\widetilde{\mathcal{E}}' l^2\left(\mathcal{K}_1\otimes \mathcal{L},X\right).
\end{eqnarray}
 Let us introduce the spatial isomorphism $\breve{\Phi}_3: \widetilde{\mathcal{E}}' \widetilde{\mathcal{M}}_1\mapsto \mathcal{M}_2$ (see \eqref{spatial}) by $\breve{\Phi}_3(\widetilde{m})=\mathcal{U}\, \widetilde{m}\,\mathcal{U}^{-1}$, $\widetilde{m}\in \widetilde{\mathcal{E}}' \widetilde{\mathcal{M}}_1$. It follows from above that $\breve{\Phi}=\breve{\Phi}_3\circ\breve{\Phi}_2\circ\breve{\Phi}_1$ is an isomorphism of $\mathcal{M}_1$ onto $\mathcal{M}_2$ and the equality $\breve{\Phi}\circ\Phi_1=\Phi_2$ holds.
 \end{proof}
 \subsection{Asymptotic character of the stable factor representation.}
Here we give a canonical construction of a indecomposable character for a stable factor representation of $B$.
Let ${\rm Cl}(g)=\left\{ hgh^{-1}  \right\}_{h\in B}$ be the  conjugacy class of an element $g\in B$. We will denote by ${\rm Cl}_{n\infty}(g)$ the set ${\rm Cl}(g)\cap B_{n\infty}$.
\begin{Prop}\label{asymp_char}
Let $\pi$ be a stable factor-representation of $B$.
  Fix the sequence $g_n\in {\rm Cl}_{n\infty}(g)$. Then there exists $\lim\limits_{n\to\infty}\pi\left(g_n\right)$ in the weak operator topology. Since $\pi$ is factor-representation, we have $w-\lim\limits_{n\to\infty}\pi\left(g_n\right)=\chi_{_\pi}^{as}(g)\cdot {\rm I}_{\mathcal{H}_\pi}$, where ${\rm I}_{\mathcal{H}_\pi}$ stands for the identity operator on $\mathcal{H}_\pi$, and  $\chi_{_\pi}^{as}$ is positive definite function on  $B$ with the properties:
  \begin{itemize}
    \item {\it i}) $\chi_{_\pi}^{as}(g)$ does not depend on the choice of the sequence $\left\{ g_n  \right\}$;
    \item {\it ii}) $\chi_{_\pi}^{as}$ is central; i. e. $\chi_{_\pi}^{as}(gh)=\chi_{_\pi}^{as}(hg)$ for all $g,h\in B$;
    \item {\it iii}) $\chi_{_\pi}^{as}$ is an indecomposable character; i.e   $\chi_{_\pi}^{as}$ satisfies the relations (\ref{II_1-mult_general}) and (\ref{character_formula}).
  \end{itemize}
\end{Prop}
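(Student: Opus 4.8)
The plan is to reduce the existence of the weak limit $w\text{-}\lim_{n\to\infty}\pi(g_n)$ to the stability of $\pi$ via the tame representation $D$ of $B$ constructed in subsection 3.1 (Proposition \ref{tame_Prop}), exactly as in the proof of Theorem \ref{induction_theorem}. First I would observe that it suffices to prove the statement when $g$ is a quasi-cycle $g=cz$, because every element of $B$ is a product of quasi-cycles with pairwise disjoint supports, and one can arrange the conjugates $g_n$ to respect this decomposition; the multiplicativity property (\ref{II_1-mult_general}) then propagates from the quasi-cycle case to the general case. For a quasi-cycle, I would fix a realization where $g_n$ differs from $g_{n+1}$ only by shifting one point of the support out of $\overline{1,n}$ into $\overline{n+1,\infty}$, so that $g_{n+1}=s_n g_n s_n^{-1}$ with $s_n$ a transposition disjoint from $\{1,\dots,n\}\setminus\{\text{one point}\}$. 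The key estimate is then: for any $\eta\in\mathfrak{P}_\xi$ (the natural cone) and any $a\in M$, the scalar products $(\pi(g_n)a\eta,a\eta)$ form a Cauchy sequence. This follows because $\pi(g_{n+1})=\pi(s_n)\pi(g_n)\pi(s_n^{-1})$, so $(\pi(g_{n+1})\eta,\eta)-(\pi(g_n)\eta,\eta)=(\pi(g_n)D(s_n^{-1})\eta,D(s_n^{-1})\eta)-(\pi(g_n)\eta,\eta)$, and $\|D(s_n^{-1})\eta-\eta\|\to 0$ by tameness of $D$ (Proposition \ref{tame_Prop}), since $\mathrm{Ad}\,s_n^{-1}\to\mathrm{id}$ in the topology (\ref{topology}) when the support of $s_n$ recedes to infinity. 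Density of $M\xi$ in $\mathcal{H}_\pi$ together with uniform boundedness of $\|\pi(g_n)\|=1$ upgrades this to weak convergence of $\pi(g_n)$ to some operator $T_g\in B(\mathcal{H}_\pi)$.

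Next I would show $T_g$ lies in the center of $M$, hence is scalar since $\pi$ is a factor representation. For centrality: $T_g$ commutes with every $\pi(h)$, $h\in B$, because for $h$ fixed one may, after discarding finitely many terms, choose the conjugating permutations so that $\mathrm{supp}\,g_n\cap\mathrm{supp}\,h=\emptyset$ for all large $n$, whence $\pi(h)\pi(g_n)=\pi(g_n)\pi(h)$ and the relation passes to the limit; and $T_g$ commutes with $M'$ trivially because each $\pi(g_n)\in M$. So $T_g\in M\cap M'=\mathbb{C}I_{\mathcal{H}_\pi}$, giving $T_g=\chi_{_\pi}^{as}(g)I_{\mathcal{H}_\pi}$. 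Independence of the chosen sequence $\{g_n\}$ (property {\it i}) follows by interlacing two admissible sequences into a single admissible sequence whose limit must exist by the argument just given, forcing the two original limits to agree. Property {\it ii}, centrality of $\chi_{_\pi}^{as}$, is immediate: $gh$ and $hg$ are conjugate in $B$, so one may choose the same sequence of conjugates for both.

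For property {\it iii}, I would argue that $\chi_{_\pi}^{as}$ is a character, i.e. a normalized, central, positive definite function, and that it is indecomposable. Positive definiteness and normalization ($\chi_{_\pi}^{as}(e)=1$) are clear from $\chi_{_\pi}^{as}(h^{-1}g)I=w\text{-}\lim\pi(h^{-1})\pi(g_n)$ and the fact that a weak limit of matrix coefficients of a unitary representation is again of this form. For multiplicativity (\ref{II_1-mult_general}): if $g=g'g''$ with $\mathrm{supp}\,g'\cap\mathrm{supp}\,g''=\emptyset$, then choosing conjugates with disjoint recessing supports gives $\pi(g_n)=\pi(g'_n)\pi(g''_n)$ with $\pi(g'_n)\to\chi_{_\pi}^{as}(g')I$ and $\pi(g''_n)\to\chi_{_\pi}^{as}(g'')I$ weakly; since one factor converges to a scalar, the product of the weak limits equals the weak limit of the products, yielding $\chi_{_\pi}^{as}(g'g'')=\chi_{_\pi}^{as}(g')\chi_{_\pi}^{as}(g'')$. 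Multiplicativity plus the known classification of indecomposable characters of $B$ from \cite{DN} (recalled in formula (\ref{character_formula})) then forces $\chi_{_\pi}^{as}=\chi_{\alpha\beta\gamma}^\sigma$ for suitable parameters, which is exactly the content of (\ref{character_formula}); one still checks that a central, multiplicative, positive definite, normalized function restricted to $\mathfrak{S}_\infty$ is a Thoma character, and the extra $\mathbb{Z}_2$-data produces $\gamma$ and $\sigma$ by evaluating on the quasi-cycles $\zeta^{(n)}$ and on cycles-with-signs.

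\medskip
\noindent\textbf{Main obstacle.} The delicate point is the existence of the weak limit, specifically verifying that weak convergence of $\pi(g_n)$ tested on the total set $M\xi$ is genuinely available — this requires using the natural cone $\mathfrak{P}_\xi$ and the norm estimate (\ref{inequality_vector_state}) to convert the hypothesis ``$\pi$ stable'' into ``$D$ tame on $\mathfrak{P}_\xi$,'' and then decomposing arbitrary vectors $a\xi$ via (\ref{Cone_decom}). The genuinely new work beyond Theorem \ref{induction_theorem} is the passage from convergence along the special ``recede-by-one-transposition'' sequences to convergence along an arbitrary sequence $g_n\in\mathrm{Cl}_{n\infty}(g)$, which needs a careful interlacing/cofinality argument showing any two admissible sequences are subsequences of a common admissible one; and the verification that the resulting scalar function satisfies (\ref{character_formula}) rather than merely (\ref{II_1-mult_general}), which relies on importing the indecomposable-character classification from \cite{DN}.
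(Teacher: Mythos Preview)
Your overall strategy is workable but takes a longer route than the paper, and there is one genuine gap.

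\textbf{Comparison with the paper.} The paper does not pass through the tame representation $D$, the natural cone $\mathfrak{P}_\xi$, or any reduction to quasi-cycles. It uses Definition~\ref{stable_Def} directly: for every $\eta$ the map $\theta\mapsto\omega_\eta\circ\theta$ is norm-continuous on $\mathrm{Ad}\,B$, and since every $b_n\in B_{n\infty}$ fixes any given finite-support element once $n$ is large, one obtains the \emph{uniform} estimate
\[
\bigl|\,\omega_\eta\bigl(\pi(b_n)\,x\,\pi(b_n)^*\bigr)-\omega_\eta(x)\,\bigr|<\varepsilon_n(\eta)\,\|x\|\qquad\text{for all }b_n\in B_{n\infty},
\]
with $\varepsilon_n(\eta)\to 0$. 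Taking $x=\pi(g_n)$ and choosing $b_n\in B_{n\infty}$ with $b_ng_nb_n^{-1}=g_N$ (which exists whenever $N>n$) shows immediately that $\{(\pi(g_n)\eta,\eta)\}$ is Cauchy for every $\eta$ and every admissible sequence, with no special ``recede-by-one'' construction and no interlacing argument. Your identity $(\pi(g_{n+1})\eta,\eta)=(\pi(g_n)D(s_n^{-1})\eta,D(s_n^{-1})\eta)$ is correct (the $J\pi(s)J\in M'$ factors cancel), so your route does work, but it is heavier than needed; and your reduction to quasi-cycles is circular, since it presupposes the product behaviour that is part of~{\it iii)}.

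\textbf{The gap.} In the multiplicativity argument you assert that if $\pi(g'_n)\to c'I$ and $\pi(g''_n)\to c''I$ weakly then $\pi(g'_n)\pi(g''_n)\to c'c''I$ weakly, ``since one factor converges to a scalar.'' This is not a valid general principle: weak convergence of unitaries to a scalar of modulus $<1$ does not imply strong convergence, and products of weakly convergent (even commuting) unitaries need not converge weakly---the bilateral shift $V$ satisfies $V^n\to 0$, $V^{-n}\to 0$ weakly while $V^nV^{-n}=I$. The paper repairs this by taking \emph{iterated} limits: with $g_n$ held fixed, it conjugates $h_n$ further out by elements ${}^n\!s_p\in B_{n\infty}$ commuting with $g_n$, invokes the uniform stability estimate to bound the resulting perturbation by $\varepsilon_n(\eta)$, lets $p\to\infty$ so that $\pi({}^n\!s_ph_n{}^n\!s_p^{-1})\to\chi_\pi^{as}(h)I$ weakly against the \emph{fixed} functional $\omega_\eta(\pi(g_n)\,\cdot\,)$, and only then lets $n\to\infty$. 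Your argument can be salvaged by the same two-step limit, but as written the key step is unjustified.
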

  We will call $\chi_{_\pi}^{as}$ as an {\it asymptotical character} of the representation $\pi$.
  \begin{proof}[Proof of proposition \ref{asymp_char}]
  For convenience we assume that $\pi$ acts in Hilbert space $\mathcal{H}_\pi$ with the cyclic and separating  vector $\xi$; i. e. $\left[\pi(B)\xi \right]=\left[\pi(B)'\xi \right]=\mathcal{H}_\pi$.
  Take two sequences $\{g_n\}$, $\{h_n\}$ such that $g_n,h_n\in {\rm Cl}_{n\infty}(g)$.  Since $\pi$ is stable representation (see Definition \ref{stable_Def}), we have
  \begin{eqnarray}\label{norm_estimate}
  \left| \omega_\eta\left( \pi(b_n)x\pi(b_n)^*\right)-\omega_\eta(x)\right|<\epsilon_n(\eta)\|x\|,\;~\text{ where }~ x\in \pi(B)''
    \end{eqnarray}
    and $\lim\limits_{n\to\infty}\epsilon_n(\eta)=0$ for all $\eta\in\mathcal{H}_\pi$, $b_n\in B_{n\infty}$.  If $N>n$ then there exists $b_n\in B_{n\infty}$ such that $g_{_N}=b_ng_nb_n^{-1}$. Hence, applying \eqref{norm_estimate}, we have
      \begin{eqnarray*}
  \left| \omega_\eta\left( \pi(g_n)\right)-\omega_\eta(\pi(g_{_N}))\right|<\epsilon_n(\eta).
    \end{eqnarray*}
    Therefore, a sequence  $\pi(g_n)$ converges to operator $A\in\pi(B)''$ in weak operator topology. Since $\pi$ is factor-representation, we obtain that $A=\chi_{_{\pi}}(g)^{as}\cdot {\rm I}_{\mathcal{H}_\pi}$, where $\chi_{_{\pi}}^{as}(g)\in\mathbb{C}$. Further we note that there exists $b_n\in B_{n\infty}$ such that $h_n=b_ng_nb_n^{-1}$. Hence, using \eqref{norm_estimate}, we have
    \begin{eqnarray*}
    \left| \omega_\eta\left(\pi(h_n)\right)-\omega_\eta(\pi(g_n))\right|<\epsilon_n(\eta).
    \end{eqnarray*}
  This gives {\it i)}. To prove {\it ii)} it is sufficient to note  that ${\rm Cl}_{n\infty}(gh)={\rm Cl}_{n\infty}(hg)$.

  Let us prove {\it iii)}. In this case it suffices to show that $\chi_{_\pi}^{as}$ has a multiplicativity property \cite{DN}; i.e. the equality $\chi_{_{\pi}}^{as}(gh)=\chi_{_{\pi}}^{as}(g)\,\chi_{_{\pi}}^{as}(h)$ holds for all $g,h\in B$ such that ${\rm supp}\,g\;\cap\; {\rm supp}\,h=\emptyset$. Suppose that
  \begin{eqnarray}
  {\rm supp}\,g\;\cup\; {\rm supp}\,h\;\subset\;\{1,2,\ldots, K\}.
  \end{eqnarray}
  Set $n>K$, $t_n=\prod\limits_{i=1}^K(i\;\, i+n)$, $g_n=t_ngt_n^{-1}$,  $h_n=t_nht_n^{-1}$ and $(gh)_n=t_nght_n^{-1}$.  It is
clear that $g_n\in{\rm Cl}_{n\infty}(g), h_n\in{\rm Cl}_{n\infty}(h)$ and $g_nh_n=(gh)_n\in {\rm Cl}_{n\infty}(gh)$. Take sequence $\left\{\,^n\!s_p=\prod\limits_{j\in{\rm supp}\,h_n}(j\;\,j+pK)\right\}\subset B_{n\infty}$. Then $\,^n\!s_p\,g_n(\,^n\!s_p^{-1})$
 and, by \eqref{norm_estimate}, we have
  \begin{eqnarray*}
 \left| \omega_\eta\left( \pi(^n\!s_p)\pi(g_n)\pi(h_n)\pi(^n\!s_p^{-1})\right)-\omega_\eta(\pi(g_n)\,\pi(h_n))\right|<\epsilon_n(\eta).
\end{eqnarray*}
Hence, using definition of $\,^n\!s_p$, we conclude
\begin{eqnarray*}
\left| \omega_\eta\left( \pi(g_n)\pi(^n\!s_p)\pi(h_n)\pi(^n\!s_p^{-1})\right)-\omega_\eta(\pi(g_n)\,\pi(h_n))\right|<\epsilon_n(\eta).
\end{eqnarray*}
Since $\,^n\!s_p\,h_n\,^n\!s_p^{-1}\,\in  {\rm Cl}_{(n+Kp)\infty}(h)$, it follows from above that
\begin{eqnarray*}
\lim\limits_{p\to\infty} \left| \omega_\eta\left( \pi(g_n)\Pi(^n\!s_p)\pi(h_n)\pi(^n\!s_p^{-1})\right)-\omega_\eta(\pi(g_n)\,\pi(h_n))\right|\\
=\left| \chi_{_\pi}^{as}(h)\omega_\eta\left( \pi(g_n))\right)-\omega_\eta(\pi((gh)_n))\right|<\epsilon_n(\eta).
\end{eqnarray*}
After passing to the limit $n\to\infty$, we obtain
$
\left|\chi_{_\pi}^{as}(g)\, \chi_{_\pi}^{as}(h)-\chi_{_\pi}^{as}(gh)\right|=0.
$  This gives {\it iii)}.
\end{proof}

\subsection{The asymptotical transposition.}
Let $\pi$ be the unitary stable representation of group $B$ in Hilbert space $\mathcal{H}_{_\pi}$.
An important characteristic of the stable representations is the existence of the weak limits $\lim\limits_{k\to\infty}\pi((n\;\,k))$. These operators were introduced by A. Okounkov in \cite{Ok1},\cite{Ok2} when studying  of the finite type factor - representation of the infinite symmetric group $\mathfrak{S}\infty$ and admissible representation of the pair $(\mathfrak{S}_\infty\times\mathfrak{S}_\infty,{\rm diag}(\mathfrak{S}_\infty)$, where ${\rm diag}(\mathfrak{S}_\infty))$ $= \left\{ (s,s)\right\}_{s\in\mathfrak{S}_\infty}$. In \cite{DN} this operators were used in the classification of a finite characters on infinite wreath product. The following statement is the generalisation of Lemma 17 from \cite{DN} on the case of the stable representations of group $B$.
\begin{Lm}\label{o_transp}
The sequence $\{\pi((n\,\;k))\}_{k=1}^\infty$ converges to operator $\mathcal{O}_n=\lim\limits_{k\to\infty}\pi((n\,\;k))$ in the weak operator topology. $w^*$-algebra $\mathfrak{A}$, generated by $\pi\left( \;_0\mathbb{Z}_2^\infty\right)$ and $\left\{ \mathcal{O}_n\right\}_{n=1}^\infty$ is abelian. The equality $\pi(s)\, \mathcal{O}_j\,\pi(s^{-1})=\mathcal{O}_{s(j)}$ holds for all $s\in\mathfrak{S}_\infty$ and $j\in \left\{1,2,\ldots \right\}$.
\end{Lm}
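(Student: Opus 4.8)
The statement breaks into three parts, which I would prove in order, all of them resting on one consequence of stability. As in the proof of Proposition~\ref{asymp_char}, for $\eta\in\mathcal{H}_\pi$ set $\epsilon_N(\eta)=\sup\{\|\omega_\eta\circ{\rm Ad}\,b-\omega_\eta\|:b\in B_{N\infty}\}$; a basic neighbourhood of the identity in ${\rm Ad}\,B$ is cut out by finitely many constraints $\theta(g)=g$, and once $N$ exceeds the supports of those $g$ every $b\in B_{N\infty}$ fixes them, so stability gives $\epsilon_N(\eta)\to0$. Hence $|\omega_\eta(\pi(b\,x\,b^{-1}))-\omega_\eta(\pi(x))|\le\epsilon_N(\eta)$ for all $x\in B$ and $b\in B_{N\infty}$, and the same inequality persists when $\pi(x)$ is replaced by a weak limit of such operators.

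The first two parts are then immediate. For $n<k<k'$ the transposition $(k\;k')$ lies in $B_{(k-1)\infty}$ and conjugates $(n\;k)$ onto $(n\;k')$, so $|\omega_\eta(\pi((n\;k)))-\omega_\eta(\pi((n\;k')))|\le\epsilon_{k-1}(\eta)\to0$; thus $(\omega_\eta(\pi((n\;k))))_k$ is a Cauchy sequence and, by polarisation, $\pi((n\;k))\to\mathcal{O}_n$ in the weak operator topology, where $\mathcal{O}_n$ is a self-adjoint element of $\pi(B)''$ of norm $\le1$ (each $\pi((n\;k))$ being a self-adjoint unitary). And from $s\,(j\;k)\,s^{-1}=(s(j)\;s(k))$ together with $s(k)=k$ for $k>\max\,{\rm supp}\,s$, letting $k\to\infty$ in $\pi(s)\pi((j\;k))\pi(s^{-1})=\pi((s(j)\;k))$ — left and right multiplication by a fixed operator being weakly continuous — gives $\pi(s)\mathcal{O}_j\pi(s^{-1})=\mathcal{O}_{s(j)}$.

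For the abelianness, write $A_m=\pi(1^{(m)})$, a self-adjoint unitary; the $A_m$ commute because $\;_0\mathbb{Z}_2^\infty$ is abelian, and for $l\ne k$ the operators $\pi((k\;j))$ with $j$ large, $j\ne l$, commute with $A_l$ and with $\mathcal{O}_l$ (because ${\rm Ad}\,(k\;j)$ fixes $1^{(l)}$ and $(k\;j)$ commutes with $(l\;m)$ for $m$ large), so passing to the limit in $j$ yields $\mathcal{O}_kA_l=A_l\mathcal{O}_k$ and $\mathcal{O}_k\mathcal{O}_l=\mathcal{O}_l\mathcal{O}_k$. Everything therefore comes down to the single relation $\mathcal{O}_kA_k=A_k\mathcal{O}_k$, the analogue for stable representations of Lemma~17 of \cite{DN}. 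The most economical route is to reduce to the finite type case: by Theorem~\ref{induction_theorem} and Corollary~\ref{collora_of_ind_Th} we may take $\pi={\rm Ind}^B_{B_\infty^{(n)}}\,^n\!\pi$ with $n={\rm cd}\,\pi$ and $\,^n\!\pi$ a finite type factor-representation of $B_\infty^{(n)}=B_nB_{n\infty}$ realised on $l^2(L^2(\,^n\!M,\,^n\!{\rm tr}),X)$; in this model, exactly as in the proofs of Theorems~\ref{II_infty_case} and \ref{alpha+beta=1}, one computes $\mathcal{O}_m$ fibrewise — through $\,^n\!\mathcal{O}_m=w-\lim_j\,^n\!\pi((m\;j))$ when $m>n$, and through a fibre permutation combined with the $\,^n\!\mathcal{O}_r$ when $m\le n$ — and sees that $A_m$ decomposes along the fibres as well, so that $\mathcal{O}_kA_k=A_k\mathcal{O}_k$ follows from the commutation relations of $\,^n\!\mathcal{O}_r$ with $\,^n\!\pi(1^{(r)})$, valid because $\,^n\!\pi|_{B_{n\infty}}\cong\pi_{\alpha\beta\gamma}^\sigma$ is of finite type and Lemma~\ref{lemma_commutative} applies to it. One may also argue directly in the spirit of \cite{DN}: by $(k\;j)1^{(k)}=1^{(j)}(k\;j)$, $(k\;j)1^{(j)}=1^{(k)}(k\;j)$ and $\pi((k\;j))\to\mathcal{O}_k$, the relation is equivalent, on the dense set of vectors $\pi(g)\xi$, $\pi(h)\xi$, to
\[
\lim_{j\to\infty}\omega_\xi\big(\pi(h^{-1}1^{(j)}(k\;j)g)\big)=\lim_{j\to\infty}\omega_\xi\big(\pi(h^{-1}1^{(k)}(k\;j)g)\big);
\]
for $j$ larger than ${\rm supp}\,g\cup{\rm supp}\,h$ these two group elements have the same permutation part, and they differ only in whether the sign flip sits at $k$ or at $j$ — two positions lying in the same cycle of that permutation — so the multiset of pairs (cycle length, parity of the sign-sum over the cycle) is the same for both, and they are conjugate in $B$; converting this conjugacy into a chain of conjugacies by elements of $B_{N\infty}$ with $N\to\infty$ — which is where Proposition~\ref{asymp_char} and the stability estimate enter — forces the two limits to coincide.

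The only genuine obstacle is the relation $\mathcal{O}_kA_k=A_k\mathcal{O}_k$. All the other assertions are routine weak-limit arguments from stability and group relations; here, by contrast, the naive manipulations merely reproduce the identity one is trying to prove (the two weakly convergent sequences they yield are literally $\langle\mathcal{O}_kA_k\cdot,\cdot\rangle$ and $\langle A_k\mathcal{O}_k\cdot,\cdot\rangle$), so one must bring in either the reduction to the finite type case or the conjugacy/cycle-type invariance above, together with stability, in order to break the circle.
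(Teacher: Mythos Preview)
Your treatment of the convergence of $\pi((n\;k))$, of the conjugation formula, and of the commutation relations $\mathcal{O}_k\mathcal{O}_l=\mathcal{O}_l\mathcal{O}_k$ and $\mathcal{O}_kA_l=A_l\mathcal{O}_k$ for $k\ne l$ is fine and matches the paper.

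Where you diverge is in the relation $\mathcal{O}_kA_k=A_k\mathcal{O}_k$, and here your diagnosis is wrong. You assert that ``the naive manipulations merely reproduce the identity one is trying to prove'' and therefore bring in either the full structure theorem (Theorem~\ref{induction_theorem}) or a conjugacy-class argument. In fact a direct computation works, and this is exactly what the paper does. The point you are missing is that the stability estimate should be applied to conjugation by $1^{(j)}$, not by a transposition. Concretely:
\[
\omega_\eta(\mathcal{O}_mA_m)=\lim_{j}\omega_\eta\big(\pi((m\;j)1^{(m)})\big)
=\lim_{j}\omega_\eta\big(\pi(1^{(j)}(m\;j))\big),
\]
using $(m\;j)1^{(m)}=1^{(j)}(m\;j)$. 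Now ${\rm Ad}\,1^{(j)}\to{\rm id}$, so stability gives
\[
\lim_{j}\omega_\eta\big(\pi(1^{(j)}(m\;j))\big)
=\lim_{j}\omega_\eta\big(\pi(1^{(j)})^{-1}\pi(1^{(j)}(m\;j))\pi(1^{(j)})\big)
=\lim_{j}\omega_\eta\big(\pi((m\;j)1^{(j)})\big),
\]
and $(m\;j)1^{(j)}=1^{(m)}(m\;j)$ yields $\omega_\eta(A_m\mathcal{O}_m)$. That is the whole argument.

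As for your two alternatives: Route~1 is logically available in the paper's ordering but is heavy machinery for a two-line identity, and it also tacitly assumes $\pi$ is a factor-representation, which the lemma as stated does not. Route~2 is not justified as written. Even granting that $h^{-1}1^{(j)}(k\;j)g$ and $h^{-1}1^{(k)}(k\;j)g$ lie in the same conjugacy class of $B$ (which itself requires checking that $k$ and $j$ fall in the same cycle of $h^{-1}(k\;j)g$), conjugacy in $B$ does not give equal values of $\omega_\xi$, and your phrase ``converting this conjugacy into a chain of conjugacies by elements of $B_{N\infty}$ with $N\to\infty$'' is a hope rather than an argument: the conjugating element will typically involve the fixed index $k$ and cannot be pushed to infinity. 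Proposition~\ref{asymp_char} does not help here because the elements in question are not themselves going to infinity.
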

\begin{proof}
It follows from definition \ref{stable_Def} that $\left\{ \left(\pi((n\;\,k))\eta,\eta \right)\right\}_{k=1}^\infty$ is Cauchy sequence for each $\eta\in\mathcal{H}_{_\Pi}$. Therefore, the sequence $\left\{ \pi((n\;\,k))\right\}_{k=1}^\infty$ converges to operator $\mathcal{O}_n$ in the weak operator topology. The equality $\mathcal{O}_j\mathcal{O}_l=\mathcal{O}_l\mathcal{O}_j$ follows from definition $\mathcal{O}_k$.

In the case $m\neq l$ the equality $\pi\left(1^{(m)}\right)\,\mathcal{O}_l=\mathcal{O}_l\,\pi\left(1^{(m)}\right)$, where       $1^{(m)}=(\underbrace{0,0,\ldots,0}_{m-1},1,0,\ldots)\in \;_0\mathbb{Z}_2^\infty$, follows from definition of $\mathcal{O}_l$. If $m=l$ then
\begin{eqnarray*}
\left(\mathcal{O}_m \,\pi\left(1^{(m)}\right) \,\eta,\eta\right)
=\lim\limits_{j\to\infty}\left( \pi((m\;\,j))\,\pi\left(1^{(m)}\right)\,\eta,\eta\right)\\
\lim\limits_{j\to\infty}\left( \pi\left(1^{(j)}\right)\pi((m\;\,j))\,\eta,\eta\right).
\end{eqnarray*}
Hence, by stability of $\pi$, we have
\begin{eqnarray*}
\left(\mathcal{O}_m \,\pi\left(1^{(m)}\right) \,\eta,\eta\right)=\lim\limits_{j\to\infty}\left( \pi\left(1^{(j)}\right)\pi((m\;\,j))\, \pi\left(1^{(j)}\right)\, \pi\left(1^{(j)}\right)\eta,\eta\right)\\
\stackrel{stability}{=} \lim\limits_{j\to\infty}\left(\pi((m\;\,j))\, \pi\left(1^{(j)}\right)\,\eta,\eta\right)= \lim\limits_{j\to\infty}\left(\pi\left(1^{(m)}\right)\,\pi((m\;\,j))\, \eta,\eta\right)\\
=\left(\pi\left(1^{(m)}\right) \,\mathcal{O}_m\,\eta,\eta\right)\; ~\text{ for all }~ \;\eta\in \mathcal{H}_{_\pi}.
\end{eqnarray*}
\end{proof}
\begin{Lm}\label{as_trans_j_leq_n}
Let $\pi$, $n$ be a stable factor-representation of $B$, and let $E$ be the same as in Corollary \ref{collora_of_ind_Th}.  Then $\mathcal{O}_j\,E=0$ for all $j\in\{1,2,\ldots, n={\rm cd}\,\pi\}$.
\end{Lm}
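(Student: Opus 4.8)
The plan is to use Corollary \ref{collora_of_ind_Th} together with part {\bf b}) of Theorem \ref{induction_theorem}. Writing $n = {\rm cd}\,\pi$, the projection $E = {\rm supp}\,\varphi$ of the corollary is the support of a functional $\varphi \in M_*^+(n)$, so Theorem \ref{induction_theorem}{\bf b}) gives $E\,\pi(s)\,E = 0$ for every $s \notin B_n\cdot B_{n\infty} = B_\infty^{(n)}$; equivalently, for $g, g' \in B$ the subspaces $\pi(g)E\mathcal{H}_\pi$ and $\pi(g')E\mathcal{H}_\pi$ are orthogonal whenever $gB_\infty^{(n)} \ne g'B_\infty^{(n)}$. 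This is the only input about $E$ I would need.

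Next I would fix $j \le n$ and run the following combinatorial check. For every $k > n$ the transposition $(j\;k)$ does not belong to $B_\infty^{(n)} = B_n\cdot B_{n\infty}$, since as a permutation it maps $j \in \{1,\dots,n\}$ to $k \notin \{1,\dots,n\}$, whereas every element of $B_n\cdot B_{n\infty}$ preserves $\{1,\dots,n\}$. Hence $E\,\pi((j\;k))\,E = 0$. Moreover, for $n < k < k'$ one has $(j\;k')(j\;k) = (j\;k\;k')$, a $3$-cycle which again moves $j$ out of $\{1,\dots,n\}$ and so is not in $B_\infty^{(n)}$; consequently
\[
\langle \pi((j\;k))E\eta,\ \pi((j\;k'))E\zeta\rangle = \langle E\,\pi\big((j\;k')(j\;k)\big)\,E\,\eta,\ \zeta\rangle = 0
\]
for all $\eta,\zeta \in \mathcal{H}_\pi$, so the subspaces $\{\pi((j\;k))E\mathcal{H}_\pi : k > n\}$ are pairwise orthogonal.

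Finally, by Lemma \ref{o_transp} we have $\mathcal{O}_j E\eta = w\text{-}\lim_{k\to\infty}\pi((j\;k))E\eta$ for each $\eta$. The vectors $\pi((j\;k))E\eta$, $k > n$, are bounded in norm by $\|\eta\|$ and lie in pairwise orthogonal subspaces; a norm-bounded sequence of vectors taken from pairwise orthogonal closed subspaces tends weakly to $0$ (for any $w$, Bessel's inequality for the orthogonal projections $P_k$ onto these subspaces forces $\|P_k w\| \to 0$, hence $\langle \pi((j\;k))E\eta, w\rangle \to 0$). Therefore $\mathcal{O}_j E\eta = 0$ for all $\eta$, i.e. $\mathcal{O}_j E = 0$. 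There is no genuine obstacle here; the only points requiring care are the verification that the transpositions $(j\;k)$ and the $3$-cycles $(j\;k\;k')$ really leave $B_\infty^{(n)}$, and the legitimacy of applying Theorem \ref{induction_theorem}{\bf b}) to $\varphi$, which is guaranteed by Corollary \ref{collora_of_ind_Th}.
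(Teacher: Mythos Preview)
Your argument is correct and coincides with the paper's second proof: both rest on the fact that for $j\le n$ and distinct $k,k'>n$ the product $(j\;k')(j\;k)$ lies outside $B_\infty^{(n)}$, so that $E\pi((j\;k'))\pi((j\;k))E=0$. The paper then takes the iterated weak limit to get $E\mathcal{O}_j^2E=(\mathcal{O}_jE)^*(\mathcal{O}_jE)=0$, whereas you read the same vanishing as pairwise orthogonality of the subspaces $\pi((j\;k))E\mathcal{H}_\pi$ and conclude via Bessel's inequality that $\pi((j\;k))E\eta\to 0$ weakly; these are two phrasings of the same step. (The paper also gives an alternative first proof using the coset decomposition $I=\sum_{x\in X}\pi(\mathfrak{s}(x))E\pi(\mathfrak{s}(x))^*$, which your route does not require.)
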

\begin{proof}[First proof.]
If $\{\mathfrak{s}(x)\}_{x\in X}$ is coset representatives of left coset $X=B/{B_\infty^{(n)}}$ then
by  Corollary \ref{collora_of_ind_Th}
$
\sum\limits_{x\in X}\,\pi(\mathfrak{s}(x))\,E\,\pi\left( \mathfrak{s}(x)\right)^* =I.
$
Thus is sufficient to prove that
\begin{eqnarray}
E\,\pi(\mathfrak{s}(x))\,\mathcal{O}_j\,E=0\;~\text{ for all }~ x\in X,\; j\in\{1,2,\ldots, n={\rm cd}\,\pi\}.
\end{eqnarray}
But $\mathfrak{s}(x)\,(j\;\; m)\notin B_n\,B_{n\infty}=B_\infty^{(n)}$ for all $m\in\{n+1,n+2,\ldots\}\setminus ({\rm supp}\,\mathfrak{s}(x))$. Hence, using Corollary \ref{collora_of_ind_Th}, we have $E\,\pi(\mathfrak{s}(x)\,\pi((j\;\;m))\,E=0$. Now our lemma follows from definition of $\mathcal{O}_j$. \qed

\noindent{\it Second proof.} If $j\leq n$ and $n<m_1<m_2$ then $(j\;\;m_1)(j\;\;m_2)\notin B_\infty^{(n)}$. Hence, applying Corollary \ref{collora_of_ind_Th}, we obtain
\begin{eqnarray*}
E\pi((j\;\;m_1)(j\;\;m_2))E=0.
\end{eqnarray*}
Therefore, $0=w-\lim\limits_{m_1\to\infty}\lim\limits_{m_2\to\infty}E\pi((j\;\;m_1)(j\;\;m_2))E=E\mathcal{O}_j^2\,E=E\mathcal{O}_j\,\left( E\mathcal{O}_j\right)^*$.
\end{proof}
\begin{Co}
Since $\mathcal{O}_j=\mathcal{O}_j^*$ for all $j$, according to Lemma \ref{as_trans_j_leq_n} and Corollary \ref{collora_of_ind_Th}, we have
$\mathcal{O}_j\,E=E\,\mathcal{O}_j$ for all $j$.
\end{Co}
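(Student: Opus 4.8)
The plan is to treat the two ranges $j \le n$ and $j > n$ separately, where $n = {\rm cd}\,\pi$, and in each case reduce to a fact already in hand: Lemma~\ref{as_trans_j_leq_n} for the small indices, and the commutation relation $E \in M \cap \pi\!\left(B_\infty^{(n)}\right)'$ from Corollary~\ref{collora_of_ind_Th} for the large ones.

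First I would handle $j > n$. Here the key observation is purely combinatorial: if $k > n$ and $k \ne j$, then the transposition $(j\;k)$ fixes every element of $\{1,2,\ldots,n\}$, so $(j\;k) \in \mathfrak{S}_{n\infty} \subset B_{n\infty} \subset B_\infty^{(n)}$. By Corollary~\ref{collora_of_ind_Th} the projection $E$ lies in $\pi\!\left(B_\infty^{(n)}\right)'$, hence $\pi((j\;k))\,E = E\,\pi((j\;k))$ for all such $k$. Since, by Lemma~\ref{o_transp}, $\mathcal{O}_j = w-\lim\limits_{k\to\infty}\pi((j\;k))$ in the weak operator topology, and left (respectively right) multiplication by the fixed bounded operator $E$ is weakly continuous, I would pass to the limit along $k \to \infty$ (restricted to $k > n$, $k \ne j$) to conclude $\mathcal{O}_j E = E \mathcal{O}_j$.

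Next I would handle $j \le n$. Lemma~\ref{as_trans_j_leq_n} already gives $\mathcal{O}_j E = 0$. Each $\pi((j\;k))$ is a self-adjoint unitary, since $(j\;k)^{-1} = (j\;k)$, so its weak limit $\mathcal{O}_j$ is self-adjoint; and $E$ is an orthogonal projection, hence self-adjoint as well. Taking adjoints in $\mathcal{O}_j E = 0$ therefore yields $E\,\mathcal{O}_j = E^*\mathcal{O}_j^* = (\mathcal{O}_j E)^* = 0$, so in this range $\mathcal{O}_j E = 0 = E\mathcal{O}_j$ and the two operators commute trivially. Combining the two cases gives $\mathcal{O}_j E = E\mathcal{O}_j$ for every $j$.

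I do not expect any real obstacle: the argument is essentially two lines once the right facts are cited. The only point deserving a word of care is the reduction in the case $j > n$ — one must notice that it is precisely the membership $(j\;k) \in B_\infty^{(n)}$ for large $k$ that licenses the use of Corollary~\ref{collora_of_ind_Th} — together with the elementary observation that $\mathcal{O}_j = \mathcal{O}_j^*$, which is what closes the case $j \le n$.
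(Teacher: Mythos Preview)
Your proposal is correct and is precisely the argument the paper has in mind: the Corollary is stated in the paper without a separate proof, the reasoning being exactly the two-case split you give --- self-adjointness plus Lemma~\ref{as_trans_j_leq_n} for $j\le n$, and $E\in\pi\!\left(B_\infty^{(n)}\right)'$ from Corollary~\ref{collora_of_ind_Th} (together with the weak limit) for $j>n$.
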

By Proposition \ref{asymp_char},  there exist parameters $\alpha,\beta,\gamma,\sigma$ such that
$\chi_{\pi}^{as}=\chi_{\alpha\beta\gamma}^\sigma$, where $\chi_{\alpha\beta\gamma}^\sigma$ is defined by \ref{character_formula}.
\begin{Lm}\label{E=F}
Let $\pi$ and $E$ be the same as in Lemma \ref{as_trans_j_leq_n}, and let $F$ be an orthogonal projection onto subspace $\bigcap\limits_{j=1}^n\,{\rm Ker}\,\mathcal{O}_j$.  If $\gamma_0=\gamma_1=0$ then
$E=F$.
\end{Lm}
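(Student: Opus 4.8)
One inclusion is essentially free. By Lemma \ref{as_trans_j_leq_n} we have $\mathcal{O}_jE=0$ for every $j\in\{1,\dots,n\}$ with $n={\rm cd}\,\pi$, so if $Ev=v$ then $\mathcal{O}_jv=\mathcal{O}_jEv=0$, hence $v\in\bigcap_{j=1}^n{\rm Ker}\,\mathcal{O}_j={\rm Ran}\,F$. Thus $E\le F$, and the whole point of the lemma is the reverse inclusion ${\rm Ran}\,F\subseteq{\rm Ran}\,E$; the plan is to check it inside the induced realization of $\pi$, where the hypothesis $\gamma_0=\gamma_1=0$ enters through the injectivity of certain asymptotic transpositions.

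First I would pass to the model supplied by Corollary \ref{collora_of_ind_Th}: realize $\pi={\rm Ind}^B_{B_\infty^{(n)}}\pi_E$ on $l^2\bigl(L^2(\,^n\!M,\,^n\!{\rm tr}),X\bigr)$, where $\,^n\!M=\pi_E\bigl(B_\infty^{(n)}\bigr)''$, the coset space $X=B/B_\infty^{(n)}$ is identified with $\mathfrak{S}_\infty/(\mathfrak{S}_n\mathfrak{S}_{n\infty})$, and — by the very construction of the induced representation in that corollary — $E$ is the projection $P_{\widetilde e}$ onto $\{v:v(x)=0\text{ for }x\neq\widetilde e\}$. Using the parametrisation of the cosets by the canonical involutions $\mathfrak{s}_x=\prod_i(\,^x\!p_i\;\,^x\!r_i)$ with $\,^x\!p_i\le n<\,^x\!r_i$ from Lemmas \ref{product_transpositions} and \ref{unique_involution} (so that $\widetilde e$ is the unique coset with $\,^x\!l=0$), the computation already carried out in the proof of Theorem \ref{alpha+beta=1} shows that, for $1\le j\le n$,
\[
(\mathcal{O}_j\eta)(x)=\begin{cases}0,&j\notin\{\,^x\!p_i\}_{i=1}^{\,^x\!l},\\ \,^n\!\mathcal{O}_{\,^x\!r_i}\,\eta(x),&j=\,^x\!p_i,\end{cases}
\]
where $\,^n\!\mathcal{O}_m=w-\lim\limits_{k\to\infty}\,^n\!\pi\bigl((m\;k)\bigr)$ acts on $L^2(\,^n\!M,\,^n\!{\rm tr})$ (this is exactly formula \eqref{mathcal{O}_m_gamma_1+gamma_2=0}).

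Now I would invoke the hypothesis. The restriction of $\pi_E$ to $B_{n\infty}$ is the finite type representation with character $\chi_{\alpha\beta\gamma}^\sigma=\chi^{as}_\pi$, so its parameters $\gamma=(\gamma_0,\gamma_1)$ are those of the asymptotic character; since $\gamma_0=\gamma_1=0$, Lemma \ref{spectral_decomposition} gives $\,^n\!{\rm tr}\bigl(\,^n\!E_m(0)\bigr)=\gamma_0+\gamma_1=0$ for every $m>n$, and faithfulness of $\,^n\!{\rm tr}$ forces $\,^n\!E_m(0)=0$, i.e. ${\rm Ker}\,^n\!\mathcal{O}_m=\{0\}$. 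Consequently $\,^n\!\mathcal{O}_{\,^x\!r_i}\eta(x)=0$ if and only if $\eta(x)=0$, and the displayed formula shows that $\eta\in\bigcap_{j=1}^n{\rm Ker}\,\mathcal{O}_j$ if and only if $\eta(x)=0$ for every $x$ with $\{\,^x\!p_i\}_{i=1}^{\,^x\!l}\neq\emptyset$; since $\,^x\!p_i\le n$ always, this is precisely $\eta(x)=0$ for all $x\neq\widetilde e$, i.e. $\eta\in{\rm Ran}\,P_{\widetilde e}={\rm Ran}\,E$ (this is equation \eqref{indicator_formula}). Hence $F\le E$, and combined with $E\le F$ this gives $E=F$.

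The one step that deserves care is the identification of the abstract support projection $E={\rm supp}\,\varphi$ of Corollary \ref{collora_of_ind_Th} with the base-fibre projection $P_{\widetilde e}$ of the canonical model of ${\rm Ind}^B_{B_\infty^{(n)}}\pi_E$; once this is made explicit, everything else is the bookkeeping above, whose only essential use of the hypothesis is the vanishing $\,^n\!E_m(0)=0$ that makes each $\,^n\!\mathcal{O}_m$ injective. When $\gamma_0+\gamma_1>0$ this injectivity fails and $\bigcap_{j\le n}{\rm Ker}\,\mathcal{O}_j$ is strictly larger than ${\rm Ran}\,E$, so that $E\neq F$ there — in agreement with the fact that those cases are handled separately.
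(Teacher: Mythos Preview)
Your argument is correct and uses the same essential ingredient as the paper: when $\gamma_0=\gamma_1=0$ the asymptotic transpositions on the base fibre are injective. The organization differs. You pass to the explicit induced model and invoke the fibrewise formula \eqref{mathcal{O}_m_gamma_1+gamma_2=0} together with \eqref{indicator_formula} from the proof of Theorem~\ref{alpha+beta=1} to compute $\bigcap_{j\le n}\ker\mathcal{O}_j$ directly. The paper instead argues by contradiction without naming the model: assuming $f=F-E\neq0$, it uses $\sum_x\pi(\mathfrak{s}_x)E\pi(\mathfrak{s}_x)^*=I$ to find a coset representative $\mathfrak{s}_x\neq e$ with $E\pi(\mathfrak{s}_x)f\neq0$; then injectivity of $E\mathcal{O}_jE$ for $j>n$ gives $\mathcal{O}_jE\pi(\mathfrak{s}_x)f\neq0$, and the conjugation relation rewrites this as $E\pi(\mathfrak{s}_x)\mathcal{O}_{\mathfrak{s}_x(j)}f\neq0$; choosing $j>n$ with $\mathfrak{s}_x(j)\le n$ contradicts $f\le F$. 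The paper's route has the advantage of sidestepping the identification of the abstract support projection $E$ with $P_{\widetilde e}$ that you rightly flag as the one delicate point; your route has the virtue of reusing a computation already recorded earlier in the paper.
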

\begin{proof}
It follows from Lemma \ref{as_trans_j_leq_n} that $E\leq F$.

Now we assume that $f=F-E\neq 0$.

By Lemma \ref{unique_involution}, we can assume without loss of generality that coset representatives $\{\mathfrak{s}_x\}_{x\in X}$ of left coset $X=B/{B_\infty^{(n)}}$ looks like
\begin{eqnarray*}
\mathfrak{s}_x=\left(\,{^x}\!p_1\;\;\,{^x}\!r_1\right)\left(\,{^x}\!p_2\;\;\,{^x}\!r_2\right)\cdots\left(\,{^x}\!p_q\;\;\,{^x}\!r_q\right),
\end{eqnarray*}
where $\left\{{^x}\!p_1<{^x}\!p_2<\ldots<{^x}\!p_q \right\}\subset\{1,2,\ldots n\}$ and $\left\{{^x}\!r_1<{^x}\!r_2<\ldots<{^x}\!r_q \right\}$ $\subset \{n+1,n+2, \ldots\}$.

We recall that normalized character $\chi_{_E}$ of the finite type factor-representation $\pi_{E}$ of group $B_\infty^{(n)}$ from  Corollary \ref{collora_of_ind_Th} has the form
\begin{eqnarray*}
\chi_{_E}(b_1\,b_2)=\chi_n(b_1)\chi_{_\pi}^{as}(b_2),\; ~\text{ where }~\; b_1\in B_n, b_2\in B_{n\infty}.
\end{eqnarray*}
Here $\chi_n$ is normalize character of the irreducible representation $\mathfrak{Ir}_{_{\,^0\!\lambda\,^1\!\!\lambda}}$ of $B_n$ (see section \ref{repr_subgroup}) and $\chi_{_\pi}^{as}=\chi_{\alpha\beta\gamma}^\sigma$ is indecomposable character of group $B_{n\infty}$.

Let $j>n$.
Since $\pi_{_E}$ is a finite type factor-representation of $B_\infty^{(n)}$, by Lemma \ref{spectral_decomposition},   operator $ E\,O_j\,E =E\,\lim\limits_{m\to\infty}\pi((j\;\;m))\,E$ has the following spectral decomposition
\begin{eqnarray*}
 E\,O_j\,E =\sum\limits_{\lambda\in S\left(E\mathcal{O}_j E\right)} \lambda E_j(\lambda).
\end{eqnarray*}
Applying Lemma \ref{spectral_decomposition} again and condition $\gamma_0=\gamma_1=0$ $\Leftrightarrow$ $\sum\alpha_i+\sum\beta_i=1$, we obtain
\begin{eqnarray}\label{zero_KER}
{\rm Ker}\,E\,O_j\,E=0.
\end{eqnarray}
Since $f=F-E>0$, we find $\mathfrak{s}_x\neq e$ such that $f\,\pi(\mathfrak{s}_x)\,E\neq 0$. Hence, using \eqref{zero_KER}, we have
\begin{eqnarray*}
O_j\,E\,\pi(\mathfrak{s}_x)\,f\neq 0 \;~\text{ for all }~ j>n.
\end{eqnarray*}
From this, applying Lemmas \ref{o_transp} and \ref{as_trans_j_leq_n}, we have
\begin{eqnarray}\label{neq_0}
O_j\,E\,\pi(\mathfrak{s}_x)\,f=E\,\pi(\mathfrak{s}_x)\,\mathcal{O}_{\mathfrak{s}_x(j)}\,f\neq0 \;~\text{ for all}~ j>n.
 \end{eqnarray}
 Since $\mathfrak{s}_x\neq e$, there exists $j>n$ such that $\mathfrak{s}_x(j)\leq n$. Therefore, $E\,\pi(\mathfrak{s}_x)\,\mathcal{O}_{\mathfrak{s}_x(j)}\,f$ $=0$. This contradicts \eqref{neq_0}.
\end{proof}
\begin{Prop}\label{prop_cd}
Let $\Pi$ be the same as in Theorem \ref{alpha+beta=1}. Then ${\rm cd}\,\Pi =n$.
\end{Prop}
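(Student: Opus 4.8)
The statement claims ${\rm cd}\,\Pi=n$, where $\Pi={\rm Ind}^B_{B_\infty^{(n)}}\,^n\!\pi$ with $\,^n\!\pi$ a ${\rm II}_1$-factor-representation of $B_\infty^{(n)}$ and $\sum\alpha_i+\sum\beta_i=1$, $n\ge1$; the plan is to establish the two inequalities ${\rm cd}\,\Pi\le n$ and ${\rm cd}\,\Pi\ge n$ separately. For the first I would use the standard realisation of $\Pi$ built in the proof of Theorem~\ref{alpha+beta=1}: on $l^2\!\left(L^2(\,^n\!M,\,^n\!{\rm tr}),X\right)$ with $X=B/B_\infty^{(n)}$ and the cyclic vector $\xi$ of \eqref{trace2_vector}, equality \eqref{stability_equality} reads $\omega_\xi(g)=\omega_\xi(hgh^{-1})$ for all $g\in B$ and $h\in B_\infty^{(n)}=B_nB_{n\infty}$. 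Since $\{\Pi(g)\}_{g\in B}$ spans a $\sigma$-weakly dense $*$-subalgebra of $M$ and $\omega_\xi$ is normal, this says $0\neq\omega_\xi\in M_*^+(n)$, hence ${\rm cd}\,\Pi=\min\{m:M_*^+(m)\neq0\}\le n$.

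For the reverse inequality I would argue by contradiction, assuming $m:={\rm cd}\,\Pi<n$. Corollary~\ref{collora_of_ind_Th} then supplies a nonzero projection $E={\rm supp}\,\varphi\in M\cap\Pi(B_\infty^{(m)})'$ such that $\Pi_E(g)=E\,\Pi(g)\,E$, $g\in B_\infty^{(m)}=B_mB_{m\infty}$, is a finite type factor-representation of $B_\infty^{(m)}$ and $\Pi={\rm Ind}^B_{B_\infty^{(m)}}\Pi_E$. Because $\gamma_0=\gamma_1=0$, Lemma~\ref{E=F} identifies the range of $E$ with $\bigcap_{j=1}^m{\rm Ker}\,\mathcal O_j$. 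Using the explicit formula \eqref{mathcal{O}_m_gamma_1+gamma_2=0} for the action of $\mathcal O_1,\dots,\mathcal O_n$ on $l^2\!\left(L^2(\,^n\!M,\,^n\!{\rm tr}),X\right)$ together with ${\rm Ker}\,^n\!\mathcal O_l=0$ for $l>n$ (Lemma~\ref{spectral_decomposition}, valid since $\sum\alpha_i+\sum\beta_i=1$), one checks that ${\rm Ker}\,\mathcal O_j=\{\eta:\eta(x)=0\text{ whenever }j\in\{\,^x\!p_i\}_i\}$ for $1\le j\le n$, so that $\bigcap_{j=1}^m{\rm Ker}\,\mathcal O_j$ equals the subspace $\mathcal H_{X_m}$ of functions supported on $X_m=\{x\in X:\{\,^x\!p_i\}_i\cap\{1,\dots,m\}=\emptyset\}$. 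Since every minimal involution $\mathfrak s_x$ with $x\in X_m$ fixes $\{1,\dots,m\}$ pointwise, $X_m$ is invariant under $B_\infty^{(m)}$ (consistent with $E\in\Pi(B_\infty^{(m)})'$), and for $m<n$ it strictly contains $\{\widetilde e\}$ — it contains, e.g., the cosets of $(j\;N)$ for all $m<j\le n$, $N>n$.

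The contradiction then comes from the requirement that $\Pi_E$ be of finite type. Restricting $\Pi_E$ to $B_{m\infty}\subset B_\infty^{(m)}$, the algebra $\Pi_E(B_{m\infty})''\subseteq\Pi_E(B_\infty^{(m)})''$ is still finite. The $B_{m\infty}$-orbit of $\widetilde e$ inside $X_m$ has stabiliser $B_{m\infty}\cap B_\infty^{(n)}=B_{mn}B_{n\infty}$, so the restriction of $\Pi_E$ to the corresponding $B_{m\infty}$-invariant subspace is unitarily equivalent to ${\rm Ind}^{B_{m\infty}}_{B_{mn}B_{n\infty}}\bigl(\,^n\!\pi\!\restriction_{B_{mn}B_{n\infty}}\bigr)$; decomposing the finite-dimensional representation $\mathfrak{Ir}_{\,^0\!\lambda\,^1\!\!\lambda}\!\restriction_{B_{mn}}$ into irreducibles $\mathfrak{Ir}_{\mu\nu}$ of the finite group $B_{mn}$, this becomes a direct sum (with multiplicities) of representations ${\rm Ind}^{B_{m\infty}}_{B_{mn}B_{n\infty}}\bigl(\mathfrak{Ir}_{\mu\nu}\otimes\pi_{\alpha\beta\gamma}^\sigma\bigr)$. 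Under the natural isomorphism $B_{m\infty}\cong B$ (shifting $m+1,m+2,\dots$ to $1,2,\dots$) the subgroup $B_{mn}B_{n\infty}$ becomes $B_{n-m}B_{(n-m)\infty}=B_\infty^{(n-m)}$ and $\mathfrak{Ir}_{\mu\nu}\otimes\pi_{\alpha\beta\gamma}^\sigma$ a finite type factor-representation of $B_\infty^{(n-m)}$; since $n-m\ge1$ and $\sum\alpha_i+\sum\beta_i=1$, Theorem~\ref{alpha+beta=1} (whose proof only uses that $\,^n\!\pi$ is of finite type) shows each such induced representation is a semifinite factor-representation of infinite type (${\rm II}_\infty$ or ${\rm I}_\infty$), in particular not of finite type. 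Hence $\Pi_E(B_{m\infty})''$ is infinite, contradicting Corollary~\ref{collora_of_ind_Th}; therefore $m=n$, i.e.\ ${\rm cd}\,\Pi=n$.

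I expect the main obstacle to be the bookkeeping in the middle and last steps: reading off $\bigcap_{j\le m}{\rm Ker}\,\mathcal O_j$ from \eqref{mathcal{O}_m_gamma_1+gamma_2=0}, verifying the $B_\infty^{(m)}$-invariance of $X_m$ and the identification of the stabiliser $B_{m\infty}\cap B_\infty^{(n)}=B_{mn}B_{n\infty}$, and then matching the orbit subrepresentation cleanly with the hypotheses of Theorem~\ref{alpha+beta=1} for the group $B_{m\infty}\cong B$. Once these identifications are in place, the infinite-type conclusion for a one-step-or-more induction closes the argument.
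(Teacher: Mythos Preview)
Your argument is correct, and the opening (${\rm cd}\,\Pi\le n$ via $\omega_\xi\in M_*^+(n)$ from \eqref{stability_equality}) and the invocation of Lemma~\ref{E=F} to identify the range of the depth-$m$ projection with $\bigcap_{j\le m}{\rm Ker}\,\mathcal O_j$ match the paper exactly.

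The divergence is in how you extract the contradiction. The paper does not compute $X_m$ or pass to an orbit subrepresentation at all: it simply notes that the projection onto the single fiber $\{\widetilde e\}$, which by \eqref{indicator_formula} equals $\bigcap_{j\le n}{\rm Ker}\,\mathcal O_j$, lies below $\widetilde E=\bigcap_{j\le d}{\rm Ker}\,\mathcal O_j$ (your $E$); since $d<n$, the transpositions $(n\;m)$, $m>n$, lie in $B_{d\infty}$, and their conjugates of this fiber projection are pairwise orthogonal equivalent projections inside the finite factor $\widetilde E\,\Pi(B)''\,\widetilde E$ --- done in three lines. Your route reaches the same endpoint by a detour: after identifying $\mathcal H_{X_m}$ you restrict to $B_{m\infty}$, cut to the orbit of $\widetilde e$, recognise ${\rm Ind}^{B_{m\infty}}_{B_{mn}B_{n\infty}}(\,^n\!\pi\!\restriction)$, decompose over irreducibles of $B_{mn}$, and then re-invoke Theorem~\ref{alpha+beta=1}. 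That recursive appeal is legitimate (and your parenthetical about Theorem~\ref{alpha+beta=1} needing only finite type, not ${\rm II}_1$, is a fair reading of its proof), but it is doing nothing more than reproducing, one level down, the very ``infinitely many orthogonal equivalent projections'' mechanism the paper applies directly. The paper's approach buys you a much shorter proof with no orbit/stabiliser bookkeeping and no need to worry about the type-${\rm I}$ edge case; your approach buys nothing extra here, though the explicit description of $\mathcal H_{X_m}$ is correct and could be independently useful.
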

\begin{proof}
We will follow the notation used  in the proof of  Theorem \ref{alpha+beta=1}. It follows from \eqref{stability_equality} that ${\rm cd}\,\Pi \leq n$.
Suppose that ${\rm cd}\,\Pi=d<n$. Then there exists a nonzero normal nonnegative functional $\varphi$  on $\Pi(B)''$ such that
\begin{eqnarray*}
  \varphi\left( \Pi(b) m\Pi(b^{-1})\right)=\varphi(m) ~\text{for all }~ m\in \Pi(B)'' ~\text{ and }~ b\in B_\infty^{(d)} =B_dB_{d\infty}.
\end{eqnarray*}
If $\widetilde{E}$ = ${\rm supp}\, \varphi$, then, applying   Corollary \ref{collora_of_ind_Th}, we obtain that  $\widetilde{E}\Pi(B)''\widetilde{E}=\left(\widetilde{E}\,\Pi\left(B_\infty^{(d)} \right)\,\widetilde{E} \right)''$ is a finite type factor and $\widetilde{E}\in \Pi(B_\infty^{(d)})'$. By Theorem \ref{induction_theorem},
\begin{eqnarray}
\widetilde{E}\Pi(b)\widetilde{E}=0\;~\text{ for all }~ b\notin B_\infty^{(d)}.
\end{eqnarray}
It follows from Lemma \ref{E=F} that
\begin{eqnarray}\label{E<tilde_E}
\bigcap\limits_{j=1}^d{\rm Ker}\,\mathcal{O}_j=\widetilde{E}.
\end{eqnarray}
Denoting by $E$ the projection onto subspace $\left\{\eta\in \mathcal{H}_\Pi: \eta(x)=0 ~\text{ for all } x\neq \widetilde{e}\right\}$, we obtain, by \eqref{indicator_formula},
\begin{eqnarray}
E\mathcal{H}_\Pi=\bigcap\limits_{m=1}^n {\rm Ker}\,\mathcal{O}_m.
\end{eqnarray}
 Hence, using \eqref{E<tilde_E}, we have
\begin{eqnarray}
E\leq\widetilde{E},\;E\in \Pi(B)'' \cap \Pi(B_\infty^{(n)})'\cap\left\{ \left\{ \mathcal{O}_j\right\}_{j=1}^\infty\right\}'.
\end{eqnarray}
Therefore, $E\in \widetilde{E}\Pi(B)''\widetilde{E}=\left(\widetilde{E}\,\Pi\left(B_\infty^{(d)} \right)\,\widetilde{E} \right)''$.  Since $d<n$, the collection $\left\{ (n\;\;m)\right\}_{m=n+1}^\infty\subset B_{d\infty}\setminus \left(B_\infty^{(n)} \right)$.  According to equality  $$E\mathcal{H}_\Pi=\left\{\eta\in \mathcal{H}_\Pi: \eta(x)=0 ~\text{ for all } x\neq \widetilde{e}\right\},$$
we have that the projections $\Pi((n\;\;m_1))\,E\,\Pi((n\;\;m_1))$ and  $\Pi((n\;\;m_2))\,E\,\Pi((n\;\;m_2))$ are orthogonal for different $m_1$ and $m_2$. Therefore, $\left\{  \Pi((n\;\;m))\,E\,\Pi((n\;\;m))\right\}_{m=n+1}^\infty$ is an infinite collection of pairwise orthogonal equivalennt projections in finite type factor  $\widetilde{E}\,\Pi\left( B_\infty^{(d)}\right)''\widetilde{E}$, a contradiction.
\end{proof}
\subsection{The central depth of representation ${\rm Ind}^B_{B_\infty^{(n)}}\,^n\!\pi$ in the case $\gamma_0$ $+\gamma_1>0$.}
We recall that $\,^n\!\pi$ is a finite type factor-representation of subgroup $B_\infty^{(n)}\subset B$ (see sections \ref{irreducible_repr_of_B_n}, \ref{repr_subgroup}). By Proposition \ref{Prop_Ind}, $\,^n\!\pi={\rm Ind}_{H_{\infty\Omega_{_{kn}}}}^{B_\infty^{(n)}}\,\Xi$, where $\,\Xi$ is finite type factor representation of subgroup $H_{\infty\Omega_{_{kn}}}=B_k\cdot B_{kn}\cdot B_{n\infty}$, which defined by \eqref{tilde_rho}. It follows from this that
\begin{eqnarray*}
{\rm Ind}^B_{B_\infty^{(n)}}\,^n\!\pi={\rm Ind}^B_{B_\infty^{(n)}}\,{\rm Ind}_{H_{\infty\Omega_{_{kn}}}}^{B_\infty^{(n)}}\,\Xi={\rm Ind}_{H_{\infty\Omega_{_{kn}}}}^B\,\Xi.
 \end{eqnarray*}
 On the other side, considering the chain of subgroups $H_{\infty\Omega_{_{kn}}}=B_k\cdot B_{kn}\cdot B_{n\infty}$ $\subset B_\infty^{(k)}=B_k\cdot B_{k\infty}\subset B$, we have
 \begin{eqnarray} \label{ind_through}
 {\rm Ind}^B_{B_\infty^{(n)}}\,^n\!\pi={\rm Ind}_{H_{\infty\Omega_{_{kn}}}}^B\,\Xi={\rm Ind}^B_{B_\infty^{(k)}}{\rm Ind}_{H_{\infty\Omega_{_{kn}}}}^{B_\infty^{(k)}}\,\Xi.
 \end{eqnarray}
By Theorem \ref{II_1_theorem},  it is sufficient to find the central depth only in two cases:
\begin{itemize}
  \item {\it i)} $\gamma_0=0$ and $\gamma_1>0$;
  \item  {\it ii)} $\gamma_0>0$ and $\gamma_1=0$.
\end{itemize}
\begin{Th}\label{cd_gamma_0=0_gamma_1>0}
Let $\Pi={\rm Ind}_{H_{\infty\Omega_{_{kn}}}}^B\,\Xi$. If $\gamma_0+\gamma_1>0$ then  we have the following conclusions:
\begin{itemize}
  \item {\it a)} if $\gamma_0>0$ and $\gamma_1>0$ then ${\rm cd}\,\Pi=0$, i. e. $\Pi$ is ${\rm II}_1$-factor-representation (see Theorem \ref{II_1_theorem});
  \item {\it b)} if  $\gamma_0=0$ and $\gamma_1>0$ then ${\rm cd}\,\Pi=k$;
  \item {\it c)} if  $\gamma_0>0$ and $\gamma_1=0$ then ${\rm cd}\,\Pi=n-k$.
\end{itemize}
\end{Th}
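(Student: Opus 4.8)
The plan is the following. Part~\textbf{a)} is immediate from Theorem~\ref{II_1_theorem}: there $\Pi$ is a ${\rm II}_1$-factor-representation, so its normalized trace is a normal positive functional fixed by every inner automorphism, whence $M_*^+(0)\neq0$ and ${\rm cd}\,\Pi=0$. For \textbf{b)} and \textbf{c)} I would first reduce, using \eqref{ind_through} and Theorem~\ref{Ind^B_infty_(k)}, to a standard model: if $\gamma_1>0$ then $\Pi\stackrel{\text{quasi}}{=}{\rm Ind}^B_{B_\infty^{(k)}}\,^k\!\pi$, where $\,^k\!\pi$ is a ${\rm II}_1$-factor-representation of $B_\infty^{(k)}$ acting on $B_{k\infty}$ as $\pi_{\alpha\beta\gamma}^\sigma$, trivially on $\mathbb{Z}_2^k$, and by ${\rm Irr}_{\,^0\!\lambda}$ on $\mathfrak{S}_k$; if $\gamma_0>0$ then, by the second part of Theorem~\ref{Ind^B_infty_(k)}, $\Pi$ is quasi-equivalent to an induction from a ${\rm II}_1$-factor-representation $\,^\mathbb{A}\!\pi$ of $B_\infty^{\mathbb{A}}$ with $\mathbb{A}=\overline{k+1,n}$, which after relabelling the first $n-k$ coordinates is an induction from $B_\infty^{(n-k)}$ (now $\,^\mathbb{A}\!\pi$ is the sign twist on $\mathbb{Z}_2^{\mathbb{A}}$, i.e.\ $\Pi(1^{(j)})=-I$ on the distinguished subspace).

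For the upper bounds, realise the inducing ${\rm II}_1$-factor-representation on the $L^2$-space of its factor by left multiplication. Then in the induced picture the projection $Q$ onto the identity coset lies in $M=\Pi(B)''$, the algebra $QMQ$ is that ${\rm II}_1$-factor with trace ${\rm tr}$, $Q$ commutes with $\Pi(g)$ for every $g$ in the inducing subgroup, and $Q\,\Pi(b)\,Q=0$ for $b$ outside it (as in Theorems~\ref{factor_ind} and~\ref{II_infty_case}). The functional $a\mapsto{\rm tr}(QaQ)$ is normal, positive, has support $Q$, and — since ${\rm tr}$ is a trace and $Q$ is invariant — is fixed by ${\rm Ad}\,\Pi(g)$ for all $g\in B_kB_{k\infty}$ (resp.\ $B_{n-k}B_{(n-k)\infty}$). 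Hence ${\rm cd}\,\Pi\le k$ in case \textbf{b)} and ${\rm cd}\,\Pi\le n-k$ in case \textbf{c)}.

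The lower bounds are the crux; I outline \textbf{b)}. Suppose ${\rm cd}\,\Pi=d<k$. By Theorem~\ref{II_infty_case} the representation $\Pi$ is ${\rm II}_\infty$, so $d\ge1$. By Corollary~\ref{collora_of_ind_Th}, $\Pi\stackrel{\text{quasi}}{=}{\rm Ind}^B_{B_\infty^{(d)}}\pi_E$ with $\pi_E$ a finite-type factor-representation of $B_\infty^{(d)}$; write $\pi_E|_{B_d}=\mathfrak{Ir}_{\,^0\!\mu\,^1\!\!\mu}$ with split parameter $k'\le d$, while $\pi_E|_{B_{d\infty}}$ carries the character $\chi_{\alpha\beta\gamma}^\sigma$ (the asymptotic character of $\Pi$ being a quasi-equivalence invariant, the parameters $\gamma_0=0$, $\gamma_1>0$ are unchanged). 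Re-inducing $\pi_E$ through $H_{\infty\Omega_{k'd}}$ and $B_\infty^{(k')}$ (Proposition~\ref{Prop_Ind} and transitivity of induction as in \eqref{successive_induction}) presents $\Pi\stackrel{\text{quasi}}{=}{\rm Ind}^B_{B_\infty^{(k')}}\,^{k'}\!\pi$ with $\,^{k'}\!\pi$ again a ${\rm II}_1$-factor-representation (Theorem~\ref{Ind^B_infty_(k)}); applying the upper bound to this model gives ${\rm cd}\,\Pi\le k'$, so $k'=d$. Consequently $\mathfrak{Ir}_{\,^0\!\mu\,^1\!\!\mu}$ is a representation of $\mathfrak{S}_d$, trivial on $\mathbb{Z}_2^d$, and $\Pi\stackrel{\text{quasi}}{=}{\rm Ind}^B_{B_\infty^{(d)}}\,^d\!\pi$ with $\,^d\!\pi$ of the same form, carrying ${\rm Irr}_{\,^0\!\mu}$ on $\mathfrak{S}_d$ and $\pi_{\alpha\beta\gamma}^\sigma$ on $B_{d\infty}$.

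Finally, the operators $\mathcal{O}_j$ of Lemma~\ref{o_transp}, their kernel projections $E_j(0)$ (Lemma~\ref{spectral_decomposition}), and the projections $Q_j=\frac12E_j(0)\bigl(I+\Pi(1^{(j)})\bigr)$ are intrinsic to $M$ and therefore the same in both quasi-equivalent models. By the computation in the proof of Theorem~\ref{II_infty_case} (which uses exactly $\gamma_0=0$), $\prod_{j=1}^{k}Q_j$ is the identity-coset projection $Q$ of the $B_\infty^{(k)}$-model and $\prod_{j=1}^{d}Q_j$ is the identity-coset projection $E$ of the $B_\infty^{(d)}$-model; these projections commute, so $Q\le E$. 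Now the transposition $(d\;d+1)$ lies in $B_k$ and hence fixes the identity coset of $B/B_\infty^{(k)}$, so $Q\,\Pi((d\;d+1))\,Q=\,^k\!\pi((d\;d+1))|_{Q\mathcal{H}}$ is a nonzero unitary on $Q\mathcal{H}\neq0$; but $(d\;d+1)\notin B_\infty^{(d)}$, so Theorem~\ref{induction_theorem} forces $E\,\Pi((d\;d+1))\,E=0$, and $Q\le E$ then gives $Q\,\Pi((d\;d+1))\,Q=0$ — a contradiction. Hence ${\rm cd}\,\Pi\ge k$, proving \textbf{b)}. Case \textbf{c)} is entirely parallel: replace $\gamma_0=0$ by $\gamma_1=0$, $I+\Pi(1^{(j)})$ by $I-\Pi(1^{(j)})$, $B_\infty^{(k)}$ by $B_\infty^{(n-k)}$ (after relabelling), and the identity $k'=d$ by $k'=0$. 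The step I expect to be most delicate is precisely this comparison of the two induced realizations — justifying that they carry the same $\mathcal{O}_j$ and $Q_j$, and hence nested identity-coset projections — together with the bookkeeping that forces the value of $k'$.
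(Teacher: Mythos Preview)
Your proposal is correct and follows essentially the paper's strategy: reduce via \eqref{ind_through} and Theorem~\ref{Ind^B_infty_(k)} to the model $\Pi={\rm Ind}^B_{B_\infty^{(k)}}\,^k\!\pi$, identify the identity-coset projection as $\prod_{m\le k}Q_m$ with $Q_m=\tfrac12E_m(0)(I+\Pi(1^{(m)}))$ (exactly the projections from the proof of Theorem~\ref{II_infty_case}), deduce ${\rm cd}\,\Pi\le k$, and for $d<k$ compare with the identity-coset projection $E$ of the $B_\infty^{(d)}$-model to derive a contradiction; part~\textbf{c)} is handled by the sign twist, just as in the paper.

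There are two genuine differences worth noting. First, you insert an extra step the paper omits: before invoking the computation \eqref{projection_on_unit} in the $d$-model you establish $k'=d$ (by re-inducing through $B_\infty^{(k')}$ and applying the upper bound there), so that $\Pi_E$ is trivial on $\mathbb{Z}_2^d$. The paper simply asserts $E=\prod_{m\le d}Q_m$ ``by repeating the reasoning'' of Theorem~\ref{II_infty_case}, but that reasoning uses $\,^k\!\pi(1^{(m)})=I$ for $m\le k$; without your step one only gets $\prod_{m\le d}Q_m\le E$ (which, to be fair, already suffices for the paper's contradiction). Your argument is therefore more careful at this point. Second, your final contradiction is different and slightly slicker: from $Q\le E$ and $(d\;\,d{+}1)\in B_\infty^{(k)}\setminus B_\infty^{(d)}$ you get $Q\Pi((d\;\,d{+}1))Q$ simultaneously unitary and zero. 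The paper instead observes that the projections $\Pi((k\;\,m))\,Q\,\Pi((k\;\,m))$, $m>k$, are infinitely many pairwise orthogonal equivalent projections inside the finite factor $E\Pi(B)''E$. Both routes work; yours uses a single group element, while the paper's has the advantage of not requiring the equality $E=\prod_{m\le d}Q_m$ (only $Q\le E$), hence not needing the $k'=d$ step at all.
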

\begin{proof}
Property {\it a)} follows from Theorem \ref{II_1_theorem}.

\noindent  {\it b)}\ \  By \eqref{ind_through}, $\Pi={\rm Ind}^B_{B_\infty^{(k)}}{\rm Ind}_{H_{\infty\Omega_{_{kn}}}}^{B_\infty^{(k)}}\,\Xi$. According to Theorem \ref{Ind^B_infty_(k)}, $\,^k\!\pi={\rm Ind}_{H_{\infty\Omega_{_{kn}}}}^{B_\infty^{(k)}}\,\Xi$ is ${\rm II}_1$-factor-representation of subgroup $B_\infty^{(k)}$. Take $z=(z_1,\ldots,z_k,0,0,\ldots)$ $\in \mathbb{Z}_2^k$, $s\in\mathfrak{S}_k$ and $b\in B_{k\infty}$. Then, applying Propositions  \ref{Prop_Ind} and \ref{quasi_equivalent_ind}, we obtain that up to quasi-equivalence $\,^k\!\pi$  has a form
\begin{eqnarray}
\,^k\!\pi(zsb)\stackrel{Proposition \ref{Prop_Ind}}{=}\Omega_{_{kn}}(z)\left( {\rm Irr}_{\,^0\!\lambda}(s)\otimes \pi_{\alpha\beta\gamma}^\sigma(b)\right)\stackrel{\eqref{mult_character}}{=}{\rm Irr}_{\,^0\!\lambda}(s)\otimes \pi_{\alpha\beta\gamma}^\sigma(b),
\end{eqnarray}
where $\pi_{\alpha\beta\gamma}^\sigma$ is a finite type factor-representation, corresponding to character $\chi_{\alpha\beta\gamma}^\sigma$ from \eqref{character_formula}. Below,   we shall use for convenience some of the notation introduced in the proof of Theorem
\ref{II_infty_case}.

Representation $\Pi$ acts in $\mathcal{H}_\Pi=l^2\left(L^2\left( \,^k\!M, \,^k\!{\rm tr} \right),X\right)$ by \eqref{ind1_action}. Denote by $F_k$ the orthogonal projection onto subspace $\left\{ \eta\in\mathcal{H}_\Pi: \eta(x)=0 \,~\text{ for all}~x\neq \widetilde{e}\right\}$.
In the proof of Theorem \ref{II_infty_case} , we showed that the orthogonal projections
\begin{eqnarray}\label{k^Q}
\,^{^k}\!\!Q_m=\frac{1}{2}\, E_m(0)\left(I+\Pi\left(1^{(m)}\right)\right),
\end{eqnarray}
 where $m=1,2,\ldots,k$, satisfy the condition
\begin{eqnarray}\label{prod^k}
F_k=\prod\limits_{m=1}^k \,^{^k}\!\!Q_m.
\end{eqnarray}
In particular, it follows from this that $F_k\in\Pi(B)''$. It is clear that ${\rm cd}\,\Pi\leq k$.

Suppose that $d={\rm cd}\,\Pi<k$. Applying Theorem \ref{induction_theorem} and Corollary \ref{collora_of_ind_Th}, we find orthogonal projection $E\in \Pi(B)''\cap \Pi\left( B_{\infty}^{(d)}\right)^\prime$ with the properties:
\begin{itemize}
  \item {\it 1)} $E\,\Pi(b)\, E=0$ for all $b\notin  B_{\infty}^{(d)}$;
  \item {\it 2)} the unitary representation $\Pi_{_E}$ of the group $ B_{\infty}^{(d)}$ generated  by the operators $\Pi_{_E}(b)=E\,\Pi(b)\,E$, where $b\in  B_{\infty}^{(d)}$, in the subspace $E\mathcal{H}_\Pi$  is a finite type factor-representation;
\end{itemize}
It follows from this that $\Pi$ induced  from the representation $\Pi_{_E}$ of subgroup $B_{\infty}^{(d)}$; i. e. $\Pi={\rm Ind}_{B_{\infty}^{(d)}}^B \Pi_{_E}$. Now, repeating the reasoning from the proof of Theorem  \ref{II_infty_case} (see \eqref{zero_projection}, \eqref{Product_zero_projection}, \eqref{projection_on_unit}), we obtain
\begin{eqnarray}\label{d^Q}
E=\prod\limits_{m=1}^d \,^{^d}\!\!Q_m, \;~\text{ where}~ \,^{^d}\!\!Q_m=\frac{1}{2}\, E_m(0)\left(I+\Pi\left(1^{(m)}\right)\right).
\end{eqnarray}
Since $d<k$ and, by \eqref{k^Q}, \eqref{d^Q} $, \,^{^d}\!\!Q_m= \,^{^k}\!\!Q_m$ for $m=1,2,\ldots,d$, it follows from \eqref{prod^k} that $F_k\leq E$. Therefore, projection $F_k$ belongs to finite type factor $E\Pi(B)''\,E = \Pi_{_E}(B_{\infty}^{(d)})''$. But nonzero projections $\,^m\!\!F_k$ $=\Pi_{_E}((k\;\;m))\,F_k\,\Pi_{_E}((k\;\;m))\in \Pi_{_E}(B_{d\infty}^{(d)})'' $ are pairwise equivalent and orthogonal for different $m=k+1, k+2,\ldots$. For this reason, algebra $ \Pi_{_E}(B_{\infty}^{(d)})''$ cannot be a finite type factor (see property {\it 2)}). This contradicts our assumption that $d<k$.

 To prove the {\it c)} we need to repeat the above reasoning for representation $\widetilde{\Pi}$, which defined as follows:
\begin{eqnarray*}
 \widetilde{\Pi}(b)= \left\{\begin{array}{rl}
\Pi(s), ~&\text{ if }~ s\in\mathfrak{S}_\infty\subset B;\\
(-1)^{\sum z_i}\Pi(z),&\text{ if } \; z=(z_1,z_2,\ldots)\in \;_0\mathbb{Z}_2^\infty \subset B.
\end{array}
\right.
\end{eqnarray*}
\end{proof}
 \subsection{Quasi-equivalence of the induced representations in the case $\gamma_0=\gamma_1=0$.}\label{gamma_zero}
We will use below the notation from section \ref{all_gamma_are_zero}.
\begin{Th}\label{quasi-equivalence _in_case_alpha+beta=1}
  Let $\,^n\!\pi$ and $\,^n\!\breve{\pi}$  be the finite type factor-representation of $B_\infty^{(n)}$, and let $\sum\alpha_i +\sum\beta_i=1$.  The representations $\Pi={\rm Ind}^B_{B_\infty^{(n)}}\,^n\!\pi$ and $\breve{\Pi}={\rm Ind}^B_{B_\infty^{(n)}}\,^n\!\breve{\pi}$ are quasi-equivalent if and only if the representations $\,^n\!\pi$ and $\,^n\!\breve{\pi}$ are quasi-equivalent.
\end{Th}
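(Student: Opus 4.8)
The plan is to establish the two implications separately. For the ``if'' direction I would simply invoke Proposition \ref{quasi_equivalent_ind} with the subgroup $B_\infty^{(n)}\subset B$: quasi-equivalence of $\,^n\!\pi$ and $\,^n\!\breve{\pi}$ immediately yields quasi-equivalence of the induced representations $\Pi$ and $\breve\Pi$. (One may assume $n\geq 1$, since for $n=0$ one has $B_\infty^{(0)}=B$ and $\Pi=\,^n\!\pi$, $\breve\Pi=\,^n\!\breve{\pi}$, so the statement is trivial.) All the work is in the ``only if'' direction, for which the idea is that, when $\sum\alpha_i+\sum\beta_i=1$, the subgroup representation $\,^n\!\pi$ can be recovered from $\Pi$ by an intrinsic construction, so that an abstract isomorphism $M\to\breve M$ intertwining $\Pi$ and $\breve\Pi$ descends to one intertwining $\,^n\!\pi$ and $\,^n\!\breve{\pi}$.

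Concretely: assume $\Pi$ and $\breve\Pi$ are quasi-equivalent; by Theorem \ref{factor_ind} both are stable factor-representations, so there is a $*$-isomorphism $\Phi\colon M=\Pi(B)''\to\breve M=\breve\Pi(B)''$ with $\Phi(\Pi(g))=\breve\Pi(g)$ for all $g\in B$. Such a $\Phi$ is automatically normal, hence $\sigma$-weakly continuous. First I would note that the asymptotic transpositions $\mathcal{O}_j=w\text{-}\lim_k\Pi((j\;k))$ of Lemma \ref{o_transp} lie in $M$ and are $\sigma$-weak limits of bounded sequences, so $\Phi(\mathcal{O}_j)=\breve{\mathcal{O}}_j$ and $\Phi$ carries the kernel projection $E_j(0)$ of $\mathcal{O}_j$ onto that of $\breve{\mathcal{O}}_j$. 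Since the $\mathcal{O}_j$ pairwise commute (Lemma \ref{o_transp}), the projection onto $\bigcap_{j=1}^n{\rm Ker}\,\mathcal{O}_j$ equals $\prod_{j=1}^n E_j(0)$, and because $\gamma_0=\gamma_1=0$ Lemma \ref{E=F} identifies this with the canonical support projection $E$ of Corollary \ref{collora_of_ind_Th}; by Proposition \ref{prop_cd} one has ${\rm cd}\,\Pi={\rm cd}\,\breve\Pi=n$, so Corollary \ref{collora_of_ind_Th} is available at level $n$ on both sides and $\Phi(E)=\prod_{j=1}^n\breve E_j(0)=\breve E$. Then $\Phi$ restricts to a $*$-isomorphism of $\Pi_E(B_\infty^{(n)})''$ onto $\breve\Pi_{\breve E}(B_\infty^{(n)})''$ intertwining $\Pi_E(g)=E\Pi(g)E$ with $\breve\Pi_{\breve E}(g)=\breve E\breve\Pi(g)\breve E$, whence $\Pi_E$ and $\breve\Pi_{\breve E}$ are quasi-equivalent.

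It then remains to identify $\Pi_E$ with $\,^n\!\pi$ up to quasi-equivalence. I would realize $\,^n\!\pi$ on $L^2(\,^n\!M,\,^n\!{\rm tr})$ by left multiplication, as in the proof of Theorem \ref{factor_ind} --- legitimate since a finite type factor-representation is quasi-equivalent to this standard form. Then, by the proof of Theorem \ref{alpha+beta=1} combined with Lemma \ref{E=F}, the projection $E$ is exactly the projection onto the block $\{v:v(x)=0\text{ for }x\neq\widetilde{e}\}$ of $l^2(L^2(\,^n\!M,\,^n\!{\rm tr}),X)$, and on that block $E\Pi(b)E$ acts as $\,^n\!\pi(b)$ for every $b\in B_\infty^{(n)}$ since such $b$ fixes the coset $\widetilde{e}$; hence $\Pi_E$ is, in this model, the representation $\,^n\!\pi$. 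The same applies to $\breve\Pi_{\breve E}$ and $\,^n\!\breve{\pi}$, so $\,^n\!\pi$ and $\,^n\!\breve{\pi}$ are quasi-equivalent, completing the argument.

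The hard part will be the step $\Phi(E)=\breve E$, which is what makes the reconstruction functorial. It depends on the intrinsic description of the recovered subgroup block as the joint kernel $\bigcap_{j=1}^n{\rm Ker}\,\mathcal{O}_j$ of the asymptotic transpositions --- available precisely in the regime $\gamma_0=\gamma_1=0$ (Lemma \ref{E=F}) --- together with normality of $\Phi$ and the coincidence ${\rm cd}\,\Pi={\rm cd}\,\breve\Pi=n$ (Proposition \ref{prop_cd}), which lets Corollary \ref{collora_of_ind_Th} be applied at the same level on both sides. Once this is in hand, the two remaining checks (that the corner isomorphism intertwines the corner representations, and that the corner representation is $\,^n\!\pi$ up to quasi-equivalence) are routine.
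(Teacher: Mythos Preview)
Your proposal is correct and follows essentially the same route as the paper: for the ``if'' direction both invoke Proposition \ref{quasi_equivalent_ind}, and for ``only if'' both transport the asymptotic transpositions $\mathcal{O}_j$ through the isomorphism to match the kernel projections $E_j(0)$, take their product to obtain $E$, and identify the corner representation $\Pi_E$ with $\,^n\!\pi$ via the block description \eqref{indicator_formula} from the proof of Theorem \ref{alpha+beta=1}. Your write-up is in fact slightly more careful than the paper's in making explicit the normality of $\Phi$ and in citing Lemma \ref{E=F} and Proposition \ref{prop_cd} to justify that the projection $E=\prod_{j=1}^n E_j(0)$ is intrinsic; the paper simply works directly in the concrete induced model and appeals to \eqref{indicator_formula}.
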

\begin{proof}
Denote the factors $\,^n\!\pi(B_\infty^{(n)})''$ and $\,^n\!\breve{\pi}(B_\infty^{(n)})''$ briefly by $\,^n\!M$ and $\,^n\!\breve{M}$, respectively. Without loss of generality we may assume   that $\,^n\!\pi$ and $\,^n\!\breve{\pi}$ act in $L^2\left(\,^n\!M, \,^n\!{\rm tr} \right)$ and $L^2\left(\,^n\!M, \,^n\!\breve{{\rm tr}} \right)$ by the operators of left multiplication. Here $\,^n\!{\rm tr}$ and $\,^n\!\breve{{\rm tr}}$ are faithful normal traces on the finite type factors  $\,^n\!M$ and $\,^n\!\breve{M}$. Suppose that $\Pi$ and $\breve{\Pi}$ are realized in the spaces $l^2(L^2\left(\,^n\!M, \,^n\!{\rm tr} \right),X)$ and $l^2(L^2\left(\,^n\!\breve{M}, \,^n\!\breve{{\rm tr}} \right),X)$ by \eqref{ind2_action}.

Since $\Pi$ and $\breve{\Pi}$ are the stable representations, it follows from Lemma \ref{o_transp} that there exist the operators $\mathcal{O}_j$ and $\breve{\mathcal{O}}_j$:
\begin{eqnarray*}
\mathcal{O}_j=w-\lim\limits_{m\to\infty}\Pi((j\;\;m)), \;
\breve{\mathcal{O}}_j=w-\lim\limits_{m\to\infty}\breve{\Pi}((j\;\;m)).
\end{eqnarray*}
Denote by $E_m(0)$ and $\breve{E}_m(0)$ the projections onto ${\rm Ker}\,\mathcal{O}_m$ and ${\rm Ker}\,\breve{\mathcal{O}}_m$, respectively.

Suppose that there exists an isomorphism $\theta: \Pi(B)''\mapsto \breve{\Pi}$ such that $\theta(\Pi(b))$ $=\breve{\Pi}(b)$ for all $b\in B$. Since $\theta(\mathcal{O}_m)=\breve{\mathcal{O}}_m$, the  equality $\theta(E_m(0))=\breve{E}_m(0)$ holds. Thus, introducing the notations  $E=\prod\limits_{m=1}^n\, E_m(0)$  and $\breve{E}=\prod\limits_{m=1}^n\, \breve{E}_m(0)$, we obtain
\begin{eqnarray*}
\theta(E)=\breve{E}.
\end{eqnarray*}
Applying \eqref{indicator_formula}, we have
\begin{eqnarray*}
\theta(E\,\Pi(b)\,E)=\breve{E}\,\breve{\Pi}(b)\,\Pi \breve{E}\;~\text{for all }~ b\in B.
\end{eqnarray*}
It means that representations
\begin{eqnarray*}
B_\infty^{(n)}\ni b\stackrel{\Pi_{E}}{\mapsto} E\,\Pi(b)\, E\; ~\text{and }~ B_\infty^{(n)}\ni b\stackrel{\breve{\Pi}_{{\breve{E}}}}{\mapsto} \breve{E}\,\breve{\Pi}(b)\, \breve{E}
\end{eqnarray*}
are quasi-equivalent.  But $\Pi_{E}$ is quasi-equivalent to $\,^n\!\pi$, and $\breve{\Pi}_{E}$ is quasi-equivalent to $\,^n\!\breve{\pi}$. Therefore,  $\,^n\!\pi$ and $\,^n\!\breve{\pi}$ are quasi-equivalent.

If $\,^n\!\pi$ and $\,^n\!\breve{\pi}$ are quasi-equivalent then $\Pi$ and $\breve{\Pi}$ are also quasi-equivalent by virtue of Proposition \ref{quasi_equivalent_ind}.
\end{proof}
\subsection{Quasi-equivalence of the induced representations in the case $\gamma_0+\gamma_1>0$.}
By Proposition \ref{Prop_Ind}, the  finite type factor-representations $\,^n\!\pi$ and $\,^n\!\breve{\pi}$  of $B_{\infty}^{(n)}$ $=B_n\cdot B_{n\infty}$ are induced from the representations $\Xi$ and $\breve{\Xi}$ of  subgroup $H_{\infty \Omega_{_{kn}}}$ $=B_k\cdot B_{kn}\cdot B_{n\infty}$, which are defined by \eqref{tilde_rho}. Therefore,
\begin{eqnarray}\label{chain_ind}
\begin{split}
\Pi={\rm Ind}_{B_\infty^{(n)}}^B\,\,^n\!\pi={\rm Ind}_{B_\infty^{(n)}}^B\,\,{\rm Ind}^{B_\infty^{(n)}}_{H_{\infty \Omega_{_{kn}}}}\,\Xi={\rm Ind}_{H_{\infty \Omega_{_{kn}}}}^B\,\Xi,\\
\breve{\Pi}={\rm Ind}_{B_\infty^{(m)}}^B\,\,^m\!\breve{\pi}={\rm Ind}_{ H_{\infty \Omega_{_{lm}}}}^B\,\breve{\Xi},\;~\text{ where }~k\leq n, l\leq m \\
\;~\text{ and }~  H_{\infty \Omega_{_{lm}}}=B_l\cdot B_{lm}\cdot B_{m\infty}.
\end{split}
\end{eqnarray}
Let
$\;^0\!\lambda\vdash k, \;^1\!\!\lambda\vdash (n-k),\alpha, \beta, \gamma, \sigma$ and $\;^0\!\breve{\lambda}\vdash l,\;^1\!\!\breve{\lambda}\vdash (m-l), \breve{\alpha}, \breve{\beta}, \breve{\gamma}, \breve{\sigma}$ be the parameters of ${\rm II}_1$-factor-representations $\Xi$ and $\breve{\Xi}$ of the group $ H_{\infty \Omega_{_{kn}}}=B_k\cdot B_{kn}\cdot B_{n\infty}$ and $ H_{\infty \Omega_{_{lm}}}=B_l\cdot B_{lm}\cdot B_{m\infty}$, respectively.

Next statement follows from Proposition \ref{asymp_char}.
\begin{Prop}\label{as_character_equality}
  Let $\chi^{as}_{_{\Pi}}$ and $\chi^{as}_{_{\breve\Pi}}$ be the asymptotical characters of the stable factor-representations $\Pi$ and $\breve\Pi$.  If $\Pi$ and $\breve\Pi$ are quasi-equivalent then $\chi^{as}_{_{\Pi}}=\chi^{as}_{_{\breve\Pi}}$; i.e. $\alpha=\breve\alpha$,$\beta=\breve\beta$, $\gamma=\breve\gamma$ and $\sigma=\breve\sigma$.
\end{Prop}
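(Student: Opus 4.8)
The plan is to read off the equality of asymptotical characters directly from their definition in Proposition \ref{asymp_char}, using the automatic normality of an isomorphism of von Neumann algebras, and then to invoke the injectivity of the parametrization \eqref{II_1-mult_general}--\eqref{character_formula} of the indecomposable characters of $B$ in order to pass from the equality of functions to the equality of parameters.

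First I would take the isomorphism $\Phi: M=\Pi(B)''\to \breve M=\breve\Pi(B)''$ furnished by the quasi-equivalence, which satisfies $\Phi(\Pi(g))=\breve\Pi(g)$ for all $g\in B$. Fix $g\in B$ and choose a sequence $g_n\in{\rm Cl}_{n\infty}(g)$. By Proposition \ref{asymp_char}, $w-\lim\limits_{n\to\infty}\Pi(g_n)=\chi^{as}_{_\Pi}(g)\,{\rm I}_{\mathcal{H}_\pi}$ and $w-\lim\limits_{n\to\infty}\breve\Pi(g_n)=\chi^{as}_{_{\breve\Pi}}(g)\,{\rm I}_{\mathcal{H}_{\breve\Pi}}$. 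Each $\Pi(g_n)$ is unitary, so the sequence $\{\Pi(g_n)\}$ lies in the unit ball of $M$, where the weak operator and the $\sigma$-weak topologies coincide; hence $\Pi(g_n)\to\chi^{as}_{_\Pi}(g)\,{\rm I}$ $\sigma$-weakly. Since a $*$-isomorphism of von Neumann algebras is automatically $\sigma$-weakly bicontinuous (see \cite{Dixmier}), applying $\Phi$ gives $\breve\Pi(g_n)=\Phi(\Pi(g_n))\to \Phi\!\left(\chi^{as}_{_\Pi}(g)\,{\rm I}\right)=\chi^{as}_{_\Pi}(g)\,{\rm I}_{\mathcal{H}_{\breve\Pi}}$ $\sigma$-weakly, hence in the weak operator topology. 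Uniqueness of the weak limit forces $\chi^{as}_{_\Pi}(g)=\chi^{as}_{_{\breve\Pi}}(g)$; since $g\in B$ was arbitrary, $\chi^{as}_{_\Pi}=\chi^{as}_{_{\breve\Pi}}$.

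It remains to see that the identity $\chi^{as}_{_\Pi}=\chi^{as}_{_{\breve\Pi}}$, i.e. $\chi^\sigma_{\alpha\beta\gamma}=\chi^{\breve\sigma}_{\breve\alpha\breve\beta\breve\gamma}$, forces $\alpha=\breve\alpha$, $\beta=\breve\beta$, $\gamma=\breve\gamma$, $\sigma=\breve\sigma$. For this I would recover the parameters from the values of the character on quasi-cycles via \eqref{character_formula}: evaluation on the cycles $(n_1\ n_2\ \ldots\ n_k)$ with $k>1$ and trivial $\mathbb{Z}_2$-component gives $\sum\alpha_i^k+(-1)^{k-1}\sum\beta_i^k$ for every $k$, which determines the multisets $\{\alpha_i\}$ and $\{\beta_i\}$ by the classical uniqueness of Thoma parameters for the restriction to $\mathfrak{S}_\infty$; evaluation on $cz$ with varying $z$ then determines the function $\sigma$ on $\alpha\cup\beta$; the relation $\gamma_0+\gamma_1=1-\sum\alpha_i-\sum\beta_i$ fixes $\gamma_0+\gamma_1$, and evaluation on the $k=1$ quasi-cycle $1^{(n_1)}$ separates $\gamma_0$ from $\gamma_1$. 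Thus $(\alpha,\beta,\gamma,\sigma)\mapsto\chi^\sigma_{\alpha\beta\gamma}$ is injective, which is exactly the content of the classification of indecomposable characters of $B$ established in \cite{DN}, and the proposition follows.

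The argument is short, and the one point that needs care — which I regard as the main obstacle — is the justification that $\Phi$ transports the weak-operator limit defining $\chi^{as}_{_\Pi}$ to the one defining $\chi^{as}_{_{\breve\Pi}}$. This rests on the two standard facts that a $*$-isomorphism between von Neumann algebras is $\sigma$-weakly bicontinuous and that on norm-bounded sets the weak operator and $\sigma$-weak topologies agree; together they also show that the limit used in Proposition \ref{asymp_char} is intrinsic to the algebra and independent of the chosen standard realisation, which is what legitimises comparing the two representations in the first place.
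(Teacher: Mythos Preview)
Your argument is correct and is precisely the unpacking of the paper's one-line justification ``Next statement follows from Proposition \ref{asymp_char}'': you make explicit the passage of the defining weak limit through the quasi-equivalence isomorphism via $\sigma$-weak continuity on bounded sets, and then invoke the injectivity of the parametrization from \cite{DN}. There is nothing to add or correct.
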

The following Proposition is a consequence of Theorems \ref{II_1_theorem},   \ref{Ind^B_infty_(k)} and Proposition \ref{as_character_equality}
\begin{Prop}
Let us assume that $\alpha=\breve\alpha,\beta=\breve\beta,\sigma=\breve\sigma, \gamma=\breve\gamma$ and  one of the following conditions holds:
\begin{itemize}
  \item {\it 1)} $ \gamma_0>0$ and $ \gamma_1>0$,;
  \item {\it 2)} $(\gamma_0=0) \& (\gamma_1>0)\& (k=0)\&(l=0)$;
  \item {\it 3)}  $(\gamma_0>0) \& (\gamma_1=0)\& (k=n)\&(l=m)$.
\end{itemize}
Then $\Pi$ and $\breve\Pi$ have ${\rm II}_1$-type. It follows from this that $\Pi$ and $\breve\Pi$ are quasi-equivalent.
\end{Prop}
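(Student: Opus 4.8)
The plan is to reduce the statement to the already-proved fact that, under each of the hypotheses (1)--(3), the induced representations $\Pi$ and $\breve\Pi$ are of type ${\rm II}_1$, and then to invoke uniqueness of the trace together with Proposition~\ref{asymp_char}. More precisely, I would argue as follows. First, in case (1) $\gamma_0>0$ and $\gamma_1>0$, so Theorem~\ref{II_1_theorem} applies directly to both $\Pi={\rm Ind}^B_{B_\infty^{(n)}}\,^n\!\pi$ and $\breve\Pi={\rm Ind}^B_{B_\infty^{(m)}}\,^m\!\breve\pi$, giving that each is a ${\rm II}_1$-factor-representation. In case (2), $k=l=0$ means $H_{\infty\Omega_{_{kn}}}=B_{0n}B_{n\infty}$ and by \eqref{successive_induction} we have $\Pi\stackrel{\text{quasi}}{=}{\rm Ind}^B_{B_\infty^{(0)}}\,^0\!\pi={\rm Ind}^B_{B}\,^0\!\pi$, so $\Pi$ is quasi-equivalent to the ${\rm II}_1$-factor-representation $\,^0\!\pi={\rm Ind}^{B_\infty^{(0)}}_{H_{\infty\Omega_{_{0n}}}}\Xi$ given by Theorem~\ref{Ind^B_infty_(k)}(1); and similarly for $\breve\Pi$. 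Case (3) is reduced to case (2) by applying the automorphism $s\mapsto s$, $z\mapsto(-1)^{\sum z_i}z$ of $B$ exactly as in the proof of Theorem~\ref{cd_gamma_0=0_gamma_1>0}(c), which interchanges the roles of $\gamma_0$ and $\gamma_1$ and of $k$ and $n-k$; under this automorphism the hypotheses of (3) become those of (2), and ${\rm II}_1$-type is preserved.

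Once we know both $\Pi$ and $\breve\Pi$ are ${\rm II}_1$-factor-representations, the next step is to identify them up to quasi-equivalence via their asymptotical characters. Both are stable factor-representations, so by Proposition~\ref{asymp_char} each has a well-defined indecomposable asymptotical character $\chi^{as}_{_\Pi}$ and $\chi^{as}_{_{\breve\Pi}}$, and by the hypothesis $\alpha=\breve\alpha$, $\beta=\breve\beta$, $\gamma=\breve\gamma$, $\sigma=\breve\sigma$ these coincide: $\chi^{as}_{_\Pi}=\chi^{as}_{_{\breve\Pi}}=\chi^\sigma_{\alpha\beta\gamma}$. The key remaining point is that for a ${\rm II}_1$-factor-representation the normalized trace $\chi_\Pi(g)={\rm tr}(\Pi(g))$ is \emph{equal} to the asymptotical character $\chi^{as}_{_\Pi}$. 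Indeed, in a ${\rm II}_1$-factor $M$ the unique normal trace ${\rm tr}$ with ${\rm tr}(I)=1$ is continuous in the weak operator topology, so applying ${\rm tr}$ to the relation $w\text{-}\lim_{j\to\infty}\Pi(g_j)=\chi^{as}_{_\Pi}(g)I$ of Proposition~\ref{asymp_char} and using that ${\rm tr}$ is constant on conjugacy classes yields ${\rm tr}(\Pi(g))=\chi^{as}_{_\Pi}(g)$ for every $g\in B$. Hence $\chi_\Pi=\chi_{\breve\Pi}$ as characters of $B$, and since a finite type factor-representation is determined up to quasi-equivalence by its character (the GNS construction from $\chi_\Pi$ recovers $\Pi$ up to quasi-equivalence), we conclude $\Pi$ and $\breve\Pi$ are quasi-equivalent.

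The one genuine obstacle is handling case (3) cleanly. Rather than re-running the whole argument, I would spell out that $\,^n\!\pi$ (resp.\ $\,^m\!\breve\pi$) precomposed with the sign-flip automorphism of $B_\infty^{(n)}$ is again a finite type factor-representation, now with parameters satisfying $\gamma_0'=\gamma_1>0$, $\gamma_1'=0$ and with $k'=n-k$; the condition $k=n$ becomes $k'=0$, so case (2) applies; and induction commutes with the automorphism (${\rm Ind}^B_{B_\infty^{(n)}}$ of the twisted representation is the twist of $\Pi$), so $\Pi$ is the sign-twist of a ${\rm II}_1$-representation, hence itself ${\rm II}_1$; the same for $\breve\Pi$; finally the sign-twist is a bijection on quasi-equivalence classes, so the conclusion of case (2) transports back. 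After that the trace/asymptotical-character identification above is purely formal. I expect the trace-continuity step to require only the standard fact (from \cite{TAKES1}) that the normal trace on a ${\rm II}_1$ factor is weakly continuous on bounded sets, which is legitimate to cite here.
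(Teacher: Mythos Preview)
Your proposal is correct and follows the same outline as the paper (which merely cites Theorems~\ref{II_1_theorem}, \ref{Ind^B_infty_(k)} and Proposition~\ref{as_character_equality} without further detail); your explicit argument that the normalized trace of a ${\rm II}_1$-factor-representation equals its asymptotical character is a clean way to make precise what the paper leaves implicit. One simplification: for case~(3) you can invoke Theorem~\ref{Ind^B_infty_(k)}(2) directly with $\mathbb{A}=\overline{k+1,n}=\emptyset$ (so $B_\infty^{\mathbb{A}}=B$) rather than passing through the sign-twist, and note that what you call the ``automorphism $z\mapsto(-1)^{\sum z_i}z$'' is really the representation twist $\widetilde\Pi(z)=(-1)^{\sum z_i}\Pi(z)$ as in the proof of Theorem~\ref{cd_gamma_0=0_gamma_1>0}(c), not a group automorphism of $B$.
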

\subsubsection{The case $\gamma_0=0, \gamma_1>0$.}
If representations $\Pi$ and $\breve\Pi$ are quasi-equivalent then ${\rm cd}\,\Pi={\rm cd}\,\breve\Pi$. Therefore, applying Theorem \ref{cd_gamma_0=0_gamma_1>0}, we can assume without less of generality that $k=l$. Thus, we conclude from \eqref{chain_ind} that
\begin{eqnarray}
\begin{split}
\Pi={\rm Ind}^B_{H_{\infty\Omega_{_{kn}}}}\,\Xi={\rm Ind}^B_{B_\infty^{(k)}}\,{\rm Ind}^{B_\infty^{(k)}}_{H_{\infty\Omega_{_{kn}}}}\,\Xi\\
\breve\Pi\stackrel{(l=k)}{=}{\rm Ind}^B_{{H}_{\infty\Omega_{_{k}}}}\,\breve\Xi={\rm Ind}^B_{B_\infty^{(k)}}\,{\rm Ind}^{B_\infty^{(k)}}_{H_{\infty\Omega_{_{km}}}}\,\breve\Xi.
\end{split}
\end{eqnarray}
By Theorem \ref{Ind^B_infty_(k)}, the representations $\,^k\!\pi=\,{\rm Ind}^{B_\infty^{(k)}}_{H_{\infty\Omega_{_{kn}}}}\,\Xi$ and $\,^k\!\breve\pi=\,{\rm Ind}^{B_\infty^{(k)}}_{\breve{H}_{\infty\Omega_{_{km}}}}\,\breve\Xi$
are defined on $b=zsg$, where $z\in\mathbb{Z}_2^k, s\in\mathfrak{S}_k, g\in\mathfrak{S}_{k\infty}$, by
\begin{eqnarray}
\begin{split}\label{Gamma_k_II_1}
^k\!\pi(b)=  {\rm Irr}_{\,^0\!\lambda}(s)\otimes \pi_{\alpha\beta\gamma}^\sigma(g),\\
^k\!\breve\pi(b)={\rm Irr}_{\,^0\!\breve\lambda}(s)\otimes \pi_{\alpha\beta\gamma}^\sigma(g).
\end{split}
\end{eqnarray}
Here ${\rm Irr}_{\,^0\!\lambda}$ (${\rm Irr}_{\,^0\!\breve\lambda}$) is irreducible representation of $\mathfrak{S}_k$, corresponding $\,^0\!\lambda\vdash k$ ($\,^0\!\breve\lambda\vdash k$); ${\rm II}_1$-factor-representation $\pi_{\alpha\beta\gamma}^\sigma$ is defined by indecomposable character $\chi_{\alpha\beta\gamma}^\sigma$ (see \eqref{character_formula}).
\begin{Th}\label{quasi equivalence gamma=0 gamma=1}
Let $\gamma_0=0$ and $\gamma_1>0$. The representations $\Pi={\rm Ind}^B_{B_\infty^{(k)}}\,^k\!\pi$ and $\breve{\Pi}={\rm Ind}^B_{B_\infty^{(k)}}\,^k\!\breve{\pi}$ are quasi-equivalent if and only if ${\rm II}_1$-factor-representations $\,^k\!\pi$ and $\,^k\!\breve{\pi}$ are quasi-equivalent.
\end{Th}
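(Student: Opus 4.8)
The argument parallels the proof of Theorem \ref{quasi-equivalence _in_case_alpha+beta=1}, the role of the projection $\prod_{m=1}^n E_m(0)$ there being played here by the projection $F_k$ constructed in the proof of Theorem \ref{cd_gamma_0=0_gamma_1>0}.

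\emph{The ``if'' part} is immediate: if $\,^k\!\pi$ and $\,^k\!\breve{\pi}$ are quasi-equivalent as representations of $B_\infty^{(k)}$, then by Proposition \ref{quasi_equivalent_ind} the induced representations $\Pi={\rm Ind}^B_{B_\infty^{(k)}}\,^k\!\pi$ and $\breve{\Pi}={\rm Ind}^B_{B_\infty^{(k)}}\,^k\!\breve{\pi}$ are quasi-equivalent.

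\emph{The ``only if'' part.} Suppose $\theta\colon\Pi(B)''\to\breve{\Pi}(B)''$ is an isomorphism with $\theta(\Pi(b))=\breve{\Pi}(b)$ for all $b\in B$. Realize $\Pi$ and $\breve{\Pi}$ on $l^2(L^2(\,^k\!M,\,^k\!{\rm tr}),X)$ and on $l^2(L^2(\,^k\!\breve{M},\,^k\!\breve{{\rm tr}}),X)$ by \eqref{ind1_action}, where $X=B/B_\infty^{(k)}$. By Lemma \ref{o_transp} the asymptotic transpositions $\mathcal{O}_m=w\text{-}\lim_{j\to\infty}\Pi((m\;j))\in\Pi(B)''$ and $\breve{\mathcal{O}}_m=w\text{-}\lim_{j\to\infty}\breve{\Pi}((m\;j))\in\breve{\Pi}(B)''$ exist; since $\theta$ is normal and these limits are taken along uniformly bounded (unitary) sequences, where the weak operator and $\sigma$-weak topologies agree, we get $\theta(\mathcal{O}_m)=\breve{\mathcal{O}}_m$, hence $\theta(E_m(0))=\breve{E}_m(0)$ for the spectral projections of $\mathcal{O}_m$ and $\breve{\mathcal{O}}_m$ at $0$. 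Together with $\theta(\Pi(1^{(m)}))=\breve{\Pi}(1^{(m)})$ and the definition \eqref{k^Q}, this yields $\theta\bigl(\,^{^k}\!\!Q_m\bigr)=\,^{^k}\!\breve{Q}_m$ for $m=1,\dots,k$. The projections $\{E_m(0),\Pi(1^{(m)})\}_{m=1}^k$ generate an abelian algebra by Lemma \ref{lemma_commutative}, so the $\,^{^k}\!\!Q_m$ pairwise commute, and by \eqref{prod^k} we obtain $\theta(F_k)=\breve{F}_k$, where $F_k=\prod_{m=1}^k\,^{^k}\!\!Q_m$ is the projection onto $\{\eta\in\mathcal{H}_\Pi:\eta(x)=0\text{ for all }x\neq\widetilde{e}\}$, and $\breve{F}_k$ is its analogue for $\breve{\Pi}$.

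Now recall from the proof of Theorem \ref{cd_gamma_0=0_gamma_1>0} (and Corollary \ref{collora_of_ind_Th}, Theorem \ref{induction_theorem}) that $F_k\in\Pi(B)''$, that $F_k\Pi(g)F_k=0$ for $g\notin B_\infty^{(k)}$, and that under the identification of $F_k\mathcal{H}_\Pi$ with the $\widetilde{e}$-component $L^2(\,^k\!M,\,^k\!{\rm tr})$ the map $g\mapsto F_k\Pi(g)F_k$, $g\in B_\infty^{(k)}$, generates a finite type factor-representation quasi-equivalent to $\,^k\!\pi$, with $F_k\Pi(B)''F_k=\bigl(\{F_k\Pi(g)F_k\}_{g\in B_\infty^{(k)}}\bigr)''$; the same statements hold for $\breve{F}_k$, $\breve{\Pi}$ and $\,^k\!\breve{\pi}$. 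Since $\theta(F_k)=\breve{F}_k$, the isomorphism $\theta$ restricts to an isomorphism $F_k\Pi(B)''F_k\to\breve{F}_k\breve{\Pi}(B)''\breve{F}_k$ satisfying $\theta(F_k\Pi(g)F_k)=\breve{F}_k\breve{\Pi}(g)\breve{F}_k$ for all $g\in B_\infty^{(k)}$. Transporting this through the two identifications above gives an isomorphism $\,^k\!\pi(B_\infty^{(k)})''\to\,^k\!\breve{\pi}(B_\infty^{(k)})''$ carrying $\,^k\!\pi(g)$ to $\,^k\!\breve{\pi}(g)$, i.e. $\,^k\!\pi$ and $\,^k\!\breve{\pi}$ are quasi-equivalent.

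\emph{Where the difficulty lies.} Essentially all the genuine content has been established beforehand: that ${\rm cd}\,\Pi={\rm cd}\,\breve{\Pi}=k$ (Theorem \ref{cd_gamma_0=0_gamma_1>0}), that $F_k$ admits the explicit description \eqref{k^Q}--\eqref{prod^k} in terms of the spectral data of the operators $\mathcal{O}_m$ (proof of Theorem \ref{II_infty_case}), and that the corner $F_k\Pi(B)''F_k$ carries exactly the inducing representation $\,^k\!\pi$ (Corollary \ref{collora_of_ind_Th}, Theorem \ref{Ind^B_infty_(k)}). The only point requiring care is the observation that $F_k$ is assembled from operators ($\mathcal{O}_m$ for $m\le k$, and $\Pi(1^{(m)})$) that are visibly respected by any intertwining isomorphism $\theta$, so that the corner algebra in which the inducing factor sits is canonical; granted that, the conclusion is a short assembly formally identical to Theorem \ref{quasi-equivalence _in_case_alpha+beta=1}.
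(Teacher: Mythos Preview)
Your proof is correct and follows essentially the same approach as the paper: both arguments use the projections $Q_m=\frac{1}{2}E_m(0)(I+\Pi(1^{(m)}))$ from \eqref{k^Q}--\eqref{prod^k}, show that the isomorphism $\theta$ carries $\prod_{m=1}^k Q_m$ to $\prod_{m=1}^k \breve{Q}_m$, and conclude via the identification of the corner with the inducing representation. One minor point: for the commutativity of $\{E_m(0),\Pi(1^{(m)})\}$ in the induced (not ${\rm II}_1$) setting you should cite Lemma~\ref{o_transp} rather than Lemma~\ref{lemma_commutative}.
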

\begin{proof}
We follow the notation used in the proof of Theorem \ref{II_infty_case} (see page \pageref{proof_28}).
Let $\Pi$ and $\breve\Pi$ are realized in $\mathcal{H}_\Pi$ $=l^2\left(L^2\left( \,^k\!M, \,^k\!{\rm tr} \right),X\right)$ by \eqref{ind1_action}, where $ \,^k\!M$ denotes $\,^k\!\pi(B_\infty^{(k)})''$ or $\,^k\!\breve\pi(B_\infty^{(k)})''$.

Suppose that $\Pi$ and $\breve\Pi$ are quasi-equivalent. Then there exists isomorphism $\theta: \Pi(B)'' \mapsto \breve\Pi(B)''$ with the property: $\theta(\Pi(b))=\breve\Pi(b)$ for all $b\in B$. By Lemma \ref{o_transp}, there exist the following weak limits
\begin{eqnarray*}
\mathcal{O}_m=w-\lim\limits_{j\to\infty}\Pi((m\;\;j))\;~\text{ and }~ \breve{\mathcal{O}}_m=w-\lim\limits_{j\to\infty}\breve\Pi((m\;\;j)).
\end{eqnarray*}
Let be $E_m(0)$ ($\breve E_m(0)$) be a orthogonal projection onto ${\rm Ker}\,\mathcal{O}_m$ (${\rm Ker}\,\breve{\mathcal{O}}_m$).
Then
\begin{eqnarray}\label{Theta_E_breve_E}
\theta( E_m(0))=\breve{E}_m(0).
\end{eqnarray}
Set $Q_m(\breve{Q}_m)=\frac{1}{2}E_m(0)\left(I+\Pi\left(1^{(m)}\right)\right)$ ($\frac{1}{2}\breve{E}_m(0)\left(I+\breve\Pi\left(1^{(m)}\right)\right)$). It follows from \eqref{projection_on_unit} and \eqref{product_of_Q_m} that projections $E=\prod\limits_1^k Q_m$ and $\breve E=\prod\limits_1^k \breve{Q}_m$ have the following properties:
\begin{itemize}
  \item i) $E\Pi(g)E=0$ and $\breve E \breve\Pi(g)\breve E=0$ for all $g\notin B\setminus B_\infty^{(k)}$;
  \item ii) the operators $\Pi_{_E}(g)=E\Pi(g)E$  and  $\breve{\Pi}_{_{\breve{E}}}(g)=\breve{E}\breve\Pi(g)\breve{E}$  generated ${\rm II}_1$-factor-representations of $B_\infty^{(k)}$;
  \item iii) $\Pi_{_E}$ is quasi-equivalent to $\,^k\!\pi$ and  $\breve{\Pi}_{_E}$ is quasi-equivalent to $\,^k\!\breve\pi$.
\end{itemize}
But, by \eqref{Theta_E_breve_E}, $\Pi_{_E}$ and $\breve{\Pi}_{_{\breve{E}}}$ are quasi-equivalent. Therefore, the representations $\,^k\!\pi$ and $\,^k\!\breve\pi$ are also quasi-equivalent.

If $\,^k\!\pi$ and $\,^k\!\breve\pi$ are quasi-equivalent then quasi equivalence of representations  $\Pi$ and $\breve\Pi$ follows from  Proposition \ref{quasi_equivalent_ind}.
\end{proof}
{}
B. Verkin ILTPE of NASU - B.Verkin Institute for Low Temperature Physics and Engineering
of the National Academy of Sciences of Ukraine

 n.nessonov@gmail.com
\end{document}